\numberwithin{equation}{section}
\numberwithin{figure}{section}
\theoremstyle{plain}
\newtheorem{theorem}{Theorem}[section]
\newtheorem{lemma}[theorem]{Lemma}
\newtheorem{proposition}[theorem]{Proposition}
\newtheorem{corollary}[theorem]{Corollary}
\theoremstyle{definition}
\newtheorem{definition}[theorem]{Definition}
\theoremstyle{remark}
\newtheorem{example}[theorem]{Example}
\newtheorem{remark}[theorem]{Remark}
\def\theglossary{\@restonecoltrue\if@twocolumn\@restonecolfalse\fi
\columnseprule\z@ \columnsep 35\p@
\let\@makessectionhead\indexsec
\@xp\section\@xp*\@xp{\glossaryname}%
\let\item\@idxitem
\parindent\z@  \parskip\z@\@plus.3\p@\relax
\footnotesize}
\def\glossaryname{Notation Index}
\renewcommand{\to}{\rightarrow}
\newcommand{\into}{\hookrightarrow}
\renewcommand{\iff}{\Leftrightarrow}
\renewcommand{\Im}{\operatorname{Im}}
\newcommand{\im}{\operatorname{im}}
\newcommand{\id}{\operatorname{id}}
\renewcommand{\dim}{\operatorname{dim}}
\newcommand{\Lie}{{\operatorname{Lie}}}
\DeclareMathOperator{\Vect}{Vect}
\newcommand{\Z}{{\mathbb Z}}
\newcommand{\Q}{{\mathbb Q}}
\newcommand{\R}{{\mathbb R}}
\newcommand{\C}{{\mathbb C}}
\newcommand{\gl}{{\mathfrak gl}}
\newcommand{\g}{{\mathfrak g}}
\newcommand{\pg}{{\mathfrak p}{\mathfrak g}}
\renewcommand{\u}{{\mathfrak u}}
\renewcommand{\mod}{{/\!\!/}}
\newcommand{\mmod}{{/\!\!/\!\!/\!\!/}}
\newcommand{\PhiR}{{\Phi_{\R}}}
\newcommand{\PhiC}{{\Phi_{\C}}}
\newcommand{\PhiCzero}{{\PhiC^{-1}(0)}}
\newcommand{\ddt}{\left. \frac{d}{dt} \right|_{t=0}}
\DeclareMathOperator{\grad}{grad}
\DeclareMathOperator{\stab}{stab}
\DeclareMathOperator{\Gr}{Gr}
\DeclareMathOperator{\Crit}{Crit}
\DeclareMathOperator{\trace}{trace}
\DeclareMathOperator{\Rep}{Rep}
\DeclareMathOperator{\rank}{rank}
\DeclareMathOperator{\tr}{tr}
\DeclareMathOperator{\Hom}{Hom}
\DeclareMathOperator{\out}{out}
\DeclareMathOperator{\inw}{in} 
\DeclareMathOperator{\Ad}{Ad}
\newcommand{\linearisation}{\left. \frac{d}{dt} \right|_{t=0} }
\begin{document}

\title{Morse theory of the moment map for representations of quivers} 

\author{Megumi Harada}
\address{Department of Mathematics and Statistics, McMaster University,
Hamilton, Ontario L8S 4K1  Canada }
\email{Megumi.Harada@math.mcmaster.ca}

\author{Graeme Wilkin}
\address{Department of Mathematics, University of Colorado, Boulder, 
CO 80309-0395, U.S.A.} 
\email{graeme.wilkin@colorado.edu}

\date{\today}

\keywords{hyperk\"ahler quotient, quiver variety, equivariant cohomology, Morse theory}
\subjclass[2000]{ Primary: 53D20; Secondary: 53C26 }
% 53C26 = hyperkahler
% 53D20 = moment maps and symplectic reduction

%%%%%%%%%%%%%%%%%%%%%
%  Abstract
%%%%%%%%%%%%%%%%%%%%%

\begin{abstract}
The results of this paper concern the Morse theory of the norm-square of the moment map on the space of representations of a quiver. We show that the gradient flow of this function converges, and that the Morse stratification induced by the gradient flow co-incides with the Harder-Narasimhan stratification from algebraic geometry. Moreover, the limit of the gradient flow is isomorphic to the graded object of the Harder-Narasimhan-Jordan-H\"older filtration associated to the initial conditions for the flow. With a view towards applications to Nakajima quiver varieties we construct explicit local co-ordinates around the Morse strata and (under a technical hypothesis on the stability parameter) describe the negative normal space to the critical sets. Finally, we observe that the usual Kirwan surjectivity theorems in rational cohomology and integral K-theory carry over to this non-compact setting, and that these theorems generalize to certain equivariant contexts.
\end{abstract}

\maketitle

\setcounter{tocdepth}{1}
\tableofcontents

\section{Introduction}\label{sec:intro}

The motivation of this manuscript is to develop the analytic background for an equivariant Morse-theoretic approach to the study of the topology of Nakajima quiver varieties. 
We first recall the following long-standing question. Consider a hyperhamiltonian\footnote{The action of a Lie group $G$ on a hyperk\"ahler manifold $(M, \omega_{I},
  \omega_{J}, \omega_{K})$ is \emph{hyperhamiltonian} if it is hamiltonian with respect to each
  of the symplectic structures $\omega_{I}, \omega_{J}, \omega_{K}$.} action of a Lie group $G$ on a hyperk\"ahler manifold $M$
with moment maps $\Phi_{I}, \Phi_{J}, \Phi_{K}$ from $M$ to $\g^{*}$
with respect to $\omega_I, \omega_J, \omega_K$ respectively, and consider the $G$-equivariant inclusion 
\begin{equation}\label{eq:intro-hKinclusion}
(\Phi_{I}, \Phi_{J}, \Phi_{K})^{-1}(\alpha) \into M ,
\end{equation}
where $\alpha$ is a central value in $(\g^{*})^{3}$. The question is: When is the induced ring homomorphism from the $G$-equivariant cohomology\footnote{We assume throughout the paper
  that the coefficient ring is $\Q$.} $H^{*}_{G}(M)$ of the original
hyperhamiltonian $G$-space $M$ to the ordinary cohomology $H^{*}(M
\mmod_{\alpha} G)$ of the hyperk\"ahler quotient \(M \mmod_{\alpha} G := (\Phi_{I},
\Phi_{J}, \Phi_{K})^{-1}(\alpha)/G\) a {\em surjection} of rings?

In the setting of symplectic quotients, the analogous result is the well-known
Kirwan surjectivity theorem \cite{Kir84}. This theorem was
the impetus for much subsequent work in equivariant symplectic
geometry to compute topological invariants of symplectic quotients, since it
reduces the problem of computing $H^{*}(M \mod G)$ to that of
computing an equivariant cohomology ring $H^{*}_{G}(M)$ and the kernel
of a surjective ring homomorphism $\kappa$ (see e.g. \cite{JK95, Mar00, TW03, Gol02}). Thus it is
quite natural to ask whether a similar theory can be developed for hyperk\"ahler quotients.

The proof of the above Kirwan surjectivity
theorem in \cite{Kir84} uses the Morse theory of the norm-square
of the moment map.  This method has recently been extended in \cite{dwww} to prove Kirwan surjectivity for an infinite-dimensional hyperk\"ahler quotient: the moduli space of semistable rank $2$ Higgs bundles. Hence, one of the goals of the present manuscript is to develop tools
toward the further development of the Morse theory methods of \cite{dwww}, which would then aid the study of hyperk\"ahler quotients such as Nakajima quiver varieties. 
At present, we restrict our attention to the setting of moduli spaces of
representations of quivers and Nakajima quiver varieties for two
simple reasons. Firstly, the theory for this case
has much in common with that of the moduli spaces of Higgs bundles and
the moduli spaces of vector bundles which arise in gauge theory. This
includes the presence of algebro-geometric tools such
as a Harder-Narasimhan stratification on the space of representations
of the quiver which is compatible with the Morse theory of the moment
map.  Secondly, the study of quiver varieties is an 
active subject with many connections to other areas of mathematics,
including geometric representation theory, gauge
theory, and mirror symmetry (see e.g. \cite{Nakajima01, CraBoe06, HauRod08} and
references therein).

Although our results are motivated by, and
analogues of, similar results from gauge theory, it is not
straightforward to translate these to the setting of quivers.
In this paper we have developed new
ideas and constructions in order to carry through the analogous
program, in particular we are
able to deal with the general setting of a moment map associated to any
representation space of any quiver, making no assumptions
on the underlying graph or dimension vector.

A more detailed description of the main results of this paper is as follows. Let \(Q = ({\mathcal I}, E)\) be a {\em quiver}, i.e. an oriented graph with vertices ${\mathcal I}$ and edges $E$. We always assume $Q$ is finite. For an edge (also called an ``arrow'') \(a \in E\), let \(\inw(a), \out(a) \in {\mathcal I}\) denote the head and tail of the arrow respectively. Also let ${\mathbf v} = (v_{\ell})_{\ell \in {\mathcal I}} \in \Z_{\geq 0}^{{\mathcal I}}$ be a \emph{dimension vector}. From this data we may build the affine {\em space of representations of the quiver}, 
\[
\Rep(Q, {\mathbf v}) = \bigoplus_{a \in E} \Hom(V_{\out(a)}, V_{\inw(a)}) 
\]
where each $V_{\ell}$ for \(\ell \in {\mathcal I}\) is a hermitian
inner product space, with dimension $\dim(V_{\ell}) = v_{\ell}.$ Then
$\Rep(Q,{\bf v})$ is naturally equipped with the componentwise
conjugation action of the compact Lie group \(G := \prod_{\ell \in
  {\mathcal I}} U(V_{\ell}),\) where $U(V_{\ell})$ is the unitary
group associated to $V_{\ell}$ for each vertex \(\ell \in {\mathcal
  I},\) and the {\em moduli space of representations of the quiver
  $Q$} (at a central parameter $\alpha$) is the K\"ahler (or GIT)
quotient $\Rep(Q, {\bf v}) \mod_{\alpha} G$.  Similarly, the cotangent
bundle \(T^{*}\Rep(Q, {\mathbf v})\) is naturally a quaternionic
affine space and is hyperhamiltonian with respect to the induced
action of $G$. The hyperk\"ahler quotient $T^{*}\Rep(Q, {\bf v})
\mmod_{(\alpha,0)} G$ of $T^{*}\Rep(Q, {\bf v})$ by $G$ at
a central value $(\alpha,0) \in \g^{*} \oplus \g_{\C}^{*}$ is then
called a {\em Nakajima quiver variety}. Our results concerning spaces
of representations of quivers also may be applied to their cotangent
bundles, as we explain in Section~\ref{sec:applications}, and hence
may be used in the study of Nakajima quiver varieties.

In the gauge-theoretic study of the moduli of holomorphic bundles,
there are notions of stability and semi-stability coming
from geometric invariant theory, from which one obtains the Harder-Narasimhan stratification of the space of holomorphic
structures on a vector bundle. In the setting of quivers, Reineke
\cite{Rei03} showed that analogous notions also exist for the space
$\Rep(Q,{\bf v})$, and in particular
there exists an analogous Harder-Narasimhan stratification of $\Rep(Q, {\bf
  v})$ (associated to a choice of stability parameter $\alpha$). On
the other hand, just as in Kirwan's original work, we may also
consider the Morse-type stratification obtained from the
negative gradient flow of the norm-square of
the moment map $\|\Phi - \alpha\|^{2}$ on $\Rep(Q, {\bf v})$.
That such a stratification even makes sense is non-trivial, since
$\Rep(Q, {\bf v})$ is non-compact and $\|\Phi - \alpha\|^2$ not
proper, so it is not immediate that the gradient flow even converges.
Our first main theorem (Theorem~\ref{thm:gradflowconvergence}) 
is that the flow does converge, and hence the Morse
stratification exists. In fact, this result is valid for any linear
action on a hermitian vector space, not just this specific case of
quivers.

\begin{theorem}\label{thm:intro-convergence}
Let $V$ be a hermitian vector space and suppose that a compact connected Lie group $G$ acts linearly on $V$ via an injective homomorphism \(G \to U(V).\) Let \(\Phi: V \to \g^{*} \cong \g\) denote a moment map for this action. For \(\alpha \in \g,\) define \(f := \|\Phi - \alpha\|^{2}: V \to \R\) and denote by $\gamma(x,t)$ the negative gradient flow on $V$ with respect to $f$. Then for any initial condition \(x_{0} \in V,\) the gradient flow \(\gamma(x_{0}, t): \R_{\geq 0} \to V\) exists for all \(t \in \R_{\geq 0}\), and converges to a critical point $x_{\infty}$ of $f$. 
\end{theorem}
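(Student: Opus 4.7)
I would prove the theorem in three stages: global existence of the flow; boundedness of the trajectory; and convergence to a critical point via the \L ojasiewicz gradient inequality. The main obstacle will be boundedness.

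\textbf{Global existence.} Since $G \hookrightarrow U(V)$ acts linearly and unitarily, the K\"ahler identities applied to the moment map yield an explicit formula
\[
\grad f(x) \;=\; 2\, I \cdot \mathcal{X}_{\Phi(x)-\alpha}(x) \;=\; 2i\,(\Phi(x)-\alpha)\cdot x,
\]
where $I$ denotes the complex structure on $V$ and $\Phi(x)-\alpha \in \g \subset \u(V)$ acts on $V$ as a (bounded) skew-Hermitian operator. The operator-norm estimate from the embedding $\g \hookrightarrow \u(V)$ together with the fact that $f$ is non-increasing along its negative gradient flow (so $\|\Phi(\gamma(x_0,t))-\alpha\| \le \sqrt{f(x_0)}$) gives a pointwise bound $\|\grad f(x(t))\| \le C(f(x_0))\,\|x(t)\|$. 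A Gr\"onwall argument on any interval of existence then rules out finite-time blow-up and produces $\gamma(x_0,t)$ for all $t \ge 0$.

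\textbf{Boundedness of the trajectory (the main obstacle).} The key step is to show that $\{\gamma(x_0,t) : t \ge 0\}$ is precompact in $V$. The gradient flow is tangent to the complexified orbit: one can write $\gamma(x_0,t) = g(t) \cdot x_0$ for a path $g(t) \in G_{\C}$ with $g(0)=e$ and $\dot g\, g^{-1} \in i\g$ controlled by $\Phi(\gamma(x_0,t))-\alpha$. Suppose for contradiction $\|\gamma(x_0,t_n)\| \to \infty$ along some sequence $t_n \to \infty$. By compactness of the unit sphere, the rescalings $y_n := \gamma(x_0,t_n)/\|\gamma(x_0,t_n)\|$ have a subsequential limit $y_\infty$, and the degree-two homogeneity of $\Phi$ combined with the bound $\|\Phi(\gamma(x_0,t_n))\| \le \sqrt{f(x_0)} + \|\alpha\|$ forces $\Phi(y_\infty)=0$. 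Such a direction of escape corresponds to a one-parameter subgroup in $G_{\C}$ along which a Kempf--Ness/Hilbert--Mumford analysis of the linear $G_{\C}$-action on $V$ yields behaviour of $\|\Phi - \alpha\|^2$ that is incompatible with the energy identity
\[
\int_0^\infty \|\grad f(\gamma(x_0,s))\|^2\,ds \;=\; f(x_0) - \lim_{t\to\infty} f(\gamma(x_0,t)) \;<\; \infty.
\]
Making this contradiction quantitative is the technical heart of the argument, and I expect it to be where most of the work lies. Once established, the trajectory is contained in a compact subset of $V$.

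\textbf{Convergence.} Given boundedness, Bolzano--Weierstrass together with the energy identity provide a sequence $t_n \to \infty$ with $\gamma(x_0,t_n) \to x_\infty$ and $\grad f(\gamma(x_0,t_n)) \to 0$, so $x_\infty$ is a critical point of $f$. To upgrade subsequential convergence to full convergence, I would invoke the \L ojasiewicz gradient inequality for the real-analytic (in fact polynomial) function $f$ at $x_\infty$: there exist a neighbourhood $U \ni x_\infty$ and constants $C > 0$, $\theta \in (0, 1/2]$, such that
\[
\|\grad f(x)\| \;\ge\; C\,|f(x) - f(x_\infty)|^{1-\theta} \quad \text{for all } x \in U.
\]
Combined with $\tfrac{d}{dt}f(\gamma(x_0,t)) = -\|\grad f\|^2$, a standard integration argument then bounds the arc length $\int_T^\infty \|\dot\gamma(x_0,t)\|\,dt$ for $T$ large enough that the trajectory remains in $U$ thereafter, so $\gamma(x_0,t) \to x_\infty$ in norm.
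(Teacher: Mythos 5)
Your overall architecture (global existence, then boundedness of the trajectory, then subsequential convergence plus a \L ojasiewicz/arc-length argument) matches the paper's, and the first and third stages are fine: the Gr\"onwall argument for global existence is a legitimate alternative to the paper's iteration of local existence, and the \L ojasiewicz step is exactly Lemmas \ref{lemma:lojasiewicz} and \ref{lem:limitexistsalongflow}.

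The genuine gap is in the boundedness step, which you correctly flag as the heart of the matter but do not actually prove. Two specific problems. First, the rescaling argument only yields a unit vector $y_\infty$ with $\Phi(y_\infty)=0$ (for the quadratic part of $\Phi$), and this is not in tension with anything: unstable directions with vanishing moment map are ubiquitous (e.g.\ for $S^1$ acting on $\C^2$ with weights $\pm 1$, every direction $(z,0)$ satisfies this), so no contradiction is yet in sight, and the passage from ``a subsequential limit of rescaled trajectory points'' to ``a one-parameter subgroup of $G_\C$'' to which one could apply Hilbert--Mumford is unjustified. Second, the energy identity $\int_0^\infty \|\grad f\|^2\,dt < \infty$ cannot by itself preclude escape to infinity, since finite energy over an infinite time interval is compatible with infinite arc length ($\|\dot\gamma(t)\| \sim (1+t)^{-1}$, say); arc-length control only becomes available \emph{after} one has a critical point in hand and can run the \L ojasiewicz estimate, which presupposes precompactness. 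The paper closes this gap by a different mechanism: the flow line lies in a single $G_\C$-orbit, $\|\Phi\|$ is bounded along it because $f$ decreases, and Sjamaar's Lemma 4.10 of \cite{Sja98} asserts that for a linear action the set $\{x \in G_\C\cdot x_0 : \|\Phi(x)\|\le C\}$ is bounded. That orbit-wise properness statement is a real theorem (it fails if one drops the restriction to a single orbit), and some substitute for it is needed to make your Step 2 go through.
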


In the case of $\Rep(Q,{\bf v})$, we also obtain explicit
descriptions of the connected components of the critical sets of
$\|\Phi - \alpha\|^2$ in terms
of smaller-rank quiver varieties, 
thus providing us with an
avenue to inductively compute cohomology rings or
Poincar\'e polynomials (Proposition \ref{prop:cohomology-stratum}).

Our next main result (Theorem~\ref{theorem:equivstratification}) is
that the Morse stratification on $\Rep(Q, {\bf v})$ obtained by
Theorem~\ref{thm:intro-convergence} and the Harder-Narasimhan
stratification mentioned above are in fact equivalent (when specified
with respect to the same central parameter $\alpha$). Thus there is a
tight relationship between the algebraic geometry and the Morse
theory; in particular, in any given situation, we may use whichever
viewpoint is more convenient. 

\begin{theorem} 
Let \(Q = ({\mathcal I}, E)\) be a quiver and \({\mathbf v} \in
\Z^{\mathcal I}_{\geq 0}\) a dimension vector. Let $\Rep(Q,
{\mathbf v})$ be the associated representation space and \(\Phi:
\Rep(Q, {\mathbf v}) \to \g^* \cong \g \cong \prod_{\ell \in {\mathcal
I}} \u(V_\ell)\) be the moment map for the standard Hamiltonian action
of \(G = \prod_{\ell \in {\mathcal I}} U(V_\ell)\) on $\Rep(Q, {\mathbf
v})$ and \(\alpha \in (\alpha_\ell)_{\ell \in {\mathcal I}} \in
(i\R)^{\mathcal I}.\)  Then the algebraic stratification of $\Rep(Q, {\bf v})$ by
Harder-Narasimhan type with respect to $\alpha$ 
coincides with the
analytic stratification of $\Rep(Q, {\bf v})$ by the negative 
gradient flow of $f = \| \Phi - \alpha \|^2$. 
\end{theorem}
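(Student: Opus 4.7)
The plan is to match the two stratifications via their common labeling by Harder-Narasimhan types: both the algebraic strata and the connected components of the critical set of $f = \|\Phi - \alpha\|^2$ are naturally indexed by HN types, and I would show that a representation $x \in \Rep(Q,\mathbf{v})$ has HN type $\tau$ if and only if the gradient trajectory $\gamma(x,t)$ (whose convergence is guaranteed by Theorem~\ref{thm:intro-convergence}) has limit lying in the critical component labeled by $\tau$. Since both decompositions of $\Rep(Q,\mathbf{v})$ are disjoint and exhaustive, agreement of labels on a point-by-point basis is enough.

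First I would analyze the critical set of $f$. A standard computation shows that $x$ is critical for $f$ precisely when the infinitesimal action of $\Phi(x)-\alpha$ annihilates $x$. This forces the spectral decomposition of $\Phi(x)-\alpha$ (a self-adjoint element of $\g$) to be a direct-sum decomposition $x = \bigoplus_j x^{(j)}$ of $x$ as a representation of $Q$, where each summand $x^{(j)}$ lives on the $i\mu_j$-eigenspace and is itself a critical point for the shifted moment map condition, i.e.\ is $\alpha^{(j)}$-semistable in the sense of Reineke, with slope determined by $\mu_j$. Collecting these data $(\mathbf{v}_j, \mu_j)$ with $\mu_1 > \mu_2 > \cdots > \mu_k$ gives a natural labeling of the connected components of $\Crit(f)$ by HN types, matching Reineke's indexing of the algebraic strata.

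With that labeling in hand, the comparison reduces to the assertion that the limit point $x_\infty$ of the flow from $x$ has the same HN type as $x$. This is precisely the content of the result advertised in the abstract, namely that $x_\infty$ is isomorphic as a representation of $Q$ to $\mathrm{gr}^{HN\text{-}JH}(x)$, the graded object of the Harder-Narasimhan-Jordan-H\"older filtration of $x$. Since passing to the associated graded of an HN filtration preserves the sequence of dimension vectors and slopes by uniqueness of the HN filtration, $x$ and $x_\infty$ share their HN type. Combined with Step~1, this identifies the Morse stratum through $x$ with the HN stratum through $x$.

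The main obstacle is the identification of $x_\infty$ with $\mathrm{gr}^{HN\text{-}JH}(x)$. This is typically established by a two-step argument adapted from the Atiyah-Bott/Daskalopoulos framework: first, in the semistable case, show that the flow converges into the semistable locus to a polystable representation obtained from a Jordan-H\"older filtration; second, for general $x$, induct on the length of the HN filtration, using that the maximal $\alpha$-destabilizing subrepresentation is essentially preserved along the flow in the sense that the orthogonal complement of its underlying graded subspaces decays. The technical work lies in controlling the extension classes between HN pieces along the flow and showing they are driven to zero in the limit; this is where the non-compactness of $\Rep(Q,\mathbf{v})$ and the fact that $f$ is not proper have to be handled carefully, but the purely linear-algebraic nature of quiver representations (as opposed to holomorphic bundles) makes the relevant subobjects easy to track in coordinates.
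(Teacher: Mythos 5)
Your Step 1 (labeling the components of $\Crit(f)$ by HN type via the eigenspace decomposition of $i(\Phi(x)-\alpha)$) is correct and matches Lemma \ref{lem:eigenvaluesdetermined}, Proposition \ref{prop:momentmapdetermined} and Corollary \ref{cor:critical-infimum}. The gap is in how you propose to show that the limit $x_\infty$ has the same HN type as $x$. You reduce this to the statement $x_\infty \cong \Gr^{HNJH}(x)$, i.e.\ to Theorem \ref{thm:convergencetogradedobject}; but in the paper the logical dependence runs the other way: the proof of that theorem (through Proposition \ref{prop:filtrationlimit}) explicitly invokes Theorem \ref{theorem:equivstratification} to know that the HN type is preserved in the limit. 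To make your route non-circular you must give an independent proof that the HN type survives the flow, and your sketch of that step --- ``the maximal destabilizing subrepresentation is essentially preserved along the flow'' and ``the extension classes are driven to zero'' --- is precisely the hard content of the theorem, not a routine adaptation. Nothing in your outline rules out the two failure modes that have to be excluded: that the trajectory falls to a critical component with strictly smaller value of $f$, or converges to a component of a different type.

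For contrast, the paper's mechanism is: (i) every $G_\C$-orbit in $\Rep(Q,{\bf v})_{\bf v^*}$ can be moved arbitrarily close to $C_{\bf v^*}$ (Lemma \ref{lem:cangetclose}, via the Kempf--Ness statement on each semistable quotient plus scaling of the extension classes); (ii) the Atiyah--Bott-type lower bound $f \ge f({\bf v^*})$ on the stratum (Lemma \ref{lem:H-N-infimum}), which together with the \L ojasiewicz estimate yields a neighbourhood of $C_{\bf v^*}$ on which the HN stratum lies inside the Morse stratum (Lemma \ref{lem:neighbourhood-subset}); and (iii) propagation to the whole stratum by showing $\Rep(Q,{\bf v})_{\bf v^*}\cap S_{\bf v^*}$ is open and closed in each $G_\C$-orbit, the closedness resting on the distance-decreasing formula $\frac{\partial}{\partial t}\sigma(h(t))\le 0$ of Theorem \ref{thm:distancedecreasing} and Proposition \ref{prop:momentmapclose}. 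The lower bound (ii) and the distance-decreasing estimate are exactly the controls on where the limit can land that your argument lacks; without them (or an equivalent substitute such as a full, independent proof of the Daskalopoulos--Wentworth-type convergence for quivers carried out before the stratification comparison), the proof is incomplete.
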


Our third main theorem (Theorem~\ref{thm:convergencetogradedobject})
is an algebraic description of the limit of the negative gradient flow
with respect to $\|\Phi - \alpha\|^{2}$. There is a refinement of the
Harder-Narasimhan filtration of a representation of a quiver called the {\em
  Harder-Narasimhan-Jordan-H\"older (H-N-J-H) filtration}. We show that the limit of the flow is
isomorphic to the associated graded object of this H-N-J-H filtration
of the initial condition. This is a quiver analogue of results for
holomorphic bundles (see \cite{Das92} and \cite{DasWen04}) and Higgs bundles (see
\cite{Wil06}).

\begin{theorem}\label{theorem:intro-graded}
Let \(Q = ({\mathcal I}, E), {\bf v} \in \Z^{\mathcal I}_{\geq 0},
\Rep(Q, {\bf v}),\) and $\alpha$ be as above. 
Let $A_0 \in \Rep(Q, {\bf v})$, and let $A_\infty$ be the limit of $A_0$ under the negative gradient flow of $\| \Phi - \alpha \|^2$. Then $A_\infty$ is isomorphic to the graded object of the H-N-J-H filtration of $A_0$.
\end{theorem}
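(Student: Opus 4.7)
The plan is to combine three ingredients: an algebraic description of the critical points of $f = \|\Phi - \alpha\|^2$, the equivalence of Morse and Harder-Narasimhan stratifications proved in the preceding theorem, and an induction on the length of the H-N filtration.

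First, I would analyze the structure of a critical point $A$ of $f$. The gradient of $f$ at $A$ is (up to the complex structure) the infinitesimal action of $\Phi(A) - \alpha$, so at a critical point the element $\Phi(A) - \alpha$ lies in the Lie-algebra stabilizer of $A$. Consequently each $V_\ell$ decomposes as an orthogonal direct sum of eigenspaces of $i(\Phi(A)-\alpha)_\ell$, and $A$ respects this decomposition, giving a canonical splitting $A = \bigoplus_j A^{(j)}$ into subrepresentations. Each summand $A^{(j)}$ satisfies a shifted moment-map equation $\Phi(A^{(j)}) = \alpha + \lambda_j \cdot \mathrm{id}$ at a central value, so it is polystable with respect to the shifted parameter and decomposes further into a direct sum of stable pieces of a single slope. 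Thus $A_\infty$ is canonically a direct sum of $\alpha$-stable subrepresentations.

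Next, the equivalence of stratifications implies that $A_\infty$ has the same Harder-Narasimhan type as $A_0$, pinning down the shifts $\lambda_j$ and the dimension vectors of the stable summands of $A_\infty$ to match those of $\mathrm{gr}(A_0)$. To upgrade this numerical match to an actual isomorphism of representations, I would induct on the length $k$ of the H-N filtration of $A_0$. For the base case, $A_0$ is $\alpha$-semistable and $A_\infty$ is $\alpha$-polystable with the same dimension vector; a Kempf-Ness style argument on the semistable stratum shows that $A_\infty$ represents the unique closed $G_{\C}$-orbit inside $\overline{G_{\C} \cdot A_0}$, which is in turn represented by the graded object of any Jordan-H\"older filtration of $A_0$ (the S-equivalence class), yielding $A_\infty \cong \mathrm{gr}(A_0)$. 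For the inductive step, let $0 \subset B \subset A_0$ be the first H-N sub; one argues that the off-diagonal components of $A_t$ with respect to the subspace structure of $B$ decay to zero along the flow, so that $A_\infty$ splits as $B_\infty \oplus (A_0/B)_\infty$, and the inductive hypothesis applied to $B$ and $A_0 / B$ completes the proof.

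The hardest step, and the principal technical content, is this asymptotic splitting in the inductive case. One must show that the strict slope inequality $\mu_\alpha(B) > \mu_\alpha(A_0/B)$ forces the extension data coupling $B$ to $A_0/B$ inside $A_t$ to decay as $t \to \infty$, while simultaneously ensuring that $B$ and the quotient evolve, in the limit, according to their own gradient flows on smaller representation spaces. This is a quiver analogue of the arguments of Daskalopoulos and Wilkin in the bundle and Higgs-bundle settings; the task is to translate that analytic machinery to the finite-dimensional quiver setting, establishing the required exponential-type decay estimates on the off-diagonal components directly from the slope gap and the structure of the moment-map flow, without appealing to Uhlenbeck-type compactness.
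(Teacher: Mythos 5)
Your first two steps (the eigenspace splitting at critical points with each summand satisfying a shifted moment-map equation, and the matching of H-N types via the equivalence of stratifications) agree with what the paper establishes in Proposition \ref{prop:momentmapdetermined}, Corollary \ref{cor:critical-infimum} and Theorem \ref{theorem:equivstratification}, and your Kempf--Ness/S-equivalence treatment of the semistable base case is a legitimate alternative to the paper's argument. The problem is the inductive step, which you correctly identify as the principal technical content but then leave entirely unproved. The assertion that ``$B$ and the quotient evolve, in the limit, according to their own gradient flows on smaller representation spaces'' is not merely a translation exercise: writing $A_a(t)$ in block upper-triangular form with diagonal blocks $B_a, C_a$ and extension class $\eta_a$, the restriction of $\Phi(A)$ to the sub-block is $\Phi_B(B)+\tfrac{i}{2}\sum_a \eta_a\eta_a^*$, so the induced evolution of $B(t)$ differs from the intrinsic gradient flow of $B$ by the coupling terms at every finite time. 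Even granting decay of $\eta_a(t)$, one must still rule out that this coupling drives the diagonal blocks to a different limit within their $G_\C$-orbit closures, and no decay estimate is supplied. As written, the proof has a hole exactly where the work is.

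It is worth noting that the paper's actual proof circumvents this splitting argument altogether, by a softer mechanism. Since the finite-time flow stays in a single $G_\C$-orbit, for each piece $\tilde A_0$ of the H-N-J-H filtration one gets maps $f_{0,t}=g_{0,t}\circ\tilde\pi_0$ intertwining $A_0$ and $\gamma(A_0,t)$; normalizing to $\tilde f_{0,t}=f_{0,t}/\|f_{0,t}\|$ and passing to a subsequence yields a \emph{nonzero} homomorphism of quiver representations $\tilde f_{0,\infty}$ into $A_\infty$ (Lemma \ref{lem:holomorphiclimit}). Proposition \ref{prop:filtrationlimit} guarantees the slopes match, and the Schur-type Proposition \ref{prop:stableisomorphism} then forces $\tilde f_{0,\infty}$ to be injective, hence an isomorphism onto its image; one quotients and repeats. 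No decay of extension classes is ever needed. If you want to salvage your route, you would need to either prove the exponential decay and the asymptotic decoupling directly (a substantial analytic undertaking), or replace the inductive step with an intertwiner-limit argument of this kind.
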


Our last main result (stated precisely in
Proposition~\ref{prop:local-coord-on-stratum}) is an explicit
construction of local coordinates
near any representation $A$ of a given Harder-Narasimhan type ${\bf v^*}$. This provides a useful tool for local computations in neighborhoods of strata, for example in standard Morse-theoretic arguments that build a manifold by inductively gluing Morse strata. 
Using these local coordinates, we conclude in
Proposition~\ref{prop:kahler-normal-bundle} that the Harder-Narasimhan
strata of $\Rep(Q, {\bf v})$ have well-behaved tubular neighborhoods, which may also be
identified with the disk bundle of their normal bundle in $\Rep(Q,
{\bf v})$; it is also straightforward to compute their codimensions
(Proposition~\ref{prop:codimension}).

Finally, in the last section, we present some applications of
our results. As mentioned at the beginning of the introduction, one of
our main motivations is in the development of equivariant
Morse-theoretic tools to study Nakajima quiver varieties. Accordingly,
in Section~\ref{subsec:hK} we first review the construction of these
varieties, which involves both a real-valued moment map
$\Phi_\R$ and a holomorphic (complex-valued) moment map $\PhiC$. Our
approach is to study the space $\PhiCzero$ (which is typically singular) in terms
of the Morse theory of $\|\Phi_\R - \alpha\|^2$, and in
Section~\ref{subsec:hK} we explain how the results in this
manuscript may be applied to $\PhiCzero$. We also prove that although
the level set $\PhiC^{-1}(0)$ may be singular, the level set near the
critical sets of $\|\PhiR-\alpha\|^2$ can be described locally in
terms of the linearized data $\ker d\PhiC$
(Theorem~\ref{theorem:PhiC-level-linear}). In
Section~\ref{subsec:Kirwan}, we prove a Kirwan surjectivity result in
both rational cohomology and integral topological $K$-theory for
K\"ahler quotients of affine space by a linear group action, and in
particular deduce results for moduli spaces of representations of
quivers. In Section \ref{sec:reineke} we compare our Morse-theoretic formulae for the Poincar\'e polynomials of moduli spaces of representations of a quiver with those of Reineke in \cite{Rei03}, and in Section~\ref{subsec:eqvt-Morse}, we observe that
the Morse theory developed in this manuscript immediately generalizes
to certain equivariant settings, 
which yields as immediate corollaries equivariant Kirwan surjectivity
theorems in both rational cohomology and integral topological
$K$-theory for moduli spaces of representations of quivers; here the
equivariance is with respect to any closed subgroup of $U(\Rep(Q, {\bf
  v}))$ that commutes with the group $G = \prod_{\ell} U(V_{\ell})$.

\medskip
{\bf Acknowledgments.}  
We thank Georgios Daskalopoulos, Lisa Jeffrey, Jonathan Weitsman, and
Richard Wentworth for their encouragement, and Tam\'as Hausel and Hiroshi Konno for their interest in our work. We are grateful to Reyer Sjamaar for pointing out to us his work in \cite{Sja98}.
Both authors thank the hospitality of the Banff Research Centre, which
hosted a Research-in-Teams workshop on our behalf, as well as the
American Institute of Mathematics, where part of this work was
conducted. The first author was supported in part by an NSERC
Discovery Grant, an NSERC University Faculty Award, and an Ontario
Ministry of Research and Innovation Early Researcher Award.

\section{Preliminaries}\label{sec:preliminaries}

In this section, we set up the notation and sign conventions to be
used throughout. We have chosen our
conventions so that our moment map formulae agree with those of
Nakajima in \cite{Nak94}.

\subsection{Lie group actions, moment maps, and
  K\"ahler quotients}\label{subsec:hk-intro}

Let $G$ be a compact connected Lie group.
Let $\g$ denote its Lie algebra and $\g^{*}$ its dual, and let
$\langle \cdot, \cdot \rangle$ denote a fixed $G$-invariant inner product on $\g$. We will always identify $\g^{*}$ with $\g$ using this inner product; by abuse of notation, we also denote by $\langle \cdot, \cdot \rangle$ the natural pairing between $\g$ and its dual.  Suppose that $G$ acts on the left on a manifold $M$. Then
we define the {\em infinitesmal action} 
\(\rho: M \times \g \to TM\) by 
\begin{equation}\label{def:rho}
\rho: (x,u) \in M \times \g \mapsto \ddt (\exp tu) \cdot x \in T_xM, 
\end{equation}
where $\{\exp tu\}$ denotes the $1$-parameter subgroup of $G$
corresponding to \(u \in \g\) and \(g \cdot x\) the group
action. We also denote by $\rho(u)$ the vector field on $M$
generated by $u$, specified by \(\rho(u)(x) := \rho(x,u).\) 
Similarly, for a fixed \(x \in M,\) we denote by $\rho_{x}: \g \to
T_{x}M$ the restriction of the infinitesmal action to the point $x$,
i.e. \(\rho_{x}(u) := \rho(x,u)\) for \(u \in \g.\) If $M$ is a
Riemannian manifold, we use $\rho^{*}$ to denote the adjoint
\(\rho_{x}^{*}: T_{x}M \to \g\) with respect to the Riemannian
metric on $M$ and the fixed inner product on $\g$.

In the special case in which \(M = V\) is a vector space, there is a natural identification between the tangent bundle $TM$ and $V \times V$. In this situation, we denote by \(\delta \rho(u)\) the restriction to $T_{x}V \cong V$ of the derivative of \(\pi_{2} \circ \rho(u): V \to V\) where \(\rho(u): V \to TV\) is as above and \(\pi_{2}: TV \cong V \times V \to V\) is the projection to the second factor; the basepoint \(x \in V\) of \(\delta \rho(u): T_{x}V \cong V \to V\) is understood by context. Similarly let \(\delta \rho(X): \g \to V\) denote the linear map defined by \(\delta \rho(X)(v) := \delta \rho(v)(X).\)

Now let $(M,\omega)$ be a symplectic manifold and suppose $G$ acts
preserving $\omega$. Recall that \(\Phi: M \to \g^*\) is a \emph{moment
map}  for this $G$-action if $\Phi$ is $G$-equivariant with respect to
the given $G$-action on $M$ and the coadjoint action on $\g^*$, and in addition, 
for all \(x \in M, X \in T_xM, u \in \g,\) 
\begin{equation}\label{eq:hamilton}
\langle d\Phi_x(X), u \rangle = - \omega_x(\rho_x(u), X),
\end{equation}
where we have identified the tangent space at \(\Phi(x) \in \g^*\) with
$\g^*$. By the identification \(\g^{*} \cong \g\) using the $G$-invariant inner product, we may also consider $\Phi$ to be a $\g$-valued map; by abuse of notation we also denote by $\Phi$ the $G$-equivariant composition \(\Phi: M \to \g^{*} \stackrel{\cong}{\to} \g.\)  
In particular, by 
differentiating the condition that $\Phi$ is $G$-equivariant, we obtain the relation
\begin{align*}
\linearisation \exp(tu) \Phi(x) \exp(-tu) & = \linearisation \Phi \left(
  \exp(tu) \cdot x \right) \\
\iff \, \,  [u, \Phi(x)] & = d\Phi_x \left( \rho_x(u) \right), 
\end{align*}
for all \(u \in \g, x \in M.\) Therefore
\begin{equation}\label{eq:mom-map-Lie}
\langle [u, \Phi(x)],  v  \rangle_{\g} = - \omega_x(\rho_x(v), \rho_x(u)), 
\end{equation}
for all \(u, v \in \g, x \in M.\) 

If $M$ is additionally K\"ahler, the relationships
between the complex structure $I$, the metric $g$, and the symplectic
form $\omega$ on $M$ are
\begin{equation}\label{eq:Kahler-relations}
\omega_x(X,Y) = g_x(IX, Y) = - g_x(X,IY)
\end{equation}
for all \(x \in M, X, Y \in T_xM.\) 
The above equations imply that the metric $g$ is $I$-invariant. We say a $G$-action on a K\"ahler manifold is \emph{Hamiltonian} if it preserves the K\"ahler structure and is Hamiltonian with respect to $\omega$.

\subsection{Moduli spaces of representations of quivers and quiver varieties}\label{subsec:intro-quivers}

In this section, we recall the construction of the
moduli spaces of representations of quivers. These spaces are constructed from the
combinatorial data of a finite oriented graph $Q$ (a quiver) and a
dimension vector ${\bf v}$ (which specifies the underlying vector
space of the representation), and are K\"ahler quotients of
the affine space of representations $\Rep(Q, {\bf v})$.

We refer the reader to \cite{Nak94} for details on what
follows. Let \(Q = ({\mathcal I}, E)\) be a finite oriented graph with vertices \({\mathcal I}\) and oriented edges \(a \in E,\) where we denote by \(\out(a) \in {\mathcal I}\) the outgoing vertex of the arrow $a$, and by $\inw(a)$ the incoming vertex. Also choose a finite-dimensional hermitian vector space \(V_{\ell}\) for each vertex \(\ell \in {\mathcal I},\) with \(\dim_{\C}(V_{\ell}) = v_{\ell}.\) Assembling this data 
gives us the {\em dimension vector} \({\bf v}  = (v_\ell)_{\ell \in {\mathcal I}} \in (\Z_{\geq 0})^{\mathcal I}.\) 
The {\em space of representations of the quiver $Q$ with dimension vector ${\bf v}$} is 
\begin{equation*}
\Rep(Q,{\mathbf v}) := \bigoplus_{a \in E} \Hom(V_{\out(a)}, V_{\inw(a)}).
\end{equation*}
Here $\Hom(-,-)$ denotes the hermitian vector space of $\C$-linear homomorphisms. 
We also denote by 
\begin{equation}\label{eq:def-VectQv}
\Vect(Q, {\bf v}) := \bigoplus_{\ell \in {\mathcal I}} V_{\ell}
\end{equation}
the underlying vector space of the representation, and let \(\rank(Q,
{\bf v}) := \dim(\Vect(Q, {\bf v}))\) denote its total dimension. 

The notion of a subrepresentation is also straightforward. Let \(A
\in \Rep(Q, {\bf v}), A' \in \Rep(Q, {\bf v'})\) be representations
with corresponding $\{ V_{\ell}\}_{\ell \in {\mathcal I}},
\{V'_{\ell}\}_{\ell \in {\mathcal I}}$ respectively. We say $A'$ is a
{\em subrepresentation} of $A$, denoted \(A' \subseteq A,\) if
\(V'_{\ell} \subseteq V_{\ell}\) for all \(\ell \in {\mathcal I},\)
the $V'_{\ell}$ are invariant under $A$, i.e. \(A_{a}(V'_{\out(a)})
\subseteq V'_{\inw(a)}\) for all \(a \in E,\) and $A'$ is the
restriction of $A$, i.e. \(A'_{a} = A_{a} \vert_{V'_{\out(a)}}\) for
all \(a \in E\). For each \(\ell \in {\mathcal I},\) let $U(V_{\ell})$ denote the unitary group associated to $V_{\ell}$. 
The group \(G = \prod_{\ell \in {\mathcal I}} U(V_{\ell})\) acts on
$\Rep(Q,{\bf v})$ by conjugation, i.e. 
\[
(g_{\ell})_{\ell \in {\mathcal I}} \cdot (A_a)_{a \in E} = 
\left( g_{\inw(a)}A_ag^*_{\out(a)}\right)_{a \in E}. 
\]
Hence the infinitesmal action of an element \(u = (u_{\ell})_{\ell \in {\mathcal I}} \in
\prod_{\ell \in {\mathcal I}} \u(V_{\ell})\) is given by
\[
\rho(A, u) = \left( u_{\inw(a)}A_a - A_a u_{\out(a)}\right)_{a \in E}. 
\]
Moreover, the K\"ahler form $\omega$ on $\Rep(Q, {\bf v})$ is given as follows: given two tangent vectors $\delta A_{1} = \left( (\delta A_{1})_{a}\right)_{a \in E}, \delta A_{2} = \left( (\delta A_{2})_{a}\right)_{a \in E}$  at a point in $\Rep(Q, {\bf v})$, 
\begin{equation}\label{eq:Kahler-form-onRepQv}
\omega(\delta A_{1}, \delta A_{2}) = \sum_{a \in E} \Im(\tr(\delta A_{1})_{a}^{*} (\delta A_{2})_{a}),
\end{equation}
where $\Im$ denotes the imaginary part of an element in $\C$.

We now 
explicitly compute the moment map $\Phi$
for the $G$-action on \(\Rep(Q,{\bf v}).\) 
Denote by $\Phi_\ell$ the $\ell$-th component, and  
identify \(\u(V_{\ell}) \cong \u(V_{\ell})^*\) using
the invariant pairing 
\[
\langle u, v \rangle := \tr(u^* v).
\]
With these conventions, the 
natural action of $U(V)$ on $\Hom(V',V)$ and $\Hom(V,V')$ 
given by, for 
\(A \in \Hom(V',V), B \in \Hom(V,V'),\) 
\[
g \cdot A = gA, \quad g \cdot B = Bg^*
\]
(where the right hand side of the equations is ordinary matrix
multiplication), has moment maps 
\[
\Phi(A) =  \frac{i}{2} AA^* \in \u(V), \quad 
\Phi(B) = - \frac{i}{2} B^*B \in \u(V)
\]
respectively. We conclude that for \(A = 
(A_a)_{a \in E} \in
\Rep(Q,{\bf v}),\) 
\[
\Phi_{\ell}(A) = \frac{1}{2}i \left(  \sum_{a: \inw(a)=\ell} 
A_a A_a^* + 
\sum_{a': \out(a')=\ell}  - A_{a'}^* A_{a'}  \right).
\]
Hence we have  
\begin{equation}\label{eq:formula-Kahler-mom-map-RepQv}
\Phi(A) = \frac{1}{2} i \sum_{a \in E} [A_{a}, A_{a}^{*}]
\end{equation}
where $A_{a}A_{a}^{*}$ is understood to be valued in $\u(V_{\inw(a)})$ and $A_{a}^{*}A_{a}$ in $\u(V_{\out(a)})$. Henceforth we often simplify the notation further and write~\eqref{eq:formula-Kahler-mom-map-RepQv} as
\begin{equation}\label{eq:simplified-Kahler-mom-map-eq}
\Phi(A) = \frac{1}{2} i [ A, A^{*}],
\end{equation}
where the summation over the arrows \(a \in E\) is understood.

\begin{remark}\label{remark:doublevertices} 
There is a central \(S^1
\subseteq G = \prod_{\ell \in {\mathcal I}} U(V_{\ell})\), given by
the diagonal embedding into the scalar matrices in each $U(V_{\ell})$,
which acts trivially on $\Rep(Q,{\bf v})$. For the purposes of
taking quotients, it is sometimes more convenient to consider the action of
the quotient group \(PG := G/S^1,\) which then acts
effectively on $\Rep(Q, {\mathbf v})$. From this point of view, the moment map naturally takes
values in the subspace \((\pg)^* \subseteq \g^*,\) where the inclusion is induced by the quotient
\(\g \to \pg := \g/\Lie(S^1),\) so we may consider \(\Phi_{\R}\)
as a function from $M$ to \((\pg)^*.\) 
On the other hand, the moment map (being a commutator) 
takes values in the subspace of traceless matrices in \(\g \subseteq \u(\Vect(Q, {\mathbf v})),\) 
the orthogonal complement of $\Lie(S^{1})$ in $\g$, which may be identified 
with $\pg^{*}$. For our Morse-theoretic purposes, 
there is no substantial distinction between $G$ and $PG$, although the difference does become relevant when computing equivariant cohomology. 
\end{remark} 

We now construct the relevant K\"ahler quotients. 
Let \[\alpha = (\alpha_{\ell})_{\ell \in {\mathcal I}} \in (i\R)^{\mathcal I}.\] 
This uniquely specifies an element in the center $Z(\g)$ of $\g$, namely 
\[
(\alpha_{\ell} \id_{V_\ell} )_{\ell \in {\mathcal I}} \in Z(\g),
\] 
which by
abuse of notation we also denote by \(\alpha.\) We will often refer to
such an $\alpha$ as a {\em central parameter}. We always assume that \(\tr(\alpha) = 0,\) i.e. \(\alpha \in \pg^{*}\) (see Remark~\ref{remark:doublevertices}). Then the Hamiltonian quotient is 
\begin{equation}\label{eq:def-moduli-space-of-rep}
X_{\alpha}(Q, {\mathbf v}) := \Phi^{-1}(\alpha)/G .
\end{equation}  
We also call $X_\alpha(Q, {\bf v})$ the {\em representation
  variety} of the quiver $Q$ corresponding to $\alpha$. See \cite{Kin94} and Section \ref{sec:HN} of this paper for more details about the relationship between $X_\alpha(Q, {\bf v})$ and the GIT quotient of $\Rep(Q, {\bf v})$.

\begin{remark}\label{remark:framed-vs-unframed} 

The construction above also applies to the framed representation varieties studied by Nakajima in \cite{Nak94}. The data for these consists of two dimension vectors \({\mathbf v},
  {\mathbf w} \in \Z_{\geq 0}^{\mathcal I}\) specifying hermitian
  vector spaces \(V_\ell, W_\ell\) with \(\dim_{\C}(V_\ell) = v_\ell,
  \dim_{\C}(W_\ell) = w_\ell\) for \(\ell \in {\mathcal I}.\) We
  may define \[ \Rep(Q,{\mathbf v}, {\mathbf w}) := \left(
    \bigoplus_{a \in E} \Hom(V_{\out(a)}, V_{\inw(a)}) \right) \oplus
  \left( \bigoplus_{\ell \in {\mathcal I}} \Hom(V_\ell, W_\ell)
  \right) \] and the associated K\"ahler quotient \(X_\alpha(Q, {\bf
    v}, {\bf w}) := \Phi^{-1}(\alpha)/G\) for the analogous moment map
  $\Phi$ on $\Rep(Q, {\bf v}, {\bf w})$. Hence \(\Rep(Q, {\mathbf
    v})\) and $X_\alpha(Q, {\bf v})$ correspond to the special case when
  \({\mathbf w} = 0\).  In the literature, the case \({\mathbf w} = 0\)
  is often called the ``unframed'' case, while \({\mathbf w} \neq 0\)
  is the ``framed'' case. In \cite[p. 261]{CraBoe01}, Crawley-Boevey points out that a framed representation variety
  $X_{\alpha}(Q, {\mathbf v}, {\mathbf w})$ can also be realized as an
  unframed representation variety for a different quiver, and so for the purposes of this paper it is sufficient to restrict attention to the unframed case.

\end{remark}

\section{Morse theory with $\|\Phi - \alpha\|^2$} \label{sec:gradientflow}

Let $V$ be a hermitian vector space and suppose a compact connected
Lie group $G$ acts linearly on $V$ via an injective homomorphism \(G
\to U(V).\) Let $\Phi$ denote the corresponding moment map. For
$\alpha \in \mathfrak{g}$, let $f := \|\Phi - \alpha\|^2$. Denote
by $\gamma(x,t)$ the negative gradient flow on $V$ with respect to
$\grad(f)$, i.e. $\gamma(x,t)$ satisfies
\begin{equation}\label{eq:gradient-flow-def}
\gamma(x,0) = x, \quad \gamma'(x,t) = - \grad(f)_{\gamma(x,t)}.
\end{equation}
In this section we prove that the gradient flow of $f$ exists for all
time $t$ and converges to a critical point of $f$. This
general situation of a linear action on a vector space contains the
main case of interest in this paper, namely, when $V$ is the space
$\Rep(Q, {\bf v})$.

\begin{theorem}\label{thm:gradflowconvergence}
Let $V$ be a hermitian vector space and suppose that a compact connected Lie group $G$ acts linearly on $V$ via an injective homomorphism \(G \to U(V).\) Let \(\Phi: V \to \g^{*} \cong \g\) denote a moment map for this action. For \(\alpha \in \g,\) define \(f := \|\Phi - \alpha\|^{2}: V \to \R\) and denote by $\gamma(x,t)$ the negative gradient flow on $V$ with respect to $f$. Then for any initial condition \(x_{0} \in V,\) the gradient flow \(\gamma(x_{0}, t): \R \to V\) exists for all time \(t \in \R\) and converges to a critical point $x_{\infty}$ of $f$. 
\end{theorem}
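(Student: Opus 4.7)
The plan is to establish convergence in three stages: short-time existence from the polynomial nature of the gradient vector field, an a priori estimate that gives long-time existence together with a uniform bound on the trajectory, and the \L{}ojasiewicz gradient inequality to upgrade boundedness to convergence. From the moment-map relation $\langle d\Phi_x(Y), u\rangle = -g(I\rho_x(u), Y)$ and the chain rule, I would first record that
\[
-\grad f(x) \;=\; 2\,I\rho_x\bigl(\Phi(x) - \alpha\bigr).
\]
Because the $G$-action on $V$ is linear, after normalizing so that $\Phi(0) = 0$ the map $\Phi$ is homogeneous of degree two in $x$, so $-\grad f$ is a polynomial vector field on $V$ and standard ODE theory produces a unique short-time solution $\gamma(x_0, t)$ of \eqref{eq:gradient-flow-def}.

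Next I would extract an a priori bound on the trajectory. Monotonicity of $f$ along the flow gives $\|\Phi(\gamma(t)) - \alpha\| \leq \sqrt{f(x_0)}$ on the maximal interval of existence. Writing $\rho_x(u) = \delta\rho(u)x$ for the linearized action and using the quadratic moment-map identity $\langle \Phi(x), u\rangle = -\tfrac{1}{2}\langle i\delta\rho(u)x, x\rangle$ (obtained by applying Euler's homogeneity relation to the quadratic $\Phi$), a direct computation yields
\[
\frac{d}{dt}\|\gamma(t)\|^2 \;=\; -8\,\langle \Phi(\gamma(t)),\, \Phi(\gamma(t)) - \alpha\rangle \;=\; -8\|\Phi(\gamma(t))\|^2 + 8\langle \Phi(\gamma(t)),\alpha\rangle,
\]
whose right-hand side is bounded above by $2\|\alpha\|^2$ (by completing the square) and is non-positive as soon as $\|\Phi(\gamma(t))\| \geq \|\alpha\|$. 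In particular the flow cannot blow up in finite time, so $\gamma(x_0,t)$ exists for all $t \geq 0$. Combining this with the observation that $\grad f$ lies in the $iG$-direction (so the trajectory remains inside the closure $\overline{G_\C \cdot x_0}$ of a single complexified orbit), I would then extract a uniform bound $\|\gamma(t)\| \leq R(x_0, \alpha)$ valid for all $t \geq 0$. I expect this uniform a priori bound to be the main technical obstacle; everything before it is essentially a calculation, and everything after it is standard machinery.

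Once the trajectory is confined to a compact ball $\overline{B_R} \subset V$, its $\omega$-limit set $\omega(x_0)$ is a nonempty, compact, connected subset of $\Crit(f)$. The function $f$ is polynomial on $V$, hence real-analytic, so near any critical point $p$ the \L{}ojasiewicz gradient inequality supplies constants $c > 0$, $\theta \in (0, 1/2]$ and a neighborhood $U_p$ on which $\|\grad f(x)\| \geq c\,|f(x) - f(p)|^{1-\theta}$. Inside $U_p$, the standard calculation
\[
-\frac{d}{dt}\bigl(f(\gamma(t)) - f(p)\bigr)^{\theta} \;=\; \theta\bigl(f(\gamma(t)) - f(p)\bigr)^{\theta - 1}\|\grad f(\gamma(t))\|^2 \;\geq\; c\theta\,\|\grad f(\gamma(t))\|
\]
shows that the arc length $\int_0^\infty \|\gamma'(t)\|\,dt = \int_0^\infty \|\grad f(\gamma(t))\|\,dt$ is finite. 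Hence $\gamma(x_0,t)$ is Cauchy as $t \to \infty$ and converges to a unique critical point $x_\infty$ of $f$, completing the argument.
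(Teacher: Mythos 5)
Your outline is the right one, and the final {\L}ojasiewicz/Simon step coincides with the paper's (Lemmas~\ref{lemma:lojasiewicz} and \ref{lem:limitexistsalongflow}). But there is a genuine gap exactly where you flag it: the uniform bound $\|\gamma(x_0,t)\|\le R$ for all $t\ge 0$. Your differential identity $\tfrac{d}{dt}\|\gamma\|^2=-8\langle\Phi(\gamma),\Phi(\gamma)-\alpha\rangle\le 2\|\alpha\|^2$ is correct and gives at most linear growth of $\|\gamma\|^2$, hence no finite-time blow-up; that is in fact a more elementary route to long-time existence than the paper's. It does not, however, yield a bound uniform in $t$: the right-hand side is only guaranteed non-positive where $\|\Phi\|\ge\|\alpha\|$, and the region $\{\|\Phi\|<\|\alpha\|\}$ is typically unbounded (with your normalization $\Phi$ is homogeneous of degree two, so it contains the cone $\Phi^{-1}(0)$), so the trajectory could a priori drift to infinity inside it. Nor does confinement to $\overline{G_\C\cdot x_0}$ help by itself, since orbit closures are unbounded. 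Without the uniform bound the $\omega$-limit set may be empty and the {\L}ojasiewicz machinery never engages.

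The missing input is precisely what the paper imports from \cite[Lemma 4.10]{Sja98}: for a linear action, the set $\{x\in G_\C\cdot x_0 : \|\Phi(x)\|\le C\}$ is bounded. Combined with the two facts you already have in hand --- $\|\Phi(\gamma(t))\|$ is bounded because $f$ decreases along the flow, and $\gamma(x_0,t)$ stays in the complexified orbit because $-\grad f=2I\rho(\Phi-\alpha)$ is tangent to $G_\C\cdot x_0$ --- this properness statement delivers the compact set containing the trajectory, and everything downstream of that point in your argument is fine. So you have assembled exactly the right hypotheses, but the step that converts them into the a priori bound is a nontrivial theorem that must be invoked or proved, not merely anticipated.
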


\begin{remark} 
Although in the construction of $X_{\alpha}(Q,{\bf v})$ we always restrict to {\em
central} parameters \(\alpha \in Z(\g),\) this hypothesis
is unecessary for Theorem~\ref{thm:gradflowconvergence}. We restrict
to $\alpha \in Z(\g)$ to ensure that the function $f := \|\Phi -
\alpha\|^2$ is $G$-invariant and hence the symplectic
quotient $f^{-1}(0)/G$, the study of which is the main motivation of
this paper, makes sense. 
\end{remark}

In the special case when $V = \Rep(Q, {\bf v})$, we also explicitly
describe in Proposition~\ref{prop:momentmapdetermined} the critical
sets of $f$ in terms of lower-rank representation spaces, and we
define a Morse-theoretic stratification of $\Rep(Q, {\mathbf v})$.

\subsection{Convergence of the gradient flow}

First recall that the gradient of the norm-square of any moment map
$\Phi$ with respect to a K\"ahler symplectic structure $\omega$,
compatible metric $g$, and complex structure $I$, is given as
follows. For any \(x \in V, X \in T_x V,\)
\begin{align*}
df_x(X) & = 2 \langle (d\Phi)_{x}(X),  \Phi(x) - \alpha \rangle \\
& = -2 (\omega)_{x}(\rho(x,  \Phi(x) - \alpha), X) \\
& = -2 g_{x}(I \rho(x,  \Phi(x) - \alpha), X) \\
& = g_{x}(-2 I \rho(x, \Phi(x) - \alpha), X).
\end{align*}
Hence the negative gradient vector field for $f = \|\Phi - \alpha\|^2$ is given by 
\begin{equation}\label{eq:negative-gradient}
- \grad(f)(x) = 2I\rho(x, (\Phi(x) - \alpha)).
\end{equation}
The proof of Theorem \ref{thm:gradflowconvergence} is contained in
Lemmas~\ref{lemma:exists-alltime} to~\ref{lemma:lojasiewicz} below. The first step is to show that the negative gradient flow is defined for all time \(t
\in \R.\) Note that since \(G \subseteq U(V)\) acts
complex-linearly on $V$, its complexification $G_{\C} \subseteq GL(V)$
also acts naturally on $V$.

\begin{lemma}\label{lemma:exists-alltime}
Let $V, G, \Phi, \alpha$ and $\gamma(x,t)$ be as in the statement of
Theorem~\ref{thm:gradflowconvergence}. 
Then for any initial condition \(x_{0} \in V,
\gamma(x_{0}, t)\) exists for all \(t \geq 0.\) Moreover, $\gamma(x_0, t)$ is contained in a compact subset of $V$.
\end{lemma}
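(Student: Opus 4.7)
The right-hand side of the ODE, $-\grad(f)(x) = 2I\rho(x, \Phi(x) - \alpha)$, is polynomial of degree at most three in $x$: $\Phi$ is quadratic for a linear $G$-action, and $\rho(\cdot, u)$ is linear. Standard ODE theory (Picard--Lindel\"of) therefore yields a unique smooth solution $\gamma(x_0, t)$ on some maximal interval $[0, T_{\max})$ with $T_{\max} \in (0, \infty]$. Along the flow, $\frac{d}{dt} f(\gamma(t)) = -\|\grad f(\gamma(t))\|^2 \leq 0$, so $f$ is non-increasing and $\|\Phi(\gamma(t)) - \alpha\|$, and hence $\|\Phi(\gamma(t))\|$, are uniformly bounded by constants depending only on $x_0$ and $\alpha$.

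To rule out finite-time blow-up of $\|\gamma(t)\|$ and obtain $T_{\max} = \infty$, I would compute the derivative of $\|\gamma(t)\|^2$ along the flow. Since $\Phi$ is homogeneous of degree two, Euler's identity gives $d\Phi_x(x) = 2\Phi(x)$; combining this with the moment map identity \eqref{eq:hamilton} and the K\"ahler relation $\omega(X, IY) = g(X, Y)$, a direct calculation produces
\[
\frac{d}{dt}\|\gamma(t)\|^2 \;=\; -8\,\bigl\langle \Phi(\gamma(t)),\, \Phi(\gamma(t)) - \alpha \bigr\rangle .
\]
Cauchy--Schwarz together with the uniform bounds on $\|\Phi(\gamma) - \alpha\|$ and $\|\Phi(\gamma)\|$ then yields an absolute constant $C = C(x_0, \alpha)$ with $\bigl|\frac{d}{dt}\|\gamma(t)\|^2\bigr| \leq C$, whence $\|\gamma(t)\|^2 \leq \|x_0\|^2 + Ct$ on $[0, T_{\max})$. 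This precludes blow-up in finite time and, by standard ODE extension, forces $T_{\max} = \infty$.

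For the stronger assertion that the entire trajectory $\{\gamma(x_0, t) : t \geq 0\}$ is contained in a compact subset of $V$, the linear-in-$t$ estimate above is inadequate. The plan is to exploit the observation that $\gamma'(t) = \rho(\gamma(t), 2i(\Phi(\gamma) - \alpha))$ is generated by $2i(\Phi - \alpha) \in i\g \subseteq \g_{\C}$, so the flow is tangent to the $G_{\C}$-orbit through $x_0$. Via the Cartan decomposition $G_{\C} = G \cdot \exp(i\g)$, the function $\xi \mapsto \|e^{i\xi}\cdot x_0\|^2$ on $\g$ is convex (the Kempf--Ness convexity), and an appropriately shifted Kempf--Ness functional (which accounts for the $\alpha$-shift, using that $\alpha$ is central) is monotone along our flow with sublevel sets bounded in $V$; this delivers the required uniform bound $\|\gamma(t)\| \leq R$. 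This convexity analysis on the $G_{\C}$-orbit, in the spirit of \cite{Sja98}, is the main technical obstacle: converting the finite-time bound into genuine pre-compactness requires using the complex-linearity of the $G$-action, and not merely growth bounds on $\grad f$. Once the uniform bound is established, $\{\gamma(x_0, t)\}$ lies in the compact ball $\overline{B_R(0)} \subset V$, completing the proof.
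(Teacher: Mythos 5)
Your argument for all-time existence is correct and takes a genuinely different, more elementary route than the paper: the identity $\tfrac{d}{dt}\|\gamma(t)\|^2 = -8\langle \Phi(\gamma),\Phi(\gamma)-\alpha\rangle$ (which does follow from Euler's identity for the quadratic moment map together with \eqref{eq:hamilton} and \eqref{eq:Kahler-relations}, given the normalization $\Phi(0)=0$) plus the a priori bound on $\|\Phi(\gamma)-\alpha\|$ yields at most linear growth of $\|\gamma(t)\|^2$ and hence rules out finite-time blow-up directly. The paper instead deduces global existence \emph{from} the compactness statement, by showing the trajectory stays in a compact set on which the local existence time is bounded below. Your route has the advantage of decoupling global existence from the harder compactness claim.

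The compactness claim, however, is where your proposal has a genuine gap, and the mechanism you sketch does not work as stated. You propose that a shifted Kempf--Ness functional is monotone along the flow ``with sublevel sets bounded in $V$.'' For $\alpha \neq 0$ the shifted functional has the form $\tfrac{1}{2}\|e^{i\xi}\cdot x_0\|^2 - \langle \alpha,\xi\rangle$ (or, in the integral case, $\tfrac{1}{2}\|A\|^2 + \tfrac{1}{2}\log\|\xi\|^2$ as in the paper's Lemma \ref{lem:KempfNess}), and its sublevel sets are \emph{not} bounded in $V$: the bound $\tfrac{1}{2}\|e^{i\xi}\cdot x_0\|^2 \leq c + \|\alpha\|\,\|\xi\|$ allows the point to escape to infinity as $\xi$ does. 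Worse, for an \emph{unstable} initial condition --- which must be allowed, since the lemma is for arbitrary $x_0$ --- the functional is unbounded below along the flow, so monotonicity gives no control at all. What is actually needed is Sjamaar's Lemma 4.10 of \cite{Sja98}: for any $x_0$, the set $\{x \in G_\C\cdot x_0 : \|\Phi(x)\|\leq C\}$ is bounded in $V$. Combined with the observation (which you do establish) that $\|\Phi(\gamma(t))\|$ is uniformly bounded and that the finite-time flow stays in the $G_\C$-orbit of $x_0$, this immediately gives precompactness of the trajectory; this is exactly how the paper argues. Sjamaar's proof is a genuine convexity argument on the orbit and is not a formal consequence of Kempf--Ness monotonicity, so this step cannot be left as an ``obstacle'' --- it is the substance of the compactness assertion.
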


\begin{proof}
Local existence for ODEs shows that for any \(x_0 \in V\), the gradient
flow $\gamma(x_0, t)$ exists on $(-\varepsilon_{x_0},
\varepsilon_{x_0})$ for some $\varepsilon_{x_{0}} > 0$ that depends
continuously on $x_0$ (see for example Lemmas 1.6.1 and 1.6.2 in
\cite{Jos05}). By construction, the function $f$ is decreasing along the negative gradient flow, therefore $\| \Phi(x) - \alpha \|$ is bounded along the flow
and there exists a constant
$C$ such that $\| \Phi(\gamma(x_0,t)) \| \leq C$. Lemma 4.10 of \cite{Sja98} shows that the set of points 
$$
K_C := \left\{
  x \in G_\C \cdot x_0 \subset V \, : \, \| \Phi(x) \| \leq C
  \right\}$$ 
  is a bounded subset of $V$, so its closure is compact. Equation
  \eqref{eq:negative-gradient} shows that the finite-time gradient
  flow is contained in a $G_\C$-orbit (see for example Section 4 of
  \cite{Kir84}) so we conclude that $\gamma(x_0, t) \in K_C$ for any
  value of $t$ for which $\gamma(x_{0},t)$ is defined. Since the
  closure of $K_C$ is
  compact and $\varepsilon_{x} > 0$ is a continuous function on $V$,
  there exists $\varepsilon > 0$ such that $\varepsilon_{x} \geq
  \varepsilon > 0$ on $\overline{K_C}$. Therefore we can iteratively extend the
  flow so that it exists for all positive time (see for example the proof of
  \cite[Theorem 1.6.2]{Jos05}).
\end{proof}

Next we show that the flow converges along a subsequence to a critical point of $f$. 

\begin{lemma}\label{lem:convergencealongsubsequence}
  Let $V, G, \Phi, \alpha$ and $\gamma(x,t)$ be as in the statement of
  Theorem~\ref{thm:gradflowconvergence}. 
 Then for any initial condition \(x_{0} \in V,\)
  there exists a sequence \(\{t_{n}\}_{n=0}^{\infty}\) with \(\lim_{n
    \to \infty} t_{n} = \infty\) and a critical point \(x_{\infty} \in
  V\) of $f$ such that
\[
\lim_{n \to \infty} \gamma(x_{0}, t_{n}) = x_{\infty}.
\]
\end{lemma}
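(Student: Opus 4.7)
The plan is to proceed in two stages: first extract a convergent subsequence via compactness, then verify that any such limit must be a critical point of $f$ by using the finite-energy estimate coming from monotonicity of $f$ along the flow.

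For the first stage, Lemma~\ref{lemma:exists-alltime} shows that $\gamma(x_0, t)$ lies in the compact set $\overline{K_C}$ for every $t \geq 0$. Hence for any sequence $s_n \to \infty$, the Bolzano--Weierstrass theorem gives a subsequence $t_n$ along which $\gamma(x_0, t_n)$ converges to some $x_\infty \in \overline{K_C}$. So the issue is only to ensure the limit is critical.

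For the second stage, the standard computation gives
\begin{equation*}
\frac{d}{dt} f(\gamma(x_0, t)) = -\| \grad(f)(\gamma(x_0,t)) \|^2 \leq 0,
\end{equation*}
so $f \circ \gamma(x_0, \cdot)$ is nonincreasing and, being bounded below by $0$, converges to some limit $f_\infty \geq 0$. Integrating from $0$ to $\infty$ yields the finite-energy bound
\begin{equation*}
\int_0^\infty \| \grad(f)(\gamma(x_0, t)) \|^2 \, dt \; = \; f(x_0) - f_\infty \; < \; \infty.
\end{equation*}
I would then argue by contradiction. Suppose $\grad(f)(x_\infty) \neq 0$. By continuity, there exists an $\varepsilon$-ball $B_\varepsilon(x_\infty) \subset V$ and a constant $\delta > 0$ with $\|\grad(f)\| \geq \delta$ on $B_\varepsilon(x_\infty)$. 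Also, $\|\grad(f)\|$ is continuous and hence bounded above by some $M$ on the compact set $\overline{K_C}$. Any gradient trajectory beginning in $B_{\varepsilon/2}(x_\infty)$ therefore takes at least time $\tau := \varepsilon/(2M)$ to exit $B_\varepsilon(x_\infty)$. Since $\gamma(x_0, t_n) \to x_\infty$, we may pass to a further subsequence with $\gamma(x_0, t_n) \in B_{\varepsilon/2}(x_\infty)$ and $t_{n+1} > t_n + \tau$, so that the intervals $[t_n, t_n + \tau]$ are disjoint and each contributes at least $\delta^2 \tau > 0$ to the integral above. Summing over $n$ forces $\int_0^\infty \|\grad(f)\|^2 \, dt = \infty$, a contradiction. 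Thus $\grad(f)(x_\infty) = 0$, and $x_\infty$ is a critical point of $f$.

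There is no substantive obstacle here given Lemma~\ref{lemma:exists-alltime}: both subsequential compactness and the finite-energy/no-lingering argument are standard ingredients for gradient flows on compact sets. The genuinely delicate step, which I anticipate is handled in the subsequent Lemma~\ref{lemma:lojasiewicz}, will be upgrading this subsequential convergence to convergence of the full flow $\gamma(x_0, t)$ as $t \to \infty$ (this typically requires a \L{}ojasiewicz-type gradient inequality to rule out orbiting behaviour near the critical set).
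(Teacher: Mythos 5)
Your proof is correct, and its overall shape matches the paper's: both arguments use Lemma~\ref{lemma:exists-alltime} to confine the trajectory to the compact set $\overline{K_C}$ and extract a convergent subsequence, and both use monotonicity of $f$ along the flow. Where you diverge is in how criticality of the limit is established. The paper chooses the subsequence so that $\frac{df}{dt}(\gamma(x_0,t_n)) \to 0$ and then invokes continuity of $\grad f$ to conclude $\|\grad f(x_\infty)\|^2 = \lim_n \|\grad f(\gamma(x_0,t_n))\|^2 = 0$. (As written, the paper's claim that $\frac{df}{dt}(\gamma(x_0,t_n))\to 0$ for a subsequence chosen only so that $\gamma(x_0,t_n)$ converges is slightly imprecise -- boundedness and monotonicity of $f(\gamma(x_0,t_n))$ in $n$ alone do not control the derivative at the times $t_n$; one must first select times at which the derivative is small, which is possible since $\int_0^\infty \|\grad f\|^2\,dt < \infty$, and then pass to a convergent subsequence.) Your argument instead keeps the arbitrary convergent subsequence and derives a contradiction from the finite-energy identity $\int_0^\infty \|\grad f(\gamma(x_0,t))\|^2\,dt = f(x_0) - f_\infty < \infty$ together with the bound $\|\grad f\| \le M$ on $\overline{K_C}$, which forces the flow to linger near any non-critical limit point long enough to accumulate infinite energy. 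This buys you something slightly stronger -- \emph{every} subsequential limit of the flow is critical, not merely some -- and it cleanly sidesteps the order-of-quantifiers issue above; the paper's route is shorter once the subsequence is chosen correctly. Your closing remark is also accurate: the \L{}ojasiewicz inequality of Lemma~\ref{lemma:lojasiewicz} is exactly what the paper uses afterwards to upgrade subsequential convergence to convergence of the full flow.
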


\begin{proof}
Lemma~\ref{lemma:exists-alltime} shows that the negative
gradient flow \(\{\gamma(x_{0}, t)\}_{t \geq 0}\) is contained in a
compact set $\overline{K_C}$. Hence there exists a sequence $\{ t_n \}_{n=0}^{\infty}$
such that $\lim_{n \rightarrow \infty} \gamma(x_0, t_n) = x_\infty$
for some $x_\infty \in \overline{K_C}$. To see that $x_\infty$ is a critical point
of $f$, firstly note that since $f(\gamma(x_0, t_n))$ is bounded below
and nonincreasing as a function of $n$, then
\begin{equation}
\lim_{n \rightarrow \infty} \frac{df}{dt}(\gamma(x_0, t_n)) =  0.
\end{equation}
Moreover, equation \eqref{eq:negative-gradient} shows that the gradient vector field $\grad f$ is continuous on $V$, so 
\begin{multline}
\| \grad f(x_\infty) \|^2 = \lim_{n \rightarrow \infty} \| \grad f(\gamma(x_0, t_n)) \|^2 \\ = \lim_{n \rightarrow \infty} df(\grad f(\gamma(x_0,t_n))) = - \lim_{n \rightarrow \infty} \frac{df}{dt} (\gamma(x_0, t_n)) = 0.
\end{multline}
Therefore $x_\infty$ is a critical point of $f$.
\end{proof}

Even if the negative gradient flow $\gamma(x_{0}, t)$ converges along
a subsequence $\{t_{n}\}_{n=0}^{\infty}$, it is still possible that
the flow $\{\gamma(x_{0}, t)\}_{t \geq 0}$ itself does not
converge; for instance, $\gamma(x_{0}, t)$ may spiral around a
critical point $x_{\infty}$ (cf. \cite[Example 3.1]{Ler04}). The key estimate that shows that the flow does not spiral around the critical set is the following gradient inequality, which shows that the length of the gradient flow curve is finite. In the case at hand $f$ is a polynomial and so the inequality is simple to prove. More generally (when $f$ is analytic) this inequality is originally due to Lojasiewicz in \cite{Loj84}, see also \cite{Ler04} for an exposition of the case when $f$ is the norm-square of a moment map on a finite-dimensional manifold.

\begin{lemma}\label{lemma:lojasiewicz}
Let $(V, G, \Phi, \alpha)$ be as in the statement of Theorem~\ref{thm:gradflowconvergence}. Then 
for every critical point $x_\infty$ of $f$ there exists $\delta > 0$, $C > 0$, and $0 < \theta < 1$ such that
\begin{equation}\label{eqn:lojasiewicz}
\| \grad f(x) \| \geq C \left| f(x) - f(x_\infty) \right|^{1-\theta}
\end{equation} 
for any $x \in V$ with 
$\| x - x_\infty \| < \delta$.
\end{lemma}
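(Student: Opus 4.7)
The plan is to reduce the statement to Lojasiewicz's classical gradient inequality by observing that $f$ is a polynomial on $V$. Since $G$ acts linearly on the hermitian vector space $V$, the moment map $\Phi: V \to \g$ is a (real) quadratic polynomial in the coordinates on $V$; this is visible in the explicit commutator formula \eqref{eq:formula-Kahler-mom-map-RepQv}, and more generally follows from the defining relation \eqref{eq:hamilton} applied to a linear symplectic action. Consequently $f = \|\Phi - \alpha\|^{2}$ is a polynomial of degree at most four in real coordinates on $V$, and in particular is real analytic.

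With this observation in hand, \eqref{eqn:lojasiewicz} is precisely the Lojasiewicz gradient inequality \cite{Loj84} applied at the critical point $x_{\infty}$: for any real analytic function defined in a neighborhood of a point $x_{\infty} \in \R^{N}$, there exist constants $\delta, C > 0$ and $\theta \in (0,1)$ with
\begin{equation*}
\|\grad f(x)\| \;\geq\; C\,|f(x) - f(x_{\infty})|^{1-\theta}
\quad \text{for all } \|x - x_{\infty}\| < \delta.
\end{equation*}
The standard proof proceeds via the curve selection lemma for semi-algebraic sets, and I would follow the streamlined exposition in \cite{Ler04}, which establishes exactly this inequality for $f$ the norm-square of a moment map on a finite-dimensional Hamiltonian manifold. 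The argument there is purely local around $x_{\infty}$, so it transfers without modification to our setting even though $V$ is non-compact.

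The reason the authors flag that the polynomial case is "simple" is that the curve selection lemma, and hence the entire Lojasiewicz machinery, is substantially easier to justify when the defining data are polynomial rather than merely analytic: the relevant sublevel sets, critical loci, and graphs are algebraic, so one may appeal directly to the Tarski--Seidenberg theorem rather than to local power series manipulations. There is in fact no substantive obstacle to overcome; the entire content of the lemma is the citation of Lojasiewicz's inequality, and the only point requiring care is verifying that the statement one quotes is local (which it is), so that the noncompactness of $V$ and the possible presence of other critical points nearby are irrelevant.
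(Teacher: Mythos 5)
Your proposal is correct and takes essentially the same approach as the paper: the paper gives no written-out proof of this lemma, but in the surrounding text it makes exactly your observation that $f=\|\Phi-\alpha\|^{2}$ is a polynomial (hence real analytic) and invokes the Lojasiewicz gradient inequality, citing \cite{Loj84} together with the exposition in \cite{Ler04} for the moment-map case. Your extra remarks on why the polynomial case is elementary (algebraicity of the relevant sets, Tarski--Seidenberg) simply make explicit the paper's comment that the inequality is ``simple to prove'' when $f$ is polynomial.
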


A standard procedure (see for example \cite{Simon83}) then shows that the gradient flow converges.

\begin{lemma}\label{lem:limitexistsalongflow}
  Let $(V, G, \Phi, \alpha)$ be as in the statement of Theorem~\ref{thm:gradflowconvergence}, and let $\gamma(x, t): V \times \R \to V$ denote the negative gradient flow of \(f = \|\Phi - \alpha\|^{2}.\) Then for any initial condition \(x_{0} \in V,\) there exists a critical point \(x_{\infty} \in V\) of $f$ such that
$$\lim_{t \rightarrow \infty} \gamma(x_0, t) = x_\infty.$$
\end{lemma}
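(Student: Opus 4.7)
The plan is to upgrade the subsequential convergence from Lemma~\ref{lem:convergencealongsubsequence} to full convergence by using the Lojasiewicz inequality of Lemma~\ref{lemma:lojasiewicz} to show that the arc length $\int_{0}^{\infty} \|\gamma'(x_0, t)\| \, dt$ of the flow curve is finite. Once arc length is finite, the subsequential limit $x_\infty$ supplied by Lemma~\ref{lem:convergencealongsubsequence} must be the full limit of $\gamma(x_0, t)$ as $t \to \infty$.

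First, let $x_\infty$ be a subsequential limit from Lemma~\ref{lem:convergencealongsubsequence}, with $\gamma(x_0, t_n) \to x_\infty$. Since $f$ is non-increasing along the flow and bounded below by $f(x_\infty)$, we have $f(\gamma(x_0, t)) \downarrow f(x_\infty)$ as $t \to \infty$. Choose $\delta, C, \theta$ as in Lemma~\ref{lemma:lojasiewicz} for $x_\infty$, and consider the auxiliary function
\begin{equation*}
h(t) := \bigl( f(\gamma(x_0, t)) - f(x_\infty) \bigr)^{\theta}.
\end{equation*}
On any interval where $f(\gamma(x_0,t)) > f(x_\infty)$ and $\|\gamma(x_0,t) - x_\infty\| < \delta$, the chain rule together with $\frac{df}{dt} = -\|\grad f\|^2$ and the Lojasiewicz inequality gives
\begin{equation*}
-\frac{d h}{d t} = \theta\, (f - f(x_\infty))^{\theta-1} \|\grad f\|^{2} \geq \theta C \, \|\grad f\|,
\end{equation*}
so $\|\gamma'(x_0,t)\| = \|\grad f\| \leq -\frac{1}{\theta C}\frac{dh}{dt}$. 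Integrating bounds the arc length of the flow by $\frac{1}{\theta C}(h(t_1) - h(t_2))$ on any such interval.

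The main obstacle, and the reason this is not immediate, is ensuring that the flow remains in the Lojasiewicz neighborhood $B_\delta(x_\infty)$ so that the estimate can be applied over the entire tail $[T, \infty)$. To handle this, fix $\varepsilon \in (0, \delta/2)$ and, using subsequential convergence plus $h(t_n) \to 0$, choose $n$ large enough that $\|\gamma(x_0, t_n) - x_\infty\| < \varepsilon$ and $\frac{1}{\theta C} h(t_n) < \varepsilon$. Define
\begin{equation*}
T := \sup\{ t \geq t_n : \|\gamma(x_0, s) - x_\infty\| < \delta \text{ for all } s \in [t_n, t] \}.
\end{equation*}
On $[t_n, T)$ the Lojasiewicz estimate applies and yields
\begin{equation*}
\|\gamma(x_0, t) - x_\infty\| \leq \|\gamma(x_0, t_n) - x_\infty\| + \int_{t_n}^{t} \|\gamma'(x_0, s)\|\, ds \leq \varepsilon + \tfrac{1}{\theta C} h(t_n) < 2\varepsilon < \delta.
\end{equation*}
Hence $T = \infty$, the flow stays in $B_\delta(x_\infty)$ for all $t \geq t_n$, and the total arc length from $t_n$ to $\infty$ is bounded by $\frac{1}{\theta C} h(t_n) < \varepsilon$. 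If $f(\gamma(x_0,t))$ equals $f(x_\infty)$ at some finite time, the flow is stationary thereafter and the conclusion is trivial.

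Finally, finite arc length implies that $\gamma(x_0, t)$ is Cauchy as $t \to \infty$ and hence has a limit in $V$; this limit must equal the subsequential limit $x_\infty$, which by Lemma~\ref{lem:convergencealongsubsequence} is a critical point of $f$. This completes the proof of Theorem~\ref{thm:gradflowconvergence}.
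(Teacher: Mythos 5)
Your proof is correct and follows the same route the paper intends: Lemma~\ref{lem:convergencealongsubsequence} for a subsequential limit, the Lojasiewicz inequality of Lemma~\ref{lemma:lojasiewicz} applied to $h(t) = (f(\gamma(x_0,t))-f(x_\infty))^{\theta}$ to bound arc length, and a continuity/bootstrap argument to keep the tail of the flow inside the Lojasiewicz ball. The paper simply outsources these details to the standard procedure in \cite{Ler04} and \cite{Simon83} (noting that compactness of the trajectory replaces properness of $\|\Phi-\alpha\|^2$), whereas you have written them out explicitly and correctly.
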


\begin{proof}[Proof of Lemma \ref{lem:limitexistsalongflow}]
The idea is
to show that once the gradient flow gets close to a critical point
then it either converges to a nearby critical point or it flows down
to a lower critical point. Lemma \ref{lem:convergencealongsubsequence} shows that the gradient flow converges to a critical point along a subsequence, and so the rest of the proof follows that given in \cite{Ler04}. The proof given in \cite{Ler04} assumes
that $\| \Phi - \alpha \|^2$ is proper;  however we do not need this
condition here, since we have the result of Lemma
\ref{lem:convergencealongsubsequence}.
\end{proof} 

As a consequence of these results we may now prove the main theorem of this section.

\begin{proof}[Proof of Theorem~\ref{thm:gradflowconvergence}]
Lemma~\ref{lemma:exists-alltime} shows that the negative gradient flow $\gamma(x_{0}, t)$ exists for all time \(t \in \R\) and for any initial condition \(x_{0} \in V.\) Lemma~\ref{lem:limitexistsalongflow} shows that $\{ \gamma(x_{0}, t) \}$ converges to a limit point $x_{\infty} \in V$. This limit point $x_{\infty}$ agrees with the limit point of the subsequence of Lemma~\ref{lem:convergencealongsubsequence}, which shows in addition that $x_{\infty}$ is a critical point of $f$. The theorem follows.  
\end{proof}

In the course of the proof of gradient flow convergence we also obtain the following estimate, which is an important part of the proof of Theorem \ref{theorem:equivstratification}.
\begin{lemma}\label{lem:close-convergence}
Let $x_\infty$ be a critical point of $f$. Then for any $\varepsilon > 0$ there exists $\delta > 0$ such that for any $x$ satisfying
\begin{enumerate}

\item $\| x - x_\infty \| < \delta$, so that the inequality \eqref{eqn:lojasiewicz} holds (with respect to the critical point $x_\infty$)

\item $f\left( \lim_{t \rightarrow \infty} \gamma(x,t) \right) = f(x_\infty)$,
\end{enumerate}
then we have
\begin{equation}\label{eqn:critical-distance-estimate}
\left\| x_\infty - \lim_{t \rightarrow \infty} \gamma(x, t) \right\| < \varepsilon .
\end{equation}
\end{lemma}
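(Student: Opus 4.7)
The plan is to use the Lojasiewicz inequality from Lemma~\ref{lemma:lojasiewicz} to bound the total arc length of the gradient trajectory starting near $x_\infty$, then invoke a continuation argument to guarantee that the trajectory never leaves the ball on which that inequality is valid. Throughout, let $\delta_0 > 0$, $C > 0$, and $0 < \theta < 1$ denote the constants produced by Lemma~\ref{lemma:lojasiewicz} at the critical point $x_\infty$, so that \eqref{eqn:lojasiewicz} holds on the ball $B_{\delta_0}(x_\infty)$. Hypothesis (2) together with the fact that $f$ is non-increasing along the negative gradient flow imply that $h(t) := f(\gamma(x,t)) - f(x_\infty) \geq 0$ for all $t \geq 0$ and $h(t) \to 0$ as $t \to \infty$, so the fractional powers of $h$ appearing below are well-defined.

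The core computation is the standard Lojasiewicz arc-length estimate. As long as $\gamma(x,t)$ lies in $B_{\delta_0}(x_\infty)$, the chain rule together with $\gamma'(x,t) = -\grad f(\gamma(x,t))$ gives $h'(t) = -\|\grad f(\gamma(x,t))\|^2$, so by \eqref{eqn:lojasiewicz},
\[
\frac{d}{dt}\, h(t)^{\theta} \;=\; -\theta\, h(t)^{\theta-1}\,\|\grad f(\gamma(x,t))\|^{2} \;\leq\; -\theta C\,\|\gamma'(x,t)\|.
\]
Integrating from $0$ to $T$ and using that $h(T) \geq 0$,
\[
\int_{0}^{T} \|\gamma'(x,t)\|\, dt \;\leq\; \frac{h(0)^{\theta} - h(T)^{\theta}}{\theta C} \;\leq\; \frac{\bigl[f(x) - f(x_\infty)\bigr]^{\theta}}{\theta C},
\]
so the arc length of the trajectory on any interval contained in $B_{\delta_0}(x_\infty)$ is controlled by the right-hand side, which tends to $0$ as $x \to x_\infty$ by continuity of $f$.

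Given $\varepsilon > 0$, I would now choose $\delta \in (0, \delta_0]$ small enough that $\|x - x_\infty\| < \delta$ forces both $\|x - x_\infty\| < \min\{\varepsilon/2,\, \delta_0/2\}$ and $[f(x) - f(x_\infty)]^{\theta}/(\theta C) < \min\{\varepsilon/2,\, \delta_0/2\}$; this is possible by continuity of $f$. A bootstrap argument then shows that the entire trajectory $\gamma(x,t)$ remains in $B_{\delta_0}(x_\infty)$: if $t_0 > 0$ were the first time the flow reached the boundary of this ball, the arc-length bound applied on $[0, t_0]$ would yield $\|\gamma(x, t_0) - x_\infty\| \leq \|x - x_\infty\| + \delta_0/2 < \delta_0$, a contradiction. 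Hence the arc-length bound is valid for all $T \geq 0$, and letting $T \to \infty$ gives $\|\lim_{t \to \infty}\gamma(x,t) - x\| < \varepsilon/2$. Combining this with $\|x - x_\infty\| < \varepsilon/2$ via the triangle inequality produces the desired estimate \eqref{eqn:critical-distance-estimate}.

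The main subtlety, rather than a serious obstacle, is this bootstrap step: the inequality \eqref{eqn:lojasiewicz} is only guaranteed inside $B_{\delta_0}(x_\infty)$, so one must ensure that the flow does not leave this ball before using the arc-length bound itself to prove that very fact. Choosing $\delta$ small relative to both $\delta_0$ and $\varepsilon$ circumvents this circularity.
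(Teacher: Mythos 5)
Your proof is correct and is precisely the standard \L{}ojasiewicz--Simon arc-length argument that the paper itself invokes: the authors omit the proof of this lemma, citing \cite{Simon83} as standard, and your computation (the differential inequality for $h(t)^{\theta}$, the resulting arc-length bound, and the bootstrap keeping the trajectory inside the ball where \eqref{eqn:lojasiewicz} holds) is exactly that argument, with hypothesis (2) correctly used to ensure $h(t)\geq 0$.
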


The proof is standard (cf. \cite{Simon83}), and therefore omitted.

\subsection{Critical points of $\| \Phi - \alpha \|^2$}\label{subsec:crit-pts-description}

In this section we analyze properties of the components of the
critical set $\Crit(\|\Phi - \alpha\|^2)$ in the quiver case. 
Let \(Q = ({\mathcal I}, E)\) be a quiver as in
Section~\ref{subsec:intro-quivers}, and $\Rep(Q,{\mathbf v})$ the
space of representations of $Q$ for a choice of dimension vector
\({\mathbf v}.\) Let \(A = (A_a)_{a \in E} \in \Rep(Q, {\mathbf v})\)
be a representation.  For a given $A$,  we denote by $\beta := \Phi(A) -
\alpha$ the shifted moment map value at $A$ where  
$\alpha$ is a central parameter as in Section~\ref{subsec:intro-quivers}. 
(Although $\beta$ depends on $A$, we suppress it from the notation for
simplicity.)

We first observe that 
\(A \in \Rep(Q,{\mathbf v})\) is a
critical point of \(f = \|\Phi -\alpha\|^2\)
if and only if 
\begin{equation}\label{eq:crit-pt-eqn-quiver} 
\left(  \beta_{\inw(a)}A_a -
A_a \beta_{\out(a)} \right) = 0
\end{equation}
for all \(a \in E.\) 
Secondly, since \(i \beta \in \prod_{\ell \in
{\mathcal I}} i\u(V_{\ell})\) is Hermitian, it can be diagonalized
with purely real eigenvalues. Since the action of
$\g$ on $V_{\ell}$ is by left multiplication, we have an
eigenvalue decomposition 
\[
V_{\ell} = \bigoplus_{\lambda} V_{\ell,\lambda} ,
\]
where the sum is over distinct eigenvalues of $i \beta$. Let
\(V_{\lambda}\) denote the $\lambda$-eigenspace of \(i \beta\) in
\(\Vect(Q, {\mathbf v})\),
and let ${\bf v_\lambda} = (\dim_{\C}(V_{\ell, \lambda}))_{\ell \in
  {\mathcal I}} \in \Z^{\mathcal I}_{\geq 0}$ denote the associated
dimension vector.
Thirdly, \eqref{eq:crit-pt-eqn-quiver} implies that
$A_a$ preserves $V_{\lambda}$ for all eigenvalues $\lambda$ and all edges $a \in E$. In
particular, the restrictions \(A_{\lambda} := A
|_{V_{\lambda}}\) are subrepresentations of $A$.
Hence we get a decomposition of the representation 
\begin{equation}\label{eq:A-decomp-by-beta}
A = \bigoplus_{\lambda} A_{\lambda},
\end{equation}
where again the sum is over distinct eigenvalues of $i\beta$. The following definitions of degree and slope-stability are originally due to King in \cite{Kin94}.

\begin{definition}\label{def:degree-slope}
Let \(Q = ({\mathcal I}, E)\) be a quiver,
with associated hermitian vector spaces \(\{V_{\ell}\}_{\ell \in {\mathcal
I}}\) and dimension vector \({\mathbf v} = (v_{\ell})_{\ell \in
  {\mathcal I}} \in \Z_{\geq 0}^{\mathcal I}.\) 
Let $\alpha$ be a central parameter. 
We define the {\em $\alpha$-degree} of $(Q,{\mathbf v})$ by 
\begin{equation}\label{eq:alpha-degree-def}
\deg_{\alpha}(Q, {\mathbf v}) := \sum_{\ell \in {\mathcal I}} i
\alpha_{\ell} v_{\ell}.
\end{equation}
We also define the {\em rank} of $(Q,{\mathbf v})$ as 
\begin{equation}\label{eq:rank-def} 
\rank(Q,{\mathbf v}) := \sum_{\ell \in {\mathcal I}} v_{\ell} =
\dim_\C(\Vect(Q, {\bf v})) \in \Z. 
\end{equation}
Finally, we define the {\em
$\alpha$-slope} $\mu_\alpha(Q, {\bf v})$
of $(Q,{\mathbf v})$ by 
\begin{equation}\label{eq:alpha-slope-def}
\mu_{\alpha}(Q,{\mathbf v}) := \frac{\deg_{\alpha}(Q,{\mathbf
v})}{\rank(Q,{\mathbf v})}.
\end{equation}
\end{definition}

At a critical point $A$ of \(f = \|\Phi - \alpha\|^{2},\) the
$\alpha$-slope of the subrepresentation $A_{\lambda}$ 
turns out to be related to the eigenvalues $\lambda$ of $i \beta$ on
$V_\lambda$. 

\begin{lemma}\label{lem:eigenvaluesdetermined}
  Let $Q = ({\mathcal I}, E)$ be a quiver with specified dimension
  vector \({\bf v} \in \Z^{\mathcal I}_{\geq 0},\)
  $\Rep(Q, {\mathbf v})$ its associated representation space, and
  \(\Phi: \Rep(Q, {\mathbf v}) \to \g^{*} \cong \g \cong \prod_{\ell
    \in {\mathcal I}} \u(V_{\ell})\) a moment map for the standard
  Hamiltonian action of $G = \prod_{\ell \in {\mathcal I}}
  U(V_{\ell})$ on $\Rep(Q, {\mathbf v})$. Suppose $A$ is a critical
  point of \(f = \|\Phi - \alpha\|^{2},\) and further suppose
  $\lambda$ is an eigenvalue of \(i \beta = i(\Phi(A) - \alpha),\) with
  associated subrepresentation $A_{\lambda}$. Let ${\bf v}_{\lambda}$
  be the dimension vector of $A_\lambda$. Then
\[
\lambda = - \mu_{\alpha}(Q, {\bf v}_{\lambda}).
\]
\end{lemma}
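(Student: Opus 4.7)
The plan is to exploit the fact that $\beta = \Phi(A) - \alpha$ is skew-Hermitian, so that $i\beta$ has real eigenvalues and mutually orthogonal eigenspaces, and then to extract $\lambda$ by computing the trace of $\Phi(A)$ restricted to the $\lambda$-eigenspace $V_\lambda$ of $i\beta$ inside $\Vect(Q,{\mathbf v})$.

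First I would observe that the critical point condition~\eqref{eq:crit-pt-eqn-quiver} says precisely that $\beta$ commutes with $A$ in the quiver sense: $\beta_{\inw(a)}A_a = A_a \beta_{\out(a)}$ for every edge $a \in E$. Consequently each $A_a$ sends $V_{\out(a),\lambda}$ into $V_{\inw(a),\lambda}$, which is the statement that $A_\lambda$ is a subrepresentation. Taking Hermitian adjoints of the commutation relation and using $\beta^* = -\beta$ gives $\beta_{\out(a)}A_a^* = A_a^* \beta_{\inw(a)}$, so $A_a^*$ also preserves the eigenspace decomposition; equivalently, since the $V_{\ell,\lambda}$ are mutually orthogonal in $V_\ell$, the restriction $(A_\lambda)_a := A_a|_{V_{\out(a),\lambda}}$ satisfies $(A_\lambda)_a^* = A_a^*|_{V_{\inw(a),\lambda}}$.

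Next I would compute $\tr\bigl(\Phi(A)|_{V_\lambda}\bigr) := \sum_{\ell \in {\mathcal I}} \tr\bigl(\Phi_\ell(A)|_{V_{\ell,\lambda}}\bigr)$ in two different ways. On the one hand, from the formula~\eqref{eq:formula-Kahler-mom-map-RepQv} and the previous paragraph,
\begin{equation*}
\tr\bigl(\Phi(A)|_{V_\lambda}\bigr)
= \tfrac{i}{2}\sum_{a \in E}\Bigl[\tr\bigl((A_\lambda)_a (A_\lambda)_a^*\bigr) - \tr\bigl((A_\lambda)_a^* (A_\lambda)_a\bigr)\Bigr] = 0
\end{equation*}
by cyclicity of the trace applied to each summand individually. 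On the other hand, $\Phi(A) = \alpha + \beta$, and on $V_\lambda$ the central element $\alpha$ acts on each component $V_{\ell,\lambda}$ as the scalar $\alpha_\ell$, while $i\beta|_{V_\lambda} = \lambda \cdot \id$. Hence
\begin{equation*}
0 = \tr\bigl(\Phi(A)|_{V_\lambda}\bigr) = \sum_{\ell \in {\mathcal I}} \alpha_\ell (v_\lambda)_\ell \; - \; i\lambda\sum_{\ell \in {\mathcal I}} (v_\lambda)_\ell.
\end{equation*}
Multiplying through by $i$ and recognising the two sums as $\deg_\alpha(Q,{\mathbf v}_\lambda)$ (see~\eqref{eq:alpha-degree-def}) and $\rank(Q,{\mathbf v}_\lambda)$ (see~\eqref{eq:rank-def}) respectively, I get $\deg_\alpha(Q,{\mathbf v}_\lambda) = -\lambda\cdot\rank(Q,{\mathbf v}_\lambda)$, which by definition~\eqref{eq:alpha-slope-def} is equivalent to $\lambda = -\mu_\alpha(Q,{\mathbf v}_\lambda)$, as desired.

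The only mildly delicate step is verifying that $A_a^*$ also preserves the eigenspace decomposition, which is what makes the cyclic cancellation on the restrictions work; but this drops out immediately from skew-Hermiticity of $\beta$ together with orthogonality of distinct eigenspaces of the Hermitian operator $i\beta$. Everything else is a direct trace computation, so I expect no further obstacles.
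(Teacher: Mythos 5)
Your proposal is correct and follows essentially the same route as the paper's proof: restrict $\Phi(A)$ to the eigenspace $V_\lambda$, observe that it is a sum of commutators and hence traceless, and take the trace of the eigenvalue equation $i\beta|_{V_\lambda}=\lambda\,\id$ to solve for $\lambda$. The only difference is that you explicitly verify that $A_a^*$ preserves the eigenspace decomposition (a point the paper leaves implicit in writing $i\Phi(A)=\oplus_\lambda i\Phi_\lambda(A_\lambda)$), which is a worthwhile clarification but not a different argument.
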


\begin{proof} 
Equation~\eqref{eq:crit-pt-eqn-quiver} implies that 
$A$ preserves the eigenspace $V_{\lambda}$ for each $\lambda$. Hence for each edge \(a \in E,\) $A_a$ decomposes as a sum \(A_a = \oplus_\lambda A_{a,\lambda}\) according
to~\eqref{eq:A-decomp-by-beta}, and in turn $i \Phi(A)$ may also be
written as a sum 
\[
i \Phi(A) = \bigoplus_{\lambda} i \Phi_{\lambda}(A_{\lambda}) =
\bigoplus_\lambda \sum_{a \in E} - \frac{1}{2}[A_{a,\lambda},
A^*_{a,\lambda}] \in \u(V_{\lambda}),
\]
where each summand in the last expression is considered as an element in
 the appropriate $\u(V_{\ell, \lambda})$ and \(\Phi_{\lambda}:=\Phi
 |_{\Rep(Q,{\mathbf v}_{\lambda})}.\) It is evident
 that \(\trace(\Phi_{\lambda}(A_{\lambda})) = 0\) for each $\lambda$
 since it is a sum of commutators. On the other hand, by definition,
 $V_{\lambda}$ is the $\lambda$-eigenspace of $i\beta =
 i(\Phi(A)-\alpha)$, so we have
\[
i (\Phi(A)-\alpha) |_{V_{\lambda}} = \lambda \id_{V_{\lambda}}.
\]
Taking the trace of the above equation, we obtain
\[
-i\trace( \alpha |_{V_{\lambda}}) = \lambda \rank(V_{\lambda}).
\]
By definition, the $\alpha$-degree of the representation $A_{\lambda}$ is \( \deg_{\alpha}(Q,
{\mathbf v}_{\lambda}) = i\trace( \alpha |_{V_{\lambda}}) \), so
\[
\lambda = - \frac{\deg_{\alpha}(Q, {\mathbf
v}_{\lambda})}{\rank(V_{\lambda})} = - \mu_{\alpha}(Q,{\mathbf
v}_{\lambda}). \qedhere
\]
\end{proof}

In fact, the converse also holds.

\begin{proposition}\label{prop:momentmapdetermined}
  Let $Q = ({\mathcal I}, E)$, ${\mathbf v} \in \Z^{\mathcal I}_{\geq 0}$, $G = \prod_{\ell \in
    {\mathcal I}} U(V_{\ell})$, and $\Phi: \Rep(Q, {\mathbf v}) \to
  \g^{*} \cong \g$ be as in Lemma~\ref{lem:eigenvaluesdetermined}.
  Then \(A \in \Rep(Q, {\mathbf v})\) is a critical point of \(f =
  \|\Phi - \alpha\|^{2}\) if and only if $A$ splits into orthogonal
  subrepresentations \(A = \oplus_{\lambda} A_{\lambda}\) as
  in~\eqref{eq:A-decomp-by-beta}, where each $A_{\lambda}$ satisfies
\[
i\left(\Phi_{\lambda}(A_{\lambda}) - \alpha |_{V_{\lambda}} \right)  = - \mu_{\alpha}(Q, {\mathbf v}_{\lambda}) \cdot \id_{V_{\lambda}}. 
\]
\end{proposition}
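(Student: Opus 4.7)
The plan is to prove the two implications of the biconditional separately. Both rely on the fact that the critical point equation~\eqref{eq:crit-pt-eqn-quiver} is equivalent to each edge map $A_a$ intertwining the Hermitian operators $\beta_{\out(a)}$ and $\beta_{\inw(a)}$, which in turn is equivalent to $A_a$ preserving the orthogonal eigenspace decomposition of $i\beta = i(\Phi(A) - \alpha)$.

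For the ($\Rightarrow$) direction, suppose $A$ is a critical point of $f$. The discussion preceding the proposition already yields the orthogonal decomposition $A = \oplus_\lambda A_\lambda$ of~\eqref{eq:A-decomp-by-beta}. Since the splitting is orthogonal and preserved by each $A_a$, it is also preserved by each adjoint $A_a^*$; consequently $[A_a, A_a^*] = \oplus_\lambda [A_{a,\lambda}, A_{a,\lambda}^*]$, and so $\Phi(A)|_{V_\lambda} = \Phi_\lambda(A_\lambda)$. Applying Lemma~\ref{lem:eigenvaluesdetermined} to identify the eigenvalue of $i\beta$ on $V_\lambda$ as $-\mu_\alpha(Q, {\bf v}_\lambda)$ then yields the required scalar identity $i(\Phi_\lambda(A_\lambda) - \alpha|_{V_\lambda}) = -\mu_\alpha(Q, {\bf v}_\lambda)\,\id_{V_\lambda}$.

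For the ($\Leftarrow$) direction, assume $A = \oplus_\lambda A_\lambda$ is an orthogonal splitting into subrepresentations satisfying the scalar identity. The same orthogonality argument as above gives $\Phi(A)|_{V_\lambda} = \Phi_\lambda(A_\lambda)$, so the hypothesis translates into
\begin{equation*}
\beta|_{V_\lambda} = \Phi(A)|_{V_\lambda} - \alpha|_{V_\lambda} = i\,\mu_\alpha(Q, {\bf v}_\lambda)\,\id_{V_\lambda},
\end{equation*}
that is, $\beta$ acts as a fixed scalar on each $V_\lambda$. Because each $A_a$ preserves the decomposition, it splits as $A_a = \oplus_\lambda A_{a,\lambda}$ with $A_{a,\lambda} \colon V_{\out(a), \lambda} \to V_{\inw(a), \lambda}$. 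The crucial observation is that on both source and target $\beta$ acts as the \emph{same} scalar $i\mu_\alpha(Q, {\bf v}_\lambda)$ (because both blocks are indexed by the same $\lambda$), so $\beta_{\inw(a)} A_{a,\lambda} - A_{a,\lambda} \beta_{\out(a)} = 0$. Summing over $\lambda$ and $a \in E$ recovers~\eqref{eq:crit-pt-eqn-quiver}, proving that $A$ is a critical point of $f$.

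Both directions are essentially bookkeeping once the preceding lemma is in hand, so I do not anticipate a real obstacle. The only points that require care are verifying that orthogonality of the splitting forces $A_a^*$ to respect it (so that $\Phi$ block-decomposes), and noticing that the critical equation becomes trivial as soon as $\beta$ acts as matching scalars on the source and target eigenspaces indexed by the same $\lambda$.
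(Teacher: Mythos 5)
Your proof is correct and follows essentially the same route as the paper's: the forward direction reads off the scalar identity from Lemma~\ref{lem:eigenvaluesdetermined} via the block-decomposition of $\Phi$, and the reverse direction observes that once $\beta$ acts as a single scalar on each $A$-invariant summand $V_{\lambda}$, the commutators in~\eqref{eq:crit-pt-eqn-quiver} vanish. The only difference is that you spell out two details the paper leaves implicit (that orthogonality forces $A_a^*$ to respect the splitting, and that source and target blocks carry the same scalar), which is a harmless elaboration.
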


\begin{proof}
  The proof of Lemma \ref{lem:eigenvaluesdetermined} shows that if $A$
  is a critical point of $f = \| \Phi - \alpha \|^2$ with associated
  splitting \eqref{eq:A-decomp-by-beta}, 
then for each eigenvalue $\lambda$
  we have $ \Phi_\lambda(A_{\lambda}) = \left. \alpha
  \right|_{V_\lambda} - i \lambda \cdot \id_{V_\lambda}$. In the
  other direction, we see that if \(i\beta |_{V_{\lambda}} =
  i\Phi_{\lambda}(A_{\lambda}) - i\alpha |_{V_{\lambda}}\) is a scalar
  multiple of the identity on each $V_{\lambda}$,
  then~\eqref{eq:crit-pt-eqn-quiver} holds for all \(a \in E\)
since $(Q, A)$ splits as in \eqref{eq:A-decomp-by-beta}. Therefore the negative gradient vector field vanishes and $A$ is a critical point of $\| \Phi - \alpha \|^2$.
\end{proof}

Therefore the $\alpha$-slopes of the subrepresentations in the splitting \(A = \oplus_{\lambda}
A_{\lambda}\) of a critical representation encode crucial information
about the critical point. This leads to the following definition.

\begin{definition}\label{def:criticaltype}
Let $A \in \Rep(Q, {\bf v})$ and let 
\begin{equation}\label{eq:A-splitting}
A = \bigoplus_{s=1}^m A_s, \quad V_\ell = \bigoplus_{s=1}^m V_{\ell, s} 
\end{equation}
be a splitting of $A$ into subrepresentations. 
For each $s, 1 \leq s \leq m$, let ${\bf v}_s$ be the
associated dimension vector 
\begin{equation}
{\bf v}_s = \left(\dim_{\C}(V_{\ell, s})_{\ell \in {\mathcal I}} \right) \in
\Z_{\geq 0}^{\mathcal I}
\end{equation}
of $V_s$. The {\em critical type} associated to the
splitting~\eqref{eq:A-splitting} is defined to be the vector ${\bf v}^* = ({\bf v}_1, \ldots, {\bf v}_m)$, where the subrepresentations are ordered so that  $\mu_{\alpha}(Q, {\mathbf v}_i) \geq \mu_{\alpha}(Q, {\mathbf v}_j)$ for all $i < j$. The \emph{slope vector} associated to ${\bf v^*}$ is the vector $\nu \in \R^{\rank(Q, {\bf v})}$ given by 
$$
\nu = \left( \mu_\alpha(Q, {\bf v}_1), \ldots, \mu_\alpha(Q, {\bf v}_1), \mu_\alpha(Q, {\bf v}_2), \ldots, \mu_\alpha(Q, {\bf v}_2), \ldots, \mu_\alpha(Q, {\bf v}_m), \ldots, \mu_\alpha(Q, {\bf v}_m) \right), 
$$
where there are $\rank(Q, {\bf v}_s)$ terms equal to $\mu_\alpha(Q, {\bf v}_s)$ for each $1 \leq s \leq m$. 
\end{definition}

The following example illustrates this definition.

\begin{example}
Suppose the quiver $Q = ({\mathcal I}, E)$ is the following directed graph: 

\begin{equation*}
\xygraph{
!{<0cm, 0cm>;<1.0cm, 0cm>:<0cm, 1.0cm>::}
!{(0,1.5) }*+{\bullet_1}="a"
!{(0,0) }*+{\bullet_\infty}="b"
"a" :@/^0.3cm/ "b"  "a" :@(l,lu) "a"
"b" :@/^0.3cm/ "a"  "a" :@(r,ru) "a"}
\end{equation*}
where the vertices are ${\mathcal I} = \{1, \infty\}$. Suppose the
dimension vector is ${\bf v} = (2,1) \in \Z^2$, and the central
parameter is chosen to be 
\[
\alpha = \left( \left[ \begin{array}{cc} i\lambda & 0 \\ 0 & i\lambda
    \end{array} \right], -2i\lambda \right) \in i \u(2) \times
i \u(1)
\]
for a positive real number \(\lambda > 0.\) Let \(A \in \Rep(Q, {\bf
  v})\) and suppose that $A$ admits a splitting 
\begin{equation}\label{eq:A-split} 
A = A_1 \oplus A_2
\end{equation}
into subrepresentations where the dimension vectors ${\bf v}_1, {\bf v}_2$ associated to $A_1$ and $A_2$ are 
\begin{equation}\label{eq:A-subdim}
{\bf v}_1 = (1,1), \quad {\bf v}_2 = (1,0),
\end{equation}
respectively. If this corresponds to a critical representation, then the critical type is the vector ${\bf v^*} = \left( (1,1), (1,0) \right)$. The slope of each subrepresentation is given by
\begin{equation}
\deg_\alpha(Q, {\bf v}_1) = i(i\lambda)(1) + i(-2i\lambda)(1) = -\lambda + 2 \lambda = \lambda > 0, 
\end{equation}
and since $\rank(Q, {\bf v}_1) = 2$, we conclude that $\mu_{\alpha}(Q, {\bf v}_1) = \frac{\lambda}{2} > 0$,
while the second subrepresentation has 
\[
\deg_\alpha(Q, {\bf v}_2) = i (i\lambda) (1) = - \lambda < 0, \quad
\rank(Q, {\bf v}_2) = 1.
\]
Therefore $\mu_{\alpha}(Q, {\bf v}_2) = - \lambda < 0$, and  the slope vector $\nu$ associated to ${\bf v^*}$ is 
\[
\nu = \left( \frac{\lambda}{2}, \frac{\lambda}{2}, - \lambda \right)
\in \R^3 = \R^{\rank(Q,{\bf v})}.
\]
\end{example}

  In the special case when \(A \in \Rep(Q, {\mathbf v})\) is a
  critical point of \(f = \|\Phi - \alpha\|^2,\)
  Proposition~\ref{prop:momentmapdetermined} shows that there exists a
  canonical splitting of $A$, given by the eigenspace decomposition
  associated to $i\beta = i(\Phi(A) - \alpha)$. In this case, we say
  that the {\em critical type} of $A$ is the critical type of this
  canonical splitting.

Let $\mathcal{T}$ be the set of all possible critical types for
elements of $\Rep(Q, {\bf v})$. The description of the critical
points of $\| \Phi - \alpha \|^2$ by their critical type, along with the convergence of the
gradient flow proven in Theorem \ref{thm:gradflowconvergence}, leads to the
following Morse stratification of the space $\Rep(Q, {\bf v})$.

\begin{definition} \label{def:Morsestratification}
Let $\gamma(x,t)$ denote the negative gradient flow of $f = \|\Phi -
\alpha\|^2$ on $\Rep(Q, {\mathbf v})$. 
Let $C_{\bf v^*}$ denote the set of critical points of $f$ of critical type ${\bf v^*}$. We define the {\em analytic (or Morse-theoretic) stratum of
  critical type ${\bf v^*}$} to be 
\begin{equation}\label{eq:stratification} 
S_{\bf v^*} := \left\{ A \in \Rep(Q, {\bf v}) \, \mid \, \lim_{t\to \infty} \gamma(A,t) \in
C_{{\bf v}^*}\right\}.
\end{equation}
\end{definition}

By Theorem \ref{thm:gradflowconvergence}, every point limits to some
critical point, so $\displaystyle{\bigcup_{ {\bf v}^* \in \mathcal{T}} S_{ {\bf v}^*}
  = \Rep(Q, {\bf v})}$.  We call this the {\em analytic (or
  Morse-theoretic) stratification} of the space $\Rep(Q, {\mathbf
  v})$.  In
Sections \ref{sec:HN} and \ref{sec:local} we show that this analytic 
stratification has good local properties in the sense of 
\cite[Proposition 1.19]{AtiBot83}. The negative gradient flow also allows us to describe the topology of
the analytic strata $S_{{\bf v}^*}$ in terms of that of the
critical sets.

\begin{proposition}\label{prop:def-retract-to-Cnu}
  Let \(Q = ({\mathcal I}, E), {\mathbf v} \in \Z^{\mathcal I}_{\geq 0}, G = \prod_{\ell \in
    {\mathcal I}} U(V_\ell),\) and \(\Phi: \Rep(Q, {\mathbf v}) \to
  \g^* \cong \g\) be as
  in Lemma~\ref{lem:eigenvaluesdetermined}. Let $\gamma(x,t)$ denote
  the negative gradient flow with respect to \(f = \|\Phi -
  \alpha\|^2,\) and $C_{ {\bf v}^*}$ and $S_{{\bf v}^*}$ the respective critical set and analytic
  stratum of a fixed splitting type ${\bf v}^*$. Then the flow
  $\gamma(x,t)$ restricted to $S_{{\bf v}^*}$ defines a $G$-equivariant
  deformation retraction of $S_{{\bf v}^*}$ onto $C_{{\bf v}^*}$. In particular, 
\[
H^*_G(S_{{\bf v}^*}) \cong H^*_G(C_{{\bf v}^*}).
\]
\end{proposition}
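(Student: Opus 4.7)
The plan is to construct a $G$-equivariant deformation retraction $r: S_{{\bf v}^*} \times [0, 1] \to S_{{\bf v}^*}$ by reparametrizing the negative gradient flow $\gamma$ to run in finite time. Explicitly, I would set
\[
r(A, t) = \gamma(A, \tan(\pi t/2)) \quad \text{for } 0 \leq t < 1, \qquad r(A, 1) = \lim_{s \to \infty} \gamma(A, s).
\]
Theorem \ref{thm:gradflowconvergence} guarantees that the limit at $t=1$ exists, and by the definition of $S_{{\bf v}^*}$ it lies in $C_{{\bf v}^*}$. Since $S_{{\bf v}^*}$ is forward-invariant under $\gamma$ (the asymptotic limit of a point on a trajectory coincides with that of the initial point), $r$ takes values in $S_{{\bf v}^*}$.

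The straightforward verifications are that $r(A, 0) = A$ from the flow equation, $r(A, 1) \in C_{{\bf v}^*}$ by construction, and if $A \in C_{{\bf v}^*}$ then $\grad f$ vanishes at $A$, so $\gamma(A, s) \equiv A$ and therefore $r(A, t) = A$ for all $t$. For $G$-equivariance, centrality of $\alpha$ together with $G$-equivariance of $\Phi$ forces $f = \|\Phi - \alpha\|^2$ to be $G$-invariant, whence $\grad f$, the flow $\gamma$, and its limit are all $G$-equivariant. The isomorphism $H^*_G(S_{{\bf v}^*}) \cong H^*_G(C_{{\bf v}^*})$ then follows from the standard fact that a $G$-equivariant deformation retraction induces an isomorphism on $G$-equivariant cohomology.

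The one nontrivial point, which I expect to be the main obstacle, is continuity of $r$ on $S_{{\bf v}^*} \times [0, 1]$. On $S_{{\bf v}^*} \times [0, 1)$ this is standard continuous dependence of ODE solutions on initial data. The delicate case is continuity at a boundary point $(A_0, 1)$. Let $x_\infty = r(A_0, 1)$ and fix $\varepsilon > 0$. A key observation is that, by Proposition \ref{prop:momentmapdetermined}, the value of $f$ at any critical point of type ${\bf v}^*$ depends only on ${\bf v}^*$; hence for every $A \in S_{{\bf v}^*}$ the hypothesis $f(\lim_{s\to\infty} \gamma(A, s)) = f(x_\infty)$ in Lemma \ref{lem:close-convergence} is automatic. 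Applying that lemma to $x_\infty$ produces a $\delta > 0$ such that $\|A - x_\infty\| < \delta$ implies $\|\lim_{s\to\infty}\gamma(A,s) - x_\infty\| < \varepsilon$. Choosing $T$ so large that $\|\gamma(A_0, T) - x_\infty\| < \delta/2$ and invoking continuous dependence of the flow furnishes a neighborhood $U$ of $A_0$ in $S_{{\bf v}^*}$ on which $\|\gamma(A, T) - x_\infty\| < \delta$. Controlling $\gamma(A, s)$ for \emph{all} $s \geq T$ (not merely its limit) is then handled by the standard Lojasiewicz-inequality argument based on Lemma \ref{lemma:lojasiewicz}: the arc length of the tail $\gamma(A, [T, \infty))$ is bounded by a constant multiple of $(f(\gamma(A, T)) - f(x_\infty))^\theta$, which can be made arbitrarily small by shrinking $U$ and enlarging $T$. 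This keeps the entire tail inside the ball of radius $\varepsilon$ about $x_\infty$ and yields joint continuity of $r$ at $(A_0, 1)$.
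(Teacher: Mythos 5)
Your proof is correct and takes essentially the same route as the paper: the paper's own proof simply invokes Theorem~\ref{thm:gradflowconvergence} together with ``the same argument as in \cite{Ler04}'', which is precisely the reparametrized-flow construction and the Lojasiewicz-based continuity check at $t=1$ that you spell out. Your use of Proposition~\ref{prop:momentmapdetermined} to see that $f$ is constant on $C_{{\bf v}^*}$ (so that the hypothesis of Lemma~\ref{lem:close-convergence} is automatic on $S_{{\bf v}^*}$), combined with Lemma~\ref{lemma:lojasiewicz} to control the tail of the flow, correctly fills in the details the paper leaves to the citation.
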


\begin{proof}
Theorem
  \ref{thm:gradflowconvergence} in this section shows that 
$\gamma(x,t)$ is defined for all $t$ and converges to a critical point
of $f$. The same argument as in \cite{Ler04}
  allows us to conclude that taking the limit of the negative gradient flow
defines a deformation retract $S_{{\bf v}^*} \rightarrow C_{{\bf v}^*}$.
Since the gradient flow equations
  \eqref{eq:negative-gradient} are $G$-equivariant, the flow is
  $G$-equivariant, as is the deformation retract $S_{{\bf v}^*} \to C_{{\bf v}^*}$. 
\end{proof}

In Proposition \ref{prop:cohomology-stratum} we provide an explicit description of $H^*_G(S_{{\bf v}^*})$ in terms of lower-rank representation varieties.

\subsection{Distance decreasing formula along the flow}\label{subsec:distance-decreasing} 

In this section we prove a distance decreasing formula for the
distance between the $G$-orbits of two solutions $A_1(t)$, $A_2(t)$ to
the gradient flow equations \eqref{eq:gradient-flow-def}, whose
initial conditions $A_1(0)$, $A_2(0)$ are related by an element of
$G_\C$ (see Remark \ref{rem:momentmapclosealongflow}). The estimates obtained in the proof of this result are an important part of 
the proof of Theorem \ref{theorem:equivstratification} in the next section. It can
also be viewed as a quiver analogue of the distance decreasing formula for Hermitian metrics on a holomorphic vector bundle
that vary according to the Yang-Mills flow given in \cite{Don85}. Since the space $\Rep(Q, {\bf v})$ is finite-dimensional then it is more convenient to do our analysis directly on the group $G_\C$ 
rather than on the space of metrics as in~\cite{Don85}. 

Throughout, let \(Q = ({\mathcal I}, E)\) be a quiver with specified
dimension vector \({\bf v} \in \Z^{\mathcal I}_{\geq 0}.\) Given $A(0)
\in \Rep(Q, {\bf v})$, let $g = g(t)$ be a curve in $G_\C$ that
satisfies the following flow equation with initial condition:
\begin{align}\label{eqn:groupflow}
\begin{split}
\frac{\partial g}{\partial t} g^{-1} & = i \left( \Phi(g(t) \cdot A(0)) - \alpha \right), \\
g(0) & = \id.
\end{split}
\end{align}
Equation \eqref{eqn:groupflow} is an ODE and $\Phi$ is a polynomial, so solutions exist locally and
are unique. We will show later that they exist for all time.

\begin{remark}\label{rem:groupflow}
By definition, $\frac{\partial g}{\partial t} g^{-1}$ is self-adjoint,
so $(g^*)^{-1} \frac{\partial g^*}{\partial t} = \frac{\partial
g}{\partial t} g^{-1}$ wherever defined. This will be used in the
sequel. 
\end{remark}

\begin{lemma}\label{lem:gradientflowgenerated}
Let \(A(0) \in \Rep(Q, {\bf v})\) as above and $g(t)$ a curve
satisfying~\eqref{eqn:groupflow}. Let $A(t) := g(t) \cdot A(0)$. Then
the curve
$A(t)$ in $\Rep(Q, {\bf v})$ satisfies the negative gradient flow equation
\eqref{eq:negative-gradient}, i.e. 
\begin{equation}
\frac{\partial A}{\partial t} = I \rho_{A} \left(\Phi(A(t)) - \alpha \right).
\end{equation}
\end{lemma}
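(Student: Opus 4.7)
The plan is a direct computation using linearity of the $G$-action and the defining ODE for $g(t)$. First I would differentiate $A(t) = g(t) \cdot A(0)$ in $t$. Since the action of $G \subseteq U(V)$, and hence of $G_{\C} \subseteq \GL(V)$, is linear on the vector space $V = \Rep(Q,\mathbf{v})$, the map $A_0 \mapsto g \cdot A_0$ is literally matrix multiplication, so one may commute $\partial/\partial t$ with the action and obtain
\begin{equation*}
\frac{\partial A}{\partial t} = \dot{g}(t) \cdot A(0) = \bigl(\dot{g}(t) g(t)^{-1}\bigr) g(t) \cdot A(0) = \bigl(\dot{g}(t) g(t)^{-1}\bigr) \cdot A(t),
\end{equation*}
where the final action is the infinitesimal action at $A(t)$ of the Lie-algebra-valued element $\dot{g}g^{-1} \in \g_{\C}$.

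Next I would substitute the ODE \eqref{eqn:groupflow}, which gives $\dot{g}g^{-1} = i(\Phi(A(t)) - \alpha) \in i\g \subseteq \g_{\C}$. So the computation reduces to identifying the infinitesimal action $\rho_{A}^{\C}$ of the purely imaginary element $i\xi$ (for $\xi := \Phi(A(t)) - \alpha \in \g$) with $I\rho_{A}(\xi)$. This is essentially the statement that the $G_{\C}$-action is the complexification of the $G$-action and that multiplication by $i$ in $\g_{\C}$ corresponds, under $\rho$, to the complex structure $I$ on $V$. Concretely, for each arrow $a \in E$ one has
\begin{equation*}
\rho_A(i\xi)_a = (i\xi)_{\inw(a)} A_a - A_a (i\xi)_{\out(a)} = i\bigl(\xi_{\inw(a)} A_a - A_a \xi_{\out(a)}\bigr) = I\rho_A(\xi)_a,
\end{equation*}
since the complex structure $I$ on $\Hom(V_{\out(a)}, V_{\inw(a)})$ is precisely multiplication by $i$.

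Combining these two steps yields
\begin{equation*}
\frac{\partial A}{\partial t} = \rho_{A(t)}\bigl(i(\Phi(A(t)) - \alpha)\bigr) = I\rho_{A(t)}\bigl(\Phi(A(t)) - \alpha\bigr),
\end{equation*}
which is exactly the negative gradient flow equation for $f = \|\Phi - \alpha\|^{2}$ as in \eqref{eq:negative-gradient}. There is no serious obstacle here: the only content is keeping careful track of how the complex structure on the representation space $V$ arises from the $i$-action on $\g_{\C}$, and invoking the linearity of the $G$-action to commute the time derivative past $g(t)$. Local existence for $g(t)$ is guaranteed by standard ODE theory since $\Phi$ is polynomial, and global existence will be deduced subsequently in the paper via the convergence results of Section~\ref{sec:gradientflow}.
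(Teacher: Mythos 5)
Your proof is correct and follows essentially the same route as the paper's: differentiate the conjugation action to obtain $\partial A_a/\partial t = [\dot g g^{-1}, A_a]$, substitute the ODE \eqref{eqn:groupflow}, and identify $\rho_A(i\xi)$ with $I\rho_A(\xi)$ arrow by arrow. The only cosmetic point is that the intermediate expression $\dot g(t)\cdot A(0)$ is loose notation (since $\dot g$ is not a group element and the action is by conjugation), but your subsequent commutator computation makes the intended meaning precise and matches the paper exactly.
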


\begin{proof}
Differentiating each component of $g(t) \cdot A(0)$ gives us
\begin{equation}
\frac{\partial A_a}{\partial t} = \frac{\partial}{\partial t} \left( g(t) A_a(0) g(t)^{-1} \right) = \frac{\partial g}{\partial t} A_a(0) g(t)^{-1} - g(t) A_a(0) g(t)^{-1} \frac{\partial g}{\partial t} g(t)^{-1}.
\end{equation}
This can be rewritten as
\begin{align*}
\frac{\partial A_a}{\partial t} & = \left[ \frac{\partial g}{\partial t} g(t)^{-1}, A_a(t) \right] \\
 & = \left[ i \left( \Phi(A) - \alpha \right) , A_a(t) \right] \\
 & = I \rho_A \left(\Phi(A) - \alpha \right). \qedhere
\end{align*}
\end{proof}

We now define a function which measures the distance
between a positive self-adjoint matrix and the identity matrix (cf. \cite{Don85}). This
is the key ingredient allowing us to analyze the distance between
$G$-orbits of different negative gradient flows in
Section~\ref{sec:HN}. 
For a positive self-adjoint $h \in G_\C$, let
\begin{equation}\label{eq:def-sigma}
\sigma(h) := \tr h + \tr h^{-1} - 2 \rank(Q, {\bf v}).
\end{equation}
Note that $\sigma(h) \geq 0$ for all $h$, with equality if and only if
$h = \id$. 

It is convenient to also define a shifted version of the moment map
$\Phi$. 

\begin{definition}
Let $h \in G_\C$ be self-adjoint. Then we define for \(A \in \Rep(Q,
{\bf v})\) 
\begin{equation}
\Phi_h (A) := i \sum_{a \in E} \left[ A_a, h A_a^* h^{-1} \right].
\end{equation}
\end{definition}

In the special case when $h = g^{-1} (g^*)^{-1}$ for some $g \in
G_\C$, a computation shows that 
\begin{equation}\label{eq:relatemetricmomentmap}
g \Phi_h(A) g^{-1} = \Phi(g \cdot A)
\end{equation}
(this is analogous to the well-known formula for the change in the curvature of the unitary connection associated to a holomorphic structure on a Hermitian vector under a change in the Hermitian metric, see for example \cite{Don85}).

Now let $A_1(0)$ and $A_2(0)$ be two initial conditions related by
some $g_0 \in G_\C$, i.e.  $A_2(0) = g_0 \cdot A_1(0)$. Let $g_1(t)$ and $g_2(t)$ be
the corresponding solutions to equation \eqref{eqn:groupflow}. Then 
the corresponding solutions to the gradient flow equations are 
$A_1(t) = g_1(t) \cdot A_1(0)$ and $A_2(t) = g_2(t) \cdot A_2(0) =
g_2(t) \cdot g_0 \cdot A_1(0)$. Let $\overline{g}(t) = g_2(t) \cdot g_0 \cdot
g_1(t)^{-1}$ be the element of $G_\C$ that connects the two flows,
i.e. $\overline{g}(t) \cdot A_1(t) = A_2(t)$, and let 
\begin{equation}\label{eqn:metric-difference}
h(t) := \overline{g}(t)^{-1} \left( \overline{g}(t)^* \right)^{-1} .
\end{equation} 

The following is the main result
of this section; it states that the discrepancy between $h(t)$ and the
identity matrix, measured by the function $\sigma$
in~\eqref{eq:def-sigma}, is non-increasing along the flow. 

\begin{theorem}\label{thm:distancedecreasing}
Let $h(t)$ be as in \eqref{eqn:metric-difference}. Then 
$\frac{\partial }{\partial t} \sigma(h(t)) \leq 0$.
\end{theorem}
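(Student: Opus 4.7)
The plan is a direct differentiation: compute $\frac{d\sigma}{dt}$, use the flow equations to reduce it to a single trace involving the ``metric-shifted'' moment map $\Phi_h(A_1)$, and then establish non-positivity in a simultaneous eigenbasis.

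First I would use \eqref{eqn:groupflow} together with $\overline{g}(t) = g_2(t) g_0 g_1(t)^{-1}$ to compute
\begin{equation*}
\overline{g}'(t) = i(\Phi(A_2(t)) - \alpha)\,\overline{g}(t) - \overline{g}(t)\cdot i(\Phi(A_1(t)) - \alpha),
\end{equation*}
together with the conjugate formula for $(\overline{g}^*)'$ obtained via Remark~\ref{rem:groupflow}. Since $\tr h^{-1} = \tr(\overline{g}^*\overline{g})$ and $\tr h = \tr(\overline{g}\overline{g}^*)^{-1}$, I would differentiate both traces in turn. Two simplifications are essential: (i) the centrality of $\alpha$, which gives $\overline{g}^*\alpha\overline{g} = \alpha h^{-1}$; and (ii) the identity \eqref{eq:relatemetricmomentmap}, rearranged as $\overline{g}^*\Phi(A_2)\overline{g} = h^{-1}\Phi_h(A_1)$. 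After these substitutions and repeated use of cyclicity of trace, the cross-terms involving $\Phi(A_2)$ all convert into expressions involving $\Phi_h(A_1)$, and the two contributions combine into the key identity
\begin{equation*}
\frac{d}{dt}\sigma(h(t)) = -2i\,\tr\bigl((h - h^{-1})(\Phi_h(A_1) - \Phi(A_1))\bigr).
\end{equation*}

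Second, expanding the right-hand side via the definitions of $\Phi_h$ and $\Phi$ reduces the problem to an arrow-by-arrow computation of traces of the form $\tr\bigl((h-h^{-1})[A_{1,a},\, hA_{1,a}^*h^{-1} - A_{1,a}^*]\bigr)$. For each fixed $a \in E$, I would choose orthonormal eigenbases $\{e_k\}$ and $\{f_l\}$ of $h_{\inw(a)}$ and $h_{\out(a)}$ with eigenvalues $\lambda_k, \mu_l > 0$, and write $A_{1,a} f_l = \sum_k c_{kl} e_k$. The six resulting traces all diagonalize in this basis, and collecting the coefficient of $|c_{kl}|^2$ produces the elementary expression
\begin{equation*}
F(\lambda_k, \mu_l) := 2\mu_l + 2\lambda_k^{-1} - \lambda_k^{-2}\mu_l - \lambda_k^{-1}\mu_l^2 - \lambda_k - \mu_l^{-1}.
\end{equation*}
The identity $\lambda_k^2\mu_l\, F(\lambda_k, \mu_l) = -(\lambda_k - \mu_l)^2(\lambda_k\mu_l + 1)$, verified by clearing denominators, shows $F \leq 0$ for positive eigenvalues, with equality iff $\lambda_k = \mu_l$. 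Summing these non-positive contributions over $k, l$ and $a \in E$ yields $\frac{d\sigma}{dt} \leq 0$.

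The principal obstacle is the algebraic simplification in the second step: the six individual traces, each of potentially mixed sign, conspire into a perfect-square factor only after an explicit calculation. I am not aware of a conceptual shortcut that avoids either this direct computation or an appeal to the convexity of $f = \|\Phi - \alpha\|^2$ along geodesics in the non-positively curved symmetric space $G_\C/G$ --- the conceptual analogue of Donaldson's argument in the gauge-theoretic setting of \cite{Don85}.
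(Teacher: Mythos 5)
Your proposal is correct, and its first half coincides with the paper's: both differentiate $\overline{g}$ and $h$ using \eqref{eqn:groupflow}, use centrality of $\alpha$ and the identity \eqref{eq:relatemetricmomentmap} (in the form $\overline{g}^{-1}\Phi(A_2)\overline{g} = \Phi_h(A_1)$), and arrive at $\frac{d}{dt}\sigma(h) = -2i\tr\bigl((h-h^{-1})(\Phi_h(A_1)-\Phi(A_1))\bigr)$ --- the paper records this as the two separate traces $\tr \frac{\partial h}{\partial t} = -2i\tr((\Phi_h - \Phi)h)$ and $-\tr(h^{-1}\frac{\partial h}{\partial t}h^{-1}) = 2i\tr(h^{-1}(\Phi_h-\Phi))$. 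Where you diverge is in establishing non-positivity. The paper stays coordinate-free: using the Leibniz-type commutator identities \eqref{eqn:productformula} and \eqref{eqn:second-productformula}, the trace of the double commutator $\bigl[A_a,[A_a^*,h]\bigr]$ vanishes, and each of the two terms collapses separately to a manifest sum of squares, $-2\sum_a \tr\bigl((\overline{g}[A_a,h])^*\overline{g}[A_a,h]\bigr)$ and its counterpart with $h^{-1}$, so each term is individually $\leq 0$. You instead diagonalize $h$ at the head and tail of each arrow and check the sign of the scalar coefficient $F(\lambda_k,\mu_l)$ of $|c_{kl}|^2$; I verified both that your $F$ is the correct coefficient and that $\lambda_k^2\mu_l F = -(\lambda_k-\mu_l)^2(\lambda_k\mu_l+1)$, so the argument goes through (you only certify that the \emph{sum} of the two traces is non-positive, which suffices). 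The paper's route buys a cleaner structural statement (each term is a negative norm, with equality iff $[A_a,h]=0$ for all $a$, without choosing bases), while yours is more elementary and reaches the same equality characterization ($\lambda_k=\mu_l$ wherever $c_{kl}\neq 0$); neither needs the convexity-along-geodesics machinery you mention as an alternative.
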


\begin{proof} 
Differentiating \(\overline{g} = g_2 g_0 g_1^{-1}\) and \(h =
\overline{g}^{-1}(\overline{g}^*)^{-1}\) yields

\begin{equation}
\frac{\partial \overline{g}}{\partial t}\overline{g}^{-1} =
\frac{\partial g_2}{\partial t} g_2^{-1} - \overline{g} \left( \frac{
\partial g_1}{\partial t} g_1^{-1} \right) \overline{g}^{-1}
\end{equation}

and 
\begin{align}\label{eqn:distancedecreasingmetricflow}
\begin{split}
\frac{\partial h}{\partial t} & = -\overline{g}^{-1} \left(
\frac{\partial g_2}{\partial t} g_2^{-1} \right) (\overline{g}^*)^{-1} + \left(
\frac{\partial g_1}{\partial t} g_1^{-1} \right) \overline{g}^{-1} (\overline{g}^*)^{-1} \\
& \quad \quad - \overline{g}^{-1} \left( (g_2^*)^{-1} \frac{\partial
g_2^*}{\partial t} \right) (\overline{g}^*)^{-1} + \overline{g}^{-1} (\overline{g}^*)^{-1} \left(
(g_1^*)^{-1} \frac{\partial g_1^*}{\partial t} \right). 
\end{split}
\end{align}

The observations in Remark~\ref{rem:groupflow}, and equations ~\eqref{eqn:groupflow}
and~\eqref{eqn:distancedecreasingmetricflow}, in turn yield

\begin{align}\label{eqn:derivativemetric}
\begin{split}
\frac{\partial h}{\partial t} & = -2 \overline{g}^{-1} i \left(
 \Phi(A_2(t)) - \alpha \right) \overline{g} h(t) + i \left(
 \Phi(A_1(t)) - \alpha \right) h(t) + h(t) i \left( \Phi(A_1(t)) -
 \alpha \right) \\ & = -2 i \left( \Phi_{h(t)} (A_1(t)) - \alpha
 \right) h(t) + i \left( \Phi(A_1(t)) - \alpha \right) h(t) + h(t) i
 \left( \Phi(A_1(t)) - \alpha \right). 
\end{split}
\end{align}

Furthermore, since $G_\C$ is a product of general linear groups then we can multiply elements of $\g_\C$ to give
\begin{equation}
[u, vw]  = v[u,w] + [u, v]w ,
\end{equation}
which in particular implies 
\([u, v^{-1}]  = -v^{-1} [u, v] v^{-1}\) and 
\([u, v]  = -v [u, v^{-1} ] v\) for \(v \in \g_\C\) invertible. We then
have 
\begin{align}\label{eqn:productformula}
\begin{split}
i \Phi_h(A) - i \Phi(A) & = - \sum_{a \in E} \left(  [A_a, h A_a^* h^{-1}] - [A_a, A_a^*] \right) \\
 & = - \sum_{a \in E} \left( [A_a, h A_a^* h^{-1} ] - [A_a, h h^{-1} A_a^*] \right) \\
 & = - \sum_{a \in E}  \left[A_a, h [A_a^*, h^{-1}] \right] \\
 & = - \sum_{a \in E} \left(  h \left[ A_a, [A_a^*, h^{-1} ] \right] + [A_a, h] [A_a^*, h^{-1}] \right) \\
 & = -h \sum_{a \in E} \left( \left[ A_a, [A_a^*, h^{-1} ] \right] -
 [A_a, h^{-1}] h [A_a^*, h^{-1}] \right).
\end{split}
\end{align}
A similar computation yields
\begin{equation}\label{eqn:second-productformula}
i \left( \Phi_{h} (A) - \Phi (A) \right) = \sum_{a \in E} \left(
\left[ A_a, [A_a^*, h] \right] - [A_a^*, h] h^{-1} [A_a, h] \right)
h^{-1}. 
\end{equation}
Differentiating $\sigma(h(t))$ gives us
\begin{equation}
\frac{\partial}{\partial t} \sigma(h(t)) = \tr \frac{\partial h}{\partial t} - \tr \left( h^{-1} \frac{\partial h}{\partial t} h^{-1} \right). 
\end{equation}
Equation~\eqref{eqn:derivativemetric} then shows that 
\begin{equation}
\tr \frac{\partial h}{\partial t} = -2 i \tr \left( \left( \Phi_{h(t)} (A_1(t) ) - \Phi (A_1(t)) \right) h(t)  \right)
\end{equation}
and
\begin{equation}
-\tr \left( h^{-1} \frac{\partial h}{\partial t} h^{-1} \right)= 2 i \tr \left(  h(t)^{-1} \left( \Phi_{h(t)} (A_1(t) ) - \Phi (A_1(t)) \right) \right).
\end{equation}
Combined with equation \eqref{eqn:second-productformula} we see that
\begin{align*}
\tr \frac{\partial h}{\partial t} & = -2 \sum_{a \in E} \tr \left( \left[A_a, [A_a^*, h] \right] - [A_a^*, h] h^{-1} [A_a, h] \right) \\
 & =  2 \sum_{a \in E} \tr  \left( [A_a^*, h] \overline{g}^* \overline{g} [A_a, h] \right) \\
 & = - 2 \sum_{a \in E} \tr \left( (\overline{g} [A_a, h])^* \overline{g} [A_a, h] \right) \leq 0.
\end{align*}
Similarly, equation \eqref{eqn:productformula} shows that
\begin{align*}
-\tr h^{-1} \frac{\partial h}{\partial t} h^{-1} & = -2 \sum_{a \in E} \tr \left( \left[A_a, [A_a^*, h^{-1}] \right] - [A_a, h^{-1}] h [A_a^*, h^{-1}] \right) \\
 & = 2 \sum_{a \in E} \tr  \left( [A_a, h^{-1}] \overline{g}^{-1} (\overline{g}^*)^{-1} [A_a^*, h^{-1}] \right) \\
 & = - 2 \sum_{a \in E} \tr \left( \left( (\overline{g}^*)^{-1} [A_a^*, h^{-1}] \right)^* (\overline{g}^*)^{-1} [A_a^*, h^{-1}] \right) \leq 0. \qedhere
\end{align*}
\end{proof}

We next show that a bound on the value of $\sigma$ yields a bound on
the distance to the identity matrix. 

\begin{proposition}\label{prop:hclosetoidentity}
Let \(g \in G_\C\) and \(h = g^{-1}(g^*)^{-1}.\) 
For each $\varepsilon > 0$ there exists $\delta$ such that if
$\sigma(h) < \delta$ then 
\begin{equation}
\| h - \id \| + \| h^{-1} - \id \| < \varepsilon.
\end{equation}
\end{proposition}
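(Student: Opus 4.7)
The plan is to reduce the matrix-norm bound to an elementary scalar estimate by diagonalizing the positive self-adjoint operator $h$ and studying the scalar function $\phi(\lambda) := \lambda + \lambda^{-1} - 2$ on $(0,\infty)$.

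First I would observe that since $g \in G_\C$ is invertible, $h = g^{-1}(g^*)^{-1}$ is positive-definite and self-adjoint, hence unitarily diagonalizable with strictly positive eigenvalues $\lambda_1, \dots, \lambda_n$, where $n := \rank(Q, \mathbf{v})$. Applying the invariance of the trace under conjugation to the definition \eqref{eq:def-sigma}, one computes
\begin{equation*}
\sigma(h) \;=\; \sum_{i=1}^{n} \bigl( \lambda_i + \lambda_i^{-1} - 2 \bigr) \;=\; \sum_{i=1}^{n} \phi(\lambda_i),
\end{equation*}
where each summand $\phi(\lambda_i) = (\lambda_i - 1)^2/\lambda_i \geq 0$, recovering in passing the fact that $\sigma(h) \geq 0$ with equality iff $h = \id$.

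Next I would establish the scalar estimate: if $\phi(\lambda) < \delta$ then $\lambda$ is close to $1$. Since $\lambda^{-1} > 0$, the inequality $\lambda + \lambda^{-1} - 2 < \delta$ gives $\lambda < \delta + 2$ and, by symmetry under $\lambda \mapsto \lambda^{-1}$, also $\lambda^{-1} < \delta + 2$. Combining with $(\lambda - 1)^2 = \lambda \cdot \phi(\lambda)$ yields $|\lambda - 1| < \sqrt{(\delta+2)\delta}$ and likewise $|\lambda^{-1} - 1| < \sqrt{(\delta+2)\delta}$. Thus both $\lambda$ and $\lambda^{-1}$ can be forced to lie in any prescribed neighborhood of $1$ by choosing $\delta$ sufficiently small.

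Finally, since $\sigma(h) < \delta$ forces $\phi(\lambda_i) < \delta$ for every eigenvalue $\lambda_i$, the scalar estimate applied to each eigenvalue of $h$ (and to each eigenvalue of $h^{-1}$, which has eigenvalues $\lambda_i^{-1}$) controls $\|h - \id\|$ and $\|h^{-1} - \id\|$ eigenvalue-by-eigenvalue. Concretely, in the basis diagonalizing $h$ one has
\begin{equation*}
\|h - \id\|^2 \;\leq\; \sum_{i=1}^{n} (\lambda_i - 1)^2 \;=\; \sum_{i=1}^{n} \lambda_i \phi(\lambda_i) \;\leq\; (\delta + 2)\,\sigma(h) \;<\; (\delta + 2)\delta,
\end{equation*}
and an identical bound for $\|h^{-1} - \id\|^2$, so given $\varepsilon > 0$ it suffices to choose $\delta > 0$ small enough that $2\sqrt{(\delta+2)\delta} < \varepsilon$. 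Since the whole argument is elementary linear algebra there is no serious obstacle; the only thing to be careful about is the precise choice of norm, but all norms on the finite-dimensional space of endomorphisms are equivalent and the Frobenius norm above dominates the operator norm up to a dimensional constant, so the statement holds for either convention.
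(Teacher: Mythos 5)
Your proof is correct and follows essentially the same route as the paper: unitarily diagonalize the positive self-adjoint $h$, use invariance of $\sigma$ and the norm under conjugation, and control each eigenvalue via the scalar inequality $\lambda + \lambda^{-1} - 2 < \delta$. The only difference is that you make the choice of $\delta$ explicit and quantitative (via $(\lambda-1)^2 = \lambda\,\phi(\lambda)$), where the paper simply asserts its existence by continuity.
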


\begin{proof}
Since \(h = g^{-1}(g^*)^{-1},\) $h$ is positive and unitarily
diagonalizable.  Let $h_d := u^* h u$ be the diagonalisation of $h$
by a unitary matrix $u$. Since both $\sigma$ and the inner product are
invariant under unitary conjugation, then $\sigma(h) = \sigma(h_d)$,
$\| h_d - \id \| = \| h - \id \|$ and $\| h_d^{-1} - \id \| = \|
h^{-1} - \id \|$. Hence the problem reduces to studying diagonal,
positive, self-adjoint matrices. Let $\lambda_1, \ldots, \lambda_n$ be the eigenvalues of $h$. Then 
\[
\| h_d - \id \|  = \left( \sum_{j=1}^n (\lambda_j  - 1)^2 \right)^{\frac{1}{2}}
\leq \sum_{j=1}^n \left| \lambda_j - 1 \right|,
\]
and similarly
\begin{equation}
\| h_d^{-1} - \id \| \leq \sum_{j=1}^n \left| \lambda_j^{-1} - 1
\right|. 
\end{equation}
Choose \(\delta > 0\) such that for all $j = 1, \ldots, n$, if \(\lambda_j + \lambda_j^{-1} - 2 <
\delta\) then \(|\lambda_j - 1| + |\lambda_j^{-1}-1| < \frac{1}{n}
\varepsilon.\) If \(\sigma(h) < \delta,\) from the definition of
$\sigma$ it follows that 
\(\lambda_j + \lambda_j^{-1} - 2 < \delta\) for each $j$ and we obtain 
\begin{equation}
\| h_d - \id \| + \| h_d^{-1} - \id \| \leq \sum_{j=1}^n \left| \lambda_j -1 \right| + \left| \lambda_j^{-1} - 1 \right| < \varepsilon.
\end{equation}
The result follows.
\end{proof}

The next result shows that, for any \(g \in G_\C,\) if $\sigma( g^{-1}
(g^*)^{-1} )$ is small, then there exists some $\tilde{g}$ in the
$G$-orbit of $g$ that is close to the identity. Set $h := g^{-1}
(g^*)^{-1}$. Then as in the proof above, $h$ is positive and unitarily
diagonalizable; let \(h_d = u^* h u\) for $u$ unitary and $h_d$
diagonal. Since the eigenvalues are positive and real, there exists a
well-defined positive square root, denoted $\sqrt{h_d}$. Let
\begin{equation}\label{eqn:tilde-g}
\tilde{g} := u (\sqrt{h_d})^{-1} u^*.
\end{equation}
We have the following.

\begin{lemma}\label{lemma:tildegclose-if-sigmaclose} 
Let \(g \in G_\C\) and $\tilde{g}$ as in \eqref{eqn:tilde-g}. Then \(\tilde{g} \in G
\cdot g.\) Moreover, for any \(\varepsilon > 0\) there exists a
$\delta > 0$ such that if \(\sigma(g^{-1}(g^*)^{-1}) < \delta\) then
\(\|\tilde{g} - \id\| < \varepsilon.\) 
\end{lemma}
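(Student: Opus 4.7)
The plan is to handle the two assertions separately: first the orbit claim $\tilde{g} \in G \cdot g$, then the closeness estimate.

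For the orbit claim, the key identity is $h^{-1} = g^* g$. Conjugating by $u$ gives $u^* h^{-1} u = h_d^{-1} = (gu)^*(gu)$. Since $h_d$ is diagonal with positive real eigenvalues, $(\sqrt{h_d})^{-1}$ is a well-defined positive self-adjoint square root of $h_d^{-1}$. The polar decomposition of $gu \in G_\C = \prod_\ell \GL(V_\ell)$ then yields a unitary $k$ with $gu = k \cdot (\sqrt{h_d})^{-1}$, so $g = k (\sqrt{h_d})^{-1} u^*$. A direct calculation gives
\begin{equation*}
\tilde{g} g^{-1} = u (\sqrt{h_d})^{-1} u^* \cdot u \sqrt{h_d}\, k^{-1} = u k^{-1} \in G,
\end{equation*}
so $\tilde{g} = (u k^{-1}) \cdot g \in G \cdot g$, as desired.

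For the closeness estimate, I would use unitary invariance of the norm to reduce to a spectral computation: since $\tilde{g} - \id = u \left( (\sqrt{h_d})^{-1} - \id \right) u^*$ and $u$ is unitary,
\begin{equation*}
\|\tilde{g} - \id\|^2 = \|(\sqrt{h_d})^{-1} - \id\|^2 = \sum_{j=1}^n (\lambda_j^{-1/2} - 1)^2,
\end{equation*}
where $\lambda_1, \dots, \lambda_n$ are the (positive) eigenvalues of $h$ and $n = \rank(Q,\mathbf{v})$. The link to $\sigma$ is the elementary identity $\lambda + \lambda^{-1} - 2 = (\sqrt{\lambda} - \lambda^{-1/2})^2 \geq 0$, so every summand in $\sigma(h) = \sum_j (\lambda_j + \lambda_j^{-1} - 2)$ is non-negative, and $\sigma(h) < \delta$ forces $(\sqrt{\lambda_j} - \lambda_j^{-1/2})^2 < \delta$ for each $j$.

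It remains to argue that small $(\sqrt{\lambda_j} - \lambda_j^{-1/2})^2$ forces $\lambda_j^{-1/2}$ close to $1$. This is straightforward because the map $y \mapsto y - y^{-1}$ on $(0,\infty)$ is strictly increasing with unique zero at $y=1$, and $y \mapsto y^{-1}$ is continuous at $y=1$; so given $\varepsilon > 0$, one may choose $\delta > 0$ small enough that $(y - y^{-1})^2 < \delta$ implies $|y^{-1} - 1| < \varepsilon/\sqrt{n}$. Applying this with $y = \sqrt{\lambda_j}$ for each $j$ and summing gives $\|\tilde{g} - \id\|^2 < \varepsilon^2$. There is no real obstacle here: the proof is essentially the same unitary-diagonalization plus elementary spectral estimate as in Proposition~\ref{prop:hclosetoidentity}, with the polar decomposition providing the bridge to the $G$-orbit statement.
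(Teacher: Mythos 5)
Your proof is correct and follows essentially the same route as the paper's: unitary diagonalization of $h$, reduction to a per-eigenvalue estimate, and identification of $\tilde{g}g^{-1}$ as unitary. The polar decomposition of $gu$ is just a repackaging of the paper's direct verification that $\tilde{g}^{-1}(\tilde{g}^*)^{-1} = h = g^{-1}(g^*)^{-1}$ forces $\tilde{g}g^{-1}$ to be unitary, and your identity $\lambda + \lambda^{-1} - 2 = (\sqrt{\lambda} - \lambda^{-1/2})^2$ is a slightly cleaner substitute for the paper's bound $|\tilde{\lambda}_j - 1| \leq |\tilde{\lambda}_j^2 - 1|$ combined with $\sigma(h^{-1}) = \sigma(h)$.
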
 

\begin{proof}
A calculation shows that
$\tilde{g}^{-1} (\tilde{g}^*)^{-1} = h$.  Since $\tilde{g}^{-1}
(\tilde{g}^*)^{-1} = h = g^{-1} (g^*)^{-1}$,
then we have
\begin{equation}\label{eqn:unitary-element}
\tilde{g} g^{-1} (g^*)^{-1} \tilde{g}^* = \id \, \Leftrightarrow \, (\tilde{g}
g^{-1})^{-1} = (\tilde{g} g^{-1})^* .
\end{equation}
Define 
\begin{equation}\label{eqn:g_u}
g_u := \tilde{g} g^{-1} ,
\end{equation}
and note that \eqref{eqn:unitary-element} shows that $g_u$ is unitary. Note also that $\tilde{g} = g_u g \in G \cdot g$. Therefore, it only remains to show that $\| \tilde{g} - \id
\| < \varepsilon$ if $\sigma(h)$ is small enough.

Observe $\tilde{g}$ is self-adjoint and positive, with eigenvalues
$\tilde{\lambda}_1, \ldots, \tilde{\lambda}_n$ equal to those of
$(\sqrt{h_d})^{-1}$. Therefore
\begin{equation}
\| \tilde{g} - \id \| = \| u^{-1} \tilde{g} u - \id \| = \|
\sqrt{h_d}^{-1} - \id \| = 
\left( \sum_{j=1}^n (\tilde{\lambda}_j - 1)^2 \right)^{\frac{1}{2}}
\leq \sum_{j=1}^n \left| \tilde{\lambda}_j 
- 1\right|. 
\end{equation}
Since each $\tilde{\lambda}_j$ is positive, then $\left|
\tilde{\lambda}_j - 1 \right| \leq \left| \tilde{\lambda}_j^2 - 1
\right|$ and we have
\begin{equation}
\| \tilde{g} - \id \| \leq \sum_{j=1}^n \left| \tilde{\lambda}_j^2 - 1\right|,
\end{equation}
where $\tilde{\lambda}_j^2$ is the $j^{th}$ eigenvalue of $h^{-1}$.
Now the same argument as in Proposition~\ref{prop:hclosetoidentity}
shows that for any given $\varepsilon > 0$ there exists a $\delta > 0$
such that if \(\sigma(h^{-1}) = \sigma(h) < \delta\) then $\sum_{j=1}^n
\left| \tilde{\lambda}_j^2 - 1\right| < \varepsilon$. The result
follows. 
\end{proof}

As a corollary of this result, we have a new proof of Lemma
\ref{lemma:exists-alltime}, which shows that the solutions
to~\eqref{eqn:groupflow} exist for all time. Although this result
has already been proven in Lemma \ref{lemma:exists-alltime}, we
include this proof here as it follows easily from the preceding
results, and the method of proof may be of independent interest.

\begin{corollary}
For any initial condition $A(0) \in \Rep(Q, {\bf v})$, the solution to
equation \eqref{eqn:groupflow} exists for all $t$.
\end{corollary}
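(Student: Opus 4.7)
The plan is a proof by contradiction using linear ODE theory. Local existence and uniqueness for \eqref{eqn:groupflow} provides a maximal existence interval $[0, T)$ with $T \in (0, \infty]$, and the goal is to rule out $T < \infty$. The strategy is to uniformly bound both $g(t)$ and $g(t)^{-1}$ on $[0, T)$, which will force $g(t)$ to stay in a compact subset of $G_\C$, admit a limit as $t \to T^-$, and be extendable past $T$ by local existence, contradicting maximality.

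First I would bound the right-hand side of \eqref{eqn:groupflow} uniformly on $[0, T)$. By Lemma \ref{lem:gradientflowgenerated}, $A(t) := g(t) \cdot A(0)$ is the negative gradient trajectory of $f = \|\Phi - \alpha\|^2$, so $f$ is non-increasing along it. The bounded-orbit portion of the argument in Lemma \ref{lemma:exists-alltime} (Sjamaar's Lemma 4.10 applied to the $G_\C$-orbit of $A(0)$, using that $A(t)$ stays on this orbit by \eqref{eq:negative-gradient}) then places $A(t)$ in a fixed compact subset of $V$ on $[0, T)$, giving a uniform bound $\|i(\Phi(A(t)) - \alpha)\| \leq M$. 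I would emphasise that this step uses only the monotonicity of $f$ on $[0, T)$, not long-time existence, so there is no circularity when the present corollary is viewed as supplying a new proof of Lemma \ref{lemma:exists-alltime}.

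Second, I would view \eqref{eqn:groupflow} as the linear time-dependent matrix ODE $\dot g(t) = X(t)\, g(t)$ inside the ambient algebra $\End(\Vect(Q,{\bf v}))$, with $X(t) := i(\Phi(A(t)) - \alpha)$. Gronwall's inequality applied to this equation, together with the companion equation $\tfrac{d}{dt}(g^{-1}) = -g(t)^{-1} X(t)$ obtained by differentiating $g(t) g(t)^{-1} = \id$, produces uniform bounds $\|g(t)\|,\ \|g(t)^{-1}\| \leq e^{MT}$ on $[0, T)$. Consequently $g$ lies in a compact subset of $G_\C$ on $[0, T)$, the limit $\lim_{t \to T^-} g(t)$ exists in $G_\C$, and the contradiction follows. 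The only real obstacle is the no-circularity check flagged in the previous paragraph; everything else is routine linear-ODE theory. One could alternatively attempt to control $g(t)$ modulo the compact group $G$ using $\sigma(h(t))$ for $h(t) = g(t)^{-1}(g(t)^*)^{-1}$ via Theorem \ref{thm:distancedecreasing} and Lemma \ref{lemma:tildegclose-if-sigmaclose}, but the Gronwall route is the cleanest way to exploit the boundedness established in the first step.
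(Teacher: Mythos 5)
Your argument is correct, but it follows a genuinely different route from the paper's. The paper's proof is designed to showcase the $\sigma$-machinery of Section~\ref{subsec:distance-decreasing}: it chooses $t_0$ with $\sigma(g_1(t_0)^*g_1(t_0))$ small, translates the flow, invokes the distance-decreasing estimate of Theorem~\ref{thm:distancedecreasing} to keep $\sigma(\overline{g}(t)^{-1}(\overline{g}(t)^*)^{-1})$ small on $[0,T-t_0)$, and then uses Lemma~\ref{lemma:tildegclose-if-sigmaclose} to conclude that the $G$-orbit of $g(t)$ meets a fixed bounded set; compactness of $G$ then traps $g(t)$ itself in a compact set. Your Gronwall argument is more elementary and more direct: it controls $g(t)$ and $g(t)^{-1}$ outright rather than modulo $G$, and it bypasses Theorem~\ref{thm:distancedecreasing} entirely. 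Your handling of the potential circularity is also right --- monotonicity of $f$ along $A(t)=g(t)\cdot A(0)$ (via Lemma~\ref{lem:gradientflowgenerated}) holds on the maximal interval $[0,T)$ without presupposing long-time existence. In fact you can simplify further: the bound $\|i(\Phi(A(t))-\alpha)\| = \sqrt{f(A(t))} \leq \sqrt{f(A(0))}$ follows from monotonicity alone, so the appeal to Sjamaar's compactness result is harmless but unnecessary for this step. What you lose relative to the paper is only expository: the paper explicitly includes this corollary to illustrate that the $\sigma$-estimates (the quiver analogue of Donaldson's metric estimates) already imply long-time existence, whereas your proof, while cleaner, exercises none of that machinery.
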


\begin{proof}
Let $A(0) \in \Rep(Q, {\bf v})$, and let
$g_1(t)$ denote the associated solution to \eqref{eqn:groupflow}. Local
existence for ODEs shows that $g_1(t)$ exists for $t \in [0, T)$ for
  some \(T > 0\), so 
it remains to show that the solution can be extended past $t=T$.

For \(\varepsilon > 0,\) let \(\delta > 0\) be as in
Lemma~\ref{lemma:tildegclose-if-sigmaclose}. 
Since the solution $g_1(t)$ is continuous, there exists $t_0 \in [0,T)$ with $\sigma
\left( g_1(t_0)^* g_1(t_0) \right) < \delta$. 
Let $g_2(t)$
be the solution to \eqref{eqn:groupflow} with initial condition
$g_1(t_0) \cdot A_0$. This is just a translation of the first
solution, so $g_2(t) = g_1(t + t_0)g_1(t_0)^{-1}$, and the problem reduces to
extending the solution for $g_2(t)$ past $T-t_0$.

Since \(g_1(0), g_2(0)\) are related by an element of $G_\C$, then as
for the above analysis we may define $\overline{g}(t)$ so that
$\overline{g}(t) = g_2(t)g_1(t_0)g_1(t)^{-1}$, and by
Theorem~\ref{thm:distancedecreasing},
\(\sigma(\overline{g}(t)^{-1}(\overline{g}(t)^*)^{-1}) \leq
\sigma(g_1(t_0)^{-1}(g_1(t_0)^*)^{-1}) = \sigma(g_1(t_0)^* g_1(t_0)) <
\delta\) for \(t \in [0, T-t_0).\) Here we use that \(\sigma(h) =
  \sigma(h^{-1})\) by definition of $\sigma$ for any positive-definite
  $h$. Then Lemma~\ref{lemma:tildegclose-if-sigmaclose} shows that for
  every \(t \in [0, T - t_0),\) the $G$-orbit of $g(t)$ under left
    multiplication intersects a bounded set in $G_\C$. Since $G$
    itself is compact, we may conclude $g(t)$ remains in a compact
    set. Therefore we can extend the solution for $g(t)$ and hence
    $g_2(t)$ past $T-t_0$, as desired. 
\end{proof}

Finally, we show that estimates on $\sigma(h) =
\sigma(g^{-1}(g^*)^{-1})$ yields distance estimates between moment map
values of $g \cdot A$ and $g_u^{-1}\cdot A$. Such an estimate is
crucial in the next section. 

\begin{proposition}\label{prop:momentmapclose}
Let \(g \in G_\C, h = g^{-1}(g^*)^{-1},\) and $g_u \in G$ as in \eqref{eqn:g_u}. 
Suppose $A \in \Rep(Q,{\bf v})$. Then for every $\varepsilon > 0$
there exists $\delta > 0$ 
such that if $\sigma(h) < \delta$ then 
\begin{equation}
\| \Phi( g \cdot A) - \Phi(g_u^{-1} \cdot A) \| < \varepsilon. 
\end{equation}
\end{proposition}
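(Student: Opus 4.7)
The plan is to reduce the inequality to a statement about $\|\Phi(\tilde{g}\cdot A) - \Phi(A)\|$, and then use Lemma~\ref{lemma:tildegclose-if-sigmaclose} together with the fact that $\Phi$ is a polynomial in $A$ (and the $G_\C$-action on $\Rep(Q,{\bf v})$ is polynomial in the group element) to close the argument by continuity.

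Concretely, I would first observe that by definition~\eqref{eqn:g_u} we have $g_u = \tilde{g} g^{-1}$, so $g = g_u^{-1}\tilde{g}$ in $G_\C$. Consequently $g \cdot A = g_u^{-1}\cdot(\tilde{g}\cdot A)$, and since $g_u \in G$, the equivariance of the moment map (with respect to the coadjoint action, which on $\g$ is conjugation for the unitary groups) gives
\begin{equation*}
\Phi(g\cdot A) = g_u^{-1}\,\Phi(\tilde{g}\cdot A)\,g_u, \qquad \Phi(g_u^{-1}\cdot A) = g_u^{-1}\,\Phi(A)\,g_u.
\end{equation*}
Subtracting and using that the inner product $\langle u,v\rangle = \tr(u^* v)$ is invariant under conjugation by elements of $G$, one obtains
\begin{equation*}
\|\Phi(g\cdot A) - \Phi(g_u^{-1}\cdot A)\| = \|\Phi(\tilde{g}\cdot A) - \Phi(A)\|.
\end{equation*}

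Next I would note that the map $\tilde{g} \mapsto \Phi(\tilde{g}\cdot A)$ from $G_\C$ to $\g$ is continuous for each fixed $A$: both the action $\tilde{g}\cdot A = (\tilde{g}_{\inw(a)} A_a \tilde{g}_{\out(a)}^{-1})_{a \in E}$ and the moment map formula~\eqref{eq:formula-Kahler-mom-map-RepQv} are polynomial expressions. Hence, given $\varepsilon > 0$, there exists $\varepsilon' > 0$ (depending on $A$) such that $\|\tilde{g} - \id\| < \varepsilon'$ implies $\|\Phi(\tilde{g}\cdot A) - \Phi(A)\| < \varepsilon$.

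Finally, I would apply Lemma~\ref{lemma:tildegclose-if-sigmaclose} to this $\varepsilon'$: there is $\delta > 0$ such that $\sigma(h) < \delta$ forces $\|\tilde{g}-\id\| < \varepsilon'$. Combining the two implications yields $\|\Phi(g\cdot A) - \Phi(g_u^{-1}\cdot A)\| < \varepsilon$ whenever $\sigma(h) < \delta$, as desired. There is no real obstacle here; the key input is the identity $g = g_u^{-1}\tilde{g}$, which converts the $G_\C$-factor $g$ into the composition of a unitary factor (which merely conjugates $\Phi$ and thus preserves norms) and the self-adjoint factor $\tilde{g}$ controlled by $\sigma(h)$ via Lemma~\ref{lemma:tildegclose-if-sigmaclose}.
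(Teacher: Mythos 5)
Your proof is correct, and the first half is exactly the paper's argument: write $g=g_u^{-1}\tilde g$, use $G$-equivariance of $\Phi$ and unitary invariance of the norm to reduce to bounding $\|\Phi(\tilde g\cdot A)-\Phi(A)\|$, then feed Lemma~\ref{lemma:tildegclose-if-sigmaclose} into that bound. The only place you diverge is the middle step: you dispose of $\|\Phi(\tilde g\cdot A)-\Phi(A)\|$ by soft continuity of $\tilde g\mapsto\Phi(\tilde g\cdot A)$ at $\tilde g=\id$ (fine for fixed $A$; note the action is rational rather than polynomial in $\tilde g$ because of the $\tilde g^{-1}$ factor, but continuity near the identity is all you need), whereas the paper runs an explicit commutator computation through $\Phi_h$ and~\eqref{eqn:productformula} to get the quantitative bound $\|\Phi(\tilde g\cdot A)-\Phi(A)\|\le C_1\|\tilde g^{-1}-\id\|+C_2\|h^{-1}-\id\|$, with constants controlled by $\|A\|$ and $\|h\|$. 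The quantitative version is what makes the dependence of $\delta$ on $A$ transparent, which matters in Remark~\ref{rem:momentmapclosealongflow} and Lemma~\ref{lem:intersection-invariant}, where the proposition is applied with $A=\gamma(A_k,t)$ varying along a flow line and one needs the estimate uniformly in $t$; your pointwise continuity argument would there have to be upgraded to uniform continuity on the compact set containing the trajectory. For the proposition as literally stated (with $A$ fixed), your argument is complete.
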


\begin{proof}

Since $\Phi$ is $G$-equivariant and \(g = g_u^{-1}\tilde{g},\)
\[
\Phi( g \cdot A)  = g_u^{-1} \Phi( \tilde{g} \cdot A ) g_u \quad
\mbox{and} \quad 
\Phi( g_u^{-1} \cdot A)  = g_u^{-1} \Phi(A) g_u.
\]
Moreover, since the inner product is invariant under the conjugate
action of $G$, it is sufficient to find a bound on $\| \Phi(
\tilde{g} \cdot A) - \Phi(A) \|$, or equivalently
\(\|\tilde{g} \Phi_h(A) \tilde{g}^{-1} - \Phi(A)\|,\)
by~\eqref{eq:relatemetricmomentmap}. First, using
Proposition~\ref{prop:hclosetoidentity} and Lemma~\ref{lemma:tildegclose-if-sigmaclose}, let \(\delta > 0\) be
such that if \(\sigma(h) < \delta\) then \(\tilde{g} - \id\) and $h$
are both bounded in norm by some constant. 
We have then  
\begin{align*}
\| \tilde{g} \Phi_h(A) \tilde{g}^{-1} - \Phi_h(A) \| & = \| \tilde{g} \left[ \Phi_h(A), \tilde{g}^{-1} \right] \| \\
 & =  \| \tilde{g} \left[ \Phi_h(A), \tilde{g}^{-1} - \id \right] \| \\
 & \leq 2 \left( \| \tilde{g} - \id \| + \| \id \| \right) \| \Phi_h(A) \| \| \tilde{g}^{-1} - \id \| \\
 & \leq C_1 \| \tilde{g}^{-1} - \id \|
\end{align*}
for some constant $C_1>0$. 
A similar computation using~\eqref{eqn:productformula} yields 
\[
\| \Phi_h(A) - \Phi(A) \| \leq C_2 \| h^{-1} - \id \|
\]
for a constant $C_2 > 0$. Combining these two estimates gives
\[
\| \Phi( \tilde{g} \cdot A) - \Phi(A) \| \leq C_1 \| \tilde{g}^{-1} -
\id \| + C_2 \| h^{-1} - \id \| .
\]
Again using Proposition~\ref{prop:hclosetoidentity}
and~\ref{lemma:tildegclose-if-sigmaclose}, and the fact that \(\sigma(h)
=\sigma(h^{-1})\) for any positive-definite $h$, we conclude that (after possibly shrinking
$\delta$)  if $\sigma(h) < \delta$ then 
\begin{equation}
C_1 \| \tilde{g}^{-1} - \id \| + C_2 \| h^{-1} - \id \| < \varepsilon. \qedhere
\end{equation}
\end{proof}

\begin{remark}\label{rem:momentmapclosealongflow}
In the notation of Theorem \ref{thm:distancedecreasing}, given initial
conditions $A_1(0)$, $A_2(0) = g_0 \cdot A_1$, solutions $A_1(t),
A_2(t)$ to the gradient flow with initial conditions $A_1(0)$,
$A_2(0)$ respectively, and $g(t) \in G_\C$ the group action that
connects the two flows, let $g_u(t) \in G$ be the unitary element
associated to $g(t)$ from \eqref{eqn:g_u}.
Then Theorem~\ref{thm:distancedecreasing} and
Proposition~\ref{prop:momentmapclose} together imply that for any
$\varepsilon > 0$ there exists $\delta > 0$ such that 
if
$\sigma (g_0^* g_0) < \delta$, then 
$$\|\Phi(A_2(t)) - \Phi(g_u(t) \cdot A_1(t)) \| < \varepsilon$$ for all
$t$. In other words, the $G$-orbits of the two solutions remain
close.  
\end{remark}

\section{The Harder-Narasimhan stratification}\label{sec:HN}

In this section, we relate the Morse-theoretic stratification of $\Rep(Q, {\mathbf
  v})$ obtained in Section~\ref{sec:gradientflow}
to the Harder-Narasimhan
stratification (recalled below) of $\Rep(Q, {\mathbf v})$ with respect to the same
central parameter $\alpha$. This latter stratification is defined in terms of
slope stability conditions similar to the case of holomorphic bundles, and the content of this section is to exhibit the
relationship between the algebraic-geometric description of this
stratification (via stability) and the analytic description (via
gradient flow). We restrict the discussion to the unframed case
$\Rep(Q, {\mathbf v})$, which (see
Remarks~\ref{remark:framed-vs-unframed}
and~\ref{remark:cotangent-double-arrows}) covers all other cases of
interest.

\subsection{Slope and stability for representations of quivers}\label{subsec:slope-stability-def}

To set the notation, we briefly recall
some preliminary definitions. 
Let \(Q = ({\mathcal I}, E)\) be a quiver, 
${\mathbf v} \in \Z^{\mathcal I}_{\geq 0}$ a dimension vector, 
and \(\alpha \in (i\R)^{\mathcal I}\) a central 
parameter. 
In order to define
the slope stability condition with respect to the parameter $\alpha$,
it will be necessary to compare the $\alpha$-slope (as in
Definition \ref{def:degree-slope}) of a
representation \(A \in \Rep(Q,{\mathbf v})\) with that of its
invariant subspaces. We make this notion more precise below
(see \cite[Definition 2.1]{Rei03}). 

\begin{definition}\label{def:alphastable}
A representation $A \in \Rep(Q, {\bf v})$ is called {\em $\alpha$-stable}
(resp. {\em $\alpha$-semistable}) if for every proper
subrepresentation $A' \in \Rep(Q, {\bf v'})$ of $A$ 
we have
\begin{equation}
\mu_{\alpha}(Q, {\bf v'}) < \mu_{\alpha}(Q, {\bf v}), \quad \left(\text{resp.  } \mu_{\alpha}(Q, {\bf v'}) \leq \mu_{\alpha}(Q, {\bf v}) \right).
\end{equation}
We denote by $\Rep(Q, {\mathbf v})^{\alpha-st}$ (resp. $\Rep(Q, {\mathbf v})^{\alpha-ss}$) the subset in $\Rep(Q, {\mathbf v})$ of $\alpha$-stable (resp. $\alpha$-semistable) representations. A representation is called {\em $\alpha$-polystable} if it is the direct sum of $\alpha$-stable representations of the same $\alpha$-slope. 
\end{definition}

We now briefly recall the notion of stability arising from geometric
invariant theory (cf. Section 2 of \cite{Kin94}). Assume now that the parameter \(\alpha = (\alpha_\ell)_{\ell \in {\mathcal I}}\)
associated to the quiver $Q$ is integral, i.e. satisfies \(i \alpha_{\ell} \in \Z\)
for all \(\ell \in {\mathcal I}.\) Let \(\chi_{\alpha}: G_{\C} =
\prod_{\ell \in {\mathcal I}} GL(V_{\ell}) \to \C\) be the character
of $G_{\C}$ given by 
\[
\chi_\alpha(g) := \prod_{\ell \in {\mathcal I}}
\det(g_{\ell})^{i\alpha_{\ell}}.
\]
Using $\chi_{\alpha}$, we define a lift of the action of $G_{\C}$ from
$\Rep(Q, {\bf v})$ to the trivial complex line bundle $L := \Rep(Q, {\bf v})
\times \C$ by 
\begin{equation}
g \cdot \left( \left( A_a \right)_{a \in E}, \xi \right) := \left(
\left( g_{\inw(a)} A_a g_{\out(a)}^{-1} \right)_{a \in E}, \chi_\alpha(g) \xi \right).
\end{equation}

As noted by King \cite[Section 2]{Kin94}, since the diagonal
one-parameter subgroup $\{ (t \cdot \id, \ldots, t \cdot \id)
\, : \, t \in \C^* \} \subseteq G_\C$ acts trivially on $\Rep(Q, {\bf
  v})$, the following definition of stability differs slightly
from the usual definition (where a stable point has finite stabiliser
in $G_\C$). In the rest of the paper we use the following variant of
King's definition.

\begin{definition}\label{def:chistable}
Let $A = \left( A_a \right)_{a \in E} \in \Rep(Q, {\bf v })$. Then we
say $A$ is \emph{$\chi_{\alpha}$-semistable} if for any $\xi \in \C
\setminus \{0\},$ the $G_{\C}$-orbit closure $\overline{G_{\C} \cdot
(x, \xi)}$ in $L$ 
is disjoint 
from the zero section of $L$. A representation $A$ is \emph{$\chi_\alpha$-stable} if the orbit $G_{\C} \cdot (x, \xi)$ is closed, and the dimension of the orbit in $\Rep(Q, {\bf v})$ satisfies $\dim_\C G_\C \cdot x = \dim_\C G_\C - 1$. A representation is \emph{$\chi_\alpha$-polystable} if it is both $\chi_\alpha$-semistable and the direct sum of $\chi_\alpha$-stable representations.
\end{definition}

\begin{remark} The definition of $\chi$-stability given by King
  \cite[Definition 2.1]{Kin94} is equivalent to that of
  $\chi_\alpha$-polystability given above. \end{remark}

The main result of \cite{Kin94} is that when $\alpha$ is integral, the 
$\chi_{\alpha}$-(semi)stability condition above is equivalent to the
$\alpha$-(semi)stability conditions of
Definition~\ref{def:alphastable}. 
In this paper we also analyze Hamiltonian
quotients of $\Rep(Q,{\mathbf v})$, so we now recall the relationship
between the above $\alpha$-stability conditions and moment map level sets.  The
following lemma appears in \cite[Section 6]{Kin94} and also in \cite{Nak99}, but  we 
provide here a different proof, which comes from our results in
Section~\ref{sec:gradientflow} on the gradient
flow of $\| \Phi_{\R} - \alpha \|^2$ on the space $\Rep(Q, {\mathbf v})$.

\begin{lemma} (\cite[Corollary 3.22]{Nak99})\label{lem:KempfNess}
Let \(Q = ({\mathcal I}, E)\) be a quiver with specified dimension
vector \({\bf v} \in \Z^{\mathcal I}_{\geq 0}.\) Let $\alpha \in
(i\Z)^{\mathcal I}$ be an
integral central parameter. 
If a representation $A \in \Rep(Q, {\bf v})$ is $\alpha$-polystable then there exists $g \in G_{\C}$
such that $\Phi(g \cdot A) = \alpha$. If a representation
$A \in \Rep(Q, {\bf v})$ is $\alpha$-semistable then the orbit closure
$\overline{G_\C \cdot A}$ has non-trivial intersection with
$\Phi^{-1}(\alpha)$.
\end{lemma}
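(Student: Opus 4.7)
The plan is to apply the negative gradient flow of $f = \|\Phi - \alpha\|^2$ from Section~\ref{sec:gradientflow}. Starting from $A$, Theorem~\ref{thm:gradflowconvergence} produces a critical limit $A_\infty = \lim_{t \to \infty} \gamma(A, t)$. Since the trajectory at finite time lies in the $G_\C$-orbit of $A$ (cf.\ the proof of Lemma~\ref{lemma:exists-alltime}), we have $A_\infty \in \overline{G_\C \cdot A}$. Proposition~\ref{prop:momentmapdetermined} then gives an orthogonal direct-sum decomposition $A_\infty = \bigoplus_\lambda (A_\infty)_\lambda$ indexed by the distinct eigenvalues of $i\beta := i(\Phi(A_\infty) - \alpha)$, with the slope of each summand equal to $\mu_\alpha(Q, {\bf v}_\lambda) = -\lambda$.

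For the semistable case, the aim is to show $\beta = 0$, which immediately gives $A_\infty \in \overline{G_\C \cdot A} \cap \Phi^{-1}(\alpha)$. Because $\tr(\alpha) = 0$ and $\Phi$ is a sum of commutators, $\sum_\lambda \lambda \cdot \rank(Q, {\bf v}_\lambda) = 0$, so if $\beta \neq 0$ then at least one $\lambda < 0$ and the subrepresentation $B_\infty := \bigoplus_{\lambda < 0} (A_\infty)_\lambda$ of $A_\infty$ has strictly positive $\alpha$-slope. I would then complete the argument by transferring $B_\infty$ to a destabilizing subrepresentation of $A$, contradicting $\alpha$-semistability. Concretely, the locus of representations admitting a subrepresentation of dimension vector ${\bf v}_{B_\infty}$ is the image of a proper projection from the closed incidence variety inside $\Rep(Q, {\bf v}) \times \prod_\ell \Gr(v_{B_\infty, \ell}, V_\ell)$; combined with compactness in the Grassmannian factor along the convergent trajectory $\gamma(A, t) \to A_\infty$, this produces the required $A$-invariant subspace of $V$.

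For the polystable case, decompose $A = \bigoplus_i A_i$ into $\alpha$-stable summands; by the trace condition each has slope $\mu_\alpha(Q, {\bf v}_i) = \mu_\alpha(Q, {\bf v}) = 0$. The gradient flow of $f$ respects this direct-sum structure, so applying the semistable case to each summand yields $(A_i)_\infty$ with $\Phi((A_i)_\infty) = \alpha|_{V_i}$. The $\alpha$-stability of $A_i$ forces its $G_\C(V_i)$-orbit in $\Rep(Q, {\bf v}_i)$ to be closed (a boundary point would give a non-trivial degeneration, contradicting stability), so $(A_i)_\infty = g_i \cdot A_i$ for some $g_i \in G_\C(V_i)$, and $g := \bigoplus_i g_i$ then satisfies $\Phi(g \cdot A) = \alpha$.

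The main technical obstacle is the lifting step in the semistable case: extracting a destabilizing subrepresentation of $A$ from one of $A_\infty$. Closedness and $G_\C$-invariance of the incidence locus alone do not suffice, since $A_\infty$ may sit on the boundary of $G_\C \cdot A$; a careful compactness argument in the Grassmannian of subspaces of dimension vector ${\bf v}_{B_\infty}$, applied along the gradient flow trajectory together with the convergence controls from Section~\ref{subsec:distance-decreasing}, is needed to produce an actual $A$-invariant subspace of the required dimensions.
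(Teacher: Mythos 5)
Your overall framework (run the negative gradient flow, land at a critical point $A_\infty\in\overline{G_\C\cdot A}$, show $\Phi(A_\infty)=\alpha$) has the right shape, but the pivotal step of the semistable case --- ruling out $\beta=\Phi(A_\infty)-\alpha\neq 0$ --- is exactly the point you leave open, and the mechanism you sketch for it cannot work. The locus $Z_{{\bf v}'}$ of representations admitting a subrepresentation of a fixed dimension vector ${\bf v}'$ is indeed closed and $G_\C$-invariant, but the implication this gives runs the wrong way: $A\in Z_{{\bf v}'}$ implies $\overline{G_\C\cdot A}\subseteq Z_{{\bf v}'}$, whereas you need the converse, namely that $A_\infty\in Z_{{\bf v}'}$ for a boundary point $A_\infty$ forces $A\in Z_{{\bf v}'}$. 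That converse is false in general: by Reineke's closure relation \eqref{eq:HNclosures-partialorder}, strata of strictly larger Harder--Narasimhan type (i.e., unstable representations) do occur in the closure of the semistable stratum, so the mere facts that $A$ is semistable and $A_\infty\in\overline{G_\C\cdot A}$ do not preclude $A_\infty$ from being destabilized. Compactness of the Grassmannian along the trajectory does not help either: the finite-time points $\gamma(A,t)$ lie in $G_\C\cdot A$ and hence admit no invariant subspace of slope exceeding $\mu_\alpha(Q,{\bf v})$, so there is no sequence of invariant subspaces to pass to a limit, and the invariant subspace of $A_\infty$ need not deform back to one of $\gamma(A,t)$. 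Ruling out this jump is precisely the content of the lemma, and it requires an input beyond the stratification formalism.

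The paper supplies that input through GIT: for integral $\alpha$, King's theorem identifies $\alpha$-semistability with $\chi_\alpha$-semistability (Definition~\ref{def:chistable}), i.e., the $G_\C$-orbit closure of $(A,\xi)$ in the line bundle $L$ avoids the zero section, so $\xi(t)=\chi_\alpha(g(t))\xi$ stays bounded away from $0$ along the flow. The Donaldson-type functional $F(A,\xi)=\tfrac{1}{2}\|A\|^2+\tfrac{1}{2}\log\|\xi\|^2$ of \eqref{eqn:Donaldson-functional} satisfies $\partial F/\partial t=-\|\Phi-\alpha\|^2$ and is therefore nonincreasing and bounded below, forcing $\|\Phi(\gamma(A,t))-\alpha\|\to 0$; this is what actually kills $\beta$. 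Your polystable case is fine in outline (closedness of the orbit of a stable point is again King's theorem), but it rests on the unproven semistable step. To repair the argument you should either import the $\chi_\alpha$-semistability characterization and the functional $F$, or find some other quantitative use of the flow; the qualitative incidence-variety argument alone will not close the gap you have correctly identified.
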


\begin{proof}
  The results of the previous section show that the negative gradient flow
  $\gamma(A, t)$ of $\| \Phi - \alpha \|^2$ is contained in a compact set and generated by the action of a path $g(t) \in G_\C$, for any initial
  condition $A \in \Rep(Q, {\bf v})$. In this
  situation, if $A$ is $\chi_\alpha$-semistable then (since $\overline{
    G_\C \cdot (x, \xi)}$ is disjoint from the zero section $\Rep(Q,
  {\bf v}) \times \{ 0 \}$ for $\xi \neq
  0$) we see that $\xi(t) = \chi_\alpha(g(t))\xi$ is bounded away from zero. 
 For $(A, \xi) \in L$, define
\begin{equation}\label{eqn:Donaldson-functional}
F(A, \xi) = \frac{1}{2} \| A \|^2 + \frac{1}{2} \log \| \xi \|^2.
\end{equation}
(This $F$ is the analogue of the
  Donaldson functional in this situation.) 
A calculation shows that $\frac{\partial F}{\partial t} = - \| \Phi -
\alpha \|^2$, so the lemma follows from the fact proven above: that if $A$ is
$\chi_\alpha$ semistable, then  $\xi(t)$ is bounded away from
zero along the gradient flow, and hence $F$ is bounded below.

If $A$ is $\alpha$-polystable then the $G_\C$-orbit of $A$ is closed,
and so the above result shows that there exists $g \in
G_\C$ such that $g \cdot A$ is a critical point of $F$, i.e. $\Phi(g
\cdot A) - \alpha = 0$. \end{proof}

Having established the connection with GIT, we now turn our attention
to the definition of the Harder-Narasimhan stratification of
$\Rep(Q,{\mathbf v})$ with respect to $\alpha$. The construction uses
the definition of slope stability in
Definition~\ref{def:alphastable}. 

\begin{definition} 
Let \(Q = ({\mathcal I}, E)\) be a
quiver. Suppose \(A \in \Rep(Q,{\mathbf v})\) is a representation of
$Q$ with associated hermitian vector spaces $\{V_{\ell}\}_{\ell \in
{\mathcal I}}$, and similarly \(A' \in \Rep(Q,{\mathbf v'})\) with
\(\{V'_{\ell}\}_{\ell \in {\mathcal I}}.\) We say a collection of
linear homomorphisms \(\psi_{\ell}: V_{\ell} \to V'_{\ell}\) is a {\em
homomorphism of representations of quivers} if $\psi_{\ell}$
intertwines the actions of $A$ and $A'$, i.e. for all \(a \in E,\) 
\[
\psi_{\inw(a)} A_a = A'_a \psi_{\out(a)}.
\]
\end{definition}

We may also define a quotient representation in the standard
manner. 

\begin{definition}\label{def:quotient-rep}

Let \(A \in \Rep(Q,{\bf v})\) be a representation of a quiver 
\(Q = ({\mathcal I}, E)\) 
with associated hermitian vector spaces $\{V_{\ell}\}_{\ell \in
{\mathcal I}}$, and let \(A' \in \Rep(Q,{\bf v'})\) be a
subrepresentation of $A$ with $\{V'_\ell\}_{\ell \in {\mathcal I}}$. Then the {\em quotient representation}
\(\overline{A} = A/A' \in \Rep(Q, {\bf v} - {\bf v'})\) is defined to be the
collection of linear maps on the quotient vector spaces
\(\{V_{\ell}/V'_{\ell}\}_{\ell \in {\mathcal I}}\) induced by the
$A_a$, i.e.
\[
\bar{A}_a: V_{\out(a)}/V'_{\out(a)} \to V_{\inw(a)}/V'_{\inw(a)}.
\]
This is well-defined since $A$ preserves the $V'_{\ell}$.
\end{definition}

Using these definitions we can make sense
of an exact sequence of representations of quivers. We will use the
usual notation 
\[
\xymatrix{
0 \ar[r] & A \ar[r] & B \ar[r] & C \ar[r] & 0 
}
\]
for \(A \in \Rep(Q,{\bf v'}), B \in \Rep(Q,{\bf v}), C \in \Rep(Q,{\bf
v''})\) where ${\bf v''} = {\bf v} - {\bf v'}$, \(A \to B\) is an inclusion
with image a subrepresentation of $B$ and \(C \cong B/A.\) The
following lemma appears in \cite{Rei03}, and gives a quiver
analogue of 
well-known results for the case of holomorphic bundles.

\begin{lemma}(\cite[Lemma 2.2]{Rei03})
  \label{lem:exactsequenceinequality} 
Let $Q=({\mathcal I}, E)$ be a quiver. Suppose that 
\[
\xymatrix{
0 \ar[r] & A \ar[r] & B \ar[r] & C \ar[r] & 0 
}
\]
is a short exact sequence of representations of the quiver $Q$,
with $A \in \Rep(Q,{\bf v'})$, $B \in \Rep(Q,{\bf v})$, and $C \in \Rep(Q,{\bf
v''})$. Then 
\[
\mu_{\alpha}(Q, {\bf v'}) \leq \mu_{\alpha}(Q, {\bf v}) \Leftrightarrow
\mu_{\alpha}(Q, {\bf v}) \leq \mu_{\alpha}(Q, {\bf v''})
\]
and 
\[\mu_{\alpha}(Q, {\bf v'}) \geq
\mu_{\alpha}(Q, {\bf v}) \Leftrightarrow \mu_{\alpha}(Q, {\bf v}) \geq \mu_{\alpha}(Q, {\bf
v''}),
\]
with
\[
\mu_{\alpha}(Q, {\bf v'}) = \mu_{\alpha}(Q, {\bf v}) \Leftrightarrow
\mu_{\alpha}(Q,{\bf v}) = \mu_{\alpha}(Q, {\bf v''}).
\]
\end{lemma}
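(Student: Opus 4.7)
The plan is to reduce the statement to a purely arithmetic fact about ``mediants'' of ratios, using the additivity of both $\deg_\alpha$ and $\rank$ over short exact sequences of quiver representations.

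First I would observe that from Definitions~\ref{def:degree-slope}, the $\alpha$-degree $\deg_\alpha(Q,\mathbf{w}) = \sum_{\ell \in \mathcal{I}} i\alpha_\ell w_\ell$ is linear in the dimension vector $\mathbf{w}$, as is $\rank(Q,\mathbf{w}) = \sum_\ell w_\ell$. Since the short exact sequence
\[
\xymatrix{0 \ar[r] & A \ar[r] & B \ar[r] & C \ar[r] & 0}
\]
forces $\mathbf{v} = \mathbf{v}' + \mathbf{v}''$ componentwise (we have $\dim V_\ell = \dim V'_\ell + \dim(V_\ell/V'_\ell)$ at each vertex), it follows immediately that
\begin{equation*}
\deg_\alpha(Q,\mathbf{v}) = \deg_\alpha(Q,\mathbf{v}') + \deg_\alpha(Q,\mathbf{v}''), \qquad
\rank(Q,\mathbf{v}) = \rank(Q,\mathbf{v}') + \rank(Q,\mathbf{v}'').
\end{equation*}
Note also that $\rank(Q,\mathbf{v}'), \rank(Q,\mathbf{v}'') > 0$ since $A$ and $C$ are nonzero representations; otherwise the slopes in the statement are not defined and there is nothing to prove.

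Next, set $d_1 := \deg_\alpha(Q,\mathbf{v}')$, $d_2 := \deg_\alpha(Q,\mathbf{v}'')$, $r_1 := \rank(Q,\mathbf{v}')$, $r_2 := \rank(Q,\mathbf{v}'')$, so that $\mu_\alpha(Q,\mathbf{v}') = d_1/r_1$, $\mu_\alpha(Q,\mathbf{v}'') = d_2/r_2$, and $\mu_\alpha(Q,\mathbf{v}) = (d_1+d_2)/(r_1+r_2)$. The inequality $\mu_\alpha(Q,\mathbf{v}') \leq \mu_\alpha(Q,\mathbf{v})$ is equivalent, after clearing the positive denominators $r_1$ and $r_1+r_2$, to $d_1 r_2 \leq d_2 r_1$, which is in turn equivalent (clearing $r_2$ and $r_1+r_2$) to $\mu_\alpha(Q,\mathbf{v}) \leq \mu_\alpha(Q,\mathbf{v}'')$. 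The same algebraic manipulation handles the reverse inequality by flipping the direction of every ``$\leq$'', and the equality case by replacing ``$\leq$'' by ``$=$'' throughout.

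No step here is a genuine obstacle: the lemma is a formal consequence of the additivity of degree and rank together with the elementary mediant inequality. The only thing to be careful about is ensuring that both $r_1$ and $r_2$ are strictly positive so that we may clear denominators without reversing inequalities, which is guaranteed by the hypothesis that the sequence is a short exact sequence of (nonzero) representations.
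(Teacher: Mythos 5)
Your proof is correct, and it is the standard elementary argument: the paper itself gives no proof but simply cites Reineke, whose Lemma 2.2 is established by exactly this additivity-of-degree-and-rank plus mediant-inequality computation. The one caveat you flag (strict positivity of the ranks of $A$ and $C$) is indeed the only point requiring care, and you handle it correctly.
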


Similarly, the following Proposition is contained in 
\cite[Proposition 2.5]{Rei03}. (See also \cite[V.1.13, V.7.17]{Kob87}
for a discussion in the case of holomorphic bundles.)

\begin{proposition}(\cite[Proposition 2.5]{Rei03})\label{prop:maximalsemistable}
Let $Q = ({\mathcal I}, E)$ be a quiver, ${\mathbf v} \in
\Z^{\mathcal I}_{\geq 0}$ a dimension vector, and \(A \in \Rep(Q, {\mathbf
  v}).\) 
Let \(\alpha = (\alpha_{\ell})_{\ell \in {\mathcal I}}\) be a 
central parameter. Then there exists a unique sub-representation $A' \in \Rep(Q, {\bf v'})$ such that
\begin{enumerate}
\item \label{item:slopecondition} $\mu_{\alpha}(Q, {\bf \tilde{v}}) \leq \mu_{\alpha}(Q, {\bf v'})$ for all proper sub-representations $\tilde{A} \in \Rep(Q, {\bf \tilde{v}})$ of $A$, and 
\item \label{item:rankcondition} if $\tilde{A}$ is a proper
  subrepresentation of $A$ with $\mu_{\alpha}(Q, {\bf \tilde{v}}) =
  \mu_{\alpha}(Q, {\bf v})$, then either $\tilde{A} = A'$, or $\rank(Q, {\bf \tilde{v}}) < \rank(Q,
  {\bf v'})$.
\end{enumerate}
Such an $A' \in \Rep(Q, {\bf v'})$ is called the \emph{maximal $\alpha$-semistable
subrepresentation} of $A \in \Rep(Q, {\bf v})$.
\end{proposition}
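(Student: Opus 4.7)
The plan is to prove existence by a maximize-slope-then-maximize-rank argument among subrepresentations of $A$, and then to prove uniqueness by exploiting the sum and intersection subrepresentations together with the slope inequalities of Lemma~\ref{lem:exactsequenceinequality}.

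For existence, I would first note that any subrepresentation $\tilde{A} \in \Rep(Q,\tilde{\mathbf{v}})$ of $A$ has dimension vector bounded componentwise, $0 \leq \tilde{v}_\ell \leq v_\ell$ for every $\ell \in {\mathcal I}$. Hence only finitely many dimension vectors arise, so the set of possible slopes $\mu_\alpha(Q,\tilde{\mathbf{v}})$ is finite. Define
$$\mu^* := \max\{\mu_\alpha(Q, \tilde{\mathbf{v}}) : 0 \neq \tilde{A}\subseteq A\},$$
and among all subrepresentations attaining this maximum, pick one of maximal rank (the ranks are bounded by $\rank(Q,\mathbf{v})$, so this maximum is also attained); call it $A' \in \Rep(Q,\mathbf{v}')$. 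Property~\eqref{item:slopecondition} holds immediately by the definition of $\mu^*$. For Property~\eqref{item:rankcondition}, any proper subrepresentation $\tilde{A}$ of $A$ with $\mu_\alpha(Q,\tilde{\mathbf{v}}) = \mu_\alpha(Q,\mathbf{v}') = \mu^*$ is either $A'$ itself, or (if distinct) has strictly smaller rank by the maximal-rank choice of $A'$ — granting the uniqueness step below.

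For uniqueness, suppose $A_1, A_2$ are two subrepresentations of $A$ both attaining slope $\mu^*$ with maximal rank. Define the sum and intersection subrepresentations componentwise by $(A_1+A_2)_\ell := (V_1)_\ell + (V_2)_\ell$ and $(A_1 \cap A_2)_\ell := (V_1)_\ell \cap (V_2)_\ell$, with the restricted and induced arrow maps; invariance under the arrows of $A$ is immediate from invariance of $V_1, V_2$. There is a short exact sequence of representations of $Q$,
$$0 \to A_1 \cap A_2 \to A_1 \oplus A_2 \to A_1 + A_2 \to 0.$$
Since slope is a weighted average, $\mu_\alpha(A_1 \oplus A_2) = \mu^*$. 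By maximality of $\mu^*$, we have $\mu_\alpha(A_1 \cap A_2) \leq \mu^*$, so Lemma~\ref{lem:exactsequenceinequality} applied to the sequence above forces $\mu_\alpha(A_1 + A_2) \geq \mu^*$. The reverse inequality $\mu_\alpha(A_1 + A_2) \leq \mu^*$ holds again by the definition of $\mu^*$, hence $\mu_\alpha(A_1 + A_2) = \mu^*$. Since $A_1 \subseteq A_1 + A_2$ and $A_1$ has maximal rank among slope-$\mu^*$ subrepresentations, $\rank(A_1) = \rank(A_1+A_2)$, forcing $A_1 = A_1+A_2$; symmetrically $A_2 = A_1 + A_2$, so $A_1 = A_2$.

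The substantive step is the uniqueness argument: I need to verify that sum and intersection of subrepresentations are genuinely subrepresentations (so that the short exact sequence makes sense in $\Rep(Q,\cdot)$), and to apply Lemma~\ref{lem:exactsequenceinequality} in the correct direction. The finiteness of slopes and ranks makes the existence essentially formal, and a brief remark then shows that the $A'$ constructed is automatically $\alpha$-semistable: any proper $A'' \subsetneq A'$ with $\mu_\alpha(A'') > \mu_\alpha(A') = \mu^*$ would also be a proper subrepresentation of $A$ with strictly larger slope than $\mu^*$, contradicting the definition of $\mu^*$.
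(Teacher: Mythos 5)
Your proof is correct, and it is the standard argument; note that the paper itself does not prove this proposition at all --- it is quoted from Reineke \cite[Proposition 2.5]{Rei03} (with a pointer to Kobayashi for the holomorphic-bundle analogue) --- so your write-up supplies exactly the argument the paper takes for granted. Both halves are the expected ones: existence by finiteness of the set of dimension vectors of subrepresentations (hence of realizable slopes), maximizing slope and then rank; uniqueness via the short exact sequence $0 \to A_1 \cap A_2 \to A_1 \oplus A_2 \to A_1 + A_2 \to 0$, additivity of $\deg_\alpha$ and $\rank$, and the see-saw property of Lemma~\ref{lem:exactsequenceinequality}, followed by the rank-maximality argument (where componentwise containment plus equality of total rank does force equality of each $V_\ell$). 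Two small points to tighten. First, in the uniqueness step you should dispose of the case $A_1 \cap A_2 = 0$ separately: there $\mu_\alpha(A_1 \cap A_2)$ is undefined and Lemma~\ref{lem:exactsequenceinequality} does not literally apply, but then $A_1 + A_2 \cong A_1 \oplus A_2$ already has slope $\mu^*$ and the argument proceeds unchanged. Second, you have silently read condition~(\ref{item:rankcondition}) with $\mu_{\alpha}(Q, {\bf v'})$ in place of the printed $\mu_{\alpha}(Q, {\bf v})$, and correspondingly read the maximization in condition~(\ref{item:slopecondition}) as running over all nonzero subrepresentations (so that $A'=A$ when $A$ is semistable); these are the intended readings, and it is worth saying so explicitly since the uniqueness assertion depends on them.
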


With Proposition~\ref{prop:maximalsemistable} in hand, the proof of
the following is also standard (see also \cite[V.1.13, V.7.15]{Kob87}
for the case of holomorphic bundles).

\begin{theorem}(cf. \cite[Prop.2.5]{Rei03})
Let \(Q = ({\mathcal I}, E)\) be a quiver, ${\mathbf v} \in
\Z^{\mathcal I}_{\geq 0}$ a dimension vector, and 
$A \in \Rep(Q, {\bf v})$. 
Let \(\alpha = (\alpha_{\ell})_{\ell \in
{\mathcal I}}\) be a central parameter. 
Then there exists a canonical filtration of $A$ by subrepresentations
\begin{equation}\label{eq:HNfiltration}
0 = A_0 \subsetneq A_1 \subsetneq A_2 \ldots \subsetneq A_L = A,
\end{equation}
where $\displaystyle{A_j \in \Rep \left(Q, \sum_{k=1}^j {\bf v}_k \right)}$.
and each $A_j/A_{j-1}$ is the maximal $\alpha_j$-semistable
subrepresentation of $A/A_{j-1}$ for all $1 \leq j \leq L$, where 
\begin{equation}
\alpha_j : = \left. \alpha \right|_{\Vect(Q, {\bf v}_j)} - \frac{1}{\rank(Q, {\bf v}_j)} \left. \tr \alpha \right|_{\Vect(Q, {\bf v}_j)} \cdot \id
\end{equation}
is the trace-free stability parameter associated to ${\bf v}_j$. This filtration is referred to as the {\em Harder-Narasimhan filtration}.
\end{theorem}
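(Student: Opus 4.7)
The plan is to prove the theorem by induction on the rank $\rank(Q, {\bf v})$, constructing the filtration iteratively by peeling off maximal semistable subrepresentations using Proposition~\ref{prop:maximalsemistable}. The base case $\rank(Q, {\bf v}) = 0$ is trivial, with the filtration $0 = A_0 = A$.

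For the inductive step, first I would apply Proposition~\ref{prop:maximalsemistable} to $A$ with respect to the parameter $\alpha_1$ (the trace-free adjustment associated to $\bf v$) to obtain a unique maximal $\alpha_1$-semistable subrepresentation $A_1 \in \Rep(Q, {\bf v}_1)$. A key preliminary observation, which I would record explicitly, is that shifting $\alpha$ by a scalar multiple $c \cdot \id$ shifts every slope $\mu_\alpha(Q, {\bf v}')$ by the same constant $c$, and therefore does not affect any of the slope stability conditions in Definition~\ref{def:alphastable} or the notion of maximal semistable subrepresentation in Proposition~\ref{prop:maximalsemistable}. Consequently, working with $\alpha_j$ rather than $\alpha$ at each stage is just a bookkeeping device, and one may freely apply Proposition~\ref{prop:maximalsemistable} with either parameter. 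Next, form the quotient $\overline{A} := A/A_1 \in \Rep(Q, {\bf v} - {\bf v}_1)$ as in Definition~\ref{def:quotient-rep}; since $\rank(Q, {\bf v} - {\bf v}_1) < \rank(Q, {\bf v})$, the inductive hypothesis applies and produces a canonical filtration
\begin{equation*}
0 = \overline{A}_0 \subsetneq \overline{A}_1 \subsetneq \cdots \subsetneq \overline{A}_{L-1} = \overline{A}
\end{equation*}
with the desired properties. Pulling this back along the quotient map $A \twoheadrightarrow A/A_1$ and prepending $A_1$ yields the filtration of $A$, setting $A_j$ to be the preimage of $\overline{A}_{j-1}$ for $j \geq 1$.

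To verify that the resulting filtration satisfies the characterization in the theorem, I would check that $A_j/A_{j-1}$ is the maximal $\alpha_j$-semistable subrepresentation of $A/A_{j-1}$ at each stage. For $j = 1$ this is the defining property of $A_1$, and for $j \geq 2$ it follows from the inductive hypothesis together with the identification $(A/A_{j-1}) \cong (A/A_1)/(A_{j-1}/A_1)$. The strict ascent $A_{j-1} \subsetneq A_j$ holds because $\overline{A}_j/\overline{A}_{j-1}$ is a nonzero semistable subrepresentation at each step (again by the inductive construction), guaranteeing termination after finitely many steps. As a corollary, I would derive the standard strict slope inequality $\mu_\alpha(Q, {\bf v}_j) > \mu_\alpha(Q, {\bf v}_{j+1})$ for successive quotients: if equality or a reversed inequality held, then Lemma~\ref{lem:exactsequenceinequality} applied to
\begin{equation*}
\xymatrix{0 \ar[r] & A_j/A_{j-1} \ar[r] & A_{j+1}/A_{j-1} \ar[r] & A_{j+1}/A_j \ar[r] & 0}
\end{equation*}
would contradict the maximality of $A_j/A_{j-1}$ inside $A/A_{j-1}$.

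Uniqueness of the filtration follows by induction from the uniqueness clause in Proposition~\ref{prop:maximalsemistable}: once $A_1$ is forced to be the unique maximal $\alpha$-semistable subrepresentation, the remainder of the filtration is forced by the analogous uniqueness in the quotient $A/A_1$. The main subtlety, and the step I expect to require the most care, is the bookkeeping with the trace-free adjustment $\alpha_j$: one must verify that replacing $\alpha$ by $\alpha_j$ when passing to the quotient $A/A_{j-1}$ does not disturb any of the slope comparisons appearing in Proposition~\ref{prop:maximalsemistable} or in Lemma~\ref{lem:exactsequenceinequality}, which reduces to the invariance of all slope conditions under global scalar shifts of the stability parameter noted above. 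Everything else is a direct transcription of the classical Harder--Narasimhan argument for holomorphic bundles (\cite[V.1.13, V.7.15]{Kob87}) into the quiver setting, using Lemma~\ref{lem:exactsequenceinequality} in place of the corresponding slope inequality for exact sequences of sheaves.
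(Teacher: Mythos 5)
Your proof is correct and is exactly the argument the paper intends: the paper omits the proof entirely, remarking only that with Proposition~\ref{prop:maximalsemistable} in hand the result is standard (citing Reineke and the holomorphic-bundle case in Kobayashi), and your induction on rank --- peeling off the maximal semistable subrepresentation, passing to the quotient, and noting that the trace-free adjustment $\alpha_j$ is a scalar shift that leaves all slope comparisons in Proposition~\ref{prop:maximalsemistable} and Lemma~\ref{lem:exactsequenceinequality} unchanged --- is precisely that standard argument, with the uniqueness and strict slope decrease handled correctly.
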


We will call the length $L$ of this sequence~\eqref{eq:HNfiltration}
the {\em Harder-Narasimhan $\alpha$-length} of $A$. In particular, an
$\alpha$-semistable representation has Harder-Narasimhan
$\alpha$-length $1$. We will often abbreviate ``Harder-Narasimhan" as ``H-N''.  From the H-N filtration we may read off the following parameters. 

\begin{definition}\label{def:HNtype}(cf. Definition 2.6 of \cite{Rei03})
The {\em Harder-Narasimhan (H-N) type} of the filtration \eqref{eq:HNfiltration} is the vector ${\bf v^*} = ({\bf v}_1, \ldots, {\bf v}_L)$. The \emph{slope vector} associated to ${\bf v^*}$ is the ordered $\rank(Q, {\bf v})$-tuple
\begin{equation}
\left( \mu_{\alpha}(Q,{\bf v}_1), \ldots, \mu_{\alpha}(Q,{\bf v}_1), \ldots, \mu_{\alpha}(Q,{\bf v}_L), \ldots, \mu_{\alpha}(Q, {\bf v}_L) \right)
\in \R^{\rank(Q,{\bf v})},
\end{equation}
where there are $\rank(Q,{\bf v_j})$ terms equal to
$\mu_{\alpha}(Q,{\bf v}_j)$ for all $1 \leq j \leq L$. 
\end{definition}

\begin{definition}\label{def:HNstratification}
  Given a fixed parameter $\alpha$, let $\Rep(Q, {\bf v})_{{\bf v}^*} \subseteq \Rep(Q,{\bf v})$ denote the subset of
  representations of H-N type ${\bf v^*}$ with
  respect to the stability parameter $\alpha$. We call this the {\em Harder-Narasimhan (H-N)
    stratum of $\Rep(Q, {\mathbf v})$ of H-N type ${\bf v}^*$}. The {\em Harder-Narasimhan stratification} is the decomposition of $\Rep(Q, {\bf v})$ indexed by the (finite) set of all types:
\begin{equation}
\Rep(Q, {\bf v}) = \bigcup_{{\bf v}^*} \Rep(Q, {\bf v})_{{\bf v}^*}.
\end{equation}
\end{definition}

In \cite[Definition 3.6]{Rei03}, Reineke describes a partial ordering
on the set of H-N types analogous to the H-N partial ordering in
\cite{AtiBot83}, and in \cite[Proposition 3.7]{Rei03} he shows that the
closures of the subsets behave well with respect to this partial
order, i.e.
\begin{equation}\label{eq:HNclosures-partialorder} 
\overline{\Rep(Q, {\bf v})}_{{\bf v}^*} \subseteq \bigcup_{{\bf w}^* \geq {\bf v^*}} \Rep(Q, {\bf v})_{{\bf w}^*}. 
\end{equation}

Moreover, this decomposition is invariant under the natural
action of $G_{\C}$ on $\Rep(Q,{\bf v})$, since the definition of the
Harder-Narasimhan type of a representation is an isomorphism
invariant. 

\subsection{Comparison of stratifications}\label{subsec:equivstratification}

The main result of this section is that the analytic Morse 
stratification of $\Rep(Q, {\mathbf v})$ obtained in
Section~\ref{sec:gradientflow} is the same as the H-N stratification from Section~\ref{subsec:slope-stability-def}.

\begin{theorem}\label{theorem:equivstratification}
  Let $Q = ({\mathcal I}, E)$ be a quiver, 
 \({\bf v} \in \Z^{\mathcal I}_{\geq 0}\) a dimension vector, 
  $\Rep(Q, {\mathbf v})$ its associated representation space, and
  \(\Phi: \Rep(Q, {\mathbf v}) \to \g^{*} \cong \g \cong \prod_{\ell
    \in {\mathcal I}} \u(V_{\ell})\) a moment map for the standard
  Hamiltonian action of $G = \prod_{\ell \in {\mathcal I}}
  U(V_{\ell})$ on $\Rep(Q, {\mathbf v})$. 
Then the algebraic stratification of $\Rep(Q, {\bf v})$ by
Harder-Narasimhan type (as in Definition \ref{def:HNstratification})
coincides with the
analytic Morse stratification of $\Rep(Q, {\bf v})$ by the negative 
gradient flow of $f = \| \Phi - \alpha \|^2$ (as in Definition
\ref{def:Morsestratification}).
\end{theorem}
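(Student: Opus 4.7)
The plan is to prove the equality $S_{{\bf v}^*} = \Rep(Q,{\bf v})_{{\bf v}^*}$ for each type ${\bf v}^*$, using the observation that both collections partition $\Rep(Q,{\bf v})$ and are indexed by the same finite set of types, so one inclusion (for every type) implies equality.

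First I would show that a critical point of $f = \|\Phi - \alpha\|^{2}$ has H-N type equal to its critical type. Given a critical point $A$ with its canonical eigenspace splitting $A = \bigoplus_{\lambda} A_\lambda$ from Proposition~\ref{prop:momentmapdetermined}, one has $\Phi_\lambda(A_\lambda) = \alpha|_{V_\lambda} - i\lambda \cdot \id$. Applying Kempf-Ness (Lemma~\ref{lem:KempfNess}) to the shifted central parameter $\alpha|_{V_\lambda} - i\lambda \cdot \id$ shows $A_\lambda$ is polystable with respect to this parameter; since shifting a central parameter by a scalar multiple of the identity changes every slope by the same constant, slope comparisons between subrepresentations are preserved, so $A_\lambda$ is $\alpha$-semistable of slope $-\lambda$ (in fact $\alpha$-polystable). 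Ordering the eigenvalues so that $-\lambda_1 > \cdots > -\lambda_m$, the partial-sum filtration of the eigenspaces is precisely the H-N filtration of $A$, so the H-N type of $A$ equals its critical type. In particular $C_{{\bf v}^*} \subseteq \Rep(Q,{\bf v})_{{\bf v}^*}$.

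Next I would obtain one inclusion by a semicontinuity argument. By Lemma~\ref{lem:gradientflowgenerated} the gradient trajectory $\gamma(A,t)$ lies in the $G_\C$-orbit of $A$ for finite $t$, so $A_\infty := \lim_{t\to\infty}\gamma(A,t) \in \overline{G_\C \cdot A}$. Since H-N type is a $G_\C$-invariant, $G_\C \cdot A$ lies in $\Rep(Q,{\bf v})_{{\bf u}^*}$ for ${\bf u}^*$ the H-N type of $A$, and Reineke's closure description~\eqref{eq:HNclosures-partialorder} then forces $A_\infty$ to have H-N type ${\bf w}^* \geq {\bf u}^*$. By the previous step ${\bf w}^*$ is also the critical type of $A_\infty$, so if $A \in S_{{\bf v}^*}$ then ${\bf u}^* \leq {\bf v}^*$, proving $S_{{\bf v}^*} \subseteq \bigcup_{{\bf u}^* \leq {\bf v}^*} \Rep(Q,{\bf v})_{{\bf u}^*}$.

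For the reverse inclusion I would argue by induction on the partial order of H-N types, with the $\alpha$-semistable ``top'' stratum as the base case: if $A$ is $\alpha$-semistable, Kempf-Ness gives a point of $\Phi^{-1}(\alpha) \cap \overline{G_\C \cdot A}$, and convergence of the flow together with the characterization of critical points forces $A_\infty$ to be $\alpha$-polystable of the same slope, so $A \in S_{({\bf v})}$. For general H-N type ${\bf u}^* = ({\bf v}_1, \ldots, {\bf v}_L)$, form the associated graded representation $\bigoplus_j A_j/A_{j-1}$, whose summands are $\alpha$-semistable of the prescribed slopes, so by the base case applied block-wise this graded object flows to a critical point of critical type ${\bf u}^*$. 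The standard one-parameter degeneration of a filtered object produces $g_s \in G_\C$ with $g_s \cdot A$ converging to the graded object as $s \to 0$, and the distance-decreasing formula (Theorem~\ref{thm:distancedecreasing} together with Remark~\ref{rem:momentmapclosealongflow}) gives uniform-in-$t$ control on how close the $G$-orbit of $\gamma(g_s\cdot A, t)$ remains to that of the flow starting at the graded object. Since distinct critical types correspond to disjoint closed subsets of $\Crit(f)$, the limits must share the same critical type, which combined with Lemma~\ref{lem:close-convergence} yields $A \in S_{{\bf u}^*}$.

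The main obstacle will be the transfer step in the induction: the distance-decreasing formula compares flows whose initial conditions lie in a common $G_\C$-orbit, but the graded object lies only in $\overline{G_\C \cdot A}$, so one must pass to a limit in $s$ while keeping the bound from Proposition~\ref{prop:momentmapclose} uniform in the flow time $t$. The combination of that uniformity with Lemma~\ref{lem:close-convergence} is what forces the critical types of the limits to agree; once this is established, the two inclusions give the claimed equality of stratifications.
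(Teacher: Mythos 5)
Your skeleton (identify critical types with H-N types at critical points, get one inclusion from Reineke's closure property \eqref{eq:HNclosures-partialorder}, and get the other by degenerating $A$ to its graded object and flowing) overlaps substantially with the paper's strategy — your degeneration step is essentially Lemma~\ref{lem:cangetclose}. But the step you flag as "the main obstacle" is a genuine gap, not a technicality, and the tool you propose for it cannot close it. Theorem~\ref{thm:distancedecreasing} and Remark~\ref{rem:momentmapclosealongflow} compare two flows whose initial conditions are related by a \emph{fixed} $g_0\in G_\C$ with $\sigma(g_0^*g_0)$ small; the graded object $A^{gr}$ is not in $G_\C\cdot A$ but only in its closure, and the connecting elements $g_{s'}g_s^{-1}$ between nearby points $g_s\cdot A$ of the orbit have $\sigma\to\infty$ as $s'\to 0$. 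So there is no uniform-in-$t$ comparison between $\gamma(g_s\cdot A,\cdot)$ and $\gamma(A^{gr},\cdot)$ available from these results, and your induction does not go through as stated.

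The paper closes this gap with two ingredients you do not invoke. First, Lemma~\ref{lem:H-N-infimum}: $f(A)\geq f({\bf v}^*)$ for every $A$ in the H-N stratum of type ${\bf v}^*$. This is what pins down the limiting critical \emph{value} for a point of the stratum that starts near $C_{{\bf v}^*}$ (it cannot flow below $f(C_{{\bf v}^*})$ because it stays in the stratum, and it cannot land above because it started close); combined with Lemma~\ref{lem:close-convergence} / Lemma~\ref{lem:nearby-limit} and the separation of critical sets of distinct types (Lemma~\ref{lem:critical-distance}), this gives a neighbourhood $V_{{\bf v}^*}$ of $C_{{\bf v}^*}$ with $V_{{\bf v}^*}\cap \Rep(Q,{\bf v})_{{\bf v}^*}\subset S_{{\bf v}^*}$ (Lemma~\ref{lem:neighbourhood-subset}). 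Second, instead of comparing the flow of $A$ with the flow of $A^{gr}$, the paper proves that $\Rep(Q,{\bf v})_{{\bf v}^*}\cap S_{{\bf v}^*}$ is $G_\C$-invariant by an open-closed argument on $\mathcal{G}_A\subseteq G_\C$ (Lemma~\ref{lem:intersection-invariant}); there the distance-decreasing formula is applied legitimately, to initial conditions $g_k\cdot A$ and $g_\infty\cdot A$ lying in a common orbit with $\sigma\bigl(g_\infty g_k^{-1}(g_\infty g_k^{-1})^*\bigr)$ small. Membership of $A$ in $S_{{\bf v}^*}$ is then propagated from the point $g\cdot A$ near $C_{{\bf v}^*}$ back to $A$ by this invariance, rather than by any flow-to-flow comparison with the graded object. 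To repair your argument you would need to supply both the infimum bound and the $G_\C$-invariance step (or an equivalent substitute).
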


The key steps in the proof of this theorem are: Lemma \ref{lem:cangetclose}, which shows that the closure of every $G^\C$-orbit in $\Rep(Q, {\bf v})_{\bf v^*}$ intersects the critical set $C_{\bf v^*}$, Lemma \ref{lem:neighbourhood-subset}, which shows that the two stratifications co-incide on a neighbourhood of the critical set, and Lemma \ref{lem:intersection-invariant}, which extends this result to any $G^\C$-orbit in $\Rep(Q, {\bf v})_{\bf v^*}$ that intersects this neighbourhood.

Firstly, we prove some background results needed for Lemmas \ref{lem:neighbourhood-subset} and \ref{lem:intersection-invariant}.

\begin{definition}\label{def:function-type}
Given a Harder-Narasimhan type ${\bf v^*}$ with corresponding slope vector $\nu$, let $\Lambda_{\bf v^*}$ and $\Lambda_\nu$ both denote the $\rank(Q, {\bf v}) \times \rank(Q, {\bf v})$ matrix with diagonal entries corresponding to the elements of $ \nu$. Define $f({\bf v^*}) : = \| \Lambda_{\bf v^*} \|^2$ and $f(\nu) := \| \Lambda_{\bf v^*} \|^2$. 
\end{definition}

\begin{lemma}\label{lem:H-N-infimum}
Let $A \in \Rep(Q, {\bf v})_{\bf v^*}$. Then $f(A) \geq f({\bf v^*})$.
\end{lemma}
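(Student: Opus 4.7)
The plan is to adapt the Atiyah--Bott type estimate from \cite{AtiBot83} bounding the Yang--Mills functional below by the Harder--Narasimhan type, with curvature replaced by the moment map. Given $A \in \Rep(Q, {\bf v})_{\bf v^*}$, let $0 = A_0 \subsetneq A_1 \subsetneq \cdots \subsetneq A_L = A$ be its Harder--Narasimhan filtration, with successive quotients of dimension vector ${\bf v}_j$ and slope $\mu_j := \mu_\alpha(Q, {\bf v}_j)$ satisfying $\mu_1 > \mu_2 > \cdots > \mu_L$. Write $V_{(j)} \subseteq \Vect(Q, {\bf v})$ for the underlying vector space of $A_j$, $P_j$ for orthogonal projection onto $V_{(j)}$, and $H := i(\Phi(A) - \alpha)$, a Hermitian endomorphism of $\Vect(Q, {\bf v})$; the task is to prove $\|H\|^2 \geq \|\Lambda_{\bf v^*}\|^2$.

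First I would compute the partial trace $T_j := \tr(P_j H)$. Since $A_j$ is a subrepresentation, for every arrow $a \in E$ one may write
\[
A_a = \begin{pmatrix} (A_j)_a & B_a^{(j)} \\ 0 & C_a^{(j)} \end{pmatrix}
\]
in block form with respect to the orthogonal decompositions $V_{\out(a)} = V_{\out(a),(j)} \oplus V_{\out(a),(j)}^\perp$ and similarly at the head, the vanishing lower-left block being precisely the invariance $A_a(V_{\out(a),(j)}) \subseteq V_{\inw(a),(j)}$. A direct computation using~\eqref{eq:formula-Kahler-mom-map-RepQv} then shows that the contributions of $(A_j)_a(A_j)_a^*$ and $(A_j)_a^*(A_j)_a$ cancel in trace, leaving
\[
\tr(P_j \cdot i\Phi(A)) = -\tfrac{1}{2}\sum_{a \in E}\|B_a^{(j)}\|^2 \leq 0.
\]
Writing $D_j := \deg_\alpha(Q, {\bf v}_1 + \cdots + {\bf v}_j) = \tr(P_j \cdot i\alpha)$, this yields the key inequality $T_j \leq -D_j$ for $1 \leq j \leq L-1$, together with $T_0 = 0$ and $T_L = -D_L = 0$, where the last equality uses the standing assumption $\tr(\alpha) = 0$.

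Next, let $\Lambda := \Lambda_{\bf v^*}$ act on $\Vect(Q, {\bf v})$ as $\mu_j \cdot \id$ on the graded piece $V_{(j)}/V_{(j-1)}$, so $\Lambda = \sum_{j=1}^L \mu_j(P_j - P_{j-1})$ and $\|\Lambda\|^2 = \sum_j \rank(Q, {\bf v}_j)\,\mu_j^2 = f({\bf v^*})$. Expanding $\|H + \Lambda\|^2 \geq 0$ reduces the desired bound to showing $\tr(H\Lambda) \leq -\|\Lambda\|^2$, which I would prove by two rounds of summation by parts. First,
\[
\tr(H\Lambda) = \sum_{j=1}^L \mu_j(T_j - T_{j-1}) = \sum_{j=1}^{L-1}(\mu_j - \mu_{j+1})\, T_j,
\]
which by $\mu_j - \mu_{j+1} > 0$ and $T_j \leq -D_j$ is at most $-\sum_{j=1}^{L-1}(\mu_j - \mu_{j+1}) D_j$; a second Abel summation, together with $D_L = 0$, collapses this last sum to $\sum_j \deg_\alpha(Q, {\bf v}_j)\,\mu_j = \sum_j \rank(Q, {\bf v}_j)\,\mu_j^2 = \|\Lambda\|^2$, as required.

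The main obstacle will be the block computation giving $\tr(P_j \cdot i\Phi(A)) \leq 0$: this is where the algebraic input that $A_j$ is a subrepresentation of $A$ is converted into an analytic lower bound on $f(A)$. Once this partial-trace inequality is in hand, the remainder is linear algebra --- nonnegativity of $\|H + \Lambda\|^2$ and two applications of summation by parts using that the Harder--Narasimhan slopes are strictly decreasing.
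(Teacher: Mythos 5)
Your proof is correct, and its core is the same as the paper's: you write $A$ in block upper-triangular form with respect to the Harder--Narasimhan filtration and derive the partial-trace inequality $T_j \le -D_j$, which is exactly the paper's estimate \eqref{eqn:degree-lower-bound} in different notation. The only divergence is the final step: where the paper converts the partial-trace inequalities into the norm bound by citing the convexity results of \cite[Section 12]{AtiBot83}, you make that step explicit and elementary by expanding $\|H+\Lambda_{\bf v^*}\|^2 \ge 0$ and using two Abel summations (with $T_0 = T_L = D_L = 0$) to get $\tr(H\Lambda_{\bf v^*}) \le -\|\Lambda_{\bf v^*}\|^2$; this is a self-contained replacement for the cited convexity argument and checks out, since only $\mu_j - \mu_{j+1} \ge 0$ and $T_j \le -D_j$ are needed.
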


\begin{proof}
Let $L$ be the length of the Harder-Narasimhan filtration. With respect to this filtration, the representation has the following form for each $a \in E$.
\begin{equation} \label{eqn:H-N-decomposition}
A_a = \left(
    \begin{matrix} A_a^1 & \eta_a^{1,2} & \eta_a^{1,3} & \cdots &
      \eta_a^{1,L} \\ 0 & A_a^2 & \eta_a^{2,3} & \cdots & \eta_a^{2,L}
      \\ \vdots & \ddots & \ddots & \ddots & \vdots \\ \vdots & \ddots
      & \ddots & \ddots & \eta_a^{L-1,L} \\ 0 & \cdots & \cdots & 0 &
      A_a^L \end{matrix} \right). 
\end{equation} 
The moment map $\Phi(A) - \alpha = i \sum_{a \in E} \left[ A_a, A_a^*\right] - \alpha$ can then be expressed in terms of the filtration, with block-diagonal terms 
\begin{equation}
  \beta_j = i \sum_{a \in E} \left( [A_a^j, (A_a^j)^* ] + \sum_{k > j}
    \eta_a^{j,k} ( \eta_a^{j,k} )^* - \sum_{k < j} (\eta_a^{k,j})^*
    \eta_a^{k,j} \right) - \alpha_j. \end{equation} 
Therefore we have
\begin{equation} \label{eqn:moment-map-terms}
- i \tr \beta_j = \sum_{a \in E} \left( \sum_{k > j}  \tr \eta_a^{j,k} ( \eta_a^{j,k} )^* - \sum_{k < j} \tr (\eta_a^{k,j})^* \eta_a^{k,j} \right) + \deg_\alpha(Q, {\bf v_j}).
\end{equation} 
Taking the sum over $j$ from $1$ to $\ell$, a computation
shows that for all $\ell \leq L$ we have 
\begin{equation} \label{eqn:degree-lower-bound}
-i \sum_{j=1}^\ell \tr \beta_j \geq \sum_{j=1}^\ell \deg_\alpha(Q, {\bf v_j}). 
 \end{equation} 
Induction on $j$ in \eqref{eqn:moment-map-terms} shows that equality for all $\ell$ in \eqref{eqn:degree-lower-bound} occurs if and only if $\eta_a^{j,k} = 0$ for all $a \in E$ and $j < k$. 

Let $\tilde{\nu} = ( \tilde{\nu}_1, \ldots, \tilde{\nu}_{\rank(Q, {\bf v})})$, where $\tilde{\nu}_k = -i \frac{1}{\rank(Q, {\bf v}_{j})} \tr \beta_j$ if $\sum_{\ell = 1}^{j-1} \rank(Q, {\bf v}_{\ell}) < k \leq \sum_{\ell=1}^j \rank(Q, {\bf v}_{\ell})$. Then the results of \cite[Section 12]{AtiBot83} for the norm-square function $\| \cdot \|^2 : \mathfrak{u}(\Vect(Q, {\bf v})) \rightarrow \R$ (which is a convex invariant function) show that
\begin{equation} 
f(A) = \| \Phi(A) - \alpha \|^2  \geq f(\tilde{\nu}) \geq f(\nu),
\end{equation}
where the last inequality follows from \eqref{eqn:moment-map-terms}.
\end{proof}

\begin{lemma}\label{lem:cangetclose}
Given a Harder-Narasimhan type ${\bf v^*}$ and any $A \in \Rep(Q, {\bf v})_{\bf v^*}$, there exists $A_\infty \in C_{\bf v^*}$ such that for all $\varepsilon > 0$ there exists $g \in G_\C$ such that $\| g \cdot A - A_\infty \| < \varepsilon$.
\end{lemma}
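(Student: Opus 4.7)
The plan is to take $A_\infty$ to be the graded object associated to the Harder-Narasimhan-Jordan-H\"older filtration of $A$, and to approach $A_\infty$ from $G_\C \cdot A$ in two stages: first kill the off-diagonal blocks of the H-N filtration using a one-parameter subgroup of $G_\C$, then on each diagonal block use Lemma \ref{lem:KempfNess} to approximate an $\alpha_j$-polystable representation.

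First I would fix a Hermitian splitting $V_\ell = \bigoplus_{j=1}^L V_\ell^j$ of each $V_\ell$ compatible with the H-N filtration of $A$, so that $A$ takes the block upper-triangular form displayed in \eqref{eqn:H-N-decomposition}, with diagonal blocks $A^j \in \Rep(Q, {\bf v}_j)$ each $\alpha_j$-semistable. Pick real numbers $c_1 > c_2 > \cdots > c_L$ and define a one-parameter subgroup $\lambda : \C^* \to G_\C$ by $\lambda(t)|_{V_\ell^j} = t^{c_j} \cdot \id$. Conjugation by $\lambda(t)$ scales the $(j,k)$ off-diagonal block of each $A_a$ by $t^{c_j - c_k}$; for $j < k$ this exponent is strictly positive, so
\begin{equation*}
\lim_{t \to 0^+} \lambda(t) \cdot A \;=\; A^1 \oplus A^2 \oplus \cdots \oplus A^L \;\in\; \overline{G_\C \cdot A}.
\end{equation*}

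Next I would apply Lemma \ref{lem:KempfNess} to each $\alpha_j$-semistable summand $A^j$: the $G_\C^j$-orbit closure of $A^j$ meets the appropriate moment map level set, yielding an $\alpha_j$-polystable representation $A^j_\infty$ satisfying $i(\Phi_j(A^j_\infty) - \alpha|_{\Vect(Q, {\bf v}_j)}) = -\mu_\alpha(Q, {\bf v}_j) \cdot \id$. Set $A_\infty := \bigoplus_j A^j_\infty$; since the diagonal embedding $\prod_j G_\C^j \hookrightarrow G_\C$ preserves the block-diagonal form of $\bigoplus_j A^j$, this yields $A_\infty \in \overline{G_\C \cdot A}$. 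Because the H-N slopes $\mu_\alpha(Q, {\bf v}_j)$ are strictly decreasing in $j$, the decomposition $A_\infty = \bigoplus_j A^j_\infty$ is exactly the eigenspace decomposition of $i(\Phi(A_\infty) - \alpha)$ with distinct eigenvalues $-\mu_\alpha(Q, {\bf v}_j)$; Proposition~\ref{prop:momentmapdetermined} then identifies $A_\infty$ as a critical point of $f = \|\Phi - \alpha\|^2$ whose critical type is $({\bf v}_1, \ldots, {\bf v}_L) = {\bf v^*}$, so $A_\infty \in C_{\bf v^*}$.

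For the final estimate, given $\varepsilon > 0$ choose $t$ small enough so that $\|\lambda(t) \cdot A - \bigoplus_j A^j\| < \varepsilon/2$, then choose $(g_j) \in \prod_j G_\C^j$ with $\|\bigoplus_j g_j \cdot A^j - A_\infty\| < \varepsilon/2$, and set $g := (\prod_j g_j)\,\lambda(t) \in G_\C$. The main subtlety is that Lemma~\ref{lem:KempfNess} only guarantees the polystable representative in the \emph{closure} of the orbit rather than in the orbit itself, which is precisely why the lemma is phrased as convergence of a family $g_\varepsilon \cdot A \to A_\infty$ instead of an exact $G_\C$-equivalence; a secondary point is checking that the shifted trace-free stability parameter $\alpha_j$ used in the H-N filtration is consistent with the shift $-\mu_\alpha(Q,{\bf v}_j) \cdot \id$ appearing in Proposition~\ref{prop:momentmapdetermined}, but this is immediate from Definition~\ref{def:degree-slope}.
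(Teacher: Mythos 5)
Your proposal follows essentially the same route as the paper: decompose $A$ into block upper-triangular form via the H-N filtration, apply Lemma~\ref{lem:KempfNess} to each $\alpha_j$-semistable diagonal block to produce the critical limit $A_\infty$, and use a one-parameter subgroup to shrink the off-diagonal extension classes. The one point to fix is the order of your final choices: you pick $t$ before the $(g_j)$, but $g\cdot A = (\prod_j g_j)\,\lambda(t)\cdot A$ has off-diagonal blocks of the form $t^{c_j-c_k}\, g_j\,\eta_a^{j,k}\, g_k^{-1}$, and conjugation by the $g_j$ (which may have large operator norm) can inflate these; since the $g_j$ do not depend on $t$, you should choose them first and then take $t$ small enough to control $t^{c_j-c_k}\|g_j\,\eta_a^{j,k}\,g_k^{-1}\|$, exactly as the paper does with $\tilde h_t\cdot\tilde g\cdot A$.
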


\begin{proof}

As for the previous proof, decompose each $A_a$ in terms of the Harder-Narasimhan filtration as in \eqref{eqn:H-N-decomposition}. With respect to this filtration, denote the vector spaces for the successive quotient representations by $\Vect(Q, {\bf v_j})$ for $j = 1, \ldots, L$, and let
\begin{equation}\label{eqn:quotient-parameter}
\alpha_j  = \left. \alpha \right|_{\Vect(Q, {\bf v}_j)} - \frac{1}{\rank(Q, {\bf v}_j)} \left. \tr \alpha \right|_{\Vect(Q, {\bf v}_j)} \cdot \id
\end{equation}
denote the trace-free stability parameters associated to the subspaces $\Vect(Q, {\bf v}_j)$. Also let $\Lambda_{\nu_j} = \mu_\alpha(Q, {\bf v}_j) \cdot \id$ be the associated diagonal matrices, as defined in Definition \ref{def:function-type}. Note that $\alpha_j - \left. \alpha \right|_{\Vect(Q, {\bf v}_j)} = - \frac{1}{\rank(Q, {\bf v}_j)} \left. \tr \alpha \right|_{\Vect(Q, {\bf v}_j)} \cdot \id = i \mu_\alpha(Q, {\bf v}_j) \cdot \id = i \Lambda_{\nu_j}$. Since each $A^j$ is $\alpha$-semistable, then Lemma \ref{lem:KempfNess} applied to each subrepresentation shows that for any $\delta > 0$ there exists 
\begin{equation*}
\tilde{g} = \left( \begin{matrix} \tilde{g}_1 & 0 & \cdots & 0 \\ 0 & \tilde{g}_2 & \cdots & 0 \\ \vdots & \vdots & \ddots & \vdots \\ 0 & 0 & \cdots & \tilde{g}_L \end{matrix} \right) \in G_\C,
\end{equation*}
such that 
\begin{equation*}
\sum_{j = 1}^L \left\| \Phi( \tilde{g_j} \cdot A^j ) - \alpha - i\Lambda_{\nu_j} \right\|^2 = \sum_{j=1}^L \left\| \Phi( \tilde{g}_j \cdot A^j) - \alpha_j \right\|^2 < \frac{1}{2} \delta .
\end{equation*}
(Recall that $\alpha_j$ is the trace-free stability parameter for the representation $A^j \in \Rep(Q, {\bf v_j})$ used in Lemma \ref{lem:KempfNess}.) In particular, as a result of Lemma \ref{lem:KempfNess} and the description of the critical sets in Proposition \ref{prop:momentmapdetermined}, the block diagonal part $A^{gr}$ of this representation $\tilde{g} \cdot A$ (the graded object of the Harder-Narasimhan filtration) is close to a critical point of $f$, i.e. there exists $A_\infty \in C_{\bf v^*}$ such that $\| A^{gr} - A_\infty \| < \frac{1}{2} \varepsilon$. Note that (up to $G$-equivalence) $A_\infty$ is determined by $A$, since it is determined by  Lemma \ref{lem:KempfNess} and the graded object of the H-N filtration of $A$. Given any $t \in \R^*$, apply a $G_\C$-transformation of the form 
\begin{equation*}
\tilde{h}_t = \left( \begin{matrix} t^L & 0 & \cdots & 0 \\ 0 & t^{L-1} & \cdots & 0 \\ \vdots & \vdots & \ddots & \vdots \\ 0 & 0 & \cdots & t \end{matrix} \right)
\end{equation*}
 to $\tilde{g} \cdot A$ to obtain
\begin{equation}
\tilde{h}_t \cdot \tilde{g} \cdot A = \left( \begin{matrix} \tilde{g}_1 A_a^1 \tilde{g}_1^{-1} & t \, \tilde{g}_1 \eta_a^{1,2} \tilde{g}_2^{-1} & t^2 \, \tilde{g}_1 \eta_a^{1,3} \tilde{g}_3^{-1} & \cdots & t^{L-1} \, \tilde{g}_1 \eta_a^{1,L} \tilde{g}_L^{-1} \\ 0 & \tilde{g}_2 A_a^2 \tilde{g}_2^{-1}  & t \, \tilde{g}_2 \eta_a^{2,3} \tilde{g}_3^{-1} & \cdots & t^{L-2} \, \tilde{g}_2 \eta_a^{2,L} \tilde{g}_L^{-1} \\ \vdots & \ddots & \ddots & \ddots & \vdots \\ \vdots & \ddots & \ddots & \ddots & t \, \tilde{g}_{L-1} \eta_a^{L-1,L} \tilde{g}_L^{-1} \\ 0   & \cdots & \cdots & 0 & \tilde{g}_L A_a^L \tilde{g}_L^{-1} \end{matrix} \right).
\end{equation}
For $t > 0$ small enough, this has the effect of scaling the extension classes $\eta_a^{j,k}$ so that  $t^{k-j} \sum_a \sum_{j,k} \| \tilde{g}_j \eta_a^{j,k} \tilde{g}_k^{-1} \| < \frac{1}{2} \varepsilon$. Combining this with the previous estimate for $\| A^{gr} - A_\infty \|$ shows that $\| \tilde{h}_t \cdot \tilde{g} \cdot A - A_\infty \| < \varepsilon$, as required.
\end{proof}

The next lemma is a restatement of Lemma \ref{lem:close-convergence} in terms of representations of quivers.

\begin{lemma}\label{lem:nearby-limit}
Let $A_\infty \in C_{\bf v^*}$ for some Harder-Narasimhan type ${\bf v^*}$. Then for any $\varepsilon_2 > 0$ there exists $\varepsilon_1 > 0$ such that for any $A$ satisfying $\| A - A_\infty \| < \varepsilon_1$ and $f\left(\lim_{t \rightarrow \infty} \gamma(A,t) \right) = f(A_\infty)$, then
$$
\left\| \lim_{t \rightarrow \infty} \gamma(A,t) - A_\infty \right\| < \frac{1}{2} \varepsilon_2  .
$$
\end{lemma}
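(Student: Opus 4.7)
The plan is to obtain this statement as a direct application of Lemma \ref{lem:close-convergence}, with $V = \Rep(Q, \mathbf{v})$, the linear $G$-action as in Section \ref{subsec:intro-quivers}, and the critical point $x_\infty$ specialized to $A_\infty \in C_{\mathbf{v}^*}$. Everything in the hypotheses of the present lemma lines up with the hypotheses of Lemma \ref{lem:close-convergence}: the norm-square function $f = \|\Phi - \alpha\|^2$ is defined on the hermitian vector space $\Rep(Q,\mathbf{v})$ with its linear $G$-action, and the negative gradient flow $\gamma(A,t)$ is the one considered throughout Section \ref{sec:gradientflow}.

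First, given $\varepsilon_2 > 0$, I would apply Lemma \ref{lem:close-convergence} with the auxiliary constant chosen to be $\varepsilon := \tfrac{1}{2}\varepsilon_2$. That lemma supplies a corresponding $\delta > 0$ (depending on $A_\infty$ and on $\tfrac{1}{2}\varepsilon_2$) such that whenever $\|A - A_\infty\| < \delta$, the Lojasiewicz inequality \eqref{eqn:lojasiewicz} holds at $A$ with respect to $A_\infty$, and moreover any $A$ satisfying the additional hypothesis $f(\lim_{t \to \infty} \gamma(A,t)) = f(A_\infty)$ must have $\|\lim_{t \to \infty} \gamma(A,t) - A_\infty\| < \tfrac{1}{2}\varepsilon_2$.

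Now I would simply set $\varepsilon_1 := \delta$. For any $A$ with $\|A - A_\infty\| < \varepsilon_1$ and $f(\lim_{t \to \infty} \gamma(A,t)) = f(A_\infty)$, the two conditions of Lemma \ref{lem:close-convergence} are satisfied by construction, so the conclusion $\|\lim_{t \to \infty} \gamma(A,t) - A_\infty\| < \tfrac{1}{2}\varepsilon_2$ follows immediately. There is no genuine obstacle here, since as the authors themselves flag, this lemma is just a translation of the convergence estimate of Section \ref{sec:gradientflow} into quiver notation; the one small bookkeeping point is to remember to use $\tfrac{1}{2}\varepsilon_2$ in place of $\varepsilon$ when invoking Lemma \ref{lem:close-convergence}, which is why the resulting $\varepsilon_1$ depends on $\varepsilon_2$ (and on $A_\infty$) but not on $A$ itself.
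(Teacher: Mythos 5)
Your proposal is correct and is exactly the paper's approach: the authors introduce Lemma \ref{lem:nearby-limit} precisely as ``a restatement of Lemma \ref{lem:close-convergence} in terms of representations of quivers,'' and the only content is the specialization $V = \Rep(Q,{\bf v})$, $x_\infty = A_\infty$, together with the bookkeeping substitution $\varepsilon = \tfrac{1}{2}\varepsilon_2$ and $\varepsilon_1 = \delta$, which you carry out correctly.
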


\begin{lemma}\label{lem:critical-distance}
Let $A_1 \in C_{\bf v^*}$ and $A_2 \in C_{\bf w^*}$ with ${\bf v^*} \neq {\bf w^*}$. Then there exists $C>0$ (depending only on ${\bf v^*}$ and ${\bf w^*}$) such that $\| \Phi(A_1) - \Phi(A_2) \| \geq C$, and
\begin{equation}
\left\| A_1 - A_2 \right\| \geq \frac{C}{2(\| A_1 \| + \| A_2 \|)}. 
\end{equation}
\end{lemma}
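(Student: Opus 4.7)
The plan is to first establish a lower bound $\|\Phi(A_1) - \Phi(A_2)\| \geq C$ depending only on ${\bf v^*}$ and ${\bf w^*}$, and then deduce the second estimate using the fact that $\Phi$ is quadratic in $A$.

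By Proposition~\ref{prop:momentmapdetermined}, any critical point $A \in C_{\bf v^*}$ admits a canonical orthogonal splitting $V_\ell = \bigoplus_s V_{\ell, s}$ with $\dim V_{\ell, s} = ({\bf v}_s)_\ell$, on which $i(\Phi(A) - \alpha)$ acts as the scalar $-\mu_\alpha(Q, {\bf v}_s)$. Because this canonical splitting is indexed by the distinct eigenvalues of $i(\Phi(A) - \alpha)$, the slopes $\mu_\alpha(Q, {\bf v}_s)$ are pairwise distinct. The key observation is that the collection of spectra $\{\mathrm{spec}\,i(\Phi(A) - \alpha)|_{V_\ell}\}_{\ell \in \mathcal I}$ recovers the critical type ${\bf v^*}$: taking the union over $\ell$ yields the set $\{-\mu_\alpha(Q, {\bf v}_s)\}_s$ (from which one recovers the slopes, since they are distinct and ordered), and $({\bf v}_s)_\ell$ is then read off as the multiplicity of $-\mu_\alpha(Q, {\bf v}_s)$ as an eigenvalue on $V_\ell$. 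Hence if ${\bf v^*} \neq {\bf w^*}$, the spectra of $i(\Phi(A_j) - \alpha)$ on some $V_\ell$ must differ.

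Let $\mathcal{O}_{\bf v^*} := \{\Phi(A) : A \in C_{\bf v^*}\} \subset \g$. Since $G = \prod_\ell U(V_\ell)$ acts transitively on orthogonal decompositions of each $V_\ell$ with prescribed dimensions, $\mathcal{O}_{\bf v^*}$ is a single $G$-orbit (under the adjoint action) through a fixed reference element, and hence compact. By the spectral argument, $\mathcal{O}_{\bf v^*} \cap \mathcal{O}_{\bf w^*} = \emptyset$ whenever ${\bf v^*} \neq {\bf w^*}$, so we may define
\begin{equation*}
C := \mathrm{dist}(\mathcal{O}_{\bf v^*}, \mathcal{O}_{\bf w^*}) > 0,
\end{equation*}
which depends only on ${\bf v^*}$ and ${\bf w^*}$. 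This yields $\|\Phi(A_1) - \Phi(A_2)\| \geq C$.

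For the second estimate, $\Phi(A) = \tfrac{i}{2}\sum_{a \in E}[A_a, A_a^*]$ is quadratic in $A$. Using the identity
\begin{equation*}
[X, X^*] - [Y, Y^*] = [X - Y, X^*] + [Y, X^* - Y^*],
\end{equation*}
the bound $\|[X,Y]\| \leq 2\|X\|\|Y\|$ for the Frobenius norm, and Cauchy--Schwarz in the sum over edges, one obtains by a routine calculation
\begin{equation*}
\|\Phi(A_1) - \Phi(A_2)\| \leq (\|A_1\| + \|A_2\|)\|A_1 - A_2\|.
\end{equation*}
Combined with the lower bound from the previous step, this gives
\begin{equation*}
\|A_1 - A_2\| \geq \frac{C}{\|A_1\| + \|A_2\|} \geq \frac{C}{2(\|A_1\| + \|A_2\|)},
\end{equation*}
as desired. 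The only substantive point is verifying that distinct critical types produce different spectral data on the individual $V_\ell$'s, which relies crucially on the distinctness of slopes within the canonical splitting of a critical representation; the rest is elementary.
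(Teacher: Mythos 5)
Your proof is correct and follows essentially the same route as the paper: a type-dependent lower bound on $\| \Phi(A_1) - \Phi(A_2) \|$ coming from Proposition~\ref{prop:momentmapdetermined}, followed by the quadratic estimate on $\Phi(A_1)-\Phi(A_2)$ via the commutator identity. The only difference is that you spell out why the lower bound $C$ exists (distinct critical types yield distinct spectral data, hence disjoint compact adjoint orbits at positive distance), a step the paper simply asserts; this elaboration is sound.
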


\begin{proof}
Recall that the value of the moment map at a critical point is determined by Proposition \ref{prop:momentmapdetermined}. Since the respective splitting types of $A_1$ and $A_2$ are different, then there is a lower bound $C$ on the magnitude of the difference between the values of $\Phi(A_1)- \alpha$ and $\Phi(A_2) - \alpha$, depending only on ${\bf v^*}$ and ${\bf w^*}$. We also have the following estimate on this difference
\begin{align*}
C & \leq \left\| \sum_{a \in E} \left[ (A_1)_a, (A_1)_a^* \right] - \left[ (A_2)_a, (A_2)_a^* \right] \right\| \\
 & = \left\| \sum_{a \in E} (A_1)_a (A_1)_a^* - (A_1)_a^* (A_1)_a - (A_2)_a (A_2)_a^* + (A_2)_a^* (A_2)_a \right\| \\
 & \leq \left\|  \sum_{a \in E} \left( (A_1)_a - (A_2)_a \right) (A_1)_a^*+ (A_2)_a \left( (A_1)_a^* - (A_2)_a^* \right) \right\| \\
 & \quad + \left\| \sum_{a \in E} \left( (A_2)_a^* - (A_1)_a^* \right) (A_1)_a + (A_2)_a^* \left( (A_2)_a - (A_1)_a \right) \right\| \\
& \leq 2 \left( \max_{a \in E} \| (A_1)_a \| + \max_{a \in E} \| (A_2)_a \| \right) \| A_1 - A_2 \| \\
 & \leq 2 \left( \| A_1 \| + \| A_2 || \right) \| A_1 - A_2 \| .
\end{align*}
Therefore, $\| A_1 - A_2 \| \geq \frac{C}{2(\| A_1 \| + \| A_2 \|)}$, as required.
\end{proof}

The next lemma is the key result needed to prove Theorem \ref{theorem:equivstratification}. It says that each Morse stratum $S_{\bf v^*}$ co-incides with the Harder-Narasimhan stratum $\Rep(Q, {\bf v})_{\bf v^*}$ on a neighbourhood of the critical set $C_{\bf v^*}$. The estimates from Lemma \ref{lem:nearby-limit} and Lemma \ref{lem:critical-distance} are in turn an important part of the proof of Lemma \ref{lem:neighbourhood-subset}.

\begin{lemma}\label{lem:neighbourhood-subset}
There exists a neighbourhood $V_{\bf v^*}$ of $C_{\bf v^*}$ such that $V_{\bf v^*} \cap \Rep(Q, {\bf v})_{\bf v^*} \subset S_{\bf v^*}$. 
\end{lemma}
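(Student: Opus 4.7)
The strategy is to construct $V_{\bf v^*}$ as a union of open balls, one around each point of $C_{\bf v^*}$. Concretely, for every $A_\infty \in C_{\bf v^*}$ I will exhibit an open ball $U_{A_\infty}$ centred at $A_\infty$ with the property
\[
U_{A_\infty} \cap \Rep(Q, {\bf v})_{\bf v^*} \subset S_{\bf v^*},
\]
and then set $V_{\bf v^*} := \bigcup_{A_\infty \in C_{\bf v^*}} U_{A_\infty}$; since each $U_{A_\infty}$ is open and contains $A_\infty$, this gives an open neighbourhood of $C_{\bf v^*}$ with the required property.

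Fix $A_\infty \in C_{\bf v^*}$. The first step is to apply Lemma \ref{lem:critical-distance}: for each critical type ${\bf w^*} \neq {\bf v^*}$ that lemma provides a constant $C_{\bf w^*} > 0$ such that any $A'_\infty \in C_{\bf w^*}$ satisfies $\| A_\infty - A'_\infty \| \geq C_{\bf w^*}/(2(\|A_\infty\| + \|A'_\infty\|))$. If $A'_\infty$ lies inside a ball $B(A_\infty, \varepsilon_2)$ then $\|A'_\infty\| \leq \|A_\infty\| + \varepsilon_2$, giving the uniform lower bound $\|A_\infty - A'_\infty\| \geq C_{\bf w^*}/(2(2\|A_\infty\| + \varepsilon_2))$. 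Since there are only finitely many critical types for a fixed dimension vector ${\bf v}$, I can choose $\varepsilon_2 > 0$ small enough that this lower bound exceeds $\varepsilon_2$ for every ${\bf w^*} \neq {\bf v^*}$; then $B(A_\infty, \varepsilon_2)$ contains no critical point of critical type different from ${\bf v^*}$. Apply Lemma \ref{lem:nearby-limit} with this $\varepsilon_2$ to obtain $\varepsilon_1 > 0$ such that whenever $\|A - A_\infty\| < \varepsilon_1$ and $f\bigl(\lim_{t \to \infty} \gamma(A,t)\bigr) = f(A_\infty)$, one has $\|\lim_{t \to \infty} \gamma(A, t) - A_\infty\| < \varepsilon_2/2$. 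Finally, let $\delta > 0$ denote the minimum positive value of $f({\bf w^*}) - f({\bf v^*})$ as ${\bf w^*}$ ranges over the finite set of critical types; shrink $\varepsilon_1$ further so that $|f(A) - f(A_\infty)| < \delta$ on $B(A_\infty, \varepsilon_1)$, by continuity of $f$. Set $U_{A_\infty} := B(A_\infty, \varepsilon_1)$.

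Suppose now $A \in U_{A_\infty} \cap \Rep(Q, {\bf v})_{\bf v^*}$ and let $A'_\infty := \lim_{t \to \infty} \gamma(A, t)$, which exists and is a critical point by Theorem \ref{thm:gradflowconvergence}. By Lemma \ref{lem:gradientflowgenerated} the entire trajectory $\gamma(A, t)$ lies in the $G_\C$-orbit of $A$; combined with the $G_\C$-invariance of the H-N stratification, this shows $\gamma(A, t) \in \Rep(Q, {\bf v})_{\bf v^*}$ for all $t$. Lemma \ref{lem:H-N-infimum} then gives $f(\gamma(A, t)) \geq f({\bf v^*})$, and passing to the limit $f(A'_\infty) \geq f({\bf v^*})$. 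On the other hand, $f$ is non-increasing along the flow, so $f(A'_\infty) \leq f(A) < f({\bf v^*}) + \delta$. Writing ${\bf w^*}$ for the critical type of $A'_\infty$, Proposition \ref{prop:momentmapdetermined} together with Definition \ref{def:function-type} identifies $f(A'_\infty) = f({\bf w^*})$, and the gap condition defining $\delta$ then forces $f({\bf w^*}) = f({\bf v^*}) = f(A_\infty)$. Consequently the hypothesis of Lemma \ref{lem:nearby-limit} is satisfied, so $A'_\infty \in B(A_\infty, \varepsilon_2/2)$. By the construction of $\varepsilon_2$, the only critical points inside this ball have critical type ${\bf v^*}$; hence $A'_\infty \in C_{\bf v^*}$ and $A \in S_{\bf v^*}$.

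The main conceptual obstacle is that distinct critical types may share the same value of $f$, so a separation of $C_{\bf v^*}$ from the other critical sets cannot be obtained from $f$-values alone. The argument circumvents this by combining two ingredients of different flavours: the Lojasiewicz-type estimate of Lemma \ref{lem:nearby-limit}, which says the \emph{limit} of the flow is close to $A_\infty$ provided the $f$-values agree, and the algebraic estimate of Lemma \ref{lem:critical-distance}, which \emph{separates} distinct critical types in $\Rep(Q, {\bf v})$ once norms are bounded. Finiteness of the set of critical types for a fixed dimension vector ${\bf v}$ is used crucially twice: to guarantee the gap $\delta$ is strictly positive, and to pass to a uniform minimum over the finitely many separation constants $C_{\bf w^*}$ when choosing $\varepsilon_2$.
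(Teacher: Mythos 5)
Your proof is correct and follows essentially the same route as the paper's: reduce to a ball around each critical point, use Lemma \ref{lem:critical-distance} to separate $C_{\bf v^*}$ from critical points of other types, use finiteness of critical values plus Lemma \ref{lem:H-N-infimum} to verify the hypothesis of Lemma \ref{lem:nearby-limit}, and conclude the limit lands in $C_{\bf v^*}$. Your treatment is if anything slightly more careful than the paper's, in making explicit both the norm-dependence of the separation constant from Lemma \ref{lem:critical-distance} and the $G_\C$-invariance argument showing the flow stays in the stratum.
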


\begin{proof}
The problem reduces to showing that there is a neighbourhood of every critical point on which the lemma is true, then one can take the union of these neighbourhoods. Therefore it is sufficient to show that for every $A_\infty \in C_{\bf v^*}$ there exists $\varepsilon > 0$ such that $\| A - A_\infty \| < \varepsilon$ and $A \in \Rep(Q, {\bf v})_{\bf v^*}$ implies that $A \in S_{\bf v^*}$.

Fix $A_\infty \in C_{\bf v^*}$. By Lemma \ref{lem:critical-distance}, choose $\delta > 0$ such that $\| \tilde{B} - A_\infty \| \geq \delta$ for all critical points $\tilde{B}$ with Harder-Narasimhan type ${\bf w^*} \neq {\bf v^*}$. Since, by Proposition \ref{prop:momentmapdetermined}, there are a finite number of critical values of $f$, and the function $f$ is continuous, then there exists $\varepsilon > 0$ such that $\| A - A_\infty \| < \varepsilon$ implies that $\lim_{t \rightarrow \infty} f(\gamma(A, t)) \leq f(C_{\bf v^*})$. Lemma \ref{lem:H-N-infimum} then implies that the hypotheses of Lemma \ref{lem:nearby-limit} are satisfied, and so (after shrinking $\varepsilon$ if necessary) if $\| A- A_\infty \| < \varepsilon$ and $A \in \Rep(Q, {\bf v})_{\bf v^*}$, then
$$
\left\|  \lim_{t \rightarrow \infty} \gamma(A, t) - A_\infty \right\| < \frac{1}{2} \delta .
$$
Therefore $\lim_{t \rightarrow \infty} \gamma(A, t) \in C_{\bf v^*}$, and so $A \in S_{\bf v^*}$. 
\end{proof}

\begin{lemma}\label{lem:intersection-invariant}
$\Rep(Q, {\bf v})_{\bf v^*} \cap S_{\bf v^*}$ is $G_\C$-invariant.
\end{lemma}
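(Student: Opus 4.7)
The plan is as follows. Since the H-N stratum $\Rep(Q, {\bf v})_{\bf v^*}$ is already $G_\C$-invariant (H-N type being an isomorphism invariant, as noted after Definition~\ref{def:HNstratification}), for $A \in \Rep(Q, {\bf v})_{\bf v^*} \cap S_{\bf v^*}$ and $g \in G_\C$ the point $A' := g \cdot A$ lies in $\Rep(Q, {\bf v})_{\bf v^*}$, and the task reduces to showing $A' \in S_{\bf v^*}$. Before turning to the dynamics, I would first record the preliminary fact that $C_{\bf v^*} \subseteq \Rep(Q, {\bf v})_{\bf v^*}$, i.e.\ the critical type of a critical point coincides with its H-N type: given a critical point $B = \bigoplus_\lambda B_\lambda$ with canonical orthogonal splitting as in Proposition~\ref{prop:momentmapdetermined}, each $B_\lambda$ is $\alpha$-polystable of slope $-\lambda = \mu_\alpha(Q, {\bf v}_\lambda)$ by Lemma~\ref{lem:eigenvaluesdetermined}, and sorting the summands by decreasing slope yields a filtration by subrepresentations whose successive quotients are $\alpha$-semistable with strictly decreasing slopes. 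A short argument using Lemma~\ref{lem:exactsequenceinequality} shows that the $k$-th successive quotient is the maximal $\alpha$-semistable subrepresentation of the corresponding quotient, so this filtration coincides with the H-N filtration of $B$.

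The main step uses the distance-decreasing machinery of Section~\ref{subsec:distance-decreasing}. By Theorem~\ref{thm:gradflowconvergence} the flows $A_1(t) := \gamma(A,t)$ and $A_2(t) := \gamma(A',t)$ converge to critical points $A_\infty \in C_{\bf v^*}$ and $A'_\infty \in C_{\bf w^*}$ for some critical type ${\bf w^*}$; I will show ${\bf w^*} = {\bf v^*}$. Letting $g_1(t), g_2(t)$ solve~\eqref{eqn:groupflow} with initial conditions $A, A'$ respectively and setting $\bar g(t) := g_2(t)\, g\, g_1(t)^{-1}$, one has $A_2(t) = \bar g(t) \cdot A_1(t)$. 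Using the polar decomposition $\bar g(t) = g_u(t)^{-1} \tilde g(t)$ from~\eqref{eqn:tilde-g}, with $g_u(t) \in G$ and $\tilde g(t)$ positive self-adjoint, Theorem~\ref{thm:distancedecreasing} yields
\[
\sigma\bigl(\tilde g(t)^{-2}\bigr) \;=\; \sigma\bigl(\bar g(t)^{-1} (\bar g(t)^*)^{-1}\bigr) \;\leq\; \sigma\bigl(g^{-1}(g^*)^{-1}\bigr).
\]
Expressed in terms of the eigenvalues $\lambda_j(t) > 0$ of $\tilde g(t)$, this reads $\sum_j \bigl(\lambda_j(t) - \lambda_j(t)^{-1}\bigr)^2 \leq \sigma\bigl(g^{-1}(g^*)^{-1}\bigr)$, which confines each $\lambda_j(t)$ to a fixed compact subinterval $[c, C] \subset (0, \infty)$ independent of $t$. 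Hence $\{\tilde g(t)\}_{t \geq 0}$ is pre-compact in $G_\C$ and every accumulation point is positive self-adjoint and invertible.

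To conclude, I would extract a subsequence $t_n \to \infty$ so that $\tilde g(t_n) \to \tilde g_\infty$ (from the pre-compactness just shown) and $g_u(t_n) \to g_{u,\infty} \in G$ (from compactness of $G$). Continuity of the $G_\C$-action together with $A_1(t_n) \to A_\infty$ then gives
\[
A'_\infty \;=\; \lim_{n \to \infty} A_2(t_n) \;=\; \lim_{n \to \infty} g_u(t_n)^{-1} \tilde g(t_n) \cdot A_1(t_n) \;=\; g_{u,\infty}^{-1} \tilde g_\infty \cdot A_\infty \;\in\; G_\C \cdot A_\infty.
\]
Since $A_\infty \in \Rep(Q, {\bf v})_{\bf v^*}$ by the preliminary observation and H-N type is $G_\C$-invariant, $A'_\infty$ has H-N type ${\bf v^*}$; applying the preliminary observation a second time, now to the critical point $A'_\infty$, gives ${\bf w^*} = {\bf v^*}$, so $A' \in S_{\bf v^*}$, as required. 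The main obstacle I expect is the spectral bound on $\tilde g(t)$ that keeps $\tilde g_\infty$ invertible: without such two-sided control, the limit $A'_\infty$ could a priori escape $G_\C \cdot A_\infty$ and degenerate to a representation of strictly larger H-N type, which is permitted by~\eqref{eq:HNclosures-partialorder}, and the identification ${\bf w^*} = {\bf v^*}$ would break down.
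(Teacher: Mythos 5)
Your argument is correct, but it proceeds quite differently from the paper's. The paper fixes $A$ and studies $\mathcal{G}_A = \{g \in G_\C : g\cdot A \in \Rep(Q,{\bf v})_{\bf v^*}\cap S_{\bf v^*}\}$, showing it is open (via Lemmas~\ref{lem:cangetclose} and~\ref{lem:neighbourhood-subset}) and closed (via an $\varepsilon$--$\delta$ contradiction argument combining Theorem~\ref{thm:distancedecreasing}, Proposition~\ref{prop:momentmapclose}, and the moment-map separation of distinct critical types in Lemma~\ref{lem:critical-distance}) in the connected group $G_\C$. You instead handle an arbitrary $g\in G_\C$ in one stroke: the monotonicity $\sigma(h(t))\le\sigma\bigl(g^{-1}(g^*)^{-1}\bigr)$ gives a two-sided spectral bound on the positive part $\tilde g(t)$ of the connecting element, hence pre-compactness in $G_\C$ with invertible accumulation points, and passing to a subsequence places the two limiting critical points in the same $G_\C$-orbit; equality of H-N (hence critical) types follows. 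This buys a shorter proof that avoids Proposition~\ref{prop:momentmapclose} and the neighbourhood lemmas entirely, and in fact establishes the stronger statement that $S_{\bf v^*}$ itself is $G_\C$-invariant. The price is that you must know in advance that critical type equals H-N type on $C_{\bf v^*}$ — a fact the paper only records afterwards in Corollary~\ref{cor:critical-infimum} as a consequence of Theorem~\ref{theorem:equivstratification} — but your direct derivation of it from Proposition~\ref{prop:momentmapdetermined}, King's theorem, and the uniqueness of the H-N filtration uses nothing downstream, so there is no circularity. The one point worth spelling out in a final write-up is the standard uniqueness argument (via Lemma~\ref{lem:exactsequenceinequality} or Proposition~\ref{prop:stableisomorphism}) that a filtration with semistable subquotients of strictly decreasing slope is the H-N filtration; as written it is asserted rather than proved, though it is the same fact the paper itself invokes in Corollary~\ref{cor:critical-infimum}.
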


\begin{proof}
Given $A \in \Rep(Q, {\bf v})_{\bf v^*} \cap S_{\bf v^*}$, let
\begin{equation*}
\mathcal{G}_A = \left\{ g \in G_\C \, \mid \, g \cdot A \in \Rep(Q, {\bf v})_{\bf v^*} \cap S_{\bf v^*} \right\} .
\end{equation*}
Since $\mathcal{G}_A$ is non-empty and $G_\C$ is connected, then the result will follow if we can show that $\mathcal{G}_A$ is open and closed in $G_\C$. The continuity of the $G_\C$ action, together with Theorem \ref{thm:distancedecreasing}, Lemma \ref{lem:cangetclose}, and Lemma \ref{lem:neighbourhood-subset}, shows that $\mathcal{G}_A$ is open. 

To see that $\mathcal{G}_A$ is closed, let $\{ g_k \}$ be a sequence of points in $\mathcal{G}_A$ converging to some $g_\infty \in G_\C$, and let $A_k := g_k \cdot A$, $A_\infty := g_\infty \cdot A$. Since $\Rep(Q, {\bf v})_{\bf v^*}$ is $G_\C$-invariant, then $A_\infty \in \Rep(Q, {\bf v})_{\bf v^*}$, and it only remains to show that $A_\infty \in S_{\bf v^*}$. 

Let $\gamma(A_\infty, t)$ be the gradient flow of $f$ with initial conditions $A_\infty$, let $\gamma(A_\infty, \infty)$ denote the limiting critical point, and ${\bf w^*}$ the Harder-Narasimhan type of $\gamma(A_\infty, \infty)$. Suppose for contradiction that ${\bf w^*} \neq {\bf v^*}$. Lemma \ref{lem:critical-distance} shows that there is a constant $C$ such that any critical point $B$ with Harder-Narasimhan type ${\bf v^*} \neq {\bf w^*}$ must satisfy $\| \Phi(B) - \Phi \left(\gamma(A_\infty,\infty) \right) \| \geq C$. Now choose $\delta$ to obtain a bound of $\frac{1}{3} C$ in Proposition \ref{prop:momentmapclose}, and choose $k$ such that $\sigma \left(g_\infty g_k^{-1} (g_\infty g_k^{-1})^* \right)  < \delta$. Let $\tilde{g}(t)$ denote the element of $G_\C$ connecting $\gamma(A_k, t)$ and $\gamma(A_\infty, t)$ along the flow (note that $\tilde{g}(0) = g_\infty g_k^{-1}$), and recall that the distance decreasing formula of Theorem \ref{thm:distancedecreasing} shows that $\sigma(\tilde{g}(t) \tilde{g}(t)^*) < \delta$ along the flow. Convergence of the flow and continuity of $\Phi$ shows that there exists $t$ such that both $\| \Phi \left( \gamma(A_\infty, t) \right) - \Phi \left( \gamma(A_\infty, \infty) \right)) \| < \frac{1}{3} C$ and $\| \Phi \left( \gamma(A_k,t) \right) - \Phi \left( \gamma(A_k, \infty) \right) \| < \frac{1}{3} C$ (note that since the norm is $G$-invariant and $\Phi$ is $G$-equivariant then these estimates are $G$-invariant). Proposition \ref{prop:momentmapclose} then shows that $\| \Phi( g_u \cdot \gamma(A_k, t)) - \Phi( \gamma(A_\infty, t)) \| < \frac{1}{3} C$ for some $g_u \in G$, and combining all of these estimates gives
\begin{equation*}
\| \Phi(g_u \cdot \gamma(A_k, \infty)) - \Phi( \gamma(A_\infty, \infty)) \| < C,
\end{equation*} 
which contradicts the choice  of $C$, since $A_k \in S_{\bf v^*}$ and $A_\infty \in S_{\bf w^*}$ by assumption, and the critical sets are $G$-invariant. Therefore $A_\infty \in S_{\bf v^*}$, which completes the proof that $\mathcal{G}_A$ is closed in $G_\C$. 
\end{proof}

With these results in hand we are now ready to prove the main theorem of this section, that the Harder-Narasimhan stratification co-incides with the Morse stratification.

\begin{proof}[Proof of Theorem \ref{theorem:equivstratification}]
First recall that the stratum $\Rep(Q, {\bf v})_{\bf v^*}$ is $G_\C$-invariant, since the Harder-Narasimhan type is an isomorphism invariant. Lemma \ref{lem:cangetclose} and Lemma \ref{lem:neighbourhood-subset} together show that each $G_\C$-orbit in $\Rep(Q, {\bf v})_{\bf v^*}$ has non-trivial intersection with $S_{\bf v^*}$. Then Lemma \ref{lem:intersection-invariant} shows that $\Rep(Q, {\bf v})_{\bf v^*} \subseteq S_{\bf v^*}$. Since $\left\{ \Rep(Q, {\bf v})_{\bf v^*} \right\}$ and $\left\{ S_{\bf v^*} \right\}$ both define stratifications of $\Rep(Q, {\bf v})$, then $\Rep(Q, {\bf v})_{\bf v^*} = S_{\bf v^*}$.
\end{proof}

As a corollary of Theorem \ref{theorem:equivstratification} we can describe the splitting of the representation at a critical point in terms of the Harder-Narasimhan filtration.

\begin{corollary}\label{cor:critical-infimum}
Let $A_\infty \in C_{\bf v^*}$. Then $f(A_\infty) = \inf \{ f(A) \, : \, A \in \Rep(Q, {\bf v})_{\bf v^*} \}$ and the splitting of $A_\infty$ into orthogonal subrepresentations as in \eqref{eq:A-decomp-by-beta} corresponds to the graded object of the Harder-Narasimhan filtration of $A_\infty$.
\end{corollary}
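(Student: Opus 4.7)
\medskip\noindent\textbf{Proof proposal.} The two assertions of the corollary can be handled in sequence. For the value statement, a direct computation using Proposition \ref{prop:momentmapdetermined} gives $f(A_\infty)=f({\bf v^*})$: the operator $i(\Phi(A_\infty)-\alpha)$ acts as $-\mu_\alpha(Q,{\bf v}_s)\,\id$ on the $s$-th summand of dimension $\rank(Q,{\bf v}_s)$, whence
\[
f(A_\infty) \;=\; \sum_s \rank(Q,{\bf v}_s)\,\mu_\alpha(Q,{\bf v}_s)^2 \;=\; \|\Lambda_{\bf v^*}\|^2 \;=\; f({\bf v^*}).
\]
The lower bound $f(A)\geq f({\bf v^*})$ on $A\in\Rep(Q,{\bf v})_{\bf v^*}$ supplied by Lemma \ref{lem:H-N-infimum}, combined with the inclusion $C_{\bf v^*}\subset\Rep(Q,{\bf v})_{\bf v^*}$ coming from Theorem \ref{theorem:equivstratification}, then identifies $f(A_\infty)$ with $\inf\{f(A):A\in\Rep(Q,{\bf v})_{\bf v^*}\}$.

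For the splitting statement, write $A_\infty=\bigoplus_s A_{\infty,s}$ via Proposition \ref{prop:momentmapdetermined}, with indices arranged so that the slopes $\mu_\alpha(Q,{\bf v}_1)>\cdots>\mu_\alpha(Q,{\bf v}_m)$ are strictly decreasing (the eigenvalues of $i(\Phi(A_\infty)-\alpha)$ are distinct by construction). The strategy is (i) to show each $A_{\infty,s}$ is $\alpha$-semistable of slope $\mu_\alpha(Q,{\bf v}_s)$, and then (ii) to identify the filtration $F_s:=\bigoplus_{k\leq s} A_{\infty,k}$ with the Harder-Narasimhan filtration of $A_\infty$ by uniqueness. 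For (i), note that $A_{\infty,s}$ viewed inside $\Rep(Q,{\bf v}_s)$ satisfies $\Phi_s(A_{\infty,s})=\alpha_s$, where $\alpha_s$ is the trace-free shifted parameter of \eqref{eqn:quotient-parameter}, so $\|\Phi_s(A_{\infty,s})-\alpha_s\|^2=0$. If $A_{\infty,s}$ had $\alpha_s$-H-N type ${\bf u^*}$ of length at least two, then Lemma \ref{lem:H-N-infimum} applied within $\Rep(Q,{\bf v}_s)$ (with parameter $\alpha_s$) would force $\|\Lambda_{\bf u^*}\|^2=0$; but the entries $\mu_{\alpha_s}(Q,{\bf u}_k)$ of the associated slope vector are strictly decreasing and their rank-weighted sum equals $\deg_{\alpha_s}(Q,{\bf v}_s)=0$, which prevents them from all vanishing, a contradiction. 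Thus $A_{\infty,s}$ is $\alpha_s$-semistable, and since $\mu_{\alpha_s}(Q,{\bf w})=\mu_\alpha(Q,{\bf w})-\mu_\alpha(Q,{\bf v}_s)$ for any sub-dimension vector ${\bf w}$, this is equivalent to $\alpha$-semistability of slope $\mu_\alpha(Q,{\bf v}_s)$.

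For (ii), the successive quotients $F_s/F_{s-1}\cong A_{\infty,s}$ of the filtration are $\alpha$-semistable with strictly decreasing slopes, which are exactly the defining properties of the canonical Harder-Narasimhan filtration; hence $\{F_s\}$ is the H-N filtration of $A_\infty$, and its graded object is precisely the orthogonal sum $\bigoplus_s A_{\infty,s}$. The main delicate point I anticipate is the bookkeeping between the shifted parameters $\alpha_s$ on subrepresentations and the ambient parameter $\alpha$, in particular verifying the slope-shift identity invoked above; beyond this, the argument is a direct assembly of Proposition \ref{prop:momentmapdetermined}, Lemma \ref{lem:H-N-infimum}, Theorem \ref{theorem:equivstratification}, and uniqueness of the H-N filtration.
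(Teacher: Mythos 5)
Your proof is correct. The first half (the infimum statement) follows the paper's route: Lemma \ref{lem:H-N-infimum} supplies the lower bound $f\geq f({\bf v^*})$ on the stratum, the explicit critical-point description from Proposition \ref{prop:momentmapdetermined} gives equality at $A_\infty$, and Theorem \ref{theorem:equivstratification} places $C_{\bf v^*}$ inside $\Rep(Q,{\bf v})_{\bf v^*}$. For the identification of the splitting with the H-N graded object you diverge in exactly one step. The paper establishes $\alpha$-semistability of each summand $A_{\infty,\lambda}$ by noting that $\Phi_\lambda(A_{\infty,\lambda})$ equals the shifted trace-free parameter $\beta_\lambda$ and then citing King's Kempf--Ness-type result \cite[Theorem 6.1]{Kin94} (membership in the level set implies semistability), whereas you re-derive this special case internally: applying Lemma \ref{lem:H-N-infimum} to $\Rep(Q,{\bf v}_s)$ with the parameter $\alpha_s$, the vanishing of $\|\Phi_s(A_{\infty,s})-\alpha_s\|^2$ forces the slope vector of the $\alpha_s$-H-N type of $A_{\infty,s}$ to vanish, which is incompatible with the strict decrease of H-N slopes unless the filtration has length one. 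Your version is self-contained and, unlike the citation of King's GIT theorem, does not implicitly require integrality of the (shifted) parameter; the only point to check is that Lemma \ref{lem:H-N-infimum} applies verbatim to the smaller representation space with the trace-free parameter $\alpha_s$, which it does since its proof is insensitive to the choice of quiver and central parameter. Your slope-shift identity $\mu_{\alpha_s}(Q,{\bf w})=\mu_\alpha(Q,{\bf w})-\mu_\alpha(Q,{\bf v}_s)$ is also correct. Both arguments conclude identically via uniqueness of the Harder-Narasimhan filtration.
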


\begin{proof}
Theorem \ref{theorem:equivstratification} and Lemma \ref{lem:H-N-infimum} together show that 
$$
f(A_\infty) = \inf \{ f(A) \, : \, A \in \Rep(Q, {\bf v})_{\bf v^*} \}.
$$
Proposition \ref{prop:momentmapdetermined} shows that each subrepresentation $A_\lambda$ satisfies
\begin{equation*}
\Phi_{\lambda}(A_{\lambda}) = \left. \alpha \right|_{V_{\lambda}}  + i \mu_{\alpha}(Q, {\mathbf v}_{\lambda}) \cdot \id_{V_{\lambda}}, 
\end{equation*}
and so by \cite[Theorem 6.1]{Kin94} each $A_\lambda$ is $\beta_\lambda$-semistable with respect to the trace-free stability parameter $\beta_\lambda = \left. \alpha \right|_{V_{\lambda}}  + i \mu_{\alpha}(Q, {\mathbf v}_{\lambda}) \cdot \id_{V_{\lambda}}$ on $\Rep(Q, {\bf v_\lambda})$. Since the H-N type and the critical type of $A_\infty$ co-incide by Theorem \ref{theorem:equivstratification}, then the uniqueness of the Harder-Narasimhan filtration shows that the decomposition \eqref{eq:A-decomp-by-beta} determines the Harder-Narasimhan filtration.
\end{proof}

We next show that the sub-representations in the
Harder-Narasimhan filtration converge along the gradient flow of $\|
\Phi(A) - \alpha \|^2$. Given a representation $A \in \Rep(Q, {\bf
v})$ with H-N filtration
$$0 \subset A_1 \subset \cdots \subset A_L = A,$$ define $\pi^{(i)}$
to be the orthogonal projection onto the subspace of $\Vect(Q, {\bf
v})$ associated to $A_i \in \Rep(Q, {\bf v_i})$. The induced
representation on the image of $\pi^{(i)}$ is then $A_i = A \circ
\pi^{(i)}$.  
Using these projections we may also denote the H-N
filtration of $A$ by $\{ \pi^{(i)} \}_{i=1}^\ell$. For a solution
$\gamma(A_0, t) = g(t) \cdot A_0$ to the gradient flow equation
\eqref{eq:gradient-flow-def}, let the corresponding projection
be $\pi_t^{(i)}$, i.e. the orthogonal projection onto the vector space
$g(t) \pi_0^{(i)} \Vect(Q, {\bf v})$.

\begin{proposition}\label{prop:filtrationlimit}
Let $\{ \pi_t^{(i)} \}$ be the H-N filtration of a solution $A(t)$ to the gradient flow equations \eqref{eq:gradient-flow-def}, and let $\{ \pi_\infty^{(i)} \}$ be the H-N filtration of the limit $A_\infty$. Then there exists a subsequence $t_j$ such that $\pi_{t_j}^{(i)} \rightarrow \pi_\infty^{(i)}$ for all $i$.  
\end{proposition}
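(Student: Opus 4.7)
The plan is to extract a convergent subsequence of the projections $\pi_t^{(i)}$ by compactness, and then identify the limit with the H-N filtration of $A_\infty$ using the explicit critical-point decomposition provided by Corollary~\ref{cor:critical-infimum}.

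Since each $\pi_t^{(i)}$ is an orthogonal projection of fixed rank $d_i := \dim W_i$ (independent of $t$), it lies in a compact subspace of $\End(\Vect(Q,{\bf v}))$ homeomorphic to a Grassmannian. A diagonal argument over the finitely many indices $i = 1,\ldots,L$ (with $L$ the H-N length of $A_0$) produces a subsequence $t_j \to \infty$ along which $\pi_{t_j}^{(i)} \to \tilde\pi^{(i)}$ for every $i$, where each $\tilde\pi^{(i)}$ is an orthogonal projection of rank $d_i$. Let $\tilde W^{(i)} := \tilde\pi^{(i)}(\Vect(Q,{\bf v}))$. I would then check that the $\tilde W^{(i)}$ form a flag of $A_\infty$-invariant subrepresentations with the correct dimension vectors. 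Indeed, since each $g(t) \in G_{\C} = \prod_\ell GL(V_\ell)$ preserves the vertex decomposition $\Vect(Q,{\bf v}) = \bigoplus_\ell V_\ell$, each $\pi_t^{(i)}$ commutes with the coordinate projections $P_\ell : \Vect \to V_\ell$; this passes to the limit, so $\tilde W^{(i)} = \bigoplus_\ell (\tilde W^{(i)} \cap V_\ell)$ with $\dim(\tilde W^{(i)} \cap V_\ell) = \dim(W_i \cap V_\ell)$. The $A_\infty$-invariance follows by passing the identity $A(t_j)\pi_{t_j}^{(i)} = \pi_{t_j}^{(i)} A(t_j)\pi_{t_j}^{(i)}$ to the limit.

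The core step is to identify $\tilde W^{(i)} = W_\infty^{(i)}$, which I would do by induction on $i$. By Corollary~\ref{cor:critical-infimum}, $A_\infty$ splits as $\bigoplus_{s=1}^L A_{\lambda_s}$ with each summand supported on the eigenspace $V_{\lambda_s}$, semistable of slope $\mu_s := \mu_\alpha(Q,{\bf v}_s)$, and $\mu_1 > \cdots > \mu_L$; moreover $W_\infty^{(i)} = V_{\lambda_1} \oplus \cdots \oplus V_{\lambda_i}$. Assuming $\tilde W^{(i-1)} = W_\infty^{(i-1)}$, the quotient $\tilde W^{(i)}/W_\infty^{(i-1)}$ is a subrepresentation of $\bigoplus_{s \geq i} A_{\lambda_s}$ of dimension vector ${\bf v}_i$ and hence slope $\mu_i$. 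Applying Lemma~\ref{lem:exactsequenceinequality} to the short exact sequence
\[
0 \to (\tilde W^{(i)}/W_\infty^{(i-1)}) \cap A_{\lambda_i} \to \tilde W^{(i)}/W_\infty^{(i-1)} \to Q \to 0
\]
(where $Q$ embeds in $\bigoplus_{s > i} A_{\lambda_s}$), together with the semistability of $A_{\lambda_i}$ and the standard fact that any subrepresentation of a direct sum of semistable representations has slope bounded by the maximum slope of the summands (here at most $\mu_{i+1} < \mu_i$, by induction on the number of summands), forces $Q = 0$. Hence $\tilde W^{(i)}/W_\infty^{(i-1)} \subseteq V_{\lambda_i}$, and dimension count yields $\tilde W^{(i)} = W_\infty^{(i)}$, completing the induction.

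The main obstacle is this final identification step, which depends crucially both on the strict inequality of slopes in the H-N type and on the direct-sum decomposition of $A_\infty$ provided by Corollary~\ref{cor:critical-infimum}: without the explicit critical-point structure, an $A_\infty$-invariant flag with the correct dimension vectors need not be the H-N filtration, and indeed there can be many such flags.
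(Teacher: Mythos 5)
Your proposal is correct, and its skeleton (compactness of the space of fixed-rank orthogonal projections to extract a convergent subsequence, then induction on $i$ to identify the limiting flag with the H-N filtration of $A_\infty$) matches the paper's proof. The difference lies in how the identification step is executed. The paper first observes, vertex by vertex, that the limiting projections $\tilde\pi_\infty^{(i)}$ have the same ranks as $\pi_\infty^{(i)}$ (citing Theorem~\ref{theorem:equivstratification} for preservation of H-N type under the flow), hence the same $\alpha$-degree and rank, and then invokes the uniqueness clause of Proposition~\ref{prop:maximalsemistable}: any subrepresentation of $A_\infty/A_\infty^{(k-1)}$ with the same slope and the same rank as the maximal semistable subrepresentation must equal it. This avoids having to verify anything further about the candidate subrepresentation. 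You instead use the orthogonal eigenspace splitting of the critical point from Corollary~\ref{cor:critical-infimum} and argue directly, via Lemma~\ref{lem:exactsequenceinequality} and the strict decrease of slopes, that an invariant subspace of dimension vector ${\bf v}_i$ lying over $W_\infty^{(i-1)}$ must project trivially to $\bigoplus_{s>i}A_{\lambda_s}$; this is more computational but equally valid, and it makes explicit why the critical-point structure is what pins the flag down. One small point: your dimension count at the end (that ${\bf v}_i$ equals the dimension vector of $V_{\lambda_i}$) silently uses that the critical type of $A_\infty$ coincides with the H-N type of $A_0$, which is exactly Theorem~\ref{theorem:equivstratification}; the paper cites this explicitly and you should too.
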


\begin{proof}

For each $i$, $\pi_t^{(i)}$ (being projections) are uniformly bounded
operators, so there exists a
subsequence $\{ t_j \}$ such that $\lim_{j \rightarrow \infty}
\pi_{t_j}^{(i)} \rightarrow \tilde{\pi}_\infty^{(i)}$ for some
$\tilde{\pi}_\infty^{(i)}$. Hence the goal is to show that
$\tilde{\pi}_\infty^{(i)} = \pi_\infty^{(i)}$ for all $i$.

Firstly note that on each vector space $V_j$ the projections
$\pi_t^{(i)}$ and $\tilde{\pi}_\infty^{(i)}$ have the same rank (since
$\pi_t^{(i)}$ is the orthogonal projection onto the space $g(t)
\pi_0^{(i)} \Vect(Q, {\bf v})$, and $\pi_{t_j}^{(i)} \rightarrow
\tilde{\pi}_\infty^{(i)}$ as projections). Theorem
\ref{theorem:equivstratification} shows that the type of the
Harder-Narasimhan filtration is preserved in the limit of the gradient
flow, and so the ranks of the maximal semistable sub-representations
are also preserved. Therefore $\rank(\pi_\infty^{(i)}) =
\rank(\pi_{t_j}^{(i)}) = \rank(\tilde{\pi}_\infty^{(i)})$ on each
vector space $V_j$.  Since the ranks are the same on each vector space
in $\Vect(Q, {\bf v})$, by the definition of $\alpha$-degree,
$\deg_\alpha(\pi_\infty^{(i)}) =
\deg_\alpha(\tilde{\pi}_\infty^{(i)})$. Thus the degree-rank ratios
of the sub-representations corresponding to the projections
$\pi_\infty^{(i)}$ and $\tilde{\pi}_\infty^{(i)}$ are the same.

For the case $i=1$, the fact that the maximal
$\alpha$-semistable sub-representation is unique (Proposition
\ref{prop:maximalsemistable}) implies that $\pi_\infty^{(1)} =
\tilde{\pi}_\infty^{(1)}$. Now we proceed by induction: Fix $k$
and assume that $\tilde{\pi}_\infty^{(i)} = \pi_\infty^{(i)}$ for all
$i < k$. Let $\tilde{A}_\infty^{(i)}$ be the sub-representation of $A$
corresponding to the projection $\tilde{\pi}_\infty^{(i)}$, and let
$A_\infty^{(i)}$ be the sub-representation of $A$ corresponding to the
projection $\pi_\infty^{(i)}$.  Then $\tilde{A}_\infty^{(k)} /
\tilde{A}_\infty^{(k-1)}$ has the same $\alpha$-degree and rank as
$A_\infty^{(k)} / A_\infty^{(k-1)}$, which is the maximal semistable
sub-representation of $A_\infty / A_\infty^{(k-1)}$. Again, uniqueness
of the maximal semistable sub-representation implies
$\tilde{A}_\infty^{(k)} / \tilde{A}_\infty^{(k-1)} = A_\infty^{(k)} /
A_\infty^{(k-1)}$. Together with the inductive hypothesis this
gives us $\tilde{\pi}_\infty^{(i)} = \pi_\infty^{(i)}$ for all $i \leq
k$.
\end{proof}

\section{An algebraic description of the limit of the flow}\label{sec:graded}

The results of Sections~\ref{sec:gradientflow} and~\ref{sec:HN} show that the
negative gradient flow of $f = \| \Phi - \alpha \|^2$ with initial
condition \(A \in \Rep(Q, {\bf v})\) converges to a critical point
$A_{\infty}$ of $f$, and that the Harder-Narasimhan type of this limit
point $A_{\infty}$ is the same as the Harder-Narasimhan type of the
initial condition $A$. In this section we provide a more precise
description of the limit of the flow in terms of the
Harder-Narasimhan-Jordan-H\"older filtration (defined below) of the initial condition
\(A \in \Rep(Q, {\bf v}).\) Our main result, Theorem~\ref{thm:convergencetogradedobject},
should be viewed as a quiver analogue of the theorem of 
Daskalopoulos and Wentworth for the case of holomorphic bundles over K\"ahler surfaces
\cite{DasWen04}. 

We first recall the definition of the
Harder-Narasimhan-Jordan-H\"older filtration, which is a refinement of
the Harder-Narasimhan filtration using stable subrepresentations. As
before, let \(Q = ({\mathcal I}, E)\) be a finite quiver, \({\bf v}
\in \Z^{\mathcal I}_{\geq 0}\) a choice of dimension vector, and
\(\alpha \in (i\R)^{\mathcal I}\) a stability parameter.

\begin{definition}
Let $Q=({\mathcal I}, E)$ be a quiver with specified dimension vector
\({\bf v} \in \Z^{\mathcal I}_{\geq 0},\) and let $\alpha$ be a
stability parameter. 
Let $A \in \Rep(Q, {\bf v})^{\alpha-ss}$ be an $\alpha$-semistable representation. A filtration of $A$ with induced subrepresentations of $A$ 
\begin{equation}
0 = A_0 \subset A_1 \subset \cdots \subset A_m = A,
\end{equation}
where $A_j \in \Rep(Q, {\bf v_j})$, is called a \emph{Jordan-H\"older filtration} if 
\begin{itemize}
\item for each \(k, 1 \leq k \leq m,\) the induced quotient representation 
$$
A_k / A_{k-1} \in \Rep \left( Q, {\bf v_k - v_{k-1}} \right)
$$
is $\alpha$-stable, and 
\item $\mu_\alpha \left( Q, {\bf v_k - v_{k-1}} \right) = \mu_\alpha(Q, {\bf v})$ for each $k$.
\end{itemize} 
\end{definition}

Given a Jordan-H\"older filtration of $A$ as above, we define the {\em associated graded object} of the filtration as the direct sum 
\[
\Gr^{JH}(A; Q, {\bf v}) := \bigoplus_{k=1}^{m} A_{k}/A_{k-1},
\]
which by construction is also a representation of $Q$ with dimension vector ${\bf v}$. 
Existence of the Jordan-H\"older filtration follows for general reasons (see \cite{Seshadri65} and \cite[pp521-522]{Kin94}), and the associated graded objects are uniquely determined up to isomorphism in $\Rep(Q, {\bf v})$ by the isomorphism type of $A$.

Given any representation \(A \in \Rep(Q, {\bf v})\), we may now
combine the H-N filtration of $A$ by $\alpha$-semistable
subrepresentations with a Jordan-H\"older filtration of each
$\alpha$-semistable piece in the Harder-Narasimhan filtration, thus obtaining a
double filtration called a {\em Harder-Narasimhan-Jordan-H\"older
(H-N-J-H) filtration} of $A$. This is the quiver analogue of the
Harder-Narasimhan-Seshadri filtration for holomorphic bundles (see
e.g. \cite[Proposition 2.6]{DasWen04}). 

We first set some notation for double filtrations. We will say a
collection $\{A_{j,k}\}$ is a \emph{double filtration} of $A$ when
\begin{equation}\label{eq:first-filtration}
0 = A_{0,0} \subseteq A_{1,0} \subseteq \cdots \subseteq A_{L,0} = A
\end{equation}
is a filtration of $A$ by subrepresentations, and furthermore, for each $j$ with \(1 \leq j \leq L,\) we have 
\begin{equation}\label{eq:second-filtrations}
A_{j,0} \subseteq A_{j,1} \subseteq A_{j,2} \subseteq \cdots \subseteq A_{j,m_{j}} = A_{j+1,0}
\end{equation}
a sequence of intermediate subrepresentations. Notating by
\(\tilde{A}_{j,k}\) the quotient \(A_{j,k}/A_{j,0},\) the
sequence~\eqref{eq:second-filtrations} then immediately gives rise to
an induced filtration (again by subrepresentations)
\begin{equation}\label{eq:filtered-quotient} 
0 = \tilde{A}_{j,0} \subseteq \tilde{A}_{j,1} \subseteq \cdots
\subseteq \tilde{A}_{j+1,0}
\end{equation}
of the quotient representation \(\tilde{A}_{j,m_{j}} = A_{j, m_{j}}/A_{j,0} = A_{j+1,0}/A_{j,0}.\)

\begin{proposition}(cf. \cite[Proposition 2.6]{DasWen04})
Let \(Q = ({\mathcal I}, E)\) be a quiver with specified dimension
vector \({\bf v} \in \Z^{\mathcal I}_{\geq 0},\)
and \(\alpha \in Z(\g)\) a central parameter. Let $A
\in \Rep(Q, {\bf v})$. Then there exists a double filtration
$\{A_{j,k}\}$ of $A$ such that the
filtration~\eqref{eq:first-filtration} is the Harder-Narasimhan
filtration of $A$, and for each $j, 1 \leq j \leq L,$ the
filtration~\eqref{eq:second-filtrations} is a Jordan-H\"older
filtration of the quotient $\tilde{A}_{j+1, 0} \in \Rep(Q, {\bf
v_{j+1, 0} - v_{j, 0}})$.  Moreover, the isomorphism class in $\Rep(Q,
{\bf v})$ of the associated graded object
\begin{equation}
\Gr^{HNJH}(A, Q, {\bf v}) := \bigoplus_{j=1}^L \bigoplus_{k=1}^{m_j} A_{j, k} / A_{j, k-1}
\end{equation}
is uniquely determined  by the isomorphism class of $A \in \Rep(Q, {\bf v})$.
\end{proposition}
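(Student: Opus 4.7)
The plan is to construct the double filtration in two stages and then to deduce uniqueness of the associated graded from uniqueness results at each stage.

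For existence, I would first invoke the Harder-Narasimhan filtration theorem recalled above to obtain the outer filtration $0 = A_{0,0} \subsetneq A_{1,0} \subsetneq \cdots \subsetneq A_{L,0} = A$, whose successive quotients $\tilde{A}_{j+1,0} := A_{j+1,0}/A_{j,0}$ are $\alpha$-semistable with strictly decreasing $\alpha$-slopes. For each $\alpha$-semistable quotient $\tilde{A}_{j+1,0}$, I would then invoke the existence of a Jordan-H\"older filtration (cited from Seshadri \cite{Seshadri65} and King \cite{Kin94}) to produce a chain with $\alpha$-stable successive quotients, all having $\alpha$-slope equal to $\mu_\alpha(Q, {\bf v}_{j+1,0} - {\bf v}_{j,0})$. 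Pulling this chain back through the quotient map $A_{j+1,0} \to \tilde{A}_{j+1,0}$ yields subrepresentations $A_{j,k}$ of $A$ interpolating $A_{j,0}$ and $A_{j+1,0}$, producing the required double filtration.

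For uniqueness of the isomorphism class of $\Gr^{HNJH}(A, Q, {\bf v})$, I would argue in two steps. First, by Proposition~\ref{prop:maximalsemistable} the Harder-Narasimhan filtration of $A$ is intrinsically determined as a filtration by subrepresentations, since at each stage $A_{j,0}/A_{j-1,0}$ is the unique maximal $\alpha_j$-semistable subrepresentation of $A/A_{j-1,0}$; consequently the isomorphism classes of the $\alpha$-semistable pieces $\tilde{A}_{j+1,0}$ depend only on the isomorphism class of $A$. Second, the Jordan-H\"older theorem for $\alpha$-semistable representations of a quiver asserts that, although the J-H filtration of a semistable representation itself is not unique, the isomorphism class of its associated graded (as an unordered multiset of $\alpha$-stable factors) is determined by the representation. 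Taking the direct sum of these graded objects over $j$ then identifies $\Gr^{HNJH}(A, Q, {\bf v})$ as an isomorphism invariant of $A$ alone.

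The main point requiring care is the quiver Jordan-H\"older uniqueness used in the second step. The standard route is to observe that the $\alpha$-semistable representations of a fixed $\alpha$-slope form an abelian subcategory of $\Rep(Q, \cdot)$ in which the $\alpha$-stable representations are precisely the simple objects; the classical Jordan-H\"older argument for finite length modules, together with a Schur-type lemma asserting that every nonzero morphism between $\alpha$-stable representations of the same slope is an isomorphism, then goes through verbatim. I would cite \cite{Kin94, Seshadri65} for this rather than reprove it, since it is a purely categorical argument once the abelian category structure and the simplicity of $\alpha$-stable representations have been established.
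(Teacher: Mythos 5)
Your proposal is correct and is essentially the argument the paper intends: the paper states this proposition without an explicit proof, relying on the canonicity of the Harder-Narasimhan filtration (Proposition~\ref{prop:maximalsemistable}) and on the cited existence/uniqueness of Jordan-H\"older filtrations from \cite{Seshadri65} and \cite{Kin94}. The Schur-type lemma you invoke for the Jordan-H\"older uniqueness step is exactly the content of Proposition~\ref{prop:stableisomorphism} (nonzero homomorphisms between $\alpha$-stable representations of equal slope are isomorphisms), so your filled-in argument matches the paper's approach.
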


We may now state the main theorem of this section.  The point is that
it is precisely the graded object of the H-N-J-H filtration of the
initial condition which determines the isomorphism type of the limit
under the negative gradient flow.

\begin{theorem}\label{thm:convergencetogradedobject}
Let $Q = ({\mathcal I}, E)$ be a quiver with specified dimension
vector \({\bf v} \in
\Z^{\mathcal I}_{\geq 0},\) $\Rep(Q, {\bf v})$ its
associated representation space, and \(\Phi: \Rep(Q, {\bf v}) \to
\g^{*} \cong \g \cong \prod_{\ell \in {\mathcal I}} \u(V_{\ell})\) a
moment map for the standard Hamiltonian action of $G = \prod_{\ell \in
{\mathcal I}} U(V_{\ell})$ on $\Rep(Q, {\bf v})$.  Let $A_0 \in
\Rep(Q, {\bf v})$, and let $A_\infty = \lim_{t \rightarrow \infty}
\gamma(A_0, t)$ be its limit under the negative gradient flow of $\|
\Phi - \alpha \|^2$. Then $A_\infty$ is isomorphic, as a representation
of the quiver $Q$, to the associated graded object of the H-N-J-H
filtration of the initial condition $A_0$.
\end{theorem}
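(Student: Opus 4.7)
The plan is to combine the structural results already established with a reduction to the $\alpha$-semistable case, where the uniqueness of the Jordan-H\"older graded object inside a $G_\C$-orbit closure takes over. By Theorem~\ref{theorem:equivstratification}, $A_0$ and $A_\infty$ share the same H-N type ${\bf v^*}=({\bf v}_1,\ldots,{\bf v}_L)$. Corollary~\ref{cor:critical-infimum} (together with Proposition~\ref{prop:momentmapdetermined}) shows that the H-N filtration of $A_\infty$ actually \emph{splits} as an orthogonal direct sum
\[
A_\infty \;\cong\; \bigoplus_{j=1}^L A_{\infty,j}, \qquad A_{\infty,j} \in \Rep(Q, {\bf v}_j),
\]
and the moment-map equation at a critical point forces $\Phi_j(A_{\infty,j})=\alpha_j$ for the trace-free parameter $\alpha_j$ of Proposition~\ref{prop:maximalsemistable}; hence each $A_{\infty,j}$ is $\alpha_j$-polystable (by Lemma~\ref{lem:KempfNess} and the identification of closed $G_\C$-orbits meeting $\Phi^{-1}(\alpha_j)$ with $\alpha_j$-polystable orbits). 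It therefore suffices to identify $A_{\infty,j}$ with the J-H graded object of the $\alpha_j$-semistable H-N quotient $A_{j,0}/A_{j-1,0}$ of $A_0$.

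To bridge the two filtrations I would use Proposition~\ref{prop:filtrationlimit}. Write $A(t) = g(t)\cdot A_0$ with $g(t) \in G_\C$; since the H-N filtration is $G_\C$-equivariant, the $j$-th H-N subrepresentation of $A(t)$ is $g(t)\cdot A_{j,0}$, and the induced quotient on $g(t)\cdot A_{j,0}/g(t)\cdot A_{j-1,0}$ is, for every finite $t$, isomorphic to $A_{j,0}/A_{j-1,0}$. Passing to a subsequence $t_n \to \infty$ along which $\pi^{(j)}_{t_n} \to \pi^{(j)}_\infty$ (Proposition~\ref{prop:filtrationlimit}), and choosing unitary trivialisations of the quotient spaces $\pi^{(j)}_{t_n}\Vect(Q,{\bf v})/\pi^{(j-1)}_{t_n}\Vect(Q,{\bf v})$ that converge to a unitary trivialisation of the limiting subquotient, we transport the quotient representations into the fixed space $\Rep(Q, {\bf v}_j)$, thereby producing a sequence in the single $G_\C$-orbit $G_\C\cdot(A_{j,0}/A_{j-1,0})$ that converges to $A_{\infty,j}$. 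In particular $A_{\infty,j} \in \overline{G_\C \cdot (A_{j,0}/A_{j-1,0})}$ inside $\Rep(Q, {\bf v}_j)$.

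The final step is purely GIT: by King's theorem \cite{Kin94} and the equivalence of $\alpha$-polystability with $\chi_\alpha$-polystability recalled in Section~\ref{subsec:slope-stability-def}, every $\alpha_j$-semistable $G_\C$-orbit closure contains a unique $\alpha_j$-polystable $G_\C$-orbit, represented up to isomorphism by the J-H graded object. Applied to $A_{j,0}/A_{j-1,0}$ and its $\alpha_j$-polystable limit $A_{\infty,j}$, this gives $A_{\infty,j} \cong \Gr^{JH}(A_{j,0}/A_{j-1,0})$; summing over $j$ yields $A_\infty \cong \Gr^{HNJH}(A_0)$.

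The main obstacle I anticipate is the bookkeeping in the second paragraph: the H-N pieces $g(t) \cdot A_{j,0}$ live inside $A(t)$ as time-varying subspaces, so to legitimately say that the induced quotient representations converge to an element of the fixed space $\Rep(Q,{\bf v}_j)$ requires a careful choice of trivialising frame and a continuity argument for the quotient representation matrices. Proposition~\ref{prop:filtrationlimit} supplies the subsequential convergence of the projections, but one still has to verify that, after the appropriate unitary change of basis, the induced representations themselves converge to $A_{\infty,j}$; once this is in place, the remaining GIT step is routine.
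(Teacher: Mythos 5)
Your argument is correct in outline, but it is a genuinely different route from the paper's. The paper proves the theorem by constructing, for each step of the H-N-J-H filtration, an explicit nonzero intertwiner from the subrepresentation of $A_0$ to the limit: one normalises $f_{0,t}=g_{0,t}\circ\tilde\pi_t$, extracts a convergent subsequence (Lemma~\ref{lem:holomorphiclimit}), and then invokes the Schur-type statement of Proposition~\ref{prop:stableisomorphism} to conclude that the limiting homomorphism out of an $\alpha$-stable piece of matching slope is injective, hence an isomorphism onto its image; peeling off pieces inductively identifies $A_\infty$ with $\Gr^{HNJH}(A_0)$. You instead establish that each block $A_{\infty,j}$ of the (split, polystable) critical limit lies in the orbit closure $\overline{G_\C\cdot(A_{j,0}/A_{j-1,0})}$ and then appeal to the uniqueness of the polystable degeneration of a semistable orbit. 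Your route buys a cleaner conceptual endgame (the identification with the J-H graded object becomes a citation rather than an induction with intertwiners), at the cost of concentrating the analytic work in the transport step; the paper's route avoids any appeal to orbit-closure uniqueness but requires Proposition~\ref{prop:stableisomorphism} and the careful limit of normalised homomorphisms.

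Two points in your plan need attention. First, the step you flag is real but fillable: writing $V^{(j)}_t=V^{(j-1)}_t\oplus W^{(j)}_t$ with $W^{(j)}_t=\operatorname{im}(\pi^{(j)}_t-\pi^{(j-1)}_t)$, the induced quotient representation is $(\pi^{(j)}_t-\pi^{(j-1)}_t)A(t)(\pi^{(j)}_t-\pi^{(j-1)}_t)$ on $W^{(j)}_t$; since $A(t_n)\to A_\infty$ and the projections converge (Proposition~\ref{prop:filtrationlimit}), these converge to the corresponding block of $A_\infty$, which is $A_{\infty,j}$ because $A_\infty$ splits orthogonally, and nearby subspaces of equal dimension are identified by unitaries converging to the identity. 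This is comparable in length to the paper's Lemma~\ref{lem:holomorphiclimit}. Second, your ``purely GIT'' step quotes King, whose results (and Definition~\ref{def:chistable}) require the shifted parameters $\alpha_j$ to be integral, which they need not be even when $\alpha$ is; for general real parameters you must use the analytic Kempf--Ness formulation (as in Lemma~\ref{lem:KempfNess} and Sjamaar's work) to get both ``polystable $\Rightarrow$ closed orbit in the semistable locus'' and the uniqueness of that closed orbit in the closure. With those two repairs the argument goes through.
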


Firstly, recall that Proposition \ref{prop:filtrationlimit} already shows that
the vector spaces that define the Harder-Narasimhan filtration
converge in the limit of the gradient flow. To prove that the gradient
flow converges to the graded object of the H-N-J-H filtration we need
to show that the corresponding sub-representations also 
converge. The next proposition is a key step in the argument; it shows
that $\alpha$-stability and $\alpha$-semistability conditions on two
representations, together with knowledge of the relationship between
their $\alpha$-slopes, can place strong restrictions on homomorphisms
between them. Again, this is a quiver analogue of a similar statement
for holomorphic bundles (\cite[Proposition V.7.11]{Kob87}), a
well-known corollary of which is that a semistable bundle with
negative degree has no holomorphic sections. (For quivers, see \cite[Lemma 2.3]{Rei03} for the case where $\mu_{\alpha}(Q, {\bf v}_{1}) > \mu_{\alpha}(Q, {\bf v}_{2})$.)

\begin{proposition}\label{prop:stableisomorphism}
Let \(Q = ({\mathcal I}, E)\) be a finite quiver, \({\bf v_{1}}, {\bf v_{2}} \in \Z^{\mathcal I}_{\geq 0}\) be dimension vectors, and \(\alpha\) a central parameter. Suppose \(A_{1} \in \Rep(Q, {\bf v}_{1})\) with associated vector spaces $\{V_{\ell}\}$, \(A_{2} \in \Rep(Q, {\bf v}_{2})\) with associated vector spaces $\{V'_{\ell}\}$, and \(f = \{f_{\ell}: V_{\ell} \to V'_{\ell}\}_{\ell \in {\mathcal I}}\) is a homomorphism of quivers from $A$ to $A'$. 

\begin{itemize} 
\item If \(\mu_{\alpha}(Q, {\bf v}_{1}) = \mu_{\alpha}(Q, {\bf v}_{2}),\) $A_{1}$ is $\alpha$-stable, and $A_{2}$ is $\alpha$-semistable, then $f$ is either zero or injective. 
\item If \(\mu_{\alpha}(Q, {\bf v}_{1}) = \mu_{\alpha}(Q, {\bf v}_{2}),\) $A_{1}$ is $\alpha$-semistable, and $A_{2}$ is $\alpha$-stable, then $f$ is either zero or surjective. 
\item If \(\mu_{\alpha}(Q, {\bf v}_{1}) = \mu_{\alpha}(Q, {\bf v}_{2})\) and both $A_{1}$ and $A_{2}$ are $\alpha$-stable, then $f$ is either zero or an isomorphism. 
\item If \(\mu_{\alpha}(Q, {\bf v}_{1}) > \mu_{\alpha}(Q, {\bf v}_{2})\) and both $A_{1}$ and $A_{2}$ are $\alpha$-semistable, then $f$ is zero. 

\end{itemize} 
\end{proposition}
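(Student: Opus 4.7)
The plan is to reduce all four cases to two standard facts: (a) for any homomorphism $f = \{f_\ell\}$ of quiver representations, both $\ker(f) = \{\ker(f_\ell)\}_{\ell \in \mathcal{I}}$ and $\operatorname{im}(f) = \{f_\ell(V_\ell)\}_{\ell \in \mathcal{I}}$ inherit the structure of subrepresentations of $A_1$ and $A_2$ respectively, and (b) there is a short exact sequence of quiver representations
\[
0 \longrightarrow \ker(f) \longrightarrow A_1 \longrightarrow \operatorname{im}(f) \longrightarrow 0.
\]
Verifying (a) is a one-line computation using the intertwining identity $f_{\inw(a)}(A_1)_a = (A_2)_a f_{\out(a)}$: if $v \in \ker(f_{\out(a)})$ then $(A_1)_a v \in \ker(f_{\inw(a)})$, and similarly the image is preserved by the arrows of $A_2$. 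Let $\mathbf{k}$ and $\mathbf{i}$ denote the dimension vectors of $\ker(f)$ and $\operatorname{im}(f)$.

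The core tool is Lemma~\ref{lem:exactsequenceinequality} applied to the exact sequence above, which translates any slope comparison between $\mathbf{k}$ and $\mathbf{v}_1$ into the corresponding comparison between $\mathbf{v}_1$ and $\mathbf{i}$. I would dispose of the fourth bullet first, as it is the prototype: assuming $f \neq 0$, both $\ker(f)$ (if nonzero) and $\operatorname{im}(f)$ are proper subrepresentations to which $\alpha$-semistability of $A_1$ and $A_2$ apply, giving $\mu_\alpha(Q,\mathbf{k}) \leq \mu_\alpha(Q,\mathbf{v}_1)$ and $\mu_\alpha(Q,\mathbf{i}) \leq \mu_\alpha(Q,\mathbf{v}_2)$; the exact sequence then forces $\mu_\alpha(Q,\mathbf{v}_1) \leq \mu_\alpha(Q,\mathbf{i}) \leq \mu_\alpha(Q,\mathbf{v}_2)$, contradicting the strict slope inequality. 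The case $\ker(f)=0$ is handled separately by observing that then $\mathbf{i}=\mathbf{v}_1$, so $\mu_\alpha(Q,\mathbf{i}) = \mu_\alpha(Q,\mathbf{v}_1)$, again contradicting the hypothesis via semistability of $A_2$.

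For the first bullet, suppose $f \neq 0$ and assume for contradiction that $\ker(f) \neq 0$, hence is a proper nonzero subrepresentation of $A_1$. Then $\alpha$-stability of $A_1$ gives a strict inequality $\mu_\alpha(Q,\mathbf{k}) < \mu_\alpha(Q,\mathbf{v}_1)$, which by Lemma~\ref{lem:exactsequenceinequality} promotes to $\mu_\alpha(Q,\mathbf{v}_1) < \mu_\alpha(Q,\mathbf{i})$; combined with $\alpha$-semistability of $A_2$ and the hypothesis $\mu_\alpha(Q,\mathbf{v}_1)=\mu_\alpha(Q,\mathbf{v}_2)$, this yields a contradiction. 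The second bullet is dual: if $f$ is nonzero but not surjective, then $\operatorname{im}(f)$ is a proper nonzero subrepresentation of the $\alpha$-stable $A_2$, so $\mu_\alpha(Q,\mathbf{i}) < \mu_\alpha(Q,\mathbf{v}_2) = \mu_\alpha(Q,\mathbf{v}_1)$, while $\alpha$-semistability of $A_1$ (applied to $\ker(f)$, or direct if $\ker(f)=0$) combined with Lemma~\ref{lem:exactsequenceinequality} forces $\mu_\alpha(Q,\mathbf{v}_1) \leq \mu_\alpha(Q,\mathbf{i})$, a contradiction. The third bullet is then immediate by conjoining the first two.

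There is no serious obstacle here; the only point requiring care is the bookkeeping when $\ker(f)=0$ or $\operatorname{im}(f) = A_2$, where the "proper subrepresentation" hypothesis needed to invoke (semi)stability fails and one must instead use the equality of slopes $\mu_\alpha(Q,\mathbf{i})=\mu_\alpha(Q,\mathbf{v}_1)$ directly. This bookkeeping, together with the observation that each case reduces to chaining the two slope inequalities coming from (semi)stability with the one coming from Lemma~\ref{lem:exactsequenceinequality}, constitutes the entire proof.
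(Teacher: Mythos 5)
Your proposal is correct and follows essentially the same route as the paper's proof: form the kernel and image as subrepresentations, use the short exact sequence $0 \to \ker(f) \to A_1 \to \im(f) \to 0$, and chain the (semi)stability slope inequalities with Lemma~\ref{lem:exactsequenceinequality} to reach a contradiction in each case. The only difference is cosmetic ordering (you treat the fourth bullet first) and slightly more explicit bookkeeping of the degenerate cases $\ker(f)=0$ and $\im(f)=A_2$, which the paper handles the same way in substance.
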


\begin{proof}
Let $K = \ker(f)$ and $L = \im(f)$, where by $\ker(f)$ we mean the direct sum $\oplus_{\ell \in {\mathcal I}} \ker(f_{\ell})$, and similarly for $\im(f)$.  If $L = \{0\}$ then there is nothing to prove, so we assume that
$L \neq \{0\}$ and hence also $K \neq \Vect(Q, {\bf v_1})$. 
Since $f$ is a homomorphism of representations of quivers, it is straightforward to see that $K$ is a subrepresentation of $A_{1}$, and $L$ is a subrepresentation of $A_{2}$. Let \(A_{1} \vert_{K}\) and \(A_{2} \vert_{L}\) denote the restrictions of $A_{1}$ and $A_{2}$ to $K$ and $L$ respectively, with associated dimension vectors \({\bf v}_{K}, {\bf v}_{L}.\) Then we have an exact sequence of representations of quivers 
\begin{equation}\label{eq:A1-A2}
0 \rightarrow A_{1} \vert_{K} \rightarrow A_{1} \rightarrow A_{2} \vert_{L} \rightarrow 0
\end{equation}
where the first map is by inclusion and the second induced by $f$. Now assume \(\mu_{\alpha}(Q, {\bf v}_{1}) = \mu_{\alpha}(Q, {\bf v}_{2}),\) $A_{1}$ is $\alpha$-stable, and $A_{2}$ is $\alpha$-semistable. 
Since by assumption \(A_{1} \vert_{K}\) is not equal to $A_{1}$, then if $K \neq 0$ it is a proper subrepresentation of $A_{1}$ and $\alpha$-stability implies \(\mu_{\alpha}(Q, {\bf v}_{K}) < \mu_{\alpha}(Q, {\bf v}_{1}).\) From this we get 
\begin{align}
\begin{split}
\mu_{\alpha}(Q, {\bf v}_{1}) & < \mu_{\alpha}(Q, {\bf v}_{L}) \quad \mbox{by Lemma~\ref{lem:exactsequenceinequality} applied to~\eqref{eq:A1-A2}} \\
& \leq \mu_{\alpha}(Q, {\bf v}_{2}) \quad \mbox{by $\alpha$-semistability of $A_{2}$} \\
& = \mu_{\alpha}(Q, {\bf v}_{1}) \quad \mbox{by assumption},
\end{split}
\end{align}
which is a contradiction. Hence \(K = \{0\}\) and $f$ is injective. This proves the first claim. 

A similar argument shows that if \(\mu_{\alpha}(Q, {\bf v}_{1}) = \mu_{\alpha}(Q, {\bf v}_{2}), A_{1}\) is $\alpha$-semistable, and $A_{2}$ is $\alpha$-stable, then $f$ is either $0$ or surjective. This proves the second claim, and hence also the third. 

Finally suppose that \(\mu_{\alpha}(Q, {\bf v}_{1}) > \mu_{\alpha}(Q, {\bf v}_{2})\) and both $A_{1}, A_{2}$ are $\alpha$-semistable. Suppose for a contradiction that \(L \neq \{0\}\) and \(K \neq \{0\}.\) Then by $\alpha$-semistability we have \(\mu_{\alpha}(Q, {\bf v}_{L}) \leq \mu_{\alpha}(Q, {\bf v}_{2})\) and \(\mu_{\alpha}(Q, {\bf v}_{K}) \leq \mu_{\alpha}(Q, {\bf v}_{1}),\) so \(\mu_{\alpha}(Q, {\bf v}_{1}) \leq \mu_{\alpha}(Q, {\bf v}_{L})\) by Lemma~\ref{lem:exactsequenceinequality} applied to~\eqref{eq:A1-A2}, which gives a contradiction. On the other hand if \(K=\{0\}\) then $A_{1}$ is isomorphic to $A_{2} \vert_{L}$ and \(\mu(Q, {\bf v}_{1}) = \mu(Q, {\bf v}_{L}) \leq \mu(Q, {\bf v}_{2}),\) again a contradiction. Hence \(L = \{0\}\) and the last claim is proved. 
\end{proof}

Let $\gamma(A_0, t)_a$ denote the component of $\gamma(A_0, t) \in
\Rep(Q, {\bf v})$ along the edge $a \in E$. Since the finite-time
gradient flow lies in a $G_\C$ orbit, for $t, T \in \R$ we can
define the isomorphism $g_{t, T} : \Rep(Q, {\bf v}) \rightarrow
\Rep(Q, {\bf v})$ such that $g_{t, T} \, \gamma(A_0, t)_a g_{t,
T}^{-1} = \gamma(A_0, T)_a$ for each $a \in E$. Then we have
%\begin{align}
%\begin{split}
\begin{equation}
g_{t,T} \, \gamma(A_0, t)_a g_{t,T}^{-1}  = \gamma (A_0, T)_a \quad \forall a \in E \, \, 
 \Leftrightarrow \, \, g_{t, T} \, \gamma (A_0, t)_a  = \gamma (A_0, T)_a g_{t, T}.
\end{equation}
%\end{split}
%\end{align}
Similarly, if $\tilde{A}_t$ is a sub-representation of $\gamma(A_0,
t)$ with associated projection $\tilde{\pi}_t$, then the induced
isomorphism $f_{t,T} = g_{t,T} \circ \tilde{\pi}_t$ also satisfies
\begin{equation}\label{eqn:fholomorphic}
f_{t, T} \, \gamma (A_0, t)_a \, \tilde{\pi}_t = \gamma (A_0, T)_a f_{t, T} \, \tilde{\pi}_t.
\end{equation}

\begin{lemma}\label{lem:holomorphiclimit}
For a sub-representation $\tilde{A}_0 \subset A_0$, let
$\tilde{f}_{0,t} = f_{0,t} / \| f_{0,t} \|$, where $f_{0,t}$ is the
map defined above for the gradient flow with initial condition
$A_0$. Then there exists a sequence $t_j$ such that $\lim_{j
\rightarrow \infty} \tilde{f}_{0, t_j} = \tilde{f}_{0,\infty}$ for
some non-zero map $\tilde{f}_{0, \infty}$ satisfying $\tilde{f}_{0,
\infty} \, \gamma(A_0, 0) \, \tilde{\pi}_0 = \gamma(A_0, \infty) \,
f_{0, \infty} \, \tilde{\pi}_0$.
\end{lemma}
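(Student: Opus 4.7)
The plan is to exploit compactness of the unit sphere in a finite-dimensional vector space together with the gradient flow convergence result Theorem \ref{thm:gradflowconvergence}. The map $f_{0,t}$ lives in the finite-dimensional space $\Hom(\Vect(Q,{\bf v}), \Vect(Q,{\bf v}))$, and by construction $f_{0,t} = g_{0,t}\circ\tilde\pi_0$ with $g_{0,t}\in G_\C$ invertible, so $f_{0,t}\neq 0$ whenever $\tilde\pi_0\neq 0$ (which we may assume, otherwise the conclusion is vacuous). Hence the normalized maps $\tilde f_{0,t}$ are all well-defined unit vectors in this finite-dimensional space.

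First I would extract the subsequence. Since the unit sphere in $\Hom(\Vect(Q,{\bf v}), \Vect(Q,{\bf v}))$ is compact, there exists a sequence $t_j\to\infty$ and a limit $\tilde f_{0,\infty}$ of unit norm (in particular nonzero) with $\tilde f_{0,t_j}\to\tilde f_{0,\infty}$. This is the compactness step and provides the desired nonvanishing of the limit without any further argument.

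Next I would pass to the limit in the homomorphism relation. Equation~\eqref{eqn:fholomorphic} with $t=0$, $T=t_j$ reads
\begin{equation*}
f_{0,t_j}\,\gamma(A_0,0)_a\,\tilde\pi_0 \;=\; \gamma(A_0,t_j)_a\,f_{0,t_j}\,\tilde\pi_0 \qquad \forall\, a\in E.
\end{equation*}
Both sides are linear in $f_{0,t_j}$, so dividing through by the scalar $\|f_{0,t_j}\|>0$ gives the same identity with $f_{0,t_j}$ replaced by $\tilde f_{0,t_j}$. Now I would apply Theorem~\ref{thm:gradflowconvergence}, which furnishes the limit $\gamma(A_0,t_j)_a \to \gamma(A_0,\infty)_a$ for every $a\in E$. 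Combined with $\tilde f_{0,t_j}\to\tilde f_{0,\infty}$, continuity of composition of linear maps yields
\begin{equation*}
\tilde f_{0,\infty}\,\gamma(A_0,0)_a\,\tilde\pi_0 \;=\; \gamma(A_0,\infty)_a\,\tilde f_{0,\infty}\,\tilde\pi_0,
\end{equation*}
which is the desired relation (the statement's right-hand side written with $f_{0,\infty}$ should be read as $\tilde f_{0,\infty}$, since the identity is linear in the map).

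There is no real analytic obstacle here: the proof is essentially a compactness-plus-continuity argument, and the only point that could conceivably cause trouble is showing that $\|f_{0,t}\|$ does not need to be controlled in order to get a nonzero limit; this is bypassed entirely by working with the normalized maps on the unit sphere. The substantive input from the rest of the paper is simply the fact that the gradient flow converges, which is Theorem~\ref{thm:gradflowconvergence}.
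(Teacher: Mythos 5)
Your proposal is correct and follows essentially the same route as the paper: extract a convergent subsequence of the unit-norm maps $\tilde f_{0,t}$ by compactness (guaranteeing a nonzero limit), then pass to the limit in the intertwining relation \eqref{eqn:fholomorphic} using Theorem \ref{thm:gradflowconvergence}; the paper merely writes out the ``continuity of composition'' step as an explicit telescoping estimate. Your observations that the relation is linear in $f_{0,t}$ (so normalization is harmless) and that the $f_{0,\infty}$ on the right-hand side of the statement should be read as $\tilde f_{0,\infty}$ are both correct.
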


\begin{proof}
Since $\| \tilde{f}_{0,t} \| = 1$ for all $t$ then there exists a
sequence $t_j$ and a non-zero map $\tilde{f}_{0,\infty}$ such that
$\tilde{f}_{0, t_j} \rightarrow \tilde{f}_{0, \infty}$, and so it only
remains to show that $\tilde{f}_{0, \infty} \gamma(A_0, 0)
\tilde{\pi}_0 = \gamma(A_0, \infty) f_{0, \infty} \tilde{\pi}_0$. To
see this, first note that
\begin{multline}
\gamma(A_0, \infty) \tilde{f}_{0,t} \tilde{\pi}_0 - \tilde{f}_{0,t} \gamma(A_0, 0) \tilde{\pi}_0 \\ = \gamma(A_0, t) \tilde{f}_{0, t} \tilde{\pi}_0 - \tilde{f}_{0, t} \gamma(A_0, 0) \tilde{\pi}_0 + \left(\gamma(A_0, \infty) - \gamma(A_0, t) \right) \tilde{f}_{0,t} \tilde{\pi}_0.
\end{multline}
Equation \eqref{eqn:fholomorphic} and Theorem \ref{thm:gradflowconvergence} show that the right-hand side converges to zero, so we have
\begin{equation}\label{eqn:limitholomorphic}
\lim_{t \rightarrow \infty} \left( \gamma(A_0, \infty) \tilde{f}_{0,t} \tilde{\pi}_0 - \tilde{f}_{0,t} \gamma(A_0, 0) \tilde{\pi}_0 \right) = 0.
\end{equation}

Now consider the equation
\begin{multline*}
\gamma(A_0, \infty) \tilde{f}_{0, \infty} \tilde{\pi}_0 - \tilde{f}_{0, \infty} \gamma(A_0, 0) \tilde{\pi}_0 \\ = \gamma(A_0, \infty) (\tilde{f}_{0, \infty} - \tilde{f}_{0, t_j}) \tilde{\pi}_0  - (\tilde{f}_{0, \infty} - \tilde{f}_{0, t_j}) \gamma(A_0, 0) \tilde{\pi}_0 + \gamma(A_0, \infty) \tilde{f}_{0, t_j} \tilde{\pi}_0 - \tilde{f}_{0, t_j} \gamma(A_0, 0)  \tilde{\pi}_0.
\end{multline*}
The convergence $\tilde{f}_{0, t_j} \rightarrow \tilde{f}_{0,
  \infty}$, together with equation \eqref{eqn:limitholomorphic}, shows
that all of the terms on the right-hand side converge to zero, hence 
\begin{equation}
\gamma(A_0, \infty) \tilde{f}_{0, \infty} \tilde{\pi}_0 = \tilde{f}_{0, \infty} \gamma(A_0, 0) \tilde{\pi}_0
\end{equation}
as required.
\end{proof}

\begin{proof}[Proof of Theorem \ref{thm:convergencetogradedobject}]
Let $A_0 \in \Rep(Q, {\bf v})$ be the initial condition for the
gradient flow of $\| \Phi(A) - \alpha \|^2$, and consider the
sub-representation $\tilde{A}_0 \in \Rep(Q, {\bf \tilde{v}})$
corresponding to the first term in the H-N-J-H filtration of $A_0 \in
\Rep(Q, {\bf v})$. Lemma \ref{lem:holomorphiclimit} shows that there
is a non-zero map $\tilde{f}_{0, \infty} : \Vect(Q, {\bf
\tilde{v}}) \rightarrow \Vect(Q, {\bf \tilde{v}})$ such that
$\tilde{A}_\infty \circ \tilde{f}_{0, \infty}  = \tilde{f}_{0, \infty}
\circ \tilde{A}_0$, where $\tilde{A}_\infty = \lim_{t \rightarrow
\infty} \tilde{A}_0$ and $\tilde{f}_{0, \infty}$ satisfies the
conditions of Proposition \ref{prop:stableisomorphism}. The proof of
Proposition \ref{prop:filtrationlimit} then shows that
$\frac{\deg_\alpha(Q, {\bf \tilde{v}_\infty})}{\rank(Q, {\bf
\tilde{v}_\infty})} = \frac{\deg_\alpha(Q, {\bf \tilde{
v}_0})}{\rank(Q, {\bf \tilde{v}_0})}$.

Applying Proposition \ref{prop:stableisomorphism} shows that $\tilde{f}_{0, \infty}$ is injective (since it is non-zero), and therefore an isomorphism onto the image $\tilde{f}_{0, \infty} \circ \Vect(Q, {\bf \tilde{v}})$. Now repeat the process on the quotient representation $A_0 / \tilde{A}_0$.  Doing this for each term in the H-N-J-H filtration of $A_0$ shows that (along a subsequence) the flow converges to the graded object of the H-N-J-H filtration of $A_0$. Theorem \ref{thm:gradflowconvergence} shows that the limit exists along the flow independently of the subsequence chosen, which completes the proof of Theorem \ref{thm:convergencetogradedobject}.
\end{proof}

\section{Fibre bundle structure of strata}\label{sec:local}

The results in Section~\ref{sec:HN} and~\ref{sec:graded} establish the
relationship between the negative gradient flow on $\Rep(Q, {\bf
v})$ and the Harder-Narasimhan stratification of $\Rep(Q, {\bf
v})$. In particular, since Theorem~\ref{theorem:equivstratification}
shows that the analytic and algebraic stratifications are equivalent,
we may without ambiguity refer to ``the'' stratum $\Rep(Q, {\bf
v})_{{\bf v}^*} = S_{{\bf v}^*}$ associated to a H-N type ${\bf v}^*$. 

In this section, we turn our attention back to the analytical
description of the stratum $\Rep(Q, {\bf v})_{{\bf v}^*}$, and in particular construct an
explicit system of local coordinates around each point in $\Rep(Q, {\bf v})_{{\bf v}^*}$. The
key idea in our construction is to first build a coordinate system
near a critical point \(A \in C_{{\bf v}^*} \subseteq \Rep(Q, {\bf v})_{{\bf v}^*}\), since the criticality
of $A$ gives us a convenient way of parametrizing a neighborhood of
$A$. These local coordinates near $C_{{\bf v}^*}$ may then be translated by
$G_\C$ to other points in $\Rep(Q, {\bf v})_{{\bf v}^*}$, since (as seen in
Lemma~\ref{lem:cangetclose}) any point in $\Rep(Q, {\bf v})_{{\bf v}^*}$ can be brought
arbitrarily close to a point in $C_{{\bf v}^*}$ via the action of 
\(G_\C\). The main results in this direction are
Propositions~\ref{prop:critical-local-co-ordinates} and
~\ref{prop:local-coord-on-stratum}.  Moreover, as a
consequence of these explicit local descriptions, we also obtain a
formula for the codimension of Harder-Narasimhan strata, and an
explicit description of the stratum as a fiber bundle in
Proposition~\ref{prop:strata-fibre-bundle}. They also lead to an
identification of a tubular neighborhood $U_{{\bf v}^*}$ of $\Rep(Q, {\bf v})_{{\bf v}^*}$ with the
(disk bundle of the) normal bundle to $\Rep(Q, {\bf v})_{{\bf v}^*}$.

We note that Reineke also provides a description of each
Harder-Narasimhan stratum as a fiber bundle \cite[Proposition
3.4]{Rei03}. However, with our analytical approach we are able to do
somewhat more, i.e. we obtain explicit local coordinates near all
points in the stratum. We expect these
descriptions to be useful in the Morse-theoretic analysis of the
hyperk\"ahler Kirwan map for Nakajima quiver varieties. Indeed, in
Section~\ref{sec:applications} we take a step in this direction by using these local coordinates to give
a description of the singularities of $\PhiC^{-1}(0)$, the zero level
set of the holomorphic moment map, which is an important intermediate
space used in the construction of a hyperk\"ahler quotient.

The first step is to use the complex group action to provide specific local
coordinates around any point in $\Rep(Q, {\bf v})$. Let ${\bf v^*}$ be a H-N type, and let \(\Rep(Q, {\bf
  v})_{*,{\bf v^*}}\) denote the subset of the H-N stratum $\Rep(Q, {\bf
  v})_{\bf v^*}$ which preserves a fixed filtration, denoted $*$, of type
${\bf v}^*$. Equip $\g_\C$ with the direct sum metric $\g_\C \cong \g
\oplus i \g$, and let $(\rho_A^\C)^*: T_A \Rep(Q, {\bf v}) \to \g_\C$
denote the adjoint of the infinitesmal action of $\g_\C$.

\begin{lemma}\label{lem:localdiffeo}
Let \(Q = ({\mathcal I}, E)\) be a quiver and 
\({\bf v} \in \Z^{\mathcal I}_{\geq 0}\) a dimension vector. Let \(\mu \in
\R^{\rank(Q, {\bf v})}\) be a H-N type, ${\bf v}^*$ a filtration type, and $\Rep(Q, {\bf v})_{*,{\bf v^*}}$
as above. 
Suppose \(A \in \Rep(Q, {\bf v})_{*, {\bf v^*}}\). Then the function 
\begin{align*}
\begin{split}
\psi_A: (\ker \rho_A^\C)^{\perp} \times \ker(\rho_A^\C)^* & \rightarrow \Rep(Q, {\bf v}) \\
(u, \delta A)  & \mapsto \exp(u) \cdot (A + \delta A) \\
\end{split}
\end{align*}
is a local diffeomorphism at the point \((0,0) \in
\ker(\rho_A^\C)^{\perp} \times \ker(\rho_A^\C)^*\). 
\end{lemma}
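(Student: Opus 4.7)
The plan is to invoke the inverse function theorem by computing the differential $d\psi_A$ at $(0,0)$ and showing it is a linear isomorphism onto $T_A \Rep(Q,{\bf v})$. Since $\Rep(Q,{\bf v})$ is an affine (vector) space, we canonically identify $T_A\Rep(Q,{\bf v}) \cong \Rep(Q,{\bf v})$ throughout.

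First I would compute the differential. Differentiating in the $u$-direction gives
\begin{equation*}
\linearisation \exp(tu)\cdot A = \rho_A^\C(u),
\end{equation*}
while differentiating in the $\delta A$-direction in the second slot (and setting $u=0$) gives the identity. Hence
\begin{equation*}
d\psi_A\big|_{(0,0)}(u,\delta A) = \rho_A^\C(u) + \delta A,
\end{equation*}
viewed as a linear map $(\ker\rho_A^\C)^\perp \times \ker(\rho_A^\C)^* \to T_A\Rep(Q,{\bf v})$.

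Next I would use standard Hilbert-space orthogonality for the bounded linear operator $\rho_A^\C \colon \g_\C \to T_A\Rep(Q,{\bf v})$. Equipping both spaces with their hermitian inner products, the image of $\rho_A^\C$ is the orthogonal complement of $\ker(\rho_A^\C)^*$, so we obtain the orthogonal decomposition
\begin{equation*}
T_A\Rep(Q,{\bf v}) = \im(\rho_A^\C) \oplus \ker(\rho_A^\C)^*.
\end{equation*}
Moreover, the restriction $\rho_A^\C \colon (\ker\rho_A^\C)^\perp \to \im(\rho_A^\C)$ is a linear isomorphism by definition. Combining these two observations shows that $d\psi_A|_{(0,0)}$ is a bijective linear map: injectivity follows because $\rho_A^\C(u) \in \im(\rho_A^\C)$ and $\delta A \in \ker(\rho_A^\C)^*$ lie in complementary subspaces (so if their sum vanishes each is zero, and then $u=0$ by injectivity on $(\ker\rho_A^\C)^\perp$); surjectivity is immediate from the decomposition above.

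Since all spaces involved are finite-dimensional and $\psi_A$ is smooth (in fact real-analytic, being a composition of the polynomial group action with addition and the exponential map), the inverse function theorem applies and yields a local diffeomorphism at $(0,0)$. There is no real obstacle here: the only subtle point is remembering that $(\ker\rho_A^\C)^\perp$ and $\ker(\rho_A^\C)^*$ are by construction precisely the two pieces needed to match the standard orthogonal splitting $T_A\Rep(Q,{\bf v}) = \im(\rho_A^\C)\oplus\im(\rho_A^\C)^\perp$, so the domain of $\psi_A$ has exactly the right dimension for $d\psi_A|_{(0,0)}$ to be an isomorphism. Note that the hypothesis $A \in \Rep(Q,{\bf v})_{*,{\bf v^*}}$ is not actually used in this local argument; it becomes relevant only in the subsequent results that exploit these coordinates to describe the stratum as a fiber bundle.
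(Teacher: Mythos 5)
Your proof is correct and follows essentially the same route as the paper: compute $d\psi_A|_{(0,0)}(u,\delta A)=\rho_A^\C(u)+\delta A$, use the orthogonal splitting $T_A\Rep(Q,{\bf v})\cong\im\rho_A^\C\oplus\ker(\rho_A^\C)^*$ to get bijectivity, and apply the inverse function theorem. Your additional remarks (e.g.\ that the stratum hypothesis on $A$ is not needed here) are accurate.
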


\begin{proof}
The derivative of $\psi_A$ at $(u, \delta A) = (0,0)$ is given by
\begin{equation}
(d \psi_A)_{(0,0)} (\delta u, \delta a) = \rho_A^\C (\delta u) + \delta a. 
\end{equation}
By the definition of the domain of $\psi_A$, it is straightforward to see that
$(d \psi_A)_{(0,0)}$ is injective. Moreover, since $T_A \Rep(Q, {\bf v})
\cong \im \rho_A^\C \oplus \ker (\rho_A^\C)^*$, then $(d
\psi_A)_{(0,0)}$ is also surjective. 
The result follows. 
\end{proof}

With respect to the filtration $*$, the space $\ker (\rho_A^\C)^*
\subseteq T_A \Rep(Q, {\bf v}) \cong \Rep(Q, {\bf v})$ splits into two
subspaces
\begin{equation}
\ker (\rho_A^\C)^* \cong \left( \ker (\rho_A^\C)^* \cap T_A\Rep(Q,
  {\bf v})_{*,{\bf v^*}} \right) \oplus \left( \ker (\rho_A^\C)^* \cap (T_A
  \Rep(Q, {\bf v})_{*,{\bf v^*}})^\perp \right). 
\end{equation}
Therefore, the previous claim shows that there exists $\varepsilon_1
\geq 0$
such that any representation $B \in \Rep(Q, {\bf v})$ satisfying $\| B - A \| <
\varepsilon_1$ can be written uniquely as
\begin{equation}
B = \exp(u) \cdot (A + \delta a + \sigma)
\end{equation}
where $\delta a \in T_A\Rep(Q, {\bf v})_{*,{\bf v^*}} \cap \ker
(\rho_A^\C)^*$ and $\sigma \in (T_A \Rep(Q, {\bf v})_{*,{\bf v^*}})^\perp
\cap \ker (\rho_A^\C)^*$. The next proposition shows that if $A$ is
critical for \(f = \|\Phi - \alpha\|^2\) and $B \in \Rep(Q, {\bf
v})_{{\bf v}^*}$, then $\sigma = 0$. It is a key step toward describing local
coordinates at points on the stratum $\Rep(Q, {\bf v})_{{\bf v}^*}$.

\begin{proposition}\label{prop:critical-local-co-ordinates}
Let $Q=({\mathcal I}, E)$, ${\bf v} \in \Z^{\mathcal I}_{\geq 0}$,
$\mu$, and $\Rep(Q, {\bf v})_{*,\mu}$ be as in
Lemma~\ref{lem:localdiffeo}. 
Let ${\bf v}^*$ be a non-minimal H-N type, suppose that \(A \in \Rep(Q, {\bf v})_{*,{\bf v^*}}\) is a critical point of \(\|\Phi - \alpha\|^2\), and suppose \(B \in \Rep(Q, {\bf v})_{{\bf v}^*}\)
satisfies \(\|B - A\| < \varepsilon_1\) as above. Then there exist
unique elements \(u \in (\ker \rho_A^\C)^{\perp}\) and \(\delta a \in
\ker(\rho_A^{\C})^* \cap T_A\Rep(Q, {\bf v})_{*, {\bf v^*}}\) such that 
\[
B = \exp(u) \cdot (A + \delta a).
\]
\end{proposition}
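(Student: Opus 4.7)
My plan is to show that the map
\[
\tilde\psi_A\colon (\ker\rho_A^\C)^\perp \times \bigl(\ker(\rho_A^\C)^* \cap T_A\Rep(Q, {\bf v})_{*,{\bf v^*}}\bigr) \to \Rep(Q, {\bf v}), \quad (u,\delta a) \mapsto \exp(u)\cdot(A+\delta a),
\]
is a local diffeomorphism onto a neighborhood of $A$ in the stratum $\Rep(Q, {\bf v})_{{\bf v^*}}$. Once this is established, the uniqueness part of Lemma~\ref{lem:localdiffeo} forces the $\sigma$-component of the unique slice decomposition of any $B\in\Rep(Q, {\bf v})_{{\bf v^*}}$ close to $A$ to vanish, which yields both the existence and the uniqueness of $(u,\delta a)$ with $B=\exp(u)\cdot(A+\delta a)$.

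First, the image of $\tilde\psi_A$ lies in the stratum: since $\delta a$ is tangent to the filtration-preserving sub-stratum, $A+\delta a\in\Rep(Q, {\bf v})_{*,{\bf v^*}}\subseteq\Rep(Q, {\bf v})_{{\bf v^*}}$ for small $\delta a$, and the stratum is $G_\C$-invariant by Lemma~\ref{lem:intersection-invariant}. Injectivity of $d\tilde\psi_A(\delta u,\delta a)=\rho_A^\C(\delta u)+\delta a$ is automatic: if the sum vanishes then $\rho_A^\C(\delta u)=-\delta a$ lies in both $\im(\rho_A^\C)$ and its orthogonal complement $\ker(\rho_A^\C)^*$, so both terms vanish, and $\delta u=0$ since $\delta u\in(\ker\rho_A^\C)^\perp$.

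The essential step is to identify the image of $d\tilde\psi_A$ with the full tangent space $T_A\Rep(Q, {\bf v})_{{\bf v^*}}$. Here the criticality of $A$ is crucial: by Proposition~\ref{prop:momentmapdetermined}, $A=\bigoplus_s A_s$ splits along the eigenspace decomposition of $i(\Phi(A)-\alpha)$, which coincides with the block decomposition induced by the filtration $*$. A direct computation using that each $A_a$ is block diagonal shows that $\rho_A^\C$ preserves the block structure on $\Rep(Q, {\bf v})$, and hence sends $\mathfrak{p}_\C\subseteq\g_\C$ (the parabolic preserving $*$) into the upper triangular part $T_A\Rep(Q, {\bf v})_{*,{\bf v^*}}$ and its nilpotent complement $\mathfrak{n}_\C$ into the strictly lower triangular part $(T_A\Rep(Q, {\bf v})_{*,{\bf v^*}})^\perp$. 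The local fiber bundle description of the stratum (along the lines of Reineke's Proposition~3.4 in \cite{Rei03}) gives $T_A\Rep(Q, {\bf v})_{{\bf v^*}}=\rho_A^\C(\mathfrak{n}_\C)\oplus T_A\Rep(Q, {\bf v})_{*,{\bf v^*}}$; decomposing each side along the upper/strictly lower split then shows this equals $\im(\rho_A^\C)+(\ker(\rho_A^\C)^*\cap T_A\Rep(Q, {\bf v})_{*,{\bf v^*}})$, which is precisely the image of $d\tilde\psi_A$.

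The inverse function theorem now delivers the desired local diffeomorphism, and combining with Lemma~\ref{lem:localdiffeo} completes the argument. The main obstacle is the third step: one must exploit the block decomposition at the critical point $A$ both to show that $\rho_A^\C$ respects the upper/strictly lower triangular splitting and to see that this splitting captures all of $T_A\Rep(Q, {\bf v})_{{\bf v^*}}$. Away from the critical locus, neither assertion holds in the same clean form, which explains why the construction is set up at critical points first, with the case of a general point in $\Rep(Q, {\bf v})_{{\bf v^*}}$ to be handled in Proposition~\ref{prop:local-coord-on-stratum} by $G_\C$-translation using Lemma~\ref{lem:cangetclose}.
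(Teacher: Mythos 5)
Your argument has a genuine gap, and it is precisely at the point where the real work of the proposition lies. You propose to show that $\tilde\psi_A$ is a local diffeomorphism onto a neighbourhood of $A$ \emph{in the stratum}, and then invoke the uniqueness in Lemma~\ref{lem:localdiffeo} to force $\sigma=0$. But for this to yield the stated conclusion you need to know that \emph{every} point $B\in\Rep(Q,{\bf v})_{{\bf v}^*}$ with $\|B-A\|<\varepsilon_1$ (distance measured in the ambient affine space) lies in the image of $\tilde\psi_A$. The inverse function theorem only gives you openness of that image in the intrinsic topology of a submanifold structure on the stratum, and the existence of such an embedded (locally closed) submanifold structure near $A$ --- equivalently, the absence of other ``sheets'' of the stratum accumulating at $A$ --- is essentially what the proposition is establishing. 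Your appeal to the identification $T_A\Rep(Q,{\bf v})_{{\bf v}^*}=\rho_A^\C(\mathfrak{n}_\C)\oplus T_A\Rep(Q,{\bf v})_{*,{\bf v}^*}$ presupposes this manifold structure; importing it wholesale from Reineke's algebraic fiber-bundle description would make the argument circular relative to the paper's stated aim (the paper presents this proposition as the analytic route to that structure, and only derives the fiber-bundle description in Proposition~\ref{prop:strata-fibre-bundle} \emph{from} it), and even then you would still have to argue that the subspace topology on the stratum agrees with the fiber-bundle topology near $A$, which you do not do.

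The paper's proof is structured quite differently to close exactly this hole. It takes an arbitrary $B$ in the stratum with $\|B-A\|<\varepsilon_1$, writes $B=\exp(u)\cdot(A+\delta a+\sigma)$ via Lemma~\ref{lem:localdiffeo}, and shows by contradiction that $\sigma=0$: if $\sigma\neq 0$, the one-parameter subgroup $g_t=\exp(-t\Lambda_{{\bf v}^*})$ fixes $A$, contracts $\delta a$, and expands the strictly lower-triangular $\sigma$ until $\|g_t\cdot\sigma\|=\varepsilon_2$; at that point the Hessian computation of Lemma~\ref{lem:perturbcriticalpoint} (the coefficients $\tau_{i,j}$ are negative because the slope vector is strictly decreasing across the blocks) forces $\|\Phi(g_t\cdot B)-\alpha\|^2<\|\Phi(A)-\alpha\|^2-\tfrac{1}{2}\eta$, contradicting Lemma~\ref{lem:H-N-infimum} since $g_t\cdot B$ still has H-N type ${\bf v}^*$ and $f(A)$ is the infimum of $f$ on the stratum. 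None of this analytic input --- the negativity of the Hessian in the normal directions, the scaling argument, and the infimum property from Lemma~\ref{lem:H-N-infimum} --- appears in your proposal, and without it the claim that nearby stratum points have $\sigma=0$ is unsupported.
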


As in the case of the Yang-Mills functional \cite{Das92}, the proof of
Proposition~\ref{prop:critical-local-co-ordinates} is by
contradiction. Namely, we show that if $\sigma \neq 0$ then there
exists $g \in G^\C$ such that $\| \Phi(g \cdot B) - \alpha \|^2 < \|
\Phi(A) - \alpha \|^2$, which implies that $g \cdot B$ has a different
Harder-Narasimhan type to $A$; this is because a critical point is an infimum of
$\| \Phi(A) - \alpha \|^2$ on each stratum (by Corollary \ref{cor:critical-infimum}). 
We have the following. 

\begin{lemma}\label{lem:perturbcriticalpoint}
Let $Q=({\mathcal I}, E)$, ${\bf v} \in \Z^{\mathcal I}_{\geq 0}$,
 ${\bf v}^*$, and $\Rep(Q, {\bf v})_{*,{\bf v^*}}$ be as in
Lemma~\ref{lem:localdiffeo}. 
Suppose $A \in \Rep(Q, {\bf v})_{*,{\bf v^*}}$ is a critical point of $\| \Phi
- \alpha \|^2$. Then there exists $\varepsilon_2 > 0$ and $\eta > 0$
such that if $\sigma \in (T_A \Rep(Q, {\bf v})_{*,{\bf v^*}})^\perp \cap \ker
(\rho_A^\C)^*$ and $\| \sigma \| = \varepsilon_2$, then $\| \Phi(A +
\sigma) - \alpha \| < \| \Phi(A) - \alpha \| - \eta$. Moreover, the
constants $\eta$ and $\varepsilon_2$ can be chosen uniformly over the set $C_{\bf v^*}
\cap \Rep(Q, {\bf v})_{*,{\bf v^*}}$.
\end{lemma}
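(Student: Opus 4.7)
The plan is to compute the second-order variation of $\|\Phi-\alpha\|^2$ at the critical point $A$ in the direction $\sigma$, show that the hypothesis $\sigma \in \ker(\rho_A^\C)^*$ kills the first-order term, and then exploit the block decomposition of $\beta := \Phi(A)-\alpha$ at $A$ to show that the second-order term is strictly negative, with all estimates depending only on ${\bf v}^*$.

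Since $\Phi$ is a sum of commutators, and hence quadratic, the Taylor expansion terminates exactly as
\[
\Phi(A+\sigma) - \alpha \;=\; \beta + d\Phi_A(\sigma) + \Phi(\sigma).
\]
The first key step is to show $d\Phi_A(\sigma) = 0$. From the moment map relation \eqref{eq:hamilton} together with the K\"ahler identities \eqref{eq:Kahler-relations}, one has the standard identity $d\Phi_A(\sigma) = \rho_A^*(I\sigma)$, where $\rho_A^*$ is the adjoint of the real infinitesmal action. Decomposing $\rho_A^\C(X+iY) = \rho_A(X) + I\rho_A(Y)$ and using $I^* = -I$, the adjoint takes the form $(\rho_A^\C)^*\sigma = \rho_A^*\sigma - i\rho_A^*(I\sigma)$ in the splitting $\g_\C = \g \oplus i\g$. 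Therefore $\sigma \in \ker(\rho_A^\C)^*$ is equivalent to the simultaneous vanishing of $\rho_A^*\sigma$ and $\rho_A^*(I\sigma)$, and in particular $d\Phi_A(\sigma) = 0$.

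For the quadratic term I use the block structure provided by Proposition \ref{prop:momentmapdetermined}: at $A \in C_{{\bf v}^*}$, each $V_\ell$ decomposes into blocks $V_\ell^{(i)}$ on which $\beta$ acts as the scalar $i\mu_i$, where $\mu_i := \mu_\alpha(Q,{\bf v}_i)$ and $\mu_1 > \mu_2 > \cdots > \mu_m$ strictly, by the HN property. Tangent vectors to the filtration-preserving stratum at $A$ are precisely the block upper-triangular deformations with respect to this ordering, so $\sigma$ is strictly block lower triangular: $(\sigma_a)_{ij}=0$ unless $i>j$. Using formula \eqref{eq:formula-Kahler-mom-map-RepQv} for $\Phi(\sigma)$ together with cyclicity of the trace and the block form of $\beta$, a direct calculation yields
\[
\langle \beta,\Phi(\sigma)\rangle \;=\; -\tfrac{1}{2}\sum_{a\in E}\sum_{i>j}(\mu_j - \mu_i)\,\|(\sigma_a)_{ij}\|^2 \;\leq\; -\tfrac{c_0}{2}\|\sigma\|^2,
\]
where $c_0 := \min_{i>j}(\mu_j - \mu_i) > 0$ depends only on ${\bf v}^*$.

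Combining both steps with the trivial quartic bound $\|\Phi(\sigma)\|^2 \leq C_Q\|\sigma\|^4$ (with $C_Q$ depending only on $Q$ and ${\bf v}$) gives $\|\Phi(A+\sigma)-\alpha\|^2 \leq \|\beta\|^2 - c_0\|\sigma\|^2 + C_Q\|\sigma\|^4$. Choose $\varepsilon_2 > 0$ small enough that $C_Q\varepsilon_2^2 \leq c_0/2$. Non-minimality of ${\bf v}^*$ ensures that the strictly decreasing slopes $\mu_i$ cannot all vanish (their rank-weighted sum equals the fixed $\deg_\alpha(Q,{\bf v})=0$), so $\|\beta\|^2 = \sum_i \mu_i^2\,\rank(Q,{\bf v}_i) > 0$; taking square roots then gives $\|\Phi(A+\sigma)-\alpha\| \leq \|\beta\| - \eta$ for $\|\sigma\|=\varepsilon_2$ and an explicit $\eta > 0$. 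Since each of $c_0$, $C_Q$, and $\|\beta\|^2$ depends only on ${\bf v}^*$, the constants $\varepsilon_2$ and $\eta$ can be chosen uniformly on $C_{{\bf v}^*} \cap \Rep(Q,{\bf v})_{*,{\bf v}^*}$. The main subtlety is the vanishing of $d\Phi_A(\sigma)$: the condition $\sigma \in \ker(\rho_A^\C)^*$ is strictly stronger than orthogonality to the real $G$-orbit, and it is precisely this extra content ($\rho_A^*(I\sigma) = 0$) that kills the otherwise troublesome first-order term and reduces the estimate to the purely linear-algebraic negativity on the strictly lower-triangular blocks.
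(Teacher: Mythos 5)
Your proof is correct and follows essentially the same route as the paper's: expand $\|\Phi(A+\sigma)-\alpha\|^2$ to second order at the critical point, use $\sigma\in\ker(\rho_A^\C)^*$ (equivalently $\rho_A^*(I\sigma)=0$) to kill the first-order contribution, and then exploit the strict lower-triangularity of $\sigma$ against the strictly descending eigenvalues of $-i\beta$ to make the quadratic term strictly negative, with uniformity coming from the fact that $\beta$ is determined by ${\bf v}^*$. Your only (welcome) refinement is observing that $\Phi$ is quadratic, so the expansion terminates exactly and the error control becomes the explicit quartic bound $C_Q\|\sigma\|^4$ rather than the paper's $O(\sigma^3)$.
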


\begin{proof}
If $A$ is critical, then 
\begin{equation}\label{eqn:estimate-for-functional}
\| \Phi(A + \sigma) - \alpha \|^2 = \| \Phi(A) - \alpha \|^2 + \left<
H_{\| \Phi(A) - \alpha \|^2} (\sigma), \sigma \right> + O(\sigma^3).
\end{equation}
Since $\sigma \in \ker (\rho_A^\C)^*$, then $I \sigma \in \ker \rho_A^*$ and a calculation using the results in Section \ref{subsec:hk-intro} shows that
\begin{align}
\begin{split}
H_{\| \Phi(A) - \alpha \|^2} (\sigma) & = -2 I \rho_A \rho_A^* I \sigma  - 2 I \delta \rho_A (\Phi(A) -
\alpha) (\sigma) \\
& = - 2 I \delta \rho_A (\Phi(A) - \alpha) (\sigma)  \\
& = 2 \sum_{a \in \mathcal{E}} \left[ - i (\Phi(A) - \alpha), \sigma_a \right].
\end{split}
\end{align}
Combining Lemma \ref{lem:eigenvaluesdetermined} and Corollary \ref{cor:critical-infimum} shows that when written with respect
to the fixed filtration $*$, the element $-i(\Phi(A) -
\alpha)$ is diagonal with entries given by the Harder-Narasimhan slope-vector of
$A$, in descending order. Since $\sigma$ is lower-triangular with
respect to the Harder-Narasimhan filtration of $A$, then each term $\left[ - i (\Phi(A) -
\alpha), \sigma_a \right]$ has the form
\begin{equation*}
\left( \begin{matrix} 0 & 0 & 0 & \cdots & 0 \\ \tau_{2,1} \sigma_a^{2,1} & 0 & 0 & \cdots & 0 \\ \tau_{3,1} \sigma_a^{3,1} & \tau_{3,2} \sigma_a^{3,2} & 0 & \cdots & 0 \\ \vdots & \vdots & \vdots & \ddots & \vdots \\ \tau_{L, 1} \sigma_a^{L,1} & \tau_{L, 2} \sigma_a^{L, 2} & \cdots & \tau_{L, L-1} \sigma_a^{L, L-1} & 0 \end{matrix} \right) ,
\end{equation*} 
where each $\tau_{i,j}$ is negative. Therefore
\begin{align*}
\left< H_{\| \Phi(A) - \alpha \|^2} (\sigma), \sigma \right> & = \left< 2 \sum_{a \in \mathcal{E}} \left[ - i (\Phi(A) - \alpha, \sigma_a \right], \sigma_a \right> \\
 & < 0.
\end{align*} 
Hence \eqref{eqn:estimate-for-functional} shows that there exists
$\eta > 0, \varepsilon_2>0$ such that $\| \Phi(A + \sigma) - \alpha \|^2 < \| \Phi(A) -
\alpha \|^2 - \eta$ if $\| \sigma \| = \varepsilon_2$. Since the operator $- I \delta \rho_A (\Phi(A)
- \alpha) : T_A \Rep(Q, {\bf v}) \rightarrow T_A \Rep(Q, {\bf v})$
only depends on the value of $\Phi(A) - \alpha$, this estimate is
uniform over the set $C_\mu \cap \Rep(Q, {\bf v})_{*,{\bf v^*}}$.

\end{proof}

We may now prove the Proposition.

\begin{proof}[Proof of Proposition \ref{prop:critical-local-co-ordinates}]
Let $g_t = \exp(- t \Lambda_{\bf v^*})$, where $\Lambda_{\bf v^*}$ is the diagonal
(with respect to the splitting given by the critical representation
$A$) matrix from Definition \ref{def:function-type}. Since $g_t$ is constant on each summand of the
splitting of $\Vect(Q,{\bf v})$ defined by $A$, we have $g_t \cdot A =
A$. Hence for $\sigma \in \ker(\rho_A^\C)^* \cap T_A(\Rep(Q, {\bf
v})_{*,{\bf v^*}})^\perp$ we have $g_t \cdot (A + \sigma) = A + g_t \cdot
\sigma$. If $\| \sigma \| < \frac{1}{2} \varepsilon_2$, then since
$\sigma$ is lower-triangular with respect to the filtration $*$ and
since ${\bf v^*}$ is non-minimal (hence has filtration length at least $2$),
we have $\| g_t \cdot (A + \sigma) - A \| = \| g_t \cdot \sigma \|
\rightarrow \infty$ as $t \rightarrow \infty$. Therefore there exists
$t > 0$ such that $\| g_t \cdot (A + \sigma) - A \| = \varepsilon_2$; for
such a $t$, by Lemma \ref{lem:perturbcriticalpoint} we have $\| \Phi(
g_t \cdot(A + \sigma) ) - \alpha \| < \| \Phi(A) - \alpha \| - \eta$.

Now note that since $\Phi : \Rep(Q, {\bf v}) \rightarrow
\mathfrak{g}^*$ is continuous, given the value of $\eta$ in Lemma
\ref{lem:perturbcriticalpoint}, there exists $\varepsilon_3 > 0$ (depending on $A$ and $\sigma$) such
that if $\| \delta a \| < \varepsilon_3$ then $\| \Phi(A + \delta a +
\sigma) - \Phi(A + \sigma) \| < \frac{1}{2} \eta$.  As above, for any
\(t \geq 0\) we have $\| g_t \cdot \delta a \| \leq \| \delta a \|$,
so if $\| \delta a \| < \varepsilon_3$, then $\| g_t \cdot (A +
\delta a) - A \| \leq \| \delta a \| < \varepsilon_3$, which implies 
$\| \Phi( g_t \cdot (A + \delta a + \sigma)) - \Phi( g_t \cdot (A + \sigma) ) \| < \frac{1}{2}
\eta$. Therefore
\begin{equation}
\| \Phi(g_t \cdot (A + \delta a + \sigma)) - \alpha \| < \| \Phi(A) - \alpha \| - \frac{1}{2} \eta,
\end{equation}
which contradicts Lemma \ref{lem:H-N-infimum}. This shows that for any \(\sigma \in
\ker(\rho_A^{\C})^* \cap T_A \Rep(Q, {\bf v})_{*,{\bf v^*}}\) with $0 < \| \sigma \| <
\frac{1}{2} \varepsilon_2$, the element $\exp(u) \cdot (A + \delta a +
\sigma) \notin \Rep(Q, {\bf v})_{{\bf v}^*}$. The result now follows from
Lemma~\ref{lem:localdiffeo}. 
\end{proof}

This Proposition shows that near a critical point $A$ of $\|\Phi -
\alpha\|^2$ of H-N type ${\bf v^*}$, the stratum $\Rep(Q, {\bf v})_{\bf v^*}$ has a
local manifold structure with dimension 
$$
\dim \Rep(Q, {\bf v})_{\bf v^*} = \dim (\ker \rho_A^\C)^{\perp} + \dim(\ker(\rho_A^\C)^* \cap T_A\Rep(Q, {\bf v})_{*,{\bf v^*}}).
$$
In particular, locally near $A$ the dimension of the
normal bundle to $\Rep(Q, {\bf v})_{\bf v^*}$ is given by $\dim(\ker(\rho_A^*\C)^*
\cap T_A\Rep(Q, {\bf v})_{*,{\bf v^*}}^{\perp})$. Our next series of
computations shows that in fact these dimensions are independent of
$A$, which implies in particular that the H-N strata are locally
closed manifolds with well-defined
(constant-rank) normal bundles in $\Rep(Q, {\bf v})$. 
Along the way, we also compare our formula for the dimension of the
normal bundle with 
that given in Kirwan's manuscript \cite{Kir84}.

Let $\Rep(Q, {\bf v})_*^{LT}$ denote the subspace
of representations in $\Rep(Q, {\bf v})$ that are lower-triangular
with respect to
a given fixed filtration $*$, and let $(\mathfrak{g}_\C)_*^{LT}$ denote
the subspace of $\mathfrak{g}_\C$ for which the
associated endomorphisms of $\Vect(Q, {\bf v})$ are lower-triangular
with respect to the same filtration.
We begin by observing that the subspace $(\g_\C)_*^{LT}$ injects via
the infinitesmal action to $T_A\Rep(Q, {\bf v})$ for any
representation which is split with respect to that same filtration.

\begin{lemma}\label{lemma:lower-triangular-kernel}
Let $Q = ({\mathcal I}, E)$ be a quiver and
${\bf v} \in \Z^{\mathcal I}_{\geq 0}$ a dimension vector. 
Let $*$ denote a fixed filtration of $\Vect(Q, {\bf v})$ of filtration type
${\bf v}^*$, and let \(A \in \Rep(Q, {\bf v})\) be a representation of H-N
type ${\bf v}^*$, which is split with respect to the fixed filtration. Then
\[
\dim \ker(\rho_A^\C \mid_{(\g_\C)_*^{LT}}) = 0.
\]
\end{lemma}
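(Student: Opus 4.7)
The plan is to reduce the claim to a vanishing statement for morphisms between $\alpha$-semistable quiver representations of strictly decreasing slopes, and then invoke Proposition~\ref{prop:stableisomorphism}.

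First I would unwind the kernel condition: an element $u = (u_\ell)_{\ell \in {\mathcal I}} \in \g_\C$ lies in $\ker \rho_A^\C$ precisely when $u_{\inw(a)} A_a = A_a u_{\out(a)}$ for every $a \in E$, which is exactly the defining condition for $u$ to be an endomorphism of $A$ as a representation of $Q$. So the problem reduces to showing that no nonzero strictly lower-triangular quiver endomorphism of $A$ exists.

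Next, since $A$ is split with respect to $*$ and is of H-N type ${\bf v^*} = ({\bf v}_1, \ldots, {\bf v}_L)$, I would write $A = \bigoplus_{j=1}^L A_j$ with $A_j \in \Rep(Q, {\bf v}_j)$ corresponding to the successive graded pieces of the filtration. By the definition of an H-N type, each $A_j$ is $\alpha$-semistable and the slopes satisfy the strict inequalities $\mu_\alpha(Q, {\bf v}_1) > \cdots > \mu_\alpha(Q, {\bf v}_L)$; strictness is essential and comes from the fact that they arise as graded pieces of an H-N filtration. Any $u \in (\g_\C)_*^{LT}$ then decomposes relative to the splitting as a block matrix $u = (u^{j,k})$, where $u^{j,k}$ maps the vertex-spaces of $A_k$ into those of $A_j$; the strict lower-triangularity hypothesis forces $u^{j,k} = 0$ whenever $j \leq k$.

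If in addition $\rho_A^\C(u) = 0$, then the commutation relation on each edge forces each remaining block $u^{j,k}$ (for $j > k$) to be a homomorphism of quiver representations from $A_k$ to $A_j$. Since $j > k$ gives $\mu_\alpha(Q, {\bf v}_k) > \mu_\alpha(Q, {\bf v}_j)$ with both $A_k$ and $A_j$ being $\alpha$-semistable, the last bullet of Proposition~\ref{prop:stableisomorphism} forces $u^{j,k} = 0$. Hence $u$ vanishes identically, proving $\ker(\rho_A^\C |_{(\g_\C)_*^{LT}}) = 0$.

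No substantive obstacle is anticipated; the argument is essentially bookkeeping once Proposition~\ref{prop:stableisomorphism} is in hand. The only mildly subtle point worth checking is that the summands $A_j$ are genuinely $\alpha$-semistable (as opposed to semistable only for the trace-free shift appearing in the statement of the H-N filtration), but this is immediate because shifting the stability parameter by a scalar multiple of the identity translates all $\alpha$-slopes by the same constant and therefore does not affect the (semi)stability condition.
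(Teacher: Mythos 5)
Your proof is correct and follows essentially the same route as the paper: decompose a lower-triangular $u\in\ker\rho_A^\C$ into blocks, observe that the splitting of $A$ forces each block to be a morphism between $\alpha$-semistable pieces of strictly decreasing slope, and apply the last case of Proposition~\ref{prop:stableisomorphism}. Your closing remark about the harmlessness of the trace-free shift of the parameter is a valid (and correct) piece of extra care that the paper leaves implicit.
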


\begin{proof}
Let $\Vect(Q, {\bf v}) \cong \Vect(Q, {\bf v_1}) \oplus \cdots \oplus
\Vect(Q, {\bf v_L})$ be the given splitting of the vector spaces,
ordered by decreasing
$\alpha$-slope. Then an element $u \in (\mathfrak{g}_\C)_*^{LT}$
consists of a sum of maps 
$u_{jk} : \Vect(Q, {\bf v_j}) \rightarrow \Vect(Q, {\bf v_k})$ where
$j < k$, so the $\alpha$-slope of the domain is always greater than
the $\alpha$-slope of the image. If $u \in \ker \rho_A^\C$ then
$\left[ u, A_a \right] = 0$ for all edges $a \in E$, and so each
component $u_{jk}$ of $u$ is a map between semistable representations
where the $\alpha$-slope is strictly decreasing. Therefore Proposition
\ref{prop:stableisomorphism} shows that $u_{jk} = 0$ for all pairs $j
< k$, and hence $\left. \dim \ker \rho_A^\C
\right|_{\mathfrak{g}_\C^{LT}} = 0$. 
\end{proof}

The next result is a formula for the rank of the normal bundle to
the stratum locally near a critical point $A$, which in particular
shows that it is independent of the choice of $A$. 
This formula is also contained in a different form in
Proposition 3.4 of \cite{Rei03} and Lemma 4.20 of \cite{Kir84}; 
however, one of the advantages of our formula is that it can also be
used in the hyperk\"ahler
case, and in particular on the singular space
$\PhiC^{-1}(0)$ (see Section \ref{sec:applications} for more details).

\begin{proposition}\label{prop:codimension}
Let \(Q = ({\mathcal I}, E)\) be a quiver and 
${\bf v} \in \Z^{\mathcal I}_{\geq 0}$ a dimension vector. 
Let ${\bf v^*}$ be a H-N type and $*$ a fixed
filtration of type ${\bf v}^*$. Suppose \(A \in \Rep(Q,
{\bf v})_{*,{\bf v^*}}\) is a critical  point of  $\|\Phi - \alpha\|^2$. 
The (complex) codimension of the H-N stratum $\Rep(Q, {\bf v})_{{\bf v}^*}$ in
$\Rep(Q, {\bf v})$, locally near $A$, is 
\begin{equation}\label{eqn:codimension-definition}
d(Q, {\bf v}, {\bf v}^*) := \dim_\C \Rep(Q, {\bf v})_*^{LT} - \dim_\C (\mathfrak{g}_\C)_*^{LT}.
\end{equation}
In particular, the codimension is independent of the choice of $A$ in
$C_{{\bf v}^*}$. 
\end{proposition}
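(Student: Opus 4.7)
The plan is to compute the codimension by identifying the normal space to $\Rep(Q, {\bf v})_{{\bf v}^*}$ at the critical point $A$ inside $\Rep(Q, {\bf v})$, and then exploiting the block-diagonal structure that criticality provides. By Proposition~\ref{prop:critical-local-co-ordinates}, the map $(u, \delta a) \mapsto \exp(u) \cdot (A + \delta a)$ is a local diffeomorphism from $(\ker \rho_A^\C)^\perp \times \bigl(\ker(\rho_A^\C)^* \cap T_A \Rep(Q, {\bf v})_{*,{\bf v}^*}\bigr)$ onto a neighborhood of $A$ in the stratum, so locally near $A$ the stratum is a smooth submanifold of $\Rep(Q, {\bf v})$ with tangent space
\begin{equation*}
T_A \Rep(Q, {\bf v})_{{\bf v}^*} \;=\; \im \rho_A^\C \;\oplus\; \bigl(\ker(\rho_A^\C)^* \cap \Rep(Q, {\bf v})_*^{UT}\bigr),
\end{equation*}
where $\Rep(Q, {\bf v})_*^{UT}$ denotes the orthogonal complement of $\Rep(Q, {\bf v})_*^{LT}$, i.e.\ the subspace of representations preserving $*$. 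This agrees with $T_A \Rep(Q, {\bf v})_{*, {\bf v}^*}$ because $\Rep(Q, {\bf v})_{*, {\bf v}^*}$ is an open subset of the linear subspace $\Rep(Q, {\bf v})_*^{UT}$ (semistability of the successive quotients of $*$ is an open condition). Taking orthogonal complements in the splitting $\Rep(Q, {\bf v}) = \Rep(Q, {\bf v})_*^{UT} \oplus \Rep(Q, {\bf v})_*^{LT}$, the normal space at $A$ is
\begin{equation*}
N_A \;=\; \ker(\rho_A^\C)^* \cap \Rep(Q, {\bf v})_*^{LT}.
\end{equation*}

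The key step is to simplify $N_A$ using criticality of $A$. By Proposition~\ref{prop:momentmapdetermined}, $A$ splits as an orthogonal direct sum of subrepresentations compatible with the filtration $*$; in matrix language, $A$ is block diagonal. A direct computation from $\rho_A^\C(u)_a = u_{\inw(a)} A_a - A_a u_{\out(a)}$ then shows that $\rho_A^\C$ preserves the upper/lower triangular splittings: writing $(\g_\C)_*^{UT}$ for the orthogonal complement of $(\g_\C)_*^{LT}$ in $\g_\C$, one checks $\rho_A^\C\bigl((\g_\C)_*^{UT}\bigr) \subseteq \Rep(Q, {\bf v})_*^{UT}$ and $\rho_A^\C\bigl((\g_\C)_*^{LT}\bigr) \subseteq \Rep(Q, {\bf v})_*^{LT}$. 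Hence $\im \rho_A^\C$ itself splits orthogonally as $\rho_A^\C\bigl((\g_\C)_*^{UT}\bigr) \oplus \rho_A^\C\bigl((\g_\C)_*^{LT}\bigr)$ along this same decomposition. For $v \in \Rep(Q, {\bf v})_*^{LT}$, orthogonality to the $UT$-summand of $\im \rho_A^\C$ is automatic (the $UT$-summand lies in $\Rep(Q, {\bf v})_*^{UT} \perp \Rep(Q, {\bf v})_*^{LT}$), so the condition $v \in \ker(\rho_A^\C)^*$ reduces to $v \perp \rho_A^\C\bigl((\g_\C)_*^{LT}\bigr)$. Therefore $N_A = \Rep(Q, {\bf v})_*^{LT} \ominus \rho_A^\C\bigl((\g_\C)_*^{LT}\bigr)$, and Lemma~\ref{lemma:lower-triangular-kernel} (the injectivity of $\rho_A^\C$ on $(\g_\C)_*^{LT}$) yields
\begin{equation*}
\dim_\C N_A \;=\; \dim_\C \Rep(Q, {\bf v})_*^{LT} - \dim_\C (\g_\C)_*^{LT} \;=\; d(Q, {\bf v}, {\bf v}^*),
\end{equation*}
which is the desired formula. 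Since the right-hand side depends only on the filtration type ${\bf v}^*$ (through $*$), the codimension is manifestly independent of the chosen critical point $A$.

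The hard part is bookkeeping: verifying that $\rho_A^\C$ respects the $UT/LT$ splitting, which genuinely uses the criticality of $A$ via Proposition~\ref{prop:momentmapdetermined}, and then carefully tracking the orthogonal projections so that the intersection $\ker(\rho_A^\C)^* \cap \Rep(Q, {\bf v})_*^{LT}$ simplifies cleanly. Once these structural facts are in place, the codimension count is a straightforward dimension computation.
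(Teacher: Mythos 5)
Your proof is correct and follows essentially the same route as the paper: identify the normal space at a critical point as $\ker(\rho_A^\C)^* \cap \Rep(Q,{\bf v})_*^{LT}$ via Proposition~\ref{prop:critical-local-co-ordinates}, then reduce the dimension count to Lemma~\ref{lemma:lower-triangular-kernel}. The only difference is presentational --- you work with $\rho_A^\C$ restricted to $(\g_\C)_*^{LT}$ and orthogonal complements, while the paper runs the dual computation through $(\rho_A^\C)^*$ restricted to $\Rep(Q,{\bf v})_*^{LT}$; both hinge on the same block-preservation of the infinitesimal action at a split (critical) representation, which you usefully make explicit.
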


\begin{proof}
From Lemma~\ref{lem:localdiffeo} and
Proposition~\ref{prop:critical-local-co-ordinates} we see that at a
critical representation $A$ in $\Rep(Q, {\bf v})_{{\bf v}^*}$, the dimension
of the subspace in $T_A\Rep(Q, {\bf v})$ orthogonal to the stratum is
$\dim(\ker(\rho_A^C)^* \cap (T_A\Rep(Q, {\bf v})_{*,{\bf v^*}})^{\perp})$,
where $*$ denotes the H-N filtration associated to $A$. It is
straightforward from the definition that 
\[
\Rep(Q, {\bf v})_*^{LT} = (T_A\Rep(Q,{\bf v})_{*,{\bf v^*}})^{\perp}
\]
under the standard identification of $T_A\Rep(Q, {\bf v})$ with
$\Rep(Q, {\bf v})$, so the dimension of the normal space is
\(\dim \left( \ker(\rho_A^\C)^* \cap \Rep(Q, {\bf v})_*^{LT} \right)\). 
We compute 
\begin{align*}
\dim \left( \ker (\rho_A^\C)^* \cap \Rep(Q, {\bf v})_*^{LT} \right) & = \dim \Rep(Q, {\bf v})_*^{LT} - \dim \left. \im (\rho_A^\C)^* \right|_{\Rep(Q, {\bf v})_*^{LT}} \\
 & = \dim \Rep(Q, {\bf v})_*^{LT} - \dim (\mathfrak{g}_\C)_*^{LT} + \dim \ker \left. \rho_A^\C \right|_{(\mathfrak{g}_\C)_*^{LT}} \\
 & = \dim \Rep(Q, {\bf v})_*^{LT} - \dim (\mathfrak{g}_\C)_*^{LT},
 \end{align*}
where the second equality uses that $(\rho_A^\C)^*(\Rep(Q,{\bf
 v})_*^{LT} \subseteq (\g_\C)_*^{LT}$ and the last equality uses
 Lemma~\ref{lemma:lower-triangular-kernel}. Finally, we observe that
 the last quantity \(\dim \Rep(Q, {\bf v})_*^{LT} -
 \dim(\g_\C)_*^{LT}\) is the same for any choice of filtration $*$ of
  type ${\bf v}^*$, so $d(Q, {\bf v}, {\bf v}^*)$ is indeed independent of the
 choice of critical point $A$. 
 \end{proof}

\begin{remark}
The dimension $d(Q, {\bf v}, {\bf v^*})$ also has a cohomological interpretation generalising Remark 2.4 in \cite{wilkin09} (which concerns the case where the representation splits into two subrepresentations). This interpretation is in turn based on \cite[Section 2.1]{Ringel76}. Given a representation $A$ of H-N type ${\bf v^*}$, let $A^{gr}$ denote the graded object of the H-N filtration associated to $A$. Then we have the following complex associated to the infinitesimal action of $A^{gr}$
\begin{equation*}
\xymatrix{
(\mathfrak{g}_\C)_*^{LT} \ar[r]^(0.4){\rho_{A^{gr}}^\C} & \Rep(Q, {\bf v})_*^{LT} ,
}
\end{equation*}
and $d(Q, {\bf v}, {\bf v}^*)$ corresponds to the Euler characteristic of this complex, which has an interpretation as the sum of the Euler characteristics given in \cite[Remark 2.4]{wilkin09}.
\end{remark}

In Theorem \ref{thm:gradflowconvergence} we saw that the
gradient flow of $\| \Phi - \alpha \|^2$ converges. As a consequence, 
Kirwan's Morse theory in \cite{Kir84} applies to
$\Rep(Q, {\bf v})$ and so \cite[Lemma 4.20]{Kir84} (which computes the
dimension of the negative normal bundle at the critical sets of $\|
\Phi - \alpha\|^2$) should also give
the same answer as Proposition~\ref{prop:codimension} above. To see the
translation between the two formulae, we recall that Kirwan's formula
is equivalent to 
\begin{equation}
d(Q, {\bf v}, {\bf v}^*) = m - \dim G + \dim \stab(\beta),
\end{equation}
where $m = \dim \Rep(Q, {\bf v})_*^{LT}$, $G$ is the group, $\beta
= \Phi(A) - \alpha$ at a critical point $A \in \Rep(Q, {\bf
v})_\mu$ and the stabilizer is with respect to the adjoint action of
$G$ on $\g$. Therefore, it remains
to show that $\dim G - \dim \stab(\beta) = \dim
(\mathfrak{g}_\C)_*^{LT}$. This follows from observing that
$\stab(\beta)$ is the block-diagonal part of $\mathfrak{g}$ (with
respect to the H-N filtration $*$ associated to $A$), $\dim (\mathfrak{g}_\C)_*^{LT} =
\dim (\mathfrak{g}_\C)_*^{UT}$ where $(\g_\C)_*^{UT}$ denotes the
upper-triangular part of $\g_\C$ with respect to $*$, and that $\dim
(\mathfrak{g}_\C)_*^{LT} + 2 \dim \stab(\beta) + \dim
(\mathfrak{g}_\C)_*^{UT} = \dim \mathfrak{g}_\C = 2 \dim G$.

By using the complex group action to get close to $C_{{\bf v}^*}$, we may use
Proposition~\ref{prop:critical-local-co-ordinates} to also give local
coordinates near any point in $\Rep(Q, {\bf v})_{*,{\bf v^*}}$, not just
those which are critical.

\begin{proposition}\label{prop:local-coord-on-stratum}
Let \(Q = ({\mathcal I}, E)\) be a quiver and 
${\bf v} \in \Z^{\mathcal I}_{\geq 0}$ a dimension vector.
Let $\mu$ be a non-minimal Harder-Narasimhan type of $\Rep(Q, {\bf
  v})$.  Suppose $A \in \Rep(Q, {\bf v})_{*,{\bf v^*}}$ and let $A_\infty$ denote
  the limit point of the negative gradient flow with respect to
  $\|\Phi - \alpha\|^2$ with initial condition $A$. Then 
there exists $\varepsilon > 0$ and \(g \in G_\C\) such that for any $B \in
  \Rep(Q, {\bf v})_{{\bf v}^*}$ with $\| B - A \| < \varepsilon$,
  there exist unique elements $u \in (\ker \rho_{A_\infty}^\C)^\perp$ and
  $\delta a \in \ker (\rho_{A_\infty}^\C)^* \cap T_A \Rep(Q, {\bf v})_{*,{\bf v^*}}$ such that
  $B = g^{-1} \exp(u) \cdot (A_\infty + \delta a)$.
\end{proposition}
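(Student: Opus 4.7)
The plan is to reduce the statement to the critical-point case already handled by Proposition~\ref{prop:critical-local-co-ordinates}, using the $G_\C$-action to transport a neighborhood of $A$ to a neighborhood of the limit point $A_\infty \in C_{{\bf v^*}}$. Concretely, first I would invoke Lemma~\ref{lem:cangetclose}: since $A \in \Rep(Q,{\bf v})_{{\bf v^*}}$ and $A_\infty$ is the (gradient-flow) limit associated to $A$, for any $\eta>0$ there exists $g\in G_\C$ with $\|g\cdot A - A_\infty\|<\eta$. Let $\varepsilon_1$ be the constant from Proposition~\ref{prop:critical-local-co-ordinates} applied at the critical point $A_\infty$ (with its H-N filtration). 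Fix $g$ with $\eta<\varepsilon_1/2$, then use continuity of the map $B\mapsto g\cdot B$ to choose $\varepsilon>0$ small enough that $\|B-A\|<\varepsilon$ forces $\|g\cdot B - A_\infty\|<\varepsilon_1$.

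Next, I would use the $G_\C$-invariance of the Harder-Narasimhan stratum $\Rep(Q,{\bf v})_{{\bf v^*}}$, established in Lemma~\ref{lem:intersection-invariant} together with Theorem~\ref{theorem:equivstratification}, to conclude that $g\cdot B$ still lies in $\Rep(Q,{\bf v})_{{\bf v^*}}$. Since $A_\infty$ is a critical point of $\|\Phi-\alpha\|^2$ of non-minimal H-N type ${\bf v^*}$, Proposition~\ref{prop:critical-local-co-ordinates} then applies to $g\cdot B$: it produces unique elements $u\in(\ker\rho^\C_{A_\infty})^\perp$ and $\delta a\in\ker(\rho^\C_{A_\infty})^*\cap T_{A_\infty}\Rep(Q,{\bf v})_{*,{\bf v^*}}$ with
\[
g\cdot B = \exp(u)\cdot(A_\infty+\delta a).
\]
Multiplying by $g^{-1}$ yields the desired decomposition $B = g^{-1}\exp(u)\cdot(A_\infty+\delta a)$. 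Uniqueness of $(u,\delta a)$ transports from the critical-point statement, since the map $(u,\delta a)\mapsto g^{-1}\exp(u)\cdot(A_\infty+\delta a)$ is obtained by composing the local diffeomorphism of Lemma~\ref{lem:localdiffeo} at $A_\infty$ with the $G_\C$-translation by $g^{-1}$.

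The main obstacle I expect is a bookkeeping issue about which filtration the subspace $T\Rep(Q,{\bf v})_{*,{\bf v^*}}$ refers to. The hypothesis places $A$ in $\Rep(Q,{\bf v})_{*,{\bf v^*}}$ for some filtration $*$, but Proposition~\ref{prop:critical-local-co-ordinates} naturally uses the filtration $*_\infty$ determined by the critical representation $A_\infty$ (and, by Corollary~\ref{cor:critical-infimum}, by the graded object of the H-N filtration of $A$). Since $\Rep(Q,{\bf v})$ is affine, all tangent spaces are canonically identified with $\Rep(Q,{\bf v})$ itself, so $T_{A_\infty}\Rep(Q,{\bf v})_{*_\infty,{\bf v^*}}$ is just the linear subspace of representations respecting $*_\infty$; the argument above genuinely produces $\delta a$ in this subspace. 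Reconciling this with the $*$ in the proposition's statement amounts to choosing $*$ to be the H-N filtration at $A_\infty$ (equivalently, the filtration implicit in the isomorphism class of the graded object), and then noting that the same $g\in G_\C$ which carries $A$ near $A_\infty$ also intertwines the two notions. Beyond this, the remaining arguments are routine applications of continuity and Lemma~\ref{lem:localdiffeo}.
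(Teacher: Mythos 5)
Your proposal is correct and follows essentially the same route as the paper: use the $G_\C$-action to move $A$ within $\varepsilon_1/2$ of the critical point $A_\infty$, apply Proposition~\ref{prop:critical-local-co-ordinates} there, and translate the resulting coordinate chart back by $g^{-1}$. The only cosmetic difference is that the paper obtains $g$ from the finite-time gradient flow lying in a $G_\C$-orbit rather than from Lemma~\ref{lem:cangetclose} (whose critical point is pinned down only up to $G$-equivalence), but this does not change the argument.
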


\begin{proof}
By Proposition~\ref{prop:critical-local-co-ordinates}, there exists an
\(\varepsilon_1 > 0\) such that the open set \(\{ B \in \Rep(Q, {\bf
  v}) \, \mid \, \| B - A_\infty \| < \varepsilon_1 \}\) in $\Rep(Q, {\bf
  v})_{\mu, {\bf v}^*}$ is a local coordinate chart for $\Rep(Q, {\bf v})$ centered
at $A_\infty$. Since
the finite-time negative gradient flow of \(\|\Phi - \alpha\|^2\) is
contained in a $G_\C$-orbit, from results of Section~\ref{sec:gradientflow}
there exists \(g \in G_\C\) such that \(\|g\cdot A - A_\infty\| <
\frac{1}{2} \varepsilon_1.\) Then by the choice of $\varepsilon_1$
above, the open set \(\{ B \in \Rep(Q, {\bf v})_{{\bf v}^*} \, \mid \, \| B -
g \cdot A \| < \frac{1}{2} \varepsilon_1 \}\) in $\Rep(Q, {\bf v})_{{\bf v}^*}$ is a
coordinate neighborhood centered around $g \cdot A$, with local
coordinates given by Proposition~\ref{prop:critical-local-co-ordinates}. The element
$g^{-1}$ induces a diffeomorphism which translates this coordinate
neighborhood to an open neighborhood around $B$. In particular,
there exists some $\varepsilon>0$ such that the conditions in the
Proposition are satisfied. 
\end{proof}

The next result shows that the H-N strata 
have a fibre bundle structure. We first need the following. 

\begin{lemma}\label{lem:strata-group-action}
Let $Q = ({\mathcal I}, E)$ be a quiver 
and \({\bf v} \in \Z^{\mathcal I}_{\geq 0}\) a dimension vector.
Let ${\bf v^*}$ be a H-N type, and $*$ a filtration of type ${\bf v^*}$. Then 
$\Rep(Q, {\bf v})_{{\bf v}^*} = G_\C \cdot \Rep(Q, {\bf v})_{*,{\bf v^*}}$. 
\end{lemma}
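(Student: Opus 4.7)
The plan is to prove the two containments separately. The easy containment $G_\C \cdot \Rep(Q, {\bf v})_{*,{\bf v^*}} \subseteq \Rep(Q, {\bf v})_{{\bf v}^*}$ will follow immediately from the $G_\C$-invariance of the Harder-Narasimhan stratification (noted after Definition~\ref{def:HNstratification}, since the H-N type is an isomorphism invariant of the representation).

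For the reverse inclusion, I would start with an arbitrary $A \in \Rep(Q, {\bf v})_{{\bf v}^*}$ and consider its Harder-Narasimhan filtration, given by subspaces $\{W_{i, \ell} \subseteq V_\ell\}_{i,\ell}$. Let $\{W^*_{i, \ell}\}_{i,\ell}$ denote the subspaces determined by the fixed filtration $*$. Since $*$ and the H-N filtration of $A$ are both of type ${\bf v^*}$, we have $\dim_{\C} W_{i, \ell} = \dim_{\C} W^*_{i, \ell}$ for every $i$ and $\ell$. The key observation is the standard fact that $GL(V_\ell)$, and in fact its compact subgroup $U(V_\ell)$, acts transitively on the space of flags in $V_\ell$ with a prescribed dimension sequence. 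Applied to each vertex separately, this produces elements $g_\ell \in U(V_\ell) \subseteq GL(V_\ell)$ with $g_\ell(W_{i, \ell}) = W^*_{i, \ell}$ for every $i$, and hence a group element $g := (g_\ell)_\ell \in G \subseteq G_\C$.

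To finish, I would verify that $g \cdot A$ lies in $\Rep(Q, {\bf v})_{*,{\bf v^*}}$. The point is that the $G_\C$-action intertwines the formation of subrepresentations: if $W = \bigoplus_\ell W_\ell$ is a subrepresentation of $A$, then because
\[
(g \cdot A)_a \bigl(g_{\out(a)} W_{\out(a)}\bigr) = g_{\inw(a)} A_a(W_{\out(a)}) \subseteq g_{\inw(a)}(W_{\inw(a)}),
\]
the image $g \cdot W := \bigoplus_\ell g_\ell(W_\ell)$ is a subrepresentation of $g \cdot A$ of the same dimension vector. Applied to the H-N filtration of $A$, and using that the H-N filtration is characterized (and unique) up to its defining slope properties which are preserved under isomorphism, this shows that the H-N filtration of $g \cdot A$ is precisely $*$. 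In particular $g \cdot A$ preserves $*$ and has H-N type ${\bf v}^*$, so $g \cdot A \in \Rep(Q, {\bf v})_{*, {\bf v^*}}$, and $A = g^{-1} \cdot (g \cdot A) \in G_\C \cdot \Rep(Q, {\bf v})_{*, {\bf v^*}}$.

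There is no substantive obstacle here; the lemma is essentially a restatement of the transitivity of $G$ on filtrations of a prescribed type, combined with the fact that the $G_\C$-action takes subrepresentations to subrepresentations. The only care needed is in tracking how the componentwise action on the spaces $V_\ell$ induces the transformation of filtrations and H-N filtrations under the natural action on $\Rep(Q, {\bf v})$.
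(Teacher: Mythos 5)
Your proof is correct and follows essentially the same route as the paper: the forward inclusion from $G_\C$-invariance of the H-N type, and the reverse inclusion by using transitivity of the (compact) group on filtrations of a fixed dimension type to move the H-N filtration of $A$ onto $*$. The paper states this in one line; you have simply written out the details, including the (slightly stronger than necessary) verification that the H-N filtration of $g\cdot A$ is exactly $*$.
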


\begin{proof}
Since the action of $G_\C$ preserves Harder-Narasimhan types, 
$G_\C \cdot \Rep(Q, {\bf v})_{*,{\bf v^*}} \subseteq \Rep(Q, {\bf
v})_{{\bf v}^*}$. Conversely, any filtration of type ${\bf v}^*$ is equivalent to the
given filtration $*$ by some element of $G_\C$, and therefore any
representation of type ${\bf v}^*$ can be mapped by $G_\C$ to a representation that
preserves the filtration $*$. Hence $\Rep(Q,
{\bf v})_{{\bf v}^*} \subseteq G_\C \cdot \Rep(Q, {\bf v})_{*,{\bf v^*}}$.
\end{proof}

From the lemma above we can conclude that each Harder-Narasimhan
stratum fibers over a homogeneous space of $G$. Let $G_{\C, *},
G_*$ denote the subgroups of $G_\C$ and $G$, respectively, which
preserve the given fixed filtration $*$. 

\begin{proposition}\label{prop:strata-fibre-bundle}
Let \(Q = ({\mathcal I}, E)\) be a quiver and
${\bf v} \in \Z^{\mathcal I}_{\geq 0}$ a dimension vector.
Let ${\bf v}^*$ be a H-N type. Then 
$$
\Rep(Q, {\bf v})_{{\bf v}^*} \cong  G_\C \times_{G_{\C,*}} \Rep(Q, {\bf v})_{*,{\bf v^*}} \cong G \times_{G_*} \Rep(Q, {\bf v})_{*,{\bf v^*}}.
$$
\end{proposition}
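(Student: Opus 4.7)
The plan is to establish the two isomorphisms in turn. For the first, I will identify $\Rep(Q, {\bf v})_{{\bf v}^*}$ with the quotient of $G_\C \times \Rep(Q, {\bf v})_{*,{\bf v^*}}$ by the free action of $G_{\C,*}$ given by $h \cdot (g, A) = (g h^{-1}, h \cdot A)$. For the second, I will use the parabolic decomposition $G_\C = G \cdot G_{\C,*}$ together with $G \cap G_{\C,*} = G_*$.

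First I would consider the natural action map
\[
\Psi : G_\C \times \Rep(Q, {\bf v})_{*,{\bf v^*}} \longrightarrow \Rep(Q, {\bf v})_{{\bf v}^*}, \qquad (g, A) \mapsto g \cdot A.
\]
This map is smooth, and it is surjective by Lemma~\ref{lem:strata-group-action}. It is manifestly invariant under the $G_{\C,*}$-action above, so it descends to a smooth map $\bar\Psi : G_\C \times_{G_{\C,*}} \Rep(Q, {\bf v})_{*,{\bf v^*}} \to \Rep(Q, {\bf v})_{{\bf v}^*}$. To see that $\bar\Psi$ is injective, suppose $g_1 \cdot A_1 = g_2 \cdot A_2$ with $A_1, A_2 \in \Rep(Q, {\bf v})_{*,{\bf v^*}}$, and set $h := g_2^{-1} g_1 \in G_\C$, so that $h \cdot A_1 = A_2$. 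Since the action of $G_\C$ carries the Harder-Narasimhan filtration of $A_1$ to that of $A_2$, and since both of these filtrations equal $*$ by the definition of $\Rep(Q, {\bf v})_{*,{\bf v^*}}$, we conclude $h \cdot * = *$, i.e.\ $h \in G_{\C,*}$. Thus $(g_1, A_1)$ and $(g_2, A_2)$ lie in the same $G_{\C,*}$-orbit, proving injectivity.

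To upgrade the continuous bijection $\bar\Psi$ to a diffeomorphism, I would invoke the explicit local coordinates provided by Propositions~\ref{prop:critical-local-co-ordinates} and~\ref{prop:local-coord-on-stratum}: near any $A \in \Rep(Q, {\bf v})_{*,{\bf v^*}}$, these describe $\Rep(Q, {\bf v})_{{\bf v}^*}$ as the image of $(\ker \rho_{A_\infty}^\C)^\perp \times \bigl(\ker(\rho_{A_\infty}^\C)^* \cap T_{A_\infty}\Rep(Q, {\bf v})_{*,{\bf v^*}}\bigr)$, where the first factor parametrises directions transverse to $\Rep(Q, {\bf v})_{*,{\bf v^*}}$ via the $G_\C$-action modulo the parabolic stabiliser, and the second factor parametrises the filtration-preserving representations. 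This matches precisely the local model for the twisted product $G_\C \times_{G_{\C,*}} \Rep(Q, {\bf v})_{*,{\bf v^*}}$, so $\bar\Psi$ is a local diffeomorphism, hence a global diffeomorphism.

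For the second isomorphism, I would use the standard fact that $G_{\C,*} \subseteq G_\C$ is a parabolic subgroup, so $G_\C = G \cdot G_{\C,*}$ and $G \cap G_{\C,*} = G_*$ (the latter holds because a unitary transformation preserving the filtration $*$ must preserve each orthogonal complement within the filtration, hence is block-diagonal). Given any $[g, A] \in G_\C \times_{G_{\C,*}} \Rep(Q, {\bf v})_{*,{\bf v^*}}$, write $g = k h$ with $k \in G$ and $h \in G_{\C,*}$; then $[g, A] = [k, h \cdot A]$, and $h \cdot A \in \Rep(Q, {\bf v})_{*,{\bf v^*}}$ since $h$ preserves $*$. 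This shows the inclusion-induced map $G \times_{G_*} \Rep(Q, {\bf v})_{*,{\bf v^*}} \to G_\C \times_{G_{\C,*}} \Rep(Q, {\bf v})_{*,{\bf v^*}}$ is surjective. Conversely, if $(k_1, A_1) \sim (k_2, A_2)$ under $G_{\C,*}$, the connecting element $h = k_1^{-1} k_2 \in G$ lies in $G \cap G_{\C,*} = G_*$, giving injectivity. The map is clearly smooth with smooth inverse, completing the proof. The main subtlety is the smoothness verification in the first isomorphism; everything else reduces to the filtration-transport argument and the parabolic decomposition.
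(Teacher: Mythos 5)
Your proof is correct and follows essentially the same route as the paper: surjectivity of the action map from Lemma~\ref{lem:strata-group-action}, the local coordinates of Propositions~\ref{prop:critical-local-co-ordinates} and~\ref{prop:local-coord-on-stratum} to upgrade to a local diffeomorphism, and the decomposition $G_\C = G \cdot G_{\C,*}$ with $G \cap G_{\C,*} = G_*$ (which the paper cites from Daskalopoulos as $G_\C = G \times_{G_*} G_{\C,*}$) for the second isomorphism. The only differences are that you spell out the injectivity via transport of the Harder--Narasimhan filtration, which the paper subsumes into the maximality of $G_{\C,*}$, and you prove the parabolic decomposition directly rather than citing it.
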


\begin{proof}
Given Lemma~\ref{lem:strata-group-action}, to prove the first equality it
only remains to show that the map $\varphi : G_\C \times_{G_{\C,*}}
\Rep(Q, {\bf v})_{*,{\bf v^*}} \rightarrow \Rep(Q, {\bf v})_{{\bf v}^*}$ given by $[g, A]
\mapsto g \cdot A$ is a local diffeomorphism. If $A$ is a critical
point, Proposition \ref{prop:critical-local-co-ordinates} shows that
$d \varphi_{[\id, A]}$ is surjective. Since $G_{\C,*}$ is by definition the
maximal subgroup of $G_\C$ that preserves $\Rep(Q, {\bf v})_{*,{\bf v^*}}$,
this means  $d\varphi_{[\id, A]}$ is injective also. Therefore $\varphi$ is a local
diffeomorphism at $[\id, A]$.  By Lemma \ref{lem:strata-group-action}, the
action of $G_\C$ translates this local diffeomorphism over the whole
stratum $\Rep(Q, {\bf v})_{{\bf v}^*}$.
The second equality follows from the isomorphism of groups $G_\C = G
\times_{G_*} G_{\C,*}$ \cite[Theorem 2.16]{Das92}. 
\end{proof}

As a result we have a simple description of the cohomology of each stratum. Given a H-N type ${\bf v^*} = ({\bf v}_1^*, \ldots, {\bf v}_L^*)$ and a stability parameter $\alpha$, define $\alpha_i = \left. \alpha  \right|_{\Vect(Q, {\bf v}_i)} - \mu_\alpha(Q, {\bf v}_i)$. This is the stability parameter associated to the quotient representation in $\Rep(Q, {\bf v}_i)$ that corresponds to the $i^{th}$ term in the graded object of a representation in $\Rep(Q, {\bf v})_{\bf v^*}$, i.e. the parameter is shifted by an amount corresponding to the $\alpha$-slope of the subrepresentation. (See also \eqref{eqn:quotient-parameter} and Proposition \ref{prop:momentmapdetermined} for similar constructions).

\begin{proposition}\label{prop:cohomology-stratum}
Let ${\bf v} \in \Z^{\mathcal I}_{\geq 0}$ be a dimension vector, and let ${\bf v^*} = ({\bf v}_1, \ldots, {\bf v}_L)$ be a Harder-Narasimhan type. Then 
\begin{equation}
H_G^* \left( \Rep(Q,{\bf v})_{\bf v^*} \right) \cong \bigotimes_{i=1}^L H_{G_{{\bf v}_i}}^* \left(\Rep(Q, {\bf v}_i)^{\alpha_i-ss} \right) ,
\end{equation}
where $G_{\bf v_i}$ is the group associated to the quiver $Q$ with dimension vector ${\bf v_i}$, and $\alpha_i$ is the stability parameter defined above.
\end{proposition}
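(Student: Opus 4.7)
The plan is to combine the Morse-theoretic deformation retraction onto critical sets (Proposition \ref{prop:def-retract-to-Cnu}) with an induced-bundle description of the critical set $C_{\bf v^*}$ whose fibre factors as a product of smaller moment-map level sets. First, I invoke Theorem \ref{theorem:equivstratification} to identify the algebraic Harder-Narasimhan stratum $\Rep(Q,{\bf v})_{\bf v^*}$ with the Morse stratum $S_{\bf v^*}$; Proposition \ref{prop:def-retract-to-Cnu} then immediately reduces the problem to computing $H^*_G(C_{\bf v^*})$.

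Next, I describe $C_{\bf v^*}$ as an induced fibre bundle. Fix a reference orthogonal decomposition $*$ of $\Vect(Q,{\bf v})$ of type ${\bf v^*}$, with summands $\Vect(Q,{\bf v}_i)$, and let $G_* = \prod_i G_{{\bf v}_i} \subseteq G$ denote its stabiliser. Since $G$ acts transitively on the set of orthogonal decompositions of type ${\bf v^*}$, the same argument that proves Proposition \ref{prop:strata-fibre-bundle} gives
\[
C_{\bf v^*} \cong G \times_{G_*} \tilde C_{\bf v^*, *},
\]
where $\tilde C_{\bf v^*, *}\subseteq C_{\bf v^*}$ consists of the block-diagonal critical representations with respect to $*$. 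Proposition \ref{prop:momentmapdetermined} identifies these as direct sums $A=\bigoplus_i A_i$ satisfying the block-wise moment-map equation $\Phi_i(A_i)=\alpha_i$ for the shifted parameter $\alpha_i$ of the statement, so that
\[
\tilde C_{\bf v^*, *}\cong \prod_i \Phi_i^{-1}(\alpha_i),
\]
with the $G_*$-action matching the componentwise action of $\prod_i G_{{\bf v}_i}$.

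Combining the standard isomorphism $H^*_G(G\times_{G_*} Y)\cong H^*_{G_*}(Y)$ for induced bundles with the rational equivariant K\"unneth theorem for the product action then yields
\[
H^*_G(C_{\bf v^*})\cong H^*_{G_*}\!\Bigl(\prod_i \Phi_i^{-1}(\alpha_i)\Bigr)\cong \bigotimes_i H^*_{G_{{\bf v}_i}}\bigl(\Phi_i^{-1}(\alpha_i)\bigr).
\]
To finish, I apply Proposition \ref{prop:def-retract-to-Cnu} once more to each factor, viewing $\Rep(Q,{\bf v}_i)$ with the shifted stability parameter $\alpha_i$: by the very definition of $\alpha_i$ the $\alpha_i$-slope of ${\bf v}_i$ vanishes, so ${\bf v}_i$ is itself the minimal Harder-Narasimhan type for the shifted problem, its Morse stratum is the full semistable locus $\Rep(Q,{\bf v}_i)^{\alpha_i-ss}$, and its critical set is precisely $\Phi_i^{-1}(\alpha_i)$. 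The gradient flow of $\|\Phi_i-\alpha_i\|^2$ therefore supplies a $G_{{\bf v}_i}$-equivariant homotopy equivalence
\[
\Phi_i^{-1}(\alpha_i)\;\simeq\;\Rep(Q,{\bf v}_i)^{\alpha_i-ss},
\]
and substituting into the tensor product above completes the argument.

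The hardest part I anticipate is the careful bookkeeping of the parameter shift from $\alpha$ to $\alpha_i$: one must check that this shift is precisely what is needed both to make the moment-map equation on each block read $\Phi_i(A_i)=\alpha_i$ (as dictated by Proposition \ref{prop:momentmapdetermined}) and to make the $\alpha_i$-slope of ${\bf v}_i$ vanish, so that ${\bf v}_i$ is the minimal H-N type in the shifted representation space and the final application of Proposition \ref{prop:def-retract-to-Cnu} is legitimate. One must also verify that the $G_*$-action on $\tilde C_{\bf v^*, *}$ is genuinely the product action so that the equivariant K\"unneth isomorphism applies cleanly; once these points are settled the rest is a direct assembly of results already proved in the paper.
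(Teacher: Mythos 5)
Your argument is correct, but it takes a genuinely different route from the paper's. The paper starts from Proposition~\ref{prop:strata-fibre-bundle}, $\Rep(Q,{\bf v})_{{\bf v}^*}\cong G\times_{G_*}\Rep(Q,{\bf v})_{*,{\bf v}^*}$, and then retracts the fibre $\Rep(Q,{\bf v})_{*,{\bf v}^*}$ \emph{directly} onto the block-diagonal product $\prod_i\Rep(Q,{\bf v}_i)^{\alpha_i-ss}$ by linearly scaling the off-diagonal extension classes $\eta^{j,k}_a$ to zero --- a $G_*$-equivariant retraction that never mentions the critical set. You instead descend all the way to $C_{{\bf v}^*}$ via the gradient flow (Theorem~\ref{theorem:equivstratification} plus Proposition~\ref{prop:def-retract-to-Cnu}), exhibit $C_{{\bf v}^*}\cong G\times_{G_*}\prod_i\Phi_i^{-1}(\alpha_i)$ using Proposition~\ref{prop:momentmapdetermined} (really Corollary~\ref{cor:critical-infimum}, which guarantees the eigenspace splitting at a critical point is the H-N graded object, so the fibre over the identity coset is exactly the block-diagonal critical locus and the transition group is exactly $G_*$), and then flow back up on each factor to pass from $\Phi_i^{-1}(\alpha_i)$ to $\Rep(Q,{\bf v}_i)^{\alpha_i-ss}$. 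Both routes end with the induced-bundle isomorphism $H^*_G(G\times_{G_*}Y)\cong H^*_{G_*}(Y)$ and the rational equivariant K\"unneth theorem. The paper's version is shorter and purely algebraic once the fibre bundle structure is in hand; yours buys a concrete identification of the critical set as an induced bundle of products of moment-map level sets, which is a useful fact in its own right but costs you the two extra verifications you correctly flag (that the canonical splitting pins down the $G_*$-reduction, and that the parameter shift simultaneously normalises the moment-map equation and kills the $\alpha_i$-slope so that the minimal-type application of Proposition~\ref{prop:def-retract-to-Cnu} is legitimate).
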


\begin{proof}
The previous proposition shows that $\Rep(Q, {\bf v})_{{\bf v}^*} \cong G \times_{G_*} \Rep(Q, {\bf v})_{*,{\bf v^*}}$. Scaling the extension class of the filtration (i.e. the off-diagonal terms) gives the following deformation retract
\begin{equation*}
\Rep(Q, {\bf v})_{*,{\bf v^*}} \simeq \prod_{i=1}^L \Rep(Q, {\bf v}_i)^{\alpha_i-ss} ,
\end{equation*}
which is equivariant with respect to $G_*$. Therefore 
\begin{equation*}
\Rep(Q, {\bf v})_{{\bf v}^*} \simeq G \times_{G_*} \prod_{i=1}^L \Rep(Q, {\bf v}_i)^{\alpha_i-ss}
\end{equation*}
(where we consider the product as a block diagonal representation with the standard action of $G$ and $G_*$), and the result follows.
\end{proof}

We conclude with a description of an open neighborhood of each H-N
stratum as a disk bundle. 

\begin{proposition}\label{prop:kahler-normal-bundle}
Let \(Q = ({\mathcal I}, E)\) be a quiver and
 ${\bf v} \in \Z^{\mathcal I}_{\geq 0}$ a dimension vector. 
Let ${\bf v}^*$ be a filtration type with corresponding H-N type $\mu$. 
Then there exists an
open neighbourhood $U_{{\bf v}^*}$ of the H-N stratum $\Rep(Q, {\bf v})_{{\bf v}^*}$
in $\Rep(Q, {\bf v})$ that is homeomorphic to a disk bundle of rank 
$d(Q, {\bf v}, {\bf v}^*)$ over $\Rep(Q, {\bf v})_{{\bf v}^*}$.
\end{proposition}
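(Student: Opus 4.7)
The plan is to construct the tubular neighborhood explicitly, by combining the slice descriptions of Propositions~\ref{prop:critical-local-co-ordinates} and~\ref{prop:local-coord-on-stratum} with the $G$-equivariant deformation retract of Proposition~\ref{prop:def-retract-to-Cnu}. I would first build a candidate normal bundle over the critical set: for each $A_0 \in C_{{\bf v}^*}$, with $*$ the Harder-Narasimhan filtration determined by $A_0$ (via Corollary~\ref{cor:critical-infimum}), set $N_{A_0} := \ker(\rho_{A_0}^{\C})^{*} \cap \Rep(Q,{\bf v})_{*}^{LT}$. By Proposition~\ref{prop:codimension} this has complex dimension $d(Q, {\bf v}, {\bf v}^*)$, and since the construction is $G$-equivariant and varies continuously with $A_0$, the spaces $N_{A_0}$ assemble into a $G$-equivariant complex vector bundle $N|_{C_{{\bf v}^*}} \to C_{{\bf v}^*}$ of the required rank. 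I would then extend this to a rank $d(Q, {\bf v}, {\bf v}^*)$ bundle $N \to \Rep(Q,{\bf v})_{{\bf v}^*}$ over the entire stratum by pulling back along the $G$-equivariant deformation retract $r: \Rep(Q,{\bf v})_{{\bf v}^*} \to C_{{\bf v}^*}$ from Proposition~\ref{prop:def-retract-to-Cnu}.

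Next I would define the tubular neighborhood map $\Psi: N \to \Rep(Q,{\bf v})$ as follows. Given $B \in \Rep(Q,{\bf v})_{{\bf v}^*}$, Proposition~\ref{prop:local-coord-on-stratum} allows me to write $B = g^{-1}\exp(u)\cdot(A_\infty + \delta a)$ with $A_\infty = r(B) \in C_{{\bf v}^*}$, and for $\sigma$ in the fiber $N_B \cong N_{A_\infty}$ I set
\[
\Psi(B, \sigma) := g^{-1}\exp(u)\cdot(A_\infty + \delta a + \sigma).
\]
Lemma~\ref{lem:localdiffeo} applied at $A_\infty$, together with Proposition~\ref{prop:critical-local-co-ordinates}, implies that $\Psi$ is a local diffeomorphism near every point $(B, 0)$ of the zero section. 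A standard partition-of-unity argument on the paracompact stratum then produces a continuous positive radius function $\varepsilon: \Rep(Q,{\bf v})_{{\bf v}^*} \to \R_{>0}$ such that $\Psi$ restricted to the open disk subbundle $D_\varepsilon := \{(B, \sigma) \in N : \|\sigma\| < \varepsilon(B)\}$ is a homeomorphism onto an open neighborhood $U_{{\bf v}^*}$ of the stratum.

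The hard part will be the well-definedness and the global-injectivity checks. First, the expression $B = g^{-1}\exp(u)\cdot(A_\infty + \delta a)$ is only unique up to the stabilizer of $A_\infty + \delta a$ in $G_\C$, together with any ambiguity in the choice of retract, so $\Psi$ must be shown to be insensitive to these choices; this reduces to a naturality/equivariance statement for the fibers $N_{A_\infty}$. Second, since $\Rep(Q,{\bf v})_{{\bf v}^*}$ is locally closed but not closed in $\Rep(Q,{\bf v})$, global injectivity of $\Psi$ on a uniform disk bundle can fail, as points of neighbouring strata of type ${\bf w}^* \geq {\bf v}^*$ may accumulate on the stratum of type ${\bf v}^*$. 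Ruling this out uses the Harder-Narasimhan closure relations~\eqref{eq:HNclosures-partialorder} together with the $G$-equivariance of $\Psi$ to shrink $\varepsilon(B)$ consistently, and I expect this to account for the bulk of the technical work.
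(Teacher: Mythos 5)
Your route is genuinely different from the paper's, and considerably more laborious. The paper's proof does not construct the disk bundle by hand: it uses Proposition~\ref{prop:critical-local-co-ordinates} and Proposition~\ref{prop:codimension} to see that $\Rep(Q,{\bf v})_{{\bf v}^*}$ is locally a manifold of constant codimension $d(Q,{\bf v},{\bf v}^*)$ near critical points, uses Proposition~\ref{prop:local-coord-on-stratum} and the $G_\C$-translation of Lemma~\ref{lem:strata-group-action} to propagate this to every point of the stratum, and then simply invokes the standard tubular neighbourhood theorem for a locally closed embedded submanifold of constant codimension in the affine space $\Rep(Q,{\bf v})$. That theorem already packages up exactly the two items you defer to the end: the existence of a globally consistent normal bundle and fibre identification, and the shrinking of the radius function needed for global injectivity near the frontier of the stratum.

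As written, your proposal therefore has two genuine gaps, both of which you flag but neither of which you close. First, the map $\Psi(B,\sigma)=g^{-1}\exp(u)\cdot(A_\infty+\delta a+\sigma)$ is not well defined: the element $g$ of Proposition~\ref{prop:local-coord-on-stratum} is only required to bring $A$ within $\tfrac{1}{2}\varepsilon_1$ of $A_\infty$ and is far from unique, and the identification of the fibre $N_B$ with $N_{A_\infty}$ that $\Psi$ implicitly uses (transport by $g^{-1}\exp(u)$) is not the identification you used to define $N$ as a pullback along the retract of Proposition~\ref{prop:def-retract-to-Cnu}. Making these choices coherent over the whole stratum is essentially equivalent to constructing the tubular neighbourhood in the first place, so saying it ``reduces to a naturality statement'' does not discharge it. Second, the global injectivity argument via the closure relations~\eqref{eq:HNclosures-partialorder} is only sketched; the relevant point is not that higher strata accumulate on $\Rep(Q,{\bf v})_{{\bf v}^*}$ (they accumulate on its closure, away from the stratum itself), but that distinct points of the stratum can be close in $\Rep(Q,{\bf v})$ while far apart in the stratum topology, which is the usual obstruction handled by shrinking the radius function. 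If you want an explicit construction rather than an appeal to the tubular neighbourhood theorem, the cleanest fix is to first prove (as the paper effectively does) that the stratum is a locally closed submanifold of constant codimension and then restrict the exponential/affine normal map of the ambient flat metric to the genuine orthogonal normal bundle of the stratum, rather than transporting slices from the critical set by non-canonical group elements.
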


\begin{proof}

Proposition~\ref{prop:critical-local-co-ordinates} shows that near
each \(A \in \Rep(Q, {\bf v})_{*,{\bf v^*}}\) which is critical for $\|\Phi
- \alpha\|^2$, the stratum $\Rep(Q, {\bf v})_{{\bf v}^*}$ is locally a
manifold of codimension $d(Q, {\bf v}, {\bf v}^*)$. This codimension is
independent of choice of critical representation $C_{{\bf v}^*} \cap \Rep(Q,
{\bf v})_{*,{\bf v^*}}$. Propostion~\ref{prop:local-coord-on-stratum} shows
that $\Rep(Q, {\bf v})_{{\bf v}^*}$ is locally a manifold of the same
codimension $d(Q, {\bf v}, {\bf v}^*)$ at any point in $\Rep(Q, {\bf
  v})_{*,{\bf v^*}}$. By Lemma~\ref{lem:strata-group-action}, 
local coordinates near points in $\Rep(Q, {\bf v})_{*,{\bf v^*}}$ can be
translated by the $G_\C$-action to any point in $\Rep(Q, {\bf
  v})_{{\bf v}^*}$; hence $\Rep(Q, {\bf v})_{{\bf v}^*}$ is a manifold of $\Rep(Q,
{\bf v})$ of constant codimension $d(Q, {\bf v}, {\bf v}^*)$. 
In particular, there exists a tubular neighborhood $U_{{\bf v}^*}$ of $\Rep(Q,
{\bf v})_{{\bf v}^*}$ in $\Rep(Q, {\bf v})$ satisfying the conditions of the
theorem. 
\end{proof}

\section{Applications}\label{sec:applications}

\subsection{Hyperk\"ahler quotients and Nakajima quiver varieties}\label{subsec:hK}

One of the main applications for the results of this manuscript is the study of the
topology of Nakajima quiver varieties. The purpose of this section is to describe some of these applications.

We first give details of the variant on the construction
used in the previous sections that yields the Nakajima quiver
varieties. The first step is to consider the holomorphic cotangent bundle $T^{*}\Rep(Q, {\mathbf v})$ of $\Rep(Q, {\mathbf v})$, with the identification 
\begin{equation}\label{eq:cotangent-double-arrows} 
T^{*}\Rep(Q, {\mathbf v}) \cong 
  \oplus_{a \in E}
\left( \Hom(V_{\out(a)}, V_{\inw(a)}) \oplus \Hom(V_{\inw(a)}, V_{\out(a)})\right).
\end{equation}
Here, for any two complex vector spaces $V$ and $V'$, we consider $\Hom(V',V)$
to be the complex dual of $\Hom(V,V')$ by the $\C$-linear pairing 
\[
(A,B) \in \Hom(V, V') \times \Hom(V', V) \mapsto \tr(BA) \in \C.
\]

\begin{remark}\label{remark:cotangent-double-arrows} The
  identification~\eqref{eq:cotangent-double-arrows} makes it evident
  that $T^*\Rep(Q, {\mathbf v})$ may be identified with
  $\Rep(\overline{Q}, {\mathbf v})$ where $\overline{Q}$ is quiver
  obtained from $Q$ by setting $\overline{\mathcal I} = {\mathcal I}$
  and, for the edges $\overline{E}$, adding for each $a \in E$ in $Q$
  an extra arrow $\overline{a}$ with the reverse orientiation.
\end{remark}

We equip $T^{*}\Rep(Q, {\mathbf v})$ with a K\"ahler structure $\omega_{\R}$ given by the standard structure on each $\Hom(-, -)$ in the summand, as for $\Rep(Q, {\bf v})$. This has a corresponding real moment map $\Phi_{\R}$ derived as before. However, being a holomorphic cotangent bundle, $T^{*}\Rep(Q, {\mathbf v})$ in addition has a canonical holomorphic symplectic structure $\omega_{\C}$ given by, for \((\delta A_{i}, \delta B_{i}) = ((\delta A_i)_{a}, (\delta B_{i})_{a})_{a \in E}\) for \(i = 1, 2,\) 
\begin{equation}\label{eq:def-TRepQv-holom-sympl-form}
\omega_{\C}\left((\delta A_1, \delta B_1), (\delta A_2, \delta B_2)\right)  = 
\sum_{a \in E} \tr((\delta A_1)_a(\delta B_2)_a - (\delta
  A_2)_a(\delta B_1)_a)  \in \C.
\end{equation}
The complex group \(G_{\C} = \prod_{\ell \in {\mathcal I}} GL(V_{\ell})\) also acts naturally by conjugation on $T^{*}\Rep(Q, {\mathbf v})$. Using the complex bilinear pairing
\[ 
\langle u, v \rangle = - \tr(u v)
\]
for \(u, v \in \gl(V_{\ell}),\) we may identify $\gl(V_{\ell})$ with its complex dual $\gl(V_{\ell})^{*}$. It is then straightforward to compute the holomorphic moment map
\(\Phi_{\C}: T^{*}\Rep(Q, {\mathbf v}) \to \g_{\C}^{*} \cong \g_{\C}\) for the $G_{\C}$-action with respect to $\omega_{\C}$. We have for $(A, B) \in T^{*}\Rep(Q, {\mathbf v})$ that 
\[
\Phi_{\C}(A,B)_{\ell} = \sum_{a: \inw(a) = \ell} A_{a} B_{a} - \sum_{a': \out(a') = \ell} B_{a'} A_{a'}, 
\]
which again we may write (using the same simplified notation as in~\eqref{eq:simplified-Kahler-mom-map-eq}) as  
\begin{equation}\label{eq:PhiC-simplified}
\Phi_{\C}(A,B) = [ A, B], 
\end{equation}
where the summation over arrows \(a \in E\) is understood. 

Suppose given a central parameter $\alpha$ and assume that $G$ acts freely on the intersection \(\Phi_{\R}^{-1}(\alpha) \cap \Phi_{\C}^{-1}(0).\) Then we define the \emph{Nakajima quiver variety} associated to $(Q, {\mathbf v}, \alpha)$ as 
\begin{equation}\label{eq:MalphaQv-definition} 
M_{\alpha}(Q, {\mathbf v}) := \Phi_{\R}^{-1}(\alpha) \cap \Phi_{\C}^{-1}(0) / G.
\end{equation}
This is a special case of a {\em hyperk\"ahler quotient}.

In the construction of Nakajima quiver varieties
recounted above, 
we take a level set not only of the real moment map $\Phi =
\Phi_\R$ but also of the holomorphic moment map $\Phi_\C$.
In order to analyze the topology of the Nakajima quiver variety via
equivariant Morse theory, we propose to take the following approach:
restrict first to
the level set $\PhiC^{-1}(0)$, and then use the Morse theory of
$f = \|\PhiR - \alpha\|^2$ on $\PhiC^{-1}(0)$ to analyze the
$G$-equivariant topology of $\PhiR^{-1}(\alpha) \cap
\PhiC^{-1}(0)$. 

The first few steps of such an analysis follow by a straightforward
application of our results in Sections~\ref{sec:gradientflow}
and~\ref{sec:HN}, as we now explain.  For instance, it is not
difficult to see that the negative gradient flow of $\|\Phi - \alpha\|^2$ of
Section~\ref{sec:gradientflow}, defined on all of $T^*\Rep(Q, {\mathbf
  v})$, in fact
preserves $\PhiC^{-1}(0)$. This follows from the fact that the
negative gradient flow preserves closed $G_{\C}$-invariant subsets. We
have the following simple corollary of
Theorem~\ref{thm:gradflowconvergence}.

\begin{proposition}\label{prop:closed-invariant-preserved}
Let $(V, G, \Phi, \alpha)$ be as in the statement of Theorem~\ref{thm:gradflowconvergence}, and let $\gamma(x, t): V \times \R \to V$ denote the negative gradient flow of \(f = \|\Phi - \alpha\|^{2}.\) Let \(Y \subseteq V\) be any closed $G_{\C}$-invariant subset of $V$. Then the negative gradient flow $\gamma(x, t)$ preserves $Y$, i.e. for any initial condition \(y_{0} \in Y, \gamma(y_{0}, t) \in Y\) for all \(t \in \R,\) and the limit point \(y_{\infty} := \lim_{t \rightarrow \infty} \gamma(y_{0}, t)\) is also contained in $Y$. 
\end{proposition}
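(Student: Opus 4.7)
The plan is to exploit the fact, already invoked in the proof of Lemma~\ref{lemma:exists-alltime}, that the negative gradient vector field of $f$ is everywhere tangent to $G_{\C}$-orbits. Indeed, since the complexified infinitesmal action satisfies $\rho^{\C}(x, u + iv) = \rho(x, u) + I\rho(x, v)$ for $u, v \in \g$, the formula
\[
-\grad(f)(x) = 2 I \rho(x, \Phi(x) - \alpha) = \rho^{\C}(x, 2i(\Phi(x) - \alpha))
\]
from \eqref{eq:negative-gradient} shows that $-\grad(f)(x) \in T_{x}(G_{\C} \cdot x)$ at every point.

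From this I would deduce that for any $y_{0} \in Y$ and any finite $t \geq 0$ for which $\gamma(y_{0}, t)$ is defined, one has $\gamma(y_{0}, t) \in G_{\C} \cdot y_{0} \subseteq Y$, where the inclusion uses the $G_{\C}$-invariance of $Y$. Theorem~\ref{thm:gradflowconvergence} guarantees that $\gamma(y_{0}, t)$ is defined for all $t \geq 0$, so the whole forward trajectory lies in $Y$.

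Finally, Theorem~\ref{thm:gradflowconvergence} also produces the limit point $y_{\infty} = \lim_{t \to \infty} \gamma(y_{0}, t) \in V$. Since $\{\gamma(y_{0}, t)\}_{t \geq 0} \subseteq Y$ and $Y$ is closed in $V$, the limit $y_{\infty}$ lies in $Y$ as well, which completes the argument.

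There is essentially no obstacle here: both facts used (tangency of the gradient to $G_{\C}$-orbits, and long-time existence/convergence of the flow) are already established in the preceding material, and the closedness hypothesis on $Y$ handles the passage to the limit. The statement is really just a packaging of these ingredients into a form convenient for applying the Morse theory of $\|\Phi_{\R} - \alpha\|^{2}$ to the $G_{\C}$-invariant subset $\PhiC^{-1}(0) \subseteq T^{*}\Rep(Q, {\mathbf v})$.
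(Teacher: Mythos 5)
Your argument is correct and is essentially the same as the paper's: both identify $-\grad(f)$ as lying in the image of the infinitesmal $G_{\C}$-action (so the finite-time flow stays in a $G_{\C}$-orbit, hence in $Y$ by invariance), and both then use closedness of $Y$ together with the convergence from Theorem~\ref{thm:gradflowconvergence} to place the limit point in $Y$.
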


\begin{proof} 
  As we already saw in the proof of Lemma~\ref{lemma:exists-alltime},
  equation~\eqref{eq:gradient-flow-def} implies that the negative
  gradient vector field is always contained in the image of the
  infinitesmal $G_{\C}$ action on $V$. Therefore, the finite-time
  gradient flow is contained in a $G_{\C}$-orbit, and hence in $Y$,
  since $Y$ is $G_{\C}$-invariant. Moreover, since $Y$ is closed, the
  limit \(y_{\infty} = \lim_{t \rightarrow \infty} \gamma(y_{0}, t)\)
  is also contained in $Y$, as desired.
\end{proof}

In the case of $T^*\Rep(Q, {\mathbf v})$, recall from
Remark~\ref{remark:cotangent-double-arrows} that we may also view
$T^{*}\Rep(Q, {\mathbf v})$ as itself a representation space
$\Rep(\overline{Q}, {\mathbf v})$, where $\overline{Q}$ is the quiver $Q$ with edges
``doubled''. The real moment map $\PhiR$ on $T^{*}\Rep(Q, {\mathbf
  v})$ is the usual K\"ahler moment map for the linear $G$-action on
$\Rep(\overline{Q}, {\mathbf v})$, so the results in
Section~\ref{sec:gradientflow} 
apply. From the Proposition above, we conclude that the negative
gradient flow of $f = \|\PhiR - \alpha\|^2$ preserves
$\PhiC^{-1}(0)$.

\begin{corollary}\label{cor:PhiC-level-preserved} 
Let $Q = ({\mathcal
    I}, E)$ be a quiver and ${\mathbf v}
  \in \Z_{\geq 0}^{{\mathcal I}}$ a dimension vector. Let 
$$
  \PhiR: T^{*}\Rep(Q, {\mathbf v}) \to \g^{*} \cong \g \cong \prod_{\ell \in {\mathcal I}}
  \u(V_{\ell})
  $$ and \(\PhiC: T^{*}\Rep(Q, {\mathbf v}) \to \g_{\C}^{*}
  \cong \g_{\C} \cong \prod_{\ell \in {\mathcal I}} \gl(V_{\ell})\) be
  the real and holomorphic moment maps, respectively, for the action
  of \(G = \prod_{\ell \in {\mathcal I}} U(V_{\ell})\) on
  $T^{*}\Rep(Q, {\mathbf v})$. Let \(\gamma(x, t): T^{*}\Rep(Q,
  {\mathbf v}) \times \R \to T^{*}\Rep(Q, {\mathbf v})\) denote the
  negative gradient flow on $T^{*}\Rep(Q, {\mathbf v})$ with respect
  to $\|\PhiR - \alpha\|^{2}$ for a choice of parameter \(\alpha \in
  Z(\g).\) Then the negative gradient flow $\gamma(x,t)$ preserves
  $\PhiC^{-1}(0)$. \end{corollary}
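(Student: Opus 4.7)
The plan is to reduce the corollary directly to Proposition~\ref{prop:closed-invariant-preserved}. As noted in Remark~\ref{remark:cotangent-double-arrows}, the cotangent bundle $T^*\Rep(Q,{\mathbf v})$ is canonically identified with $\Rep(\overline{Q},{\mathbf v})$, and under this identification $\PhiR$ is precisely the standard K\"ahler moment map for the linear $G$-action on the hermitian vector space $\Rep(\overline{Q},{\mathbf v})$. Hence Theorem~\ref{thm:gradflowconvergence} and Proposition~\ref{prop:closed-invariant-preserved} apply verbatim with $V = T^*\Rep(Q,{\mathbf v})$, and it suffices to verify that $Y := \PhiC^{-1}(0)$ is a closed $G_\C$-invariant subset.

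Closedness is immediate since $\PhiC$ is a polynomial, hence continuous, map. For $G_\C$-invariance, I would note that the holomorphic moment map $\PhiC$ is $G_\C$-equivariant with respect to the conjugation action on $T^*\Rep(Q,{\mathbf v})$ and the coadjoint action on $\g_\C^*$; concretely, from the formula $\PhiC(A,B) = [A,B]$ in \eqref{eq:PhiC-simplified} one has
\begin{equation*}
\PhiC(g\cdot(A,B)) = [gAg^{-1},\, gBg^{-1}] = g\,[A,B]\,g^{-1} = \Ad(g)\,\PhiC(A,B)
\end{equation*}
for every $g \in G_\C$. In particular $\PhiC(x) = 0$ implies $\PhiC(g\cdot x) = 0$, so $\PhiC^{-1}(0)$ is $G_\C$-invariant.

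With these two properties established, Proposition~\ref{prop:closed-invariant-preserved} applied to $Y = \PhiC^{-1}(0)$ yields the conclusion: for any initial condition $x_0 \in \PhiC^{-1}(0)$, the negative gradient flow $\gamma(x_0,t)$ remains in $\PhiC^{-1}(0)$ for all $t \in \R$, and the limit point $\lim_{t\to\infty}\gamma(x_0,t)$ (which exists by Theorem~\ref{thm:gradflowconvergence}) also lies in $\PhiC^{-1}(0)$. There is no real obstacle here; the entire content of the corollary has been packaged into Proposition~\ref{prop:closed-invariant-preserved}, and the only remaining task is the trivial verification of the two hypotheses for the specific closed invariant set $\PhiC^{-1}(0)$.
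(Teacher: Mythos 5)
Your proposal is correct and follows exactly the paper's own argument: reduce to Proposition~\ref{prop:closed-invariant-preserved} via the identification $T^*\Rep(Q,{\mathbf v}) \cong \Rep(\overline{Q},{\mathbf v})$, then check that $\PhiC^{-1}(0)$ is closed (continuity) and $G_\C$-invariant (equivariance of $\PhiC$). The explicit computation $\PhiC(g\cdot(A,B)) = \Ad(g)\PhiC(A,B)$ is a small but welcome addition of detail over the paper's one-line verification.
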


\begin{proof} 
The result follows immediately from Proposition~\ref{prop:closed-invariant-preserved} upon observing that $\PhiC^{-1}(0)$, by the definition of $\PhiC$ in~\eqref{eq:PhiC-simplified}, is a $G_{\C}$-invariant subset of $T^{*}\Rep(Q, {\mathbf v})$, and also that $\PhiC$ is continuous, so $\PhiC^{-1}(0)$ is closed. 
\end{proof}

Since the negative gradient flow $\gamma(x,t)$ restricts to $\PhiC^{-1}(0)$, the Morse 
strata $S_{\nu, {\bf v}^*}$ on $T^*\Rep(Q, {\bf v})$ also restrict to $\PhiC^{-1}(0)$, and we have the
following. 

\begin{definition}\label{def:Morse-strata-Nakajima}
  Let $Q = ({\mathcal I}, E)$, ${\mathbf v} \in \Z^{\mathcal I}_{\geq 0}$, $G$, $\PhiR$, and
  $\PhiC$ be as in Corollary~\ref{cor:PhiC-level-preserved}. Then the
  {\em analytic (or Morse-theoretic) stratum of $\PhiC^{-1}(0)$ of critical type ${\bf v}^*$} is
  defined as 
\[
Z_{{\bf v}^*} := \left\{ y \in \PhiC^{-1}(0) \, \mid \, \lim_{t \rightarrow
    \infty} \gamma(y,t) \in C_{{\bf v}^*} \right\}
\]
and is equal to $S_{{\bf v}^*} \cap \PhiC^{-1}(0)$. We call the decomposition 
\[
\PhiC^{-1}(0) = \bigcup_\nu Z_{{\bf v}^*}
\]
the {\em analytic (or Morse-theoretic) stratification of
  $\PhiC^{-1}(0)$.} 
\end{definition}

Now recall that in 
Section~\ref{sec:HN}, we saw that the Harder-Narasimhan
stratification on $T^*\Rep(Q, {\bf v})$ (viewed as $\Rep(\overline{Q},
{\mathbf v})$ as in Remark~\ref{remark:cotangent-double-arrows}) is identified with its Morse
stratification. 
We may also define a Harder-Narasimhan stratification on $\PhiCzero$ by
restriction, as follows. 
Let $\Phi_{\C}^{-1}(0)_{{\bf v}^*} := \Phi_\C^{-1}(0) \cap
T^*\Rep(Q, {\mathbf v})_{{\bf v}^*}$. Then we call the decomposition 
\begin{equation}\label{eq:def-HN-PhiClevel}
\PhiC^{-1}(0) = \bigcup_{{\bf v}^*} \PhiC^{-1}(0)_{{\bf v}^*}
\end{equation}
the {\em H-N stratification of $\PhiC^{-1}(0)$}. The closure property~\eqref{eq:HNclosures-partialorder} with respect to the partial ordering is still satisfied, i.e. 
\begin{equation}
\overline{\PhiC^{-1}(0)_{{\bf v}^*}} \subseteq \bigcup_{{\bf w^*} \geq {\bf v^*}} \PhiC^{-1}(0)_{{\bf w}^*}. 
\end{equation}
Moreover, the result above
that the Harder-Narasimhan and Morse stratifications are
identical on $T^*\Rep(Q, {\mathbf v})$ immediately implies that the
induced Harder-Narasimhan and Morse stratifications on $\PhiCzero$ are
also equivalent.

As mentioned above, 
our proposed approach to study Nakajima quiver varieties 
is to analyze the Morse theory of $f = \|\PhiR-\alpha\|^2$ on
$\PhiCzero$, however such an analysis is made
non-trivial by the fact that $\PhiC^{-1}(0)$ is usually a {\em singular}
space. In particular, the analogue of
Proposition~\ref{prop:kahler-normal-bundle} is no longer true on
$\PhiC^{-1}(0)$, i.e. open neighborhoods of the strata
$\PhiC^{-1}(0)_{{\bf v}^*}$ in
$\PhiC^{-1}(0)$ no longer have a description as a (constant-rank) disk
bundle over the strata.

Our next result concerns a convenient choice of stability
parameter $\alpha$, with respect to which computations with
Harder-Narasimhan strata of $\PhiCzero$ can be simplified.  More
specifically, it 
is often useful to choose a stability parameter $\alpha$ with respect
to which the H-N $\alpha$-length of the H-N strata are always less
than or equal to $2$. This greatly simplifies, for example, explicit
computations of Morse indices at the critical sets. Moreover, we may
additionally choose $\alpha$ such that on $\Rep(Q, {\bf v})$,
$\alpha$-semistability is equivalent to $\alpha$-stability.
We have the following.

\begin{definition}
A triple $(Q, {\bf v}, \alpha)$, consisting of a quiver $Q$,
associated dimension vector ${\bf v}$, and a choice of stability
parameter $\alpha \in Z(\mathfrak{g})$, is called {\em $2$-filtered}
if for any representation $A \in \Rep(Q, {\bf v})$ its associated
canonical H-N filtration has H-N $\alpha$-length less than
or equal to $2$.
\end{definition}

The following proposition shows that for a large class of quivers $(Q,
{\bf v})$, there exists a choice of stability parameter $\alpha$ such that $(Q,
{\bf v}, \alpha)$ is $2$-filtered.

\begin{proposition}\label{prop:twofiltered}
Let \(Q = ({\mathcal I}, E)\) be a quiver and \({\bf v} \in
\Z^{\mathcal I}_{\geq 0}\) a dimension vector. Suppose that there
exists \(\ell \in {\mathcal I}\) with \(v_\ell = \dim(V_\ell) = 1.\)
Then there exists a choice of stability parameter \(\alpha \in Z(\g)\)
such that
\begin{itemize} 
\item $(Q, {\mathbf v}, \alpha)$ is $2$-filtered, and 
\item $\Rep(Q, {\bf v})^{\alpha-ss} = \Rep(Q, {\bf v})^{\alpha-st}$. 
\end{itemize} 

\end{proposition}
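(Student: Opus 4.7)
The plan is to construct $\alpha$ explicitly by exploiting the dimension-$1$ vertex $\ell$, which forces every sub-dimension vector ${\bf v'}$ of ${\bf v}$ to satisfy $v'_\ell \in \{0,1\}$. Fix any $c>0$ and set $i\alpha_k = c$ for all $k \in {\mathcal I} \setminus \{\ell\}$, and $i\alpha_\ell = -c(\rank(Q,{\bf v})-1)$, so that $\tr(\alpha) = 0$ and $\mu_\alpha(Q,{\bf v}) = 0$. A direct computation from Definition~\ref{def:degree-slope} gives the dichotomy
\[
\mu_\alpha(Q,{\bf v'}) = \begin{cases} c > 0 & \text{if } v'_\ell = 0, \\ c\bigl(1 - \tfrac{\rank(Q,{\bf v})}{\rank(Q,{\bf v'})}\bigr) \leq 0 & \text{if } v'_\ell = 1, \end{cases}
\]
with equality to $0$ in the second case if and only if ${\bf v'} = {\bf v}$. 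In particular only two strict slope strata exist among proper nonzero sub-dimension vectors.

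Next I will verify the Harder-Narasimhan $\alpha$-length is at most $2$. Given $A \in \Rep(Q, {\bf v})$, let $A_1 \subseteq A$ denote the sum of all subrepresentations with $v'_\ell = 0$ (possibly zero). Then $A_1$ itself has $v'_\ell = 0$, and by the slope computation it is $\alpha$-semistable of maximal possible slope $c$; moreover any subrepresentation of $A$ of slope strictly greater than $0$ must satisfy $v'_\ell = 0$, hence lies in $A_1$. So $A_1$ is the maximal $\alpha$-semistable subrepresentation. If $A_1 \neq A$, the quotient $A_2 := A/A_1$ has $\ell$-component $1$, and any nonzero subrepresentation $A_2' \subseteq A_2$ must satisfy $v'_{2,\ell} = 1$: indeed, a subrepresentation with $v'_{2,\ell}=0$ would lift to a subrepresentation of $A$ with $v'_\ell=0$ strictly containing $A_1$, contradicting the maximality of $A_1$. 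The slope formula then shows $\mu_\alpha(Q, {\bf v'}_2) \leq \mu_\alpha(Q, {\bf v}_2)$ with equality iff ${\bf v'}_2 = {\bf v}_2$, so $A_2$ is $\alpha$-stable. Thus the H-N filtration has length $\leq 2$.

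The same argument yields the second bullet point for free: if $A \in \Rep(Q, {\bf v})$ is $\alpha$-semistable then every nonzero subrepresentation has slope $\leq 0$, which rules out any subrepresentation with $v'_\ell = 0$ (those have slope $c > 0$), so $A_1 = 0$ and $A = A_2$; the strict inequality of slopes for all proper nonzero subrepresentations then shows $A$ is in fact $\alpha$-stable.

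There is no real obstacle here beyond the slope bookkeeping; the crucial input is the constraint $v'_\ell \in \{0,1\}$ that reduces the problem to two cases, and the choice of $\alpha$ ensures that the two classes of sub-dimension vectors are separated by the critical slope $\mu_\alpha(Q,{\bf v}) = 0$. The only mild point requiring care is checking that the maximal subrepresentation with $v'_\ell = 0$ really coincides with the maximal $\alpha$-semistable subrepresentation given by Proposition~\ref{prop:maximalsemistable}, but this follows immediately from the slope dichotomy combined with the uniqueness statement in that proposition.
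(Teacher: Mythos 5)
Your proof is correct and follows essentially the same strategy as the paper's: an explicit parameter that is constant on the vertices other than $\ell$ and balanced at $\ell$ so that $\mu_\alpha(Q,{\bf v})=0$, followed by the case analysis on $v'_\ell\in\{0,1\}$. The only difference is a sign convention: the paper takes the constant weight negative (so the large-slope subrepresentations are those containing the distinguished one-dimensional vertex), while you take it positive (so they are those with $v'_\ell=0$); with your convention the maximal semistable subrepresentation is simply the sum of all subrepresentations avoiding $V_\ell$, which makes that step of the argument particularly clean.
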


\begin{remark}
We have 
restricted attention in our exposition to the unframed quotients
$\Rep(Q, {\mathbf v}) \mod G$ because any framed quotient $\Rep(Q,
{\mathbf v}, {\mathbf w}) \mod G$ for \(Q = ({\mathcal I}, E)\) can be
realized, by the construction by Crawley-Boevey in \cite[p. 261]{CraBoe01} (see also Remark
\ref{remark:framed-vs-unframed}), as an unframed quotient associated
to a different quiver $Q' = ({\mathcal I}', E')$. 
The construction
involves an addition of a ``vertex at infinity'',
i.e. \({\mathcal I}' = {\mathcal I} \cup \{\infty\},\) with \(v_\infty
= \dim_\C(V_\infty) = 1.\) Hence the hypothesis of
Proposition~\ref{prop:twofiltered} is always satisfied for an unframed
quiver representation space associated via this construction to a framed one.
\end{remark}

\begin{proof}[Proof of Proposition \ref{prop:twofiltered}]
Let \({\mathcal I} = \{1, 2, \ldots, n, \infty\}\) denote the
vertices of the quiver, where \(\infty \in {\mathcal I}\) denotes a
vertex with
\(v_\infty = \dim_{\C}(V_{\infty}) = 1.\)

The proof is by explicit construction. 
Let \(\bar{\alpha} \in i\R\)
be a pure imaginary parameter with \(i \bar{\alpha} < 0.\) 
Define $\alpha$ by 
\begin{equation}
\alpha_{\ell} = \left\{ 
\begin{tabular}{l}
\(\bar{\alpha},\) \quad \quad for \(1 \leq \ell \leq n,\) \\
\(- \bar{\alpha} \left( \sum_{s=1}^n v_{s} \right),\) for \(\ell =
\infty.\) \\
\end{tabular}
\right.
\end{equation} 
Let \(A \in \Rep(Q,{\bf v}).\) 
It is straightforward from the definition of
$\alpha$-slope that the maximal $\alpha$-semistable subrepresentation
$A' \in \Rep(Q, {\bf v'})$ with associated hermitian vector spaces
$\{V'_\ell \subseteq V_\ell\}_{\ell \in {\mathcal I}}$ is the maximal
subrepresentation of $A$ with $V'_\infty = V_\infty$. Now consider the
quotient representation $A/A'$, with associated dimension vector
${\mathbf v''}$. Then \(v''_\infty = 0.\) Since the
$\alpha$-parameter is constant on all other vertices $\ell \neq
\infty$, the $\alpha$-slope of any subrepresentation of $A/A'$ is
equal to the $\alpha$-slope $\mu_\alpha(Q, {\mathbf v''})$ of
$A/A'$. This implies $A/A'$ is already $\alpha$-semistable, and we
conclude that $A$ has Harder-Narasimhan $\alpha$-length less than or
equal to $2$, as desired.

To prove the last claim, we must show that if \(A \in \Rep(Q, {\mathbf
  v})\) is $\alpha$-semistable, then there are no proper
subrepresentations \(A' \in \Rep(Q, {\mathbf v'})\) of $A$ with
\(\mu_\alpha(Q, {\mathbf v'}) = \mu_\alpha(Q, {\mathbf v}).\) We first
observe from the definition of $\alpha$ that \(\mu_\alpha(Q, {\mathbf
  v}) = 0.\) We take cases: if \(\dim_\C(V'_\infty) = v'_\infty = 0,\)
then \(\mu_\alpha(Q, {\mathbf v'}) = i \overline{\alpha} < 0,\) hence
\(\mu_\alpha(Q, {\mathbf v'}) < \mu_\alpha(Q, {\mathbf v}).\) On the
other hand, if \(\dim_\C(V'_\infty) = v'_\infty = 1,\) and if $A'$ is
a proper subrepresentation, then there exists \(\ell \in {\mathcal
  I}\) with \(v'_\ell < v_\ell,\) so in particular \(\sum_{\ell=1}^n
v'_\ell < \sum_{\ell=1}^n v_\ell.\) From this it follows that
\(\mu_\alpha(Q, {\mathbf v'}) < \mu_\alpha(Q, {\mathbf v}).\) Hence
$\alpha$-semistability implies $\alpha$-stability. The reverse
implication follows from the definitions, so we conclude \(\Rep(Q,
{\mathbf v})^{\alpha-ss} = \Rep(Q, {\mathbf v})^{\alpha-st},\) as
desired. 
\end{proof}

The last result in this section is that when the H-N stratification is
$2$-filtered, the fibers of the restriction to $\PhiCzero$ of the
projection of tubular neighborhoods to H-N strata are
well-behaved at a critical point. 
Here we follow the notation of Sections~\ref{sec:preliminaries}
and~\ref{sec:HN}. 
Let \(\alpha\) be a central parameter such
that $(\overline{Q}, {\bf v}, \alpha)$ is $2$-filtered, as above. Let ${\bf v}^*$ be a H-N type of $T^*\Rep(Q, {\bf v})$ with respect to $\alpha$, and consider the corresponding H-N stratum $\PhiC^{-1}(0)_{{\bf v}^*}$
of $\PhiC^{-1}(0)$. In order to understand the topology of the H-N
stratification of $\PhiC^{-1}(0)$, we must analyze the open
neighborhoods $U_{{\bf v}^*} \cap \PhiC^{-1}(0)$ of $\PhiC^{-1}(0)_{{\bf v}^*}$, where
the $U_{{\bf v}^*}$ are the open sets in $\Rep(\overline{Q}, {\bf v})$
constructed in Proposition~\ref{prop:kahler-normal-bundle}. 
Let \(\pi_{{\bf v}^*}: U_{{\bf v}^*} \to S_{{\bf v}^*}\) denote the orthogonal projection of
the tubular neighborhood to the stratum $S_{{\bf v}^*}$. The result below
is that although $\PhiC^{-1}(0)$ is singular, if
$(\overline{Q}, {\bf v}, \alpha)$ is $2$-filtered, then at any
critical \(A \in C_{{\bf v}^*} \cap \PhiC^{-1}(0),\) the fiber 
\(\pi^{-1}_{{\bf v}^*}(A) \cap \PhiC^{-1}(0)\) is a vector space.

\begin{theorem}\label{theorem:PhiC-level-linear} 
Let \(Q = ({\mathcal I}, E)\) be a quiver and \({\bf v} \in \Z^{\mathcal I}_{\geq 0}\)
  a dimension vector.  Let \(\alpha \in Z(\g)\) be a 
 central parameter such that $(\bar{Q}, {\bf v}, \alpha)$
  is $2$-filtered. Let \(f = \|\Phi_{\R} - \alpha\|^{2}\) be the
  norm-square of the real moment map on $T^{*}\Rep(Q, {\bf v}) =
  \Rep(\bar{Q}, {\bf v})$, and let $(A, B) \in \Crit(f) \cap
  \PhiC^{-1}(0)_{{\bf v}^*} \subseteq T^{*}\Rep(Q, {\bf v})$.
  Then locally near $(A, B)$, \begin{equation}
    \label{eq:PhiC-level-linearization} \PhiC^{-1}(0) \cap \pi_{{\bf v}^*}^{-1}(A, B) = \ker (d\PhiC)_{(A,B)} \cap \pi_{{\bf v}^*}^{-1}(A, B), \end{equation} where we consider both
  $\pi_{{\bf v}^*}^{-1}(A,B)$ and $\ker (d\PhiC)_{(A,B)}$ as
  affine subspaces of $T^{*}\Rep(Q, {\bf v})$ going through $(A,B)$.
\end{theorem}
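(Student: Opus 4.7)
The plan is to show that on the slice $\pi_{{\bf v}^*}^{-1}(A, B)$, the holomorphic moment map $\PhiC$ coincides exactly with its linearization $d\PhiC_{(A,B)}$, so that their zero sets in the slice agree identically. The $2$-filtered hypothesis is precisely what forces the quadratic correction of $\PhiC$ on the slice to vanish.

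First, I would apply Proposition \ref{prop:critical-local-co-ordinates} to the doubled quiver $\overline{Q}$, using the identification $T^{*}\Rep(Q, {\bf v}) = \Rep(\overline{Q}, {\bf v})$ from Remark~\ref{remark:cotangent-double-arrows}, to identify a neighborhood of $(A,B)$ in the fiber $\pi_{{\bf v}^*}^{-1}(A, B)$ with a neighborhood of $0$ in the normal space
\[
N \;:=\; \ker(\rho_{(A,B)}^{\C})^{*} \,\cap\, \Rep(\overline{Q}, {\bf v})_{*}^{LT},
\]
where $*$ denotes the canonical H-N filtration of $(A,B)$. Under this identification, a point of the slice has the form $(A,B) + \sigma$ with $\sigma = (\sigma_{A}, \sigma_{B}) \in N$.

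Since $\PhiC$ is quadratic and $\PhiC(A,B) = 0$, a direct Taylor expansion using the formula \eqref{eq:PhiC-simplified} yields
\begin{equation*}
\PhiC\bigl((A,B) + \sigma\bigr) \;=\; [A + \sigma_{A},\, B + \sigma_{B}] \;=\; d\PhiC_{(A,B)}(\sigma) + [\sigma_{A}, \sigma_{B}],
\end{equation*}
with summation over arrows understood. The claimed equality therefore reduces to the vanishing of the quadratic term: $[\sigma_{A}, \sigma_{B}] = 0$ for every $\sigma \in N$.

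The decisive step, and the main technical point, is to exploit the $2$-filtered hypothesis to establish this vanishing. Because $(\overline{Q}, {\bf v}, \alpha)$ is $2$-filtered, the canonical H-N filtration of $(A,B)$ has length at most two, giving a splitting $V_{\ell} = V_{1,\ell} \oplus V_{2,\ell}$ at each vertex with respect to which $A$ and $B$ are block-diagonal and every element of $\Rep(\overline{Q}, {\bf v})_{*}^{LT}$ is strictly lower-triangular. Concretely, for each arrow $a$, $\sigma_{A,a}$ vanishes on $V_{2,\out(a)}$ with image in $V_{2,\inw(a)}$, while $\sigma_{B,a}$ vanishes on $V_{2,\inw(a)}$ with image in $V_{2,\out(a)}$. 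Composing in either order produces zero on both $V_{1,\bullet}$ (where the second-applied factor lands in $V_{2,\bullet}$, on which the first-applied factor vanishes) and $V_{2,\bullet}$ (where the second-applied factor already vanishes). Summing these vertex-by-vertex identities over all arrows gives $[\sigma_{A}, \sigma_{B}] = 0$. The heart of the argument is a short linear-algebra calculation, but conceptually the length-two hypothesis is essential: with three or more H-N pieces, strictly lower-triangular elements would no longer square to zero and the quadratic correction would genuinely prevent $\PhiC^{-1}(0)$ from admitting a linear description within the slice.
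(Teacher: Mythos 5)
Your proposal is correct and follows essentially the same route as the paper: both reduce the claim to the vanishing of the quadratic term of $\PhiC$ on the slice (the paper writes this out vertex-by-vertex, you write it as $[\sigma_A,\sigma_B]$), and both deduce that vanishing from the fact that, under the $2$-filtered hypothesis, every component of an element of $\Rep(\overline{Q},{\bf v})_*^{LT}$ is a map $(V_{\out(a)})_1 \to (V_{\inw(a)})_2$, so any two such components compose to zero. The only cosmetic difference is that the paper explicitly dispatches the H-N length-one case (where the lower-triangular part is trivial) before treating length two.
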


\begin{proof} 
Let
\((A+\delta A, B + \delta B) \in T^*\Rep(Q,{\bf v})\) where \((\delta
A, \delta B) \in T_{(A,B)}(T^*\Rep(Q,{\bf v})) \cong T^*\Rep(Q,{\bf
v}).\) Then from the formula for $\PhiC$ we see that \((A+\delta A, B + \delta B)\) is contained in
\(\ker(d\PhiC)_{(A,B)}\) exactly if for all \(\ell \in {\mathcal I},\)
\[
\sum_{\inw(a)=\ell} \left(A_a \delta B_a + \delta A_a B_a\right) - 
\sum_{\out(a')=\ell} \left( \delta B_{a'} A_{a'} + B_{a'} \delta
A_{a'} \right) = 0.
\]
On the other hand, \((A+\delta A, B + \delta B)\) is in
$\PhiC^{-1}(0)$ if and only if for all \(\ell \in {\mathcal I}\) 
\[
\sum_{\inw(a)=\ell} (A_a\delta B_a +\delta A_aB_a+\delta A_a\delta B_a) - \sum_{\out(a')=\ell} (B_{a'}\delta A_{a'}+
\delta B_{a'}A_{a'}+\delta B_{a'}\delta A_{a'}) = 0,
\]
where we have used that \(\PhiC(A,B)=0.\) Hence to prove~\eqref{eq:PhiC-level-linearization} near $(A,B)$, it suffices to show that 
\begin{equation}\label{eq:levelset-linearization}
\sum_{\inw(a)=\ell} \delta A_a\delta B_a - \sum_{\out(a')=\ell} \delta B_{a'}\delta A_{a'} = 0
\end{equation}
for all $\ell$ if $(\delta A,\delta B) \in \pi_{{\bf v}^*}^{-1}(A,B)$.  

Let $*$ denote the H-N filtration of $(A,B)$. By assumption, the Harder-Narasimhan filtration of $A$ has H-N $\alpha$-length at most $2$. If the H-N $\alpha$-length is $1$ then the representation is $\alpha$-semistable, and there is nothing to prove. If the H-N $\alpha$-length is $2$, let $V_{\ell} = (V_{\ell})_{1} \oplus (V_{\ell})_{2}$ denote the splitting of $A$ corresponding to the H-N filtration for all \(\ell \in {\mathcal I}.\) Then $\Rep(\bar{Q}, {\bf v})_{*}^{LT}$, written with respect to a basis compatible with $*$, consists of homomorphisms in $\Hom((V_{\out(a)})_{1}, (V_{\inw(a)})_{2})$ for each \(a \in \bar{E}.\) In particular, since this holds for each \(\delta A_{a}, \delta B_{a},\) we may conclude that each summand in~\eqref{eq:levelset-linearization} is separately equal to zero, so the sum is also equal to zero.  
\end{proof}

\begin{remark} 
Although the fibers are vector spaces, as 
just shown, the 
dimension of these fibers may jump in rank along the critical set $C_{{\bf v}^*} \cap
\PhiC^{-1}(0)$. This reflects the singularity of $\PhiCzero$. 
\end{remark}

\subsection{Kirwan surjectivity for representation varieties in cohomology and $K$-theory}\label{subsec:Kirwan}

The Morse theory on the spaces of representations of quivers $\Rep(Q,
{\bf v})$ developed in the previous sections allows us to immediately
conclude surjectivity results for both rational cohomology and
integral $K$-theory rings of the quotient moduli spaces of
representations $\Rep(Q, {\bf v}) \mod_{\alpha} G$.

We refer the reader to \cite{Kir84}, \cite[Section 3]{HarLan07} for a
more detailed account of what follows. The original work of Kirwan
proves surjectivity in rational cohomology in the following general
setting. For $G$ a compact connected Lie group, suppose $(M, \omega)$
is a compact Hamiltonian $G$-space with a moment map \(\Phi: M \to
\g^{*}.\) Assuming $0 \in \g^{*}$ is a regular value of $\Phi$, Kirwan
gives a Morse-theoretic argument using the negative gradient flow of
the norm-square $\|\Phi\|^{2}$ in order to prove that the
$G$-equivariant inclusion \(\iota: \Phi^{-1}(0) \into M\) induces a
ring homomorphism \(\kappa: H^{*}_{G}(M; \Q) \to
H^{*}_{G}(\Phi^{-1}(0); \Q) \cong H^{*}(M \mod G; \Q)\) which is a
surjection. This argument is inductive on the Morse strata
\(S_{\beta} \subseteq M,\) defined as the set of points which limits
to a component $C_{\beta}$ of the critical set $\Crit(\|\Phi\|^{2})$
under the negative gradient flow of $f = \|\Phi\|^{2}$. Here, the
limit point always exists for any initial condition due to the
compactness of the original space $M$.

The surjectivity of the restriction along the inclusion map in the base case follows from
the definition of $f$, since the minimal critical set \(C_{0} =
f^{-1}(0)\) is precisely the level set \(\Phi^{-1}(0).\) The inductive
step uses the long exact sequence of the pair $(M_{\leq \beta}, M_{<
\beta})$ for \(M_{\leq \beta} := \sqcup_{\gamma \leq \beta}
S_{\gamma}, M_{< \beta} := \sqcup_{\gamma < \beta} S_{\gamma}\) for an
appropriate partial ordering on the indexing set of components of
\(\Crit(\|\Phi\|^{2} = \{ C_{\beta}\}\) (see e.g. \cite[Section
3]{HarLan07}). The long exact sequence splits into short exact
sequences, and hence yields surjectivity at each step, by an analysis
of the $G$-equivariant negative normal bundles of $M_{< \beta}$ in
$M_{\leq \beta}$ and an application of the Atiyah-Bott lemma
\cite[Proposition 13.4]{AtiBot83} in rational cohomology. Moreover,
this Morse-theoretic argument of Kirwan, together with an integral
topological $K$-theoretic version of the Atiyah-Bott lemma \cite[Lemma
2.3]{HarLan07}, also implies that the Kirwan surjectivity statement
(in the same setting as above) holds also in integral $K$-theory
\cite[Theorem 3.1]{HarLan07}.

In Kirwan's original manuscript \cite{Kir84} she
assumes that the original symplectic manifold $(M, \omega)$ of
which we take the quotient is {\em compact}. In \cite{HarLan07} this
is slightly generalized to the setting of Hamiltonian $G$-spaces with
proper moment map $\Phi$. However, neither of these assumptions
necessarily holds in our situation, namely, in the K\"ahler quotient
construction of the moduli spaces of quiver representations, since the
original space $\Rep(Q, {\mathbf v})$ is the affine space of quiver
representations, and its moment map $\Phi$ is not
necessarily proper. On the other hand, Kirwan also comments in
\cite[Section 9]{Kir84} that her results and proofs generalize
immediately to any Hamiltonian action $(M, \omega,
\Phi: M \to \g^*)$ provided that one can prove explicitly, in the
given case at hand, that the negative gradient flow with respect to
the norm-square $\|\Phi - \alpha\|^2$ does indeed converge. Since this
is precisely what we proved in Section~\ref{sec:gradientflow}, we have the
following.

\begin{theorem}\label{theorem:Kirwan-linear-general} 
Let $V$ be a hermitian vector space and suppose that a compact
connected Lie group $G$ acts linearly on $V$ via an injective
homomorphism $G \to U(V)$. Let $\Phi: V \to \g^* \cong \g$ be a moment
map for this action and suppose that $G$ acts freely on
$\Phi^{-1}(\alpha)$ for $\alpha \in Z(\g)$. Then the $G$-equivariant
inclusion \(\iota: \Phi^{-1}(\alpha) \into V\) induces a ring
homomorphism in $G$-equivariant rational cohomology 
\[
\kappa: H^*_G(V; \Q) \to H^*_G(\Phi^{-1}(\alpha); \Q) \cong H^*(V
\mod_{\alpha} G;\Q)
\]
which is surjective. 
\end{theorem}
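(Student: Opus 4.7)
The plan is to follow Kirwan's Morse-theoretic strategy from \cite{Kir84} essentially verbatim, with Theorem \ref{thm:gradflowconvergence} of this paper supplying the convergence of the negative gradient flow of \(f = \|\Phi - \alpha\|^2\) that was originally guaranteed by compactness of \(M\) (or properness of \(\Phi\)). First I would enumerate the critical components \(C_\beta\) of \(f\) and the corresponding Morse-theoretic stratification \(V = \bigsqcup_\beta S_\beta\), where \(S_\beta\) consists of those points whose negative gradient trajectory limits to \(C_\beta\); this stratification is well-defined precisely by Theorem \ref{thm:gradflowconvergence}. Since \(\alpha \in Z(\g)\) and \(G\) acts freely on \(\Phi^{-1}(\alpha)\) by hypothesis, the minimum value of \(f\) is \(0\), achieved precisely on \(\Phi^{-1}(\alpha)\), which is therefore the minimum Morse stratum \(S_0\).

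Next I would set up the inductive step. Using a partial order on critical values refined to be locally finite (compatible with the flow), define \(V_{\leq \beta} := \bigsqcup_{\gamma \leq \beta} S_\gamma\) and \(V_{<\beta}\) similarly. The goal is to show by induction on \(\beta\) that \(H^*_G(V_{\leq \beta}; \Q) \to H^*_G(V_{\leq 0}; \Q) = H^*_G(\Phi^{-1}(\alpha); \Q)\) is surjective, with the base case given by the tautological statement for \(\beta = 0\). For the inductive step we use the Thom--Gysin long exact sequence of the pair \((V_{\leq\beta}, V_{<\beta})\), where the Thom isomorphism identifies the relative cohomology with an appropriate shift of \(H^*_G(C_\beta)\), using that \(S_\beta\) deformation retracts onto \(C_\beta\) along the gradient flow (an analogue of Proposition \ref{prop:def-retract-to-Cnu}).

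The key technical step is identifying the negative normal bundle of \(C_\beta\) in \(V\) as a \(G\)-equivariant complex vector bundle whose equivariant Euler class is a non-zero divisor in \(H^*_G(C_\beta; \Q)\). Because the \(G\)-action is linear on the hermitian vector space \(V\), at each critical point \(x \in C_\beta\) the Hessian of \(f\) is a quadratic form whose negative eigenspaces are preserved by the infinitesimal action of \(\beta := \Phi(x) - \alpha \in \g\); the one-parameter subgroup generated by \(\beta\) acts on each negative eigenspace with strictly positive weight (with respect to a compatible complex structure inherited from the ambient hermitian structure on \(V\)), exactly as in Kirwan's original argument. This circle action is the input to the Atiyah--Bott lemma \cite[Proposition 13.4]{AtiBot83}, which then guarantees that the Thom--Gysin sequence splits into short exact sequences and yields surjectivity at each stage of the induction.

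The main obstacle will be verifying that the Morse-theoretic machinery functions correctly in this non-compact affine setting. Specifically one needs: (i) that the collection \(\{C_\beta\}\) of critical components indexed by \(f\)-values is locally finite, which follows from an explicit description of critical points in terms of shifted moment map level sets for centralizers of \(\beta\), analogous to Proposition \ref{prop:momentmapdetermined}; (ii) that each stratum \(S_\beta\) admits a tubular neighborhood homeomorphic to the disk bundle of its negative normal bundle in \(V\), which may be obtained from local coordinates around the critical sets along the lines of Section \ref{sec:local}; and (iii) that the deformation retract \(S_\beta \to C_\beta\) is continuous (so that induction on the filtration proceeds cleanly), which again follows from Theorem \ref{thm:gradflowconvergence}. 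With these three ingredients in hand, Kirwan's argument goes through unchanged to produce the desired surjection \(\kappa\).
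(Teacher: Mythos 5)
Your proposal is correct and follows exactly the route the paper takes: the paper's entire proof of this theorem is the observation (following Kirwan's own remark in Section 9 of her monograph) that her Morse-theoretic argument — minimum stratum, Thom--Gysin sequences, Atiyah--Bott lemma on the negative normal bundles — carries over verbatim once convergence of the negative gradient flow of $\|\Phi-\alpha\|^2$ is established, which is Theorem~\ref{thm:gradflowconvergence}. Your write-up simply unpacks that same argument in more detail.
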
 

In particular, in the special case which is our main focus in this
manuscript, we have the following corollary. 

\begin{corollary}\label{corollary:Kirwan-Kahler-quivers}
Let \(Q = ({\mathcal I}, E)\) be a quiver, and \({\bf v} \in
\Z^{\mathcal I}_{\geq 0}\) a dimension vector. Let $\Rep(Q, {\bf v})$
be its associated representation space, \(\Phi: \Rep(Q,
{\bf v}) \to \g^{*} \cong \g \cong \prod_{\ell \in {\mathcal I}}
\u(V_{\ell})\) the moment map for the standard Hamiltonian
action of \(G = \prod_{\ell \in {\mathcal I}} U(V_{\ell})\) on
$\Rep(Q, {\bf v})$, and \(\alpha \in Z(\g)\)
a central parameter such that $G$ acts freely on
$\Phi^{-1}(\alpha)$. Then the $G$-equivariant inclusion \(\iota:
\Phi^{-1}(\alpha) \into \Rep(Q, {\bf v})\) induces a ring homomorphism
in $G$-equivariant rational cohomology
\[
\kappa: H^{*}_{G}(\Rep(Q, {\bf v}); \Q) \to H^{*}_{G}(\Phi^{-1}(\alpha); \Q) \cong H^{*}(\Rep(Q, {\bf v}) \mod_{\alpha} G; \Q)
\]
which is surjective.
\end{corollary}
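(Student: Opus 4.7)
The plan is to derive this corollary as a direct specialization of Theorem~\ref{theorem:Kirwan-linear-general} to $V = \Rep(Q, {\bf v})$ with the componentwise conjugation action of $G$. The verification of the hypotheses is routine: the space $\Rep(Q, {\bf v}) = \bigoplus_{a \in E} \Hom(V_{\out(a)}, V_{\inw(a)})$ is a hermitian vector space, each summand carrying the standard trace Hermitian form $\langle A, B \rangle = \tr(A^*B)$; the group $G = \prod_{\ell \in {\mathcal I}} U(V_\ell)$ is compact and connected as a finite product of unitary groups; and the conjugation action is $\C$-linear and preserves the Hermitian structure, so it factors through $U(\Rep(Q, {\bf v}))$.

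The one technical subtlety concerns the injectivity of $G \to U(\Rep(Q, {\bf v}))$. By Remark~\ref{remark:doublevertices}, the diagonal central $S^1 \subseteq G$ acts trivially on $\Rep(Q, {\bf v})$, so this homomorphism has kernel $S^1$. Consequently, the freeness of the $G$-action on $\Phi^{-1}(\alpha)$ must be interpreted as freeness of the effective quotient $PG = G/S^1$, which does embed into $U(\Rep(Q, {\bf v}))$. The moment map $\Phi$ descends naturally to a $PG$-moment map valued in $\pg^*$, using the standing assumption $\tr(\alpha) = 0$ (see Remark~\ref{remark:doublevertices}). Applying Theorem~\ref{theorem:Kirwan-linear-general} to the quadruple $(PG, \Rep(Q, {\bf v}), \Phi, \alpha)$ then yields surjectivity of the Kirwan map in $PG$-equivariant cohomology.

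Passing from the $PG$-equivariant to the $G$-equivariant statement is a standard exercise. Since $S^1$ acts trivially on both $\Rep(Q, {\bf v})$ and $\Phi^{-1}(\alpha)$, the Serre spectral sequence of the fibration $BS^1 \to BG \to BPG$ (applied to both spaces) degenerates rationally and produces a compatible comparison between $G$- and $PG$-equivariant cohomologies under the restriction map $\iota^*$. Surjectivity of $\kappa$ in the $G$-equivariant setting then follows by a routine diagram chase from surjectivity of its $PG$-equivariant counterpart, and the identification $H^*_G(\Phi^{-1}(\alpha); \Q) \cong H^*(\Rep(Q, {\bf v}) \mod_\alpha G; \Q)$ holds by freeness of the $PG$-action on $\Phi^{-1}(\alpha)$.

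The main obstacle is entirely cosmetic, consisting only of the bookkeeping around the trivially-acting $S^1$. All of the substantive work has already been carried out in the proof of Theorem~\ref{theorem:Kirwan-linear-general}, which itself rests on the Morse-theoretic machinery developed earlier in the paper: gradient flow convergence (Theorem~\ref{thm:gradflowconvergence}), the equivariant deformation retractions of analytic strata onto critical sets (Proposition~\ref{prop:def-retract-to-Cnu}), the disk-bundle structure of tubular neighborhoods (Proposition~\ref{prop:kahler-normal-bundle}), and the Atiyah-Bott lemma applied inductively along the Morse stratification to split the long exact sequences of pairs $(M_{\leq\beta}, M_{<\beta})$.
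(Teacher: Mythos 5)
Your proposal is correct and follows the paper's own route: the paper gives no separate argument for this corollary, presenting it as an immediate specialization of Theorem~\ref{theorem:Kirwan-linear-general} to $V = \Rep(Q,{\bf v})$ with the conjugation action, which is exactly what you do. Your additional care about the trivially-acting central $S^1$ --- so that $G \to U(\Rep(Q,{\bf v}))$ is not injective and ``freeness'' must be read for $PG = G/S^1$ as in Remark~\ref{remark:doublevertices} --- addresses a point the paper glosses over, and your passage between $PG$- and $G$-equivariant cohomology via the rational splitting coming from the fibration $BS^1 \to BG \to BPG$ is sound (the only residual imprecision, that $H^*_G(\Phi^{-1}(\alpha);\Q)$ then carries an extra $H^*(BS^1;\Q)$ factor relative to $H^*(\Rep(Q,{\bf v})\mod_\alpha G;\Q)$, is inherited from the corollary's own statement and is acknowledged by the paper in Section~\ref{sec:reineke}).
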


In the case of topological integral $K$-theory, we must restrict to
the case of quivers \(Q = ({\mathcal I}, E)\) such that the components
of the critical sets of $\|\Phi\|^{2}$ are compact; this is because
the $K$-theoretic Atiyah-Bott lemma of \cite{HarLan07} requires a
compact base for its equivariant bundles. It is known that this
condition is satisfied if, for instance, \(Q = ({\mathcal I}, E)\) has
no oriented cycles \cite{HauPro05}. Hence we also have the following
corollary.

\begin{theorem}\label{theorem:Kirwan-Kahler-quivers-Kthy}
Let \(Q = ({\mathcal I}, E), {\bf v} \in \Z^{\mathcal I}_{\geq 0},
 G = \prod_{\ell \in {\mathcal I}} U(V_{\ell}),
 \Phi: \Rep(Q, {\bf v}) \to \g^{*},\) and \(\alpha \in Z(\g)\) 
be as in Corollary~\ref{corollary:Kirwan-Kahler-quivers}. Assume
 further that \(Q = ({\mathcal I}, E)\) has no oriented cycles. Then
 the $G$-equivariant inclusion \(\iota: \Phi^{-1}(\alpha) \into
 \Rep(Q, {\bf v})\) induces a ring homomorphism in $G$-equivariant
 integral topological $K$-theory
\[
\kappa: K^{*}_{G}(\Rep(Q, {\bf v}); \Q) \to K^{*}_{G}(\Phi^{-1}(\alpha); \Q) \cong K^{*}(\Rep(Q, {\bf v}) \mod_{\alpha} G; \Q)
\]
which is surjective.
\end{theorem}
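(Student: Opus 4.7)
The plan is to transport Kirwan's Morse-theoretic inductive argument into $G$-equivariant integral topological $K$-theory, substituting the $K$-theoretic Atiyah-Bott lemma \cite[Lemma 2.3]{HarLan07} for its rational cohomology counterpart. By Theorem~\ref{theorem:equivstratification} together with Reineke's H-N partial ordering satisfying the closure condition \eqref{eq:HNclosures-partialorder}, I can exhaust $\Rep(Q,{\bf v})$ by a nested family of $G$-invariant open sets
\[
\emptyset = U_0 \subset U_1 \subset \cdots \subset U_N = \Rep(Q,{\bf v}),
\]
where each successive difference $U_{k+1}\setminus U_k = S_{\beta_k}$ is a single H-N stratum and $U_1 = \Rep(Q,{\bf v})^{\alpha-ss}$ is the minimal stratum. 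The gradient flow of $f=\|\Phi-\alpha\|^2$ on $U_1$, combined with Lemma~\ref{lem:KempfNess}, produces a $G$-equivariant deformation retraction of $U_1$ onto $\Phi^{-1}(\alpha)$, so the Kirwan map in $K$-theory is identified with the restriction $K^*_G(\Rep(Q,{\bf v}))\to K^*_G(U_1)$, and it suffices to prove inductively that each restriction $K^*_G(U_{k+1})\to K^*_G(U_k)$ is surjective.

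For the inductive step I would apply the long exact sequence of the pair $(U_{k+1},U_k)$ in $G$-equivariant $K$-theory. By Proposition~\ref{prop:kahler-normal-bundle}, $S_{\beta_k}$ has in $U_{k+1}$ a $G$-invariant tubular neighbourhood homeomorphic to the disk bundle of a complex $G$-equivariant normal bundle $\nu_k\to S_{\beta_k}$ of rank $d(Q,{\bf v},\beta_k)$. The $K$-theoretic Thom isomorphism identifies $K^*_G(U_{k+1},U_k)$ with $K^{*-2d}_G(S_{\beta_k})$ and identifies the connecting map with multiplication by the equivariant Euler class $\lambda_{-1}(\nu_k^\vee)\in K^*_G(S_{\beta_k})\cong K^*_G(C_{\beta_k})$ (using Proposition~\ref{prop:def-retract-to-Cnu}). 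Surjectivity at step $k$ reduces to verifying that this class is a non-zero-divisor, which is precisely the output of \cite[Lemma 2.3]{HarLan07}. Its weight hypothesis is checked via Proposition~\ref{prop:momentmapdetermined}: at any critical $A\in C_{\beta_k}$ the central circle in $\stab(\beta_k)$ arising from the eigenvalue decomposition of $i(\Phi(A)-\alpha)$ acts on each off-diagonal fibre of $\nu_k$ with a non-zero weight equal to the difference of two distinct H-N slopes, which is non-zero because $\beta_k$ is a non-minimal H-N type.

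The main obstacle is the compactness hypothesis required by \cite[Lemma 2.3]{HarLan07}: although $C_{\beta_k}\subset\Rep(Q,{\bf v})$ is not itself compact, only its $G$-equivariant homotopy type enters the argument, and by Proposition~\ref{prop:cohomology-stratum} (in its evident $K$-theoretic analogue, which follows from the same fibre bundle structure of Proposition~\ref{prop:strata-fibre-bundle}) the $G$-equivariant $K$-theory of $C_{\beta_k}$ is built from that of semistable loci of the same quiver $Q$ with strictly smaller dimension vectors ${\bf v}_i$. Since $Q$ has no oriented cycles, the corresponding GIT quotients $\Rep(Q,{\bf v}_i)^{\alpha_i-ss}/G_{{\bf v}_i}$ are compact projective varieties by \cite{HauPro05}, which supplies the compactness required for \cite[Lemma 2.3]{HarLan07} at every level of the induction. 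With this compactness bookkeeping in place, the remainder of the argument is a direct transcription of \cite[Theorem 3.1]{HarLan07} to our analytically controlled non-compact setting, and the theorem follows.
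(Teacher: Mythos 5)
Your overall architecture is the one the paper uses: run Kirwan's induction over the H-N/Morse strata (which coincide by Theorem~\ref{theorem:equivstratification}), identify the base case with $\Phi^{-1}(\alpha)$ via the retraction of the semistable stratum, and reduce the inductive step to the non-zero-divisor property of the equivariant $K$-theoretic Euler class of the normal bundle via \cite[Lemma 2.3]{HarLan07}, whose weight hypothesis is checked against the eigenvalue decomposition of $i(\Phi(A)-\alpha)$ exactly as you describe. That part matches the paper's (very terse) proof.

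The one genuine problem is your handling of the compactness hypothesis in \cite[Lemma 2.3]{HarLan07}. You assert that $C_{{\bf v}^*}$ is not compact and then argue around this by passing to the $K$-theory of the projective GIT quotients $\Rep(Q,{\bf v}_i)^{\alpha_i\text{-}ss}/G_{{\bf v}_i}$. This substitution does not work as stated: the Atiyah--Bott lemma is applied to a $G$-equivariant bundle over the critical set, so what is needed is a compact $G$-space model for $C_{{\bf v}^*}$ itself, not compactness of a quotient; moreover $K^*_{G_{{\bf v}_i}}(\Rep(Q,{\bf v}_i)^{\alpha_i\text{-}ss})$ is not the $K$-theory of the GIT quotient when the action fails to be free on the semistable locus, and ``$G$-equivariant homotopy type'' cannot be traded for the homotopy type of the quotient. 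The fix is that your premise is false: the whole point of the no-oriented-cycles hypothesis is that the critical sets \emph{are} compact. By Proposition~\ref{prop:momentmapdetermined} a critical point is an orthogonal sum of representations lying in level sets $\Phi_\lambda^{-1}(\alpha|_{V_\lambda}-i\mu_\alpha\cdot\id)$, and for a quiver with no oriented cycles one can choose weights $c_\ell$ increasing along a topological order of ${\mathcal I}$ so that $\sum_\ell c_\ell\,\tr(i\Phi_\ell(A))$ is a positive combination of the $\|A_a\|^2$; hence moment-map level sets, and therefore the closed sets $C_{{\bf v}^*}$, are bounded and compact. This is exactly the fact the paper invokes (citing \cite{HauPro05}), after which \cite[Lemma 2.3]{HarLan07} applies directly with base $C_{{\bf v}^*}$ and no further bookkeeping is needed.
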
 

\subsection{Comparison with Reineke's results}\label{sec:reineke}

In \cite{Rei03} Reineke describes a method for computing the Betti numbers of the quotient $\Rep(Q, {\bf v}) \mod_\alpha G$ for a choice of parameter $\alpha$ where $\Rep(Q, {\bf v})^{\alpha-ss} = \Rep(Q,{\bf v})^{\alpha-st}$ (we call these ``generic'' parameters). The purpose of this section is to show that our Morse-theoretic approach reproduces Reineke's results in the case of generic parameters and generalises them to the case where the parameter need not be generic. When the parameter is not generic, we compute the $G$-equivariant Betti numbers of $\Rep(Q, {\bf v})^{\alpha-ss}$ instead of the ordinary Betti numbers of $\Rep(Q, {\bf v}) \mod_\alpha G$.

Proposition \ref{prop:codimension} leads to the following Morse-theoretic formula for the $G$-equivariant Poincar\'e polynomial of $\Rep(Q, {\bf v})^{\alpha-ss}$.

\begin{equation}\label{eqn:Poincare-polynomial}
P_t^G(\Rep(Q, {\bf v})^{\alpha-ss}) = P_t(BG) - \sum_{\bf v^*} t^{2d(Q, {\bf v}, {\bf v^*})} P_t^G (\Rep(Q, {\bf v})_{\bf v^*}) 
\end{equation}

Note that by Proposition \ref{prop:cohomology-stratum}, we can inductively compute $P_t^G (\Rep(Q, {\bf v})_{\bf v^*})$. 

\begin{remark}
For convenience we use the group $G$ in \eqref{eqn:Poincare-polynomial} instead of the quotient $PG = G / S^1$, which acts freely on $\Rep(Q, {\bf v})^{\alpha-st}$ (where $S^1$ is the subgroup of scalar multiples of the identity). This leads to an extra factor of $P_t(BU(1))$ on both sides of \eqref{eqn:Poincare-polynomial}. For example, when $\alpha$ is generic then $P_t^G(\Rep(Q, {\bf v})^{\alpha-st}) = \frac{1}{1-t^2} P_t(\Rep(Q, {\bf v}) \mod_\alpha G) $. 
\end{remark}

Note that Reineke's formula for the Betti numbers in \cite{Rei03} is expressed differently to that in \eqref{eqn:Poincare-polynomial}. To see that they are equivalent when $\alpha$ is generic, one has to combine Proposition 4.8, Theorem 5.1, Corollary 6.2 and Theorem 6.7 of \cite{Rei03}.

\subsection{Equivariant Morse theory and equivariant Kirwan surjectivity}\label{subsec:eqvt-Morse} 

In the study of the topology of quotients via the Morse theory of the
moment map, it is often possible to make Kirwan surjectivity
statements onto the equivariant cohomology of the quotient with
respect to some residual group action, not just the ordinary
(non-equivariant) cohomology of the quotient. To prove such an
equivariant version of Kirwan surjectivity, however, it is of course
necessary to check that the relevant Morse-theoretic arguments may all
be made equivariant with respect to the appropriate extra symmetry.
This is the goal of this section.

First, we briefly recall the statement of equivariant Kirwan surjectivity. Suppose $(M, \omega)$ is a symplectic manifold, and further suppose that we have Hamiltonian actions of two compact connected Lie groups $G$ and $G'$ on $(M, \omega)$ with moment maps $\Phi_G$ and $\Phi_{G'}$ respectively. Assume that the actions of $G$ and ${G'}$ commute, and that $\Phi_G$ and $\Phi_{G'}$ are ${G'}$-invariant and $G$-invariant, respectively. Then there is a residual (Hamiltonian) ${G'}$-action on the $G$-symplectic quotient $M \mod G$, and we may ask the following question: is the ring map induced by the natural $G \times {G'}$-equivariant inclusion $\Phi_G^{-1}(0) \into M$, 
\begin{equation}\label{eq:eqvt-Kirwan}
\kappa_{G'}: H^*_{G \times {G'}}(M;\Q) \to H^*_{G \times {G'}}(\Phi_G^{-1}(0); \Q)
\end{equation}
surjective? Note that if $G$ acts freely on $\Phi_G^{-1}(0)$, then as usual, the target of~\eqref{eq:eqvt-Kirwan} is isomorphic to $H^*_{G'}(M \mod G; \Q)$. Hence $\kappa_{G'}$ is the ${G'}$-equivariant version of the usual Kirwan surjectivity question.

\begin{remark} 
In the case of Nakajima quiver varieties, there is a well-studied extra $S^1$-action commuting with the usual $G = \prod_{\ell \in {\mathcal I}} U(V_\ell)$-action on $T^*\Rep(Q, {\bf v})$ which acts by spinning only the fiber directions of the cotangent bundle with weight $1$. It is straightforward to check that this $S^1$-action and the given $G$-action satisfy all the hypotheses required for the question given in~\eqref{eq:eqvt-Kirwan} to make sense, so this is a specific example of the situation under discussion. 
\end{remark}

\begin{theorem}\label{theorem:eqvt-Morse}. 
Let \({G'} \subseteq U(\Rep(Q, {\bf v}))\) be a Lie subgroup such that \(G = \prod_{\ell \in {\mathcal I}} U(V_{\ell}) \into U(\Rep(Q, {\bf v}))\) and ${G'}$ commute, let $\Phi_{G}$ denote the usual induced $G$-moment map on $\Rep(Q, {\bf v})$, and let \(\alpha \in Z(\g).\) Then the inclusion $\iota: \Phi_{G}^{-1}(\alpha) \into \Rep(Q, {\bf v})$ induces a ring homomorphism 
\[
\iota^{*}: H^{*}_{G \times {G'}}(\Rep(Q, {\bf v}; \Q) \to H^{*}_{G \times {G'}}(\Phi_{G}^{-1}(\alpha); \Q) 
\]
which is a surjection. In particular, if $G$ acts freely on $\Phi_{G}^{-1}(\alpha)$, the composition of $\iota^{*}$ with the isomorphism \(H^{*}_{G \times {G'}}(\Phi_{G}^{-1}(\alpha); \Q) \cong H^{*}_{{G'}}(\Rep(Q, {\bf v}) \mod_{\alpha} G ; \Q)\) induces a surjection of rings 
\begin{equation}\label{eq:K-eqvt-Kirwan}
\kappa_{{G'}}: H^{*}_{G \times {G'}}(\Rep(Q, {\bf v}; \Q) \to H^{*}_{{G'}}(\Rep(Q, {\bf v}) \mod_{\alpha} G ; \Q).
\end{equation}
Moreover, if the quiver $Q$ has no oriented cycles, the inclusion $\iota$ induces a surjection 
\[
\iota^{*}: K^{*}_{G \times {G'}}(\Rep(Q, {\bf v}) \to K^{*}_{G \times {G'}}(\Phi_{G}^{-1}(0)) 
\]
and if $G$ acts freely on $\Phi_{G}^{-1}(0)$ then the composition of $\iota^{*}$ with the isomorphism \(K^{*}_{G \times {G'}}(\Phi_{G}^{-1}(0)) \cong K^{*}_{{G'}}(\Rep(Q, {\bf v}) \mod_{\alpha} G)\) induces a surjection of rings 
\begin{equation}\label{eq:K-eqvt-Kirwan-Kthy}
\kappa_{{G'}}: K^{*}_{G \times {G'}}(\Rep(Q, {\bf v}) \to K^{*}_{{G'}}(\Rep(Q, {\bf v}) \mod_{\alpha} G).
\end{equation}
\end{theorem}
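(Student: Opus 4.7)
The plan is to run the full Morse-theoretic Kirwan argument as in \cite{Kir84} and \cite{HarLan07}, exactly as for the non-equivariant statements of Corollary 7.2 and Theorem 7.4, but upgrade every step so that it respects the $G\times G'$ action. Concretely, I would proceed by induction on the partial order of Harder-Narasimhan types, using the pair $(U_{\leq{\bf v}^*},U_{<{\bf v}^*})$ where $U_{\leq{\bf v}^*}=\bigcup_{{\bf w}^*\leq{\bf v}^*}S_{{\bf w}^*}$, together with the Thom-Gysin long exact sequence
\begin{equation*}
\cdots \to H^{*-2d(Q,{\bf v},{\bf v}^*)}_{G\times G'}(S_{{\bf v}^*})\to H^*_{G\times G'}(U_{\leq{\bf v}^*})\to H^*_{G\times G'}(U_{<{\bf v}^*})\to\cdots ,
\end{equation*}
and showing the connecting homomorphism vanishes (via the equivariant Atiyah-Bott lemma) so that the long exact sequence splits into short exact sequences yielding surjectivity at each stage. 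The base case ${\bf v}^*$ minimal is the open stratum $\Rep(Q,{\bf v})^{\alpha-ss}$, whose $G\times G'$-equivariant cohomology retracts onto $\Phi_G^{-1}(\alpha)$ by the $G\times G'$-equivariant deformation retract of Proposition 3.9 applied to the minimum stratum.

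The first step is to check that all the ingredients of Sections 3-6 are $G\times G'$-equivariant. Since $G'\subseteq U(\Rep(Q,{\bf v}))$ preserves the K\"ahler structure and commutes with $G$, it preserves the $G$-moment map $\Phi_G$ up to a constant, but $\alpha\in Z(\g)$ is fixed under the adjoint action so $f=\|\Phi_G-\alpha\|^2$ is $G\times G'$-invariant. The gradient vector field $-\grad f = 2I\rho(\Phi_G-\alpha)$ of \eqref{eq:negative-gradient} is therefore $G\times G'$-equivariant, so by Theorem 3.1 the gradient flow is defined for all time and is $G\times G'$-equivariant. Consequently the Morse strata $S_{{\bf v}^*}$, the critical sets $C_{{\bf v}^*}$, the tubular neighborhoods $U_{{\bf v}^*}$ of Proposition 6.10, and the deformation retractions of Proposition 3.9 are all $G\times G'$-invariant/equivariant, and the normal bundles to the strata (Proposition 6.4) carry natural $G\times G'$-equivariant vector bundle structures.

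The main technical step and the expected main obstacle is the verification of the $G\times G'$-equivariant Atiyah-Bott lemma at each critical set, i.e.\ that the equivariant Euler class of the negative normal bundle $\nu_{{\bf v}^*}^-\to C_{{\bf v}^*}$ is not a zero-divisor in $H^*_{G\times G'}(C_{{\bf v}^*};\Q)$. The key observation is that, at any critical point $A\in C_{{\bf v}^*}$, the element $\beta=\Phi_G(A)-\alpha\in Z(\stab_G(A))$ generates a circle subgroup $S^1_\beta\subseteq G$ whose action on the fibre of $\nu_{{\bf v}^*}^-$ at $A$ has all positive weights (by Lemma 3.11 and Proposition 6.4, the normal directions are precisely the strictly lower-triangular pieces, on which $i\beta$ acts with strictly positive spectrum). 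Because $G'$ commutes with $G$, this $S^1_\beta$ commutes with $G'$, so the weight-space decomposition is $G\times G'$-equivariant, and the standard argument (see \cite[Proposition 13.4]{AtiBot83} and its proof) extends verbatim to show the equivariant Euler class is a non-zero-divisor. This gives the desired splitting of the Thom-Gysin sequence and inductively produces the surjection $\iota^*$; after identifying $H^*_{G\times G'}(\Phi_G^{-1}(\alpha);\Q)\cong H^*_{G'}(\Rep(Q,{\bf v})\mmod_\alpha G;\Q)$ under the free action hypothesis, we obtain \eqref{eq:K-eqvt-Kirwan}.

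For the $K$-theory statement, the only additional input needed is the integral topological $K$-theoretic Atiyah-Bott lemma of \cite[Lemma 2.3]{HarLan07}. This lemma requires \emph{compact} base; this is where the no-oriented-cycle hypothesis enters, since by \cite{HauPro05} the components $C_{{\bf v}^*}$ are compact in this case (and the $G'$-action on a compact set is still on a compact set). The same $S^1_\beta$-weight argument as above, combined with the $K$-theoretic version of the positive-weight lemma, yields that the $K$-theoretic equivariant Euler class is a non-zero-divisor $G\times G'$-equivariantly. The inductive splitting of the analogous Gysin sequence in equivariant $K$-theory then produces the surjection $\iota^*$ in \eqref{eq:K-eqvt-Kirwan-Kthy}, completing the proof.
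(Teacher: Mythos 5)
Your proposal follows the same route as the paper: reduce everything to checking that Kirwan's Morse-theoretic argument (and its $K$-theoretic upgrade from Harada--Landweber) can be made $G\times G'$-equivariant, which in turn reduces to the $G'$-invariance of $f=\|\Phi_G-\alpha\|^2$ and of the metric. Your treatment of the Atiyah--Bott step is in fact more explicit than the paper's, which simply observes that the circle in $G$ witnessing the positive weights on the negative normal bundle also satisfies the hypotheses for $G\times G'$ since it sits inside $G\subseteq G\times G'$.

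The one place where your justification does not go through as written is the claim that $G'$ ``preserves $\Phi_G$ up to a constant, but $\alpha\in Z(\g)$ is fixed under the adjoint action so $f$ is $G\times G'$-invariant.'' If one only knew $\Phi_G(k\cdot A)=\Phi_G(A)+c(k)$ for some possibly nonzero constant $c(k)$, then $\|\Phi_G(k\cdot A)-\alpha\|^2$ would differ from $\|\Phi_G(A)-\alpha\|^2$, and the centrality of $\alpha$ does nothing to repair this; the $G'$-equivariance of the flow, and hence the $G'$-invariance of the strata and critical sets, hinges on that constant actually vanishing. This is precisely what the paper's proof computes: writing $\langle\Phi_G(A),u\rangle_{\g}=\langle\Phi(A),\iota_{\g}(u)\rangle$ for the canonical (equivariant) moment map $\Phi$ of $U(\Rep(Q,{\bf v}))$ and the inclusion $\iota_{\g}:\g\into\u(\Rep(Q,{\bf v}))$, one obtains $\langle\Phi_G(k\cdot A),u\rangle=\langle\Ad_k\Phi(A),\iota_{\g}(u)\rangle=\langle\Phi(A),\Ad_{k^{-1}}\iota_{\g}(u)\rangle=\langle\Phi_G(A),u\rangle$, using that $G$ and $G'$ commute so that $\Ad_{k^{-1}}$ fixes $\iota_{\g}(\g)$ pointwise. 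With that one computation supplied, your argument is complete and coincides with the paper's.
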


\begin{proof} 

From the arguments given in Section~\ref{subsec:Kirwan}, it is evident that it suffices to show that all the steps in the Morse theory of Section~\ref{subsec:Kirwan} may be made ${G'}$-equivariant. 
We begin by showing that $\Phi_{G}$ is ${G'}$-invariant, i.e. for all \(k \in {G'}, u \in \g, A \in \Rep(Q, {\bf v}),\) we have 
\[
\langle \Phi_{G}(k \cdot A), u \rangle_{\g} = \langle \Phi_{G}(A), u \rangle_{\g}.
\]
By definition \(\langle \Phi_{G}(A), u \rangle_{\g} = \langle \Phi(A), \iota_{\g}(u) \rangle_{\u(\Rep(Q, {\bf v})}\) where \(\iota_{\g}: \g \into \u(\Rep(Q, {\bf v}))\) is the natural inclusion, so we may compute
\begin{align}
\begin{split}
\langle \Phi_{G}(k \cdot A), u \rangle_{\g} & =  \langle \Ad_{k} \Phi(A), \iota_{\g}(u) \rangle_{\u(\Rep(Q, {\bf v}))} \\
& = \langle \Phi(A), \Ad_{k^{-1}} \iota_{\g}(u) \rangle_{\u(\Rep(Q, {\bf v}))} \\
& = \langle \Phi(A), \iota_{\g}(u) \rangle_{\u(\Rep(Q, {\bf v}))} \\
& = \langle \Phi_{G}(A), u \rangle_{\g},\\
\end{split}
\end{align}
for all \(k \in {G'}, u \in \g, A \in \Rep(Q, {\bf v}),\) as desired, where in the second-to-last equality we use that $G$ and ${G'}$ commute. In particular, the norm-square $\|\Phi_{G} - \alpha\|^{2}$ for any \(\alpha \in Z(\g)\) is also ${G'}$-invariant, since the metric on $\g$ is induced from that on $\u(\Rep(Q, {\bf v}))$. Moreover, since \({G'} \subseteq U(\Rep(Q, {\bf v})),\) by definition it preserves the metric on $\Rep(Q, {\bf v})$, so the negative gradient vector field of the function $\|\Phi_{G} - \alpha\|^{2}$ is ${G'}$-invariant, implying that the associated flow is ${G'}$-equivariant and the associated Morse strata are ${G'}$-invariant.

To complete the argument, it suffices to note that the $G \times
{G'}$-invariance of the metric implies that the negative normal bundles
at the critical sets are $G \times {G'}$-equivariant bundles, and that if
there exists an \(S^{1} \subseteq G\) satisfying the hypotheses of the
$G$-equivariant Atiyah-Bott lemmas, then the same \(S^{1} \subseteq G
\subseteq G \times {G'}\) satisfies the hypotheses of the \(G \times
{G'}\)-equivariant Atiyah-Bott lemma. This is true in either rational
cohomology \cite[Proposition 13.4]{AtiBot83} or in integral
topological $K$-theory \cite[Lemma 2.3]{HarLan07}. The result
follows.
\end{proof}

%\bibliographystyle{plain}
%\bibliography{ref}

\begin{thebibliography}{10}

\bibitem{AtiBot83}
M.~F. Atiyah and R.~Bott.
\newblock The {Y}ang-{M}ills equations over {R}iemann surfaces.
\newblock {\em Philos. Trans. Roy. Soc. London Ser. A}, 308(1505):523--615,
  1983.

\bibitem{CraBoe01}
W.~Crawley-Boevey.
\newblock Geometry of the moment map for representations of quivers.
\newblock {\em Compositio Math.}, 126(3):257--293, 2001.

\bibitem{CraBoe06}
W.~Crawley-Boevey.
\newblock Quiver algebras, weighted projective lines, and the
  {D}eligne-{S}impson problem.
\newblock {\em Proceedings of the International Congress of Mathematics,
  Madrid, Spain}, 2006.

\bibitem{Das92}
G.~D.~Daskalopoulos.
\newblock The topology of the space of stable bundles on a compact {R}iemann
  surface.
\newblock {\em J. Differential Geom.}, 36(3):699--746, 1992.

\bibitem{dwww}
G.~D.~Daskalopoulos, J.~Weitsman, R.~A.~Wentworth, and G.~Wilkin.
\newblock Morse theory and hyperk\"ahler {K}irwan surjectivity for {H}iggs
  bundles.
\newblock Available at arxiv:math.DG/0701560.


\bibitem{DasWen04}
G.~D.~Daskalopoulos and R.~A.~Wentworth.
\newblock Convergence properties of the {Y}ang-{M}ills flow on {K}\"ahler
  surfaces.
\newblock {\em J. Reine Angew. Math.}, 575:69--99, 2004.

\bibitem{Don85}
S.~K. Donaldson.
\newblock Anti self-dual {Y}ang-{M}ills connections over complex algebraic
  surfaces and stable vector bundles.
\newblock {\em Proc. London Math. Soc. (3)}, 50(1):1--26, 1985.

\bibitem{Gol02}
R.~F. Goldin.
\newblock An effective algorithm for the cohomology ring of symplectic
  reductions.
\newblock {\em Geom. Funct. Anal.}, 12(3):567--583, 2002.

\bibitem{HarLan07}
M.~Harada and G.~D. Landweber.
\newblock Surjectivity for {H}amiltonian {$G$}-spaces in {$K$}-theory.
\newblock {\em Trans. Amer. Math. Soc.}, 359:6001--6025, 2007.


\bibitem{HauPro05}
T.~Hausel and N.~Proudfoot.
\newblock Abelianization for hyperk\"ahler quotients.
\newblock {\em Topology}, 44(1):231--248, 2005.

\bibitem{HauRod08}
Tam{\'a}s Hausel and Fernando Rodriguez-Villegas.
\newblock Mixed {H}odge polynomials of character varieties.
\newblock {\em Invent. Math.}, 174(3):555--624, 2008.
\newblock With an appendix by Nicholas M. Katz.

\bibitem{JK95}
L.~C. Jeffrey and F.~C. Kirwan.
\newblock Localization for nonabelian group actions.
\newblock {\em Topology}, 34:291--327, 1995.

\bibitem{Jos05}
J.~Jost.
\newblock {\em Riemannian geometry and geometric analysis}.
\newblock Universitext. Springer-Verlag, Berlin, fourth edition, 2005.

\bibitem{Kin94}
A.~D. King.
\newblock Moduli of representations of finite-dimensional algebras.
\newblock {\em Quart. J. Math. Oxford Ser. (2)}, 45(180):515--530, 1994.

\bibitem{Kir84}
F.~Kirwan.
\newblock {\em Cohomology of quotients in symplectic and algebraic geometry},
  volume~31 of {\em Mathematical Notes}.
\newblock Princeton University Press, Princeton, N.J., 1984.

\bibitem{Kob87}
S.~Kobayashi.
\newblock {\em Differential geometry of complex vector bundles}, volume~15 of
  {\em Publications of the Mathematical Society of Japan}.
\newblock Princeton University Press, Princeton, NJ, 1987.
\newblock Kano Memorial Lectures, 5.

\bibitem{Kon00}
H.~Konno.
\newblock Cohomology rings of toric hyperk\"ahler manifolds.
\newblock {\em Int. J. of Math.}, 11(8):1001--1026, 2000.

\bibitem{Kon02}
H.~Konno.
\newblock On the cohomology ring of the hyperk\"ahler analogue of the polygon
  spaces.
\newblock In {\em Integrable systems, topology, and physics (Tokyo, 2000)},
  volume 309 of {\em Contemp. Math.}, pages 129 -- 149. 2002.

\bibitem{Ler04}
E.~Lerman.
\newblock Gradient flow of the norm squared of a moment map.
\newblock {\em Enseign. Math. (2)}, 51(1-2):117--127, 2005.

\bibitem{Loj84}
S.~{\L}ojasiewicz.
\newblock Sur les trajectoires du gradient d'une fonction analytique.
\newblock In {\em Geometry seminars, 1982--1983 (Bologna, 1982/1983)}, pages
  115--117. Univ. Stud. Bologna, Bologna, 1984.

\bibitem{Mar00}
S.~Martin.
\newblock Symplectic quotients by a nonabelian group and by its maximal torus,
  January 2000.

\bibitem{Nak94}
H.~Nakajima.
\newblock Instantons on {ALE} spaces, quiver varieties, and {K}ac-{M}oody
  algebras.
\newblock {\em Duke Math. J.}, 72(2):365--416, 1994.

\bibitem{Nak99}
H.~Nakajima.
\newblock {\em Lectures on {H}ilbert schemes of points on surfaces}, volume~18
  of {\em University Lecture Series}.
\newblock American Mathematical Society, Providence, RI, 1999.

\bibitem{Nakajima01}
H.~Nakajima.
\newblock Quiver varieties and finite-dimensional representations of quantum
  affine algebras.
\newblock {\em J. Amer. Math. Soc.}, 14(1):145--238 (electronic), 2001.

\bibitem{Par00}
P.-E.~Paradan.
\newblock The moment map and equivariant cohomology with generalized
  coefficients.
\newblock {\em Topology}, 39(2):401--444, 2000.

\bibitem{Rei03}
M.~Reineke.
\newblock The {H}arder-{N}arasimhan system in quantum groups and cohomology of
  quiver moduli.
\newblock {\em Invent. Math.}, 152(2):349--368, 2003.

\bibitem{Ringel76}
C.~M.~Ringel.
\newblock Representations of {$K$}-species and bimodules.
\newblock {\em J. Algebra}, 41(2):269--302, 1976.

\bibitem{Seshadri65}
C.~S. Seshadri.
\newblock Space of unitary vector bundles on a compact {R}iemann surface.
\newblock {\em Ann. of Math. (2)}, 85:303--336, 1967.

\bibitem{Simon83}
L.~Simon.
\newblock Asymptotics for a class of nonlinear evolution equations, with
  applications to geometric problems.
\newblock {\em Ann. of Math. (2)}, 118(3):525--571, 1983.

\bibitem{Sja98}
R.~Sjamaar.
\newblock Convexity properties of the moment mapping re-examined.
\newblock {\em Advances in Math.}, 138(1):46--91, 1998.

\bibitem{TW03}
S.~Tolman and J.~Weitsman.
\newblock The cohomology rings of symplectic quotients.
\newblock {\em Comm. Anal. Geom.}, 11(4):751--773, 2003.

\bibitem{Wil06}
G.~Wilkin.
\newblock Morse theory for the space of {H}iggs bundles.
\newblock {\em Comm. Anal. Geom.}, 16(2):283--332, 2008.

\bibitem{wilkin09}
G.~Wilkin.
\newblock Homotopy groups of moduli spaces of stable quiver representations.
\newblock To appear {\em Internat. J. Math.}, available at arxiv:0901.4156.

\bibitem{Woo05}
C.~T. Woodward.
\newblock Localization for the norm-square of the moment map and the
  two-dimensional {Y}ang-{M}ills integral.
\newblock {\em J. Symplectic Geom.}, 3(1):17--54, 2005.

\end{thebibliography}

\def\cprime{$'$}

\end{document}